\documentclass[11pt]{article}

\usepackage{amsmath, amssymb, amscd, amsthm, amsfonts}
\usepackage{graphicx}
\usepackage{hyperref}
\usepackage{indentfirst}
\usepackage{mathtools}
\usepackage{enumerate}
\usepackage{geometry}
\usepackage{gensymb}

\title{\textsc{\Large{\textbf{Existence for Stable Rotating Star-Planet Systems}}}}

\author{Hangsheng Chen\thanks{Department of Mathematics, Statistics, and Computer Science, M/C 249, University of Illinois at Chicago, 851 S. Morgan Street, Chicago, IL 60607, USA. Email: hchen261@uic.edu}}

\date{ }

\newtheorem{theorem}{Theorem}
\newtheorem{conjecture}[theorem]{Conjecture}

\newtheorem  {corollary} [theorem]{Corollary}

\newtheorem  {lemma} [theorem]{Lemma}
\newtheorem  {proposition}[theorem]{Proposition}

\theoremstyle{definition}
\newtheorem{definition}[theorem]{Definition}

\newtheorem  {remark} [theorem]{Remark}

\numberwithin{theorem}{section}

\makeatletter
\newcommand{\mylabel}[2]{#2\def\@currentlabel{#2}\label{#1}}
\makeatother

\begin{document}

\maketitle

\begin{abstract}\label{abstract}	
	
	This paper investigates the existence and properties of stable, uniformly rotating star-planet systems, i.e., mass ratio is sufficiently small. It is modeled by the Euler-Poisson equations. Following the framework established by McCann for binary stars \cite{McC06}, we adopt a variational approach, and prove the existence of local energy minimizers with respect to the Wasserstein $L^\infty$ metric, under the assumed equation of state $P(\rho)=K\rho^\gamma$ and under the condition that the mass ratio $m$ is sufficiently small, corresponding to a star–planet system. Such minimizers correspond to solutions of the Euler–Poisson system. We consider two cases. For $\gamma > 2$, we not only prove existence but also show, via scaling arguments, that the radii (to be precise, the bounds of the supports of the minimizers) tend to zero. For $\frac{3}{2} < \gamma \leq 2$, we estimate an upper bound for the (potential) expansion rates of the radii, and it turns out that the existence result remains valid in this case as well. Finally, we provide estimates for the distances between different connected components of supports of minimizers and propose a conjecture regarding the number of connected components.

	{\footnotesize\textbf{Key words:} Gaseous Star-Planet Systems, Euler-Poisson Equations, Calculus of Variations, Wasserstein $L^\infty$ Topology, Scaling Method, Bootstrap Method}
\end{abstract}

 \tableofcontents
 
\section{Introduction}\label{section-introduction}

\setlength{\parindent}{1.5em}

In astrophysical fluid dynamics, gas stars and gas planets can be modeled as isolated self-gravitating fluid masses. A fundamental hydrodynamic description is provided by the Euler–Poisson system:
\begin{equation} \label{EP}
	\begin{split}
		\partial_{t}\rho + \nabla \cdot \left( {\rho v} \right) &= 0,    \\
		\rho\partial_{t}v + \rho\left( {v \cdot \nabla} \right)v + \nabla P(\rho) &= \rho\nabla V,	\\
		\Delta V &= - 4\pi\rho.
	\end{split}	\tag{EP}
\end{equation}
Here, $\rho(x,t)\geq0$ denotes the fluid density at position $x\in \mathbb{R}^3$ and time $t\geq 0$, $v(x,t)\in \mathbb{R}^3$ is the velocity, and $V(x,t)\in \mathbb{R}$ represents the gravitational potential. The pressure $P(\rho)$ depends solely on density. The existence of solutions under different settings has been established via variational methods (e.g., \cite{AB71M, AB71, Auc91, CF80, CL94, Li91, McC06}) and perturbative approaches (e.g., \cite{JM17, JM19, Hei94, SW17, SW19, AKV23, Lic33}). 

In this work, we adopt the polytropic equation of state $P(\rho)=K\rho^\gamma$, unless otherwise stated. We mainly consider two stellar objects, under the simplifying assumption that individual rotations are neglected and only orbital revolution is considered. We still refer to such configurations as rotating systems. The construction of rotating binary stars dates back to Lichtenstein \cite{Lic33}. As discussed by Jang and Seok \cite{JS22}, in the $N$-body uniformly rotating case, the Euler–Poisson equations reduce to a single equation:
\begin{equation} \label{EP'}
	-\omega ^2\tilde{\rho}\left( x \right) P\left( x \right) +\nabla P\left( \tilde{\rho}\left( x \right) \right) -\tilde{\rho}\left( x \right) \left( \nabla V_{\tilde{\rho}}\left( x \right) \right) =0
	\tag{EP'}
\end{equation}
where $\omega>0$ is the angular velocity, $\tilde{\rho}$ is a compactly supported density function, and $V_{\tilde{\rho}}(x) = {\int_{\mathbb{R}^3}{\frac{\tilde{\rho}(y)}{\left| {y - x} \right|}\,dy}}$. As shown in \cite[Section 2]{Che26G1}, if $\tilde{\rho}$ satisfies the reduced Euler–Poisson equation (\ref{EP'}), then $\rho\left( {t,y} \right) = \tilde{\rho}\left(R_{- \omega t}y \right)$ and $v\left({t,y} \right) = \omega\left( {- y_{2},y_{1},0} \right)^{T}$ satisfy the original system (\ref{EP}), where $R_{- \omega t}y$ is the rotation map given in Definition \ref{notations}. In this case, it is easy to see that $\Delta v = 0$, so $(\rho,v)$ also satisfies the Navier-Stokes-Poisson system. Moreover, since $ \nabla \cdot v = 0 $, no compression occurs in this uniform rotation scenario. Nevertheless, based on physical intuition, we still regard this as a compressible model, corresponding to gaseous stars and planets. 

Examples of two-body systems include binary star systems and star-planet systems. In \cite{McC06}, McCann constructed binary star solutions to (\ref{EP'}) with separated supports, (almost) determined via a Kepler problem, by formulating a minimization problem with a prescribed mass ratio. In that work, the angular momentum $J$ is assumed to be large. It leads to a wide separation between the two objects and a correspondingly small angular velocity $\omega=\frac{J}{I(\tilde{\rho})}$, because the moment of inertia $I(\tilde{\rho})$ grows faster than $J$ when $J$ grows. However, the existence of stable rotating solutions for arbitrary angular momentum under small mass ratio (the physical regime for star–planet systems) has not been addressed. This paper hence extends McCann's framework for binary-star systems to this setting and establishes existence for star–planet systems. In addition, it studies some asymptotic properties as the mass ratio tends to zero. Note that in both McCann’s work \cite{McC06} and this paper, the angular velocity $\omega$ turns out to be small, since solutions or minimizers may not exist if $\omega$ is too large, as indicated by Li \cite{Li91}.

A notable feature of McCann’s framework is that each star admits a variational characterization as a Hamiltonian or energy minimizer under certain constraints, while also being interpretable as a perturbation of simpler objects—such as non-rotating Lane–Emden stars and point-mass relative equilibria (see subsection \ref{subsection5.3-Estimate for the supports of densities} and Section \ref{section6-Existence for Star-Planet Systems} of this paper or \cite[Section 3.2]{JS22}). This characterization is crucial for showing that the support of an energy-constrained minimizer lies in the interior of balls of certain radii. Consequently, the minimizer $\rho$ is in fact a local minimizer in the Wasserstein $L^\infty$ distance sense. This ensures that $\nabla P(\rho)$ is well-defined throughout $\mathbb{R}^3$, and that (\ref{EP'}) holds in the whole space $\mathbb{R}^3$ (see Theorem \ref{Properties of LEM}, \cite[Theorem 2.1]{McC06}, \cite[Section 2]{Che26G1}).

Beyond existence, many questions remain for two-body systems. For instance, conjectures concerning the maximal possible number of connected components of the support of solution are discussed in Section \ref{section7-The Maximum Number of Connected Components of Minimizers} of this paper. Jang and Seok \cite{JS22} study the asymptotic profiles, uniqueness, and orbital stability of McCann’s uniformly rotating binary stars. They also establish existence and stability for rotating binary galaxies modeled by the Vlasov–Poisson system \cite{JS22}, a work that extends the discussion of binary star solutions by adapting Rein's reduction method \cite{Rei03, Rei07}.

This paper presents the core results extracted from the author's Master's thesis \cite{CVD24}. It investigates the existence and qualitative properties of stable, uniformly rotating star–planet systems governed by the Euler–Poisson equations, as established in Theorems \ref{themA} and \ref{themA'}.

Although this paper builds upon the preparatory work in two companion papers \cite{Che26G1, Che26R2} originating from the same thesis \cite{CVD24}, it is largely self-contained. Relevant auxiliary results can also be found in other existing literature cited at the corresponding locations herein, with the author's papers \cite{Che26G1, Che26R2} providing refined and more complete proofs. 

The paper is structured as follows. Section \ref{section2-construction and results} introduces the variational formulation and states the main results. Section \ref{section3-Preliminary Results} collects preliminary results about two-body systems and non-rotating single star cases from \cite{McC06, AB71, LY87, Che26G1, Che26R2}. Sections \ref{section4-Existence of Constrained Minimizers}, \ref{section5-Bound of the Supports of Density Functions} and \ref{section6-Existence for Star-Planet Systems} are the core of this paper, where we will prove existence of star-planet systems. Sections \ref{section7-The Maximum Number of Connected Components of Minimizers} studies distances between (possible) different connected components of planets or stars, leading to the conjecture that minimizer may consist of at most two connected components, corresponding to exactly one star and exactly one planet respectively.

\section{Variational formulation and Statement of Main Results}\label{section2-construction and results}

\subsection{Variational formulation}\label{subsection2.1-variational formulation}

We introduce some notations and variational formulation here, which are based on McCann's construction of binary star solutions, see \cite{McC06} and \cite{Che26G1}. 

\begin{definition}[Notations] \label{notations}
	\rm
	We give the following definitions:
	\begin{enumerate}[(i)]
		\item The projection operator of $x$ to the $x_1 x_2$ plane: $P_{12}(x)=P_{12}\left(x_1, x_2, x_3\right):=\left(x_1, x_2, 0\right)$.
		\item A bilinear form $\langle \cdot , \cdot \rangle_2: \mathbb{R}^3 \times \mathbb{R}^3 \rightarrow \mathbb{R}: \forall x, y \in \mathbb{R}^3,\langle x, y\rangle_2=P_{12}(x) \cdot P_{12}(y)= x_1 y_1+x_2 y_2$.
		\item $r(x):=\left(\langle x, x\rangle_2\right)^{\frac{1}{2}}=\sqrt{x_1^2+x_2^2}$.
		\item Rotation map:
		$$
		R_\theta:=\left(\begin{array}{ccc}
			\cos \theta & -\sin \theta & 0 \\
			\sin \theta & \cos \theta & 0 \\
			0 & 0 & 1
		\end{array}\right)
		$$
		\item $W^{\infty}$ metric: Wasserstein $L^{\infty}$ metric. (See Definition \ref{Wasser.})
	\end{enumerate}
\end{definition}

The state of a fluid may be represented by its mass density $\rho(x) \geq 0$ and velocity vector field $v(x)$. The fluid interacts with itself through Newtonian gravity hence we need to consider gravitational interaction energy, which will be given later. Moreover, to define internal energy, we need to first consider the equation of state for pressure $P(\rho)$. In this paper, we take the polytropic law, i.e., 

\begin{equation}\label{polytropic law}
	P(\rho)=K\rho^\gamma
\end{equation}

We also define $A(s)$ as the following:
\begin{equation}\label{A}
	A(s):=s \int_0^s P(\tau) \tau^{-2} d \tau=\frac{K}{\gamma-1} s^\gamma
\end{equation}

We first give ``admissible classes'' for $\rho$ and $v$ as the following:
\begin{align}
	R\left(\mathbb{R}^3\right) & :=\left\{\left.\rho \in L^{\frac{4}{3}}\left(\mathbb{R}^3\right) \right\rvert\, \rho \geq 0, \int_{\mathbb{R}^3} \rho \,dx=1\right\} \\
	V\left(\mathbb{R}^3\right) & :=\left\{v: \mathbb{R}^3 \rightarrow \mathbb{R}^3 \mid v \text { is measurable.}\right\}
\end{align}

Then given $\rho$ and $v$ in such sets, the energy $E(\rho, v)$ consists of three terms:

\begin{equation}\label{energy}
	E(\rho, v):=U(\rho)-\frac{G(\rho, \rho)}{2}+T(\rho, v) 
\end{equation}

\begin{equation} \label{U}
	U(\rho):=\int_{\mathbb{R}^3} A(\rho(x)) \,dx
\end{equation}

\begin{equation} \label{G}
	G(\sigma, \rho):=\int_{\mathbb{R}^3} V_\sigma \rho \,dx=\iint_{\mathbb{R}^3\times \mathbb{R}^3}  \frac{\rho(x) \sigma(y)}{|x-y|} \,dy\,dx
\end{equation}

\begin{equation} \label{T}
	T(\rho, v):=\frac{1}{2} \int_{\mathbb{R}^3}|v|^2 \rho\,dx
\end{equation}

Here $A(\rho)$ is a convex function given in (\ref{A}), and $U(\rho)$ is the \textit{internal energy}.

We can choose units so that the total mass of fluid is one and the gravitational constant $G=1$, then $V_\rho$ represents the \textit{gravitational potential} of the mass density $\rho(x)$
\begin{equation}\label{V}
	V_\rho(x):=\int_{\mathbb{R}^3} \frac{\rho(y)}{|y-x|} \,dy
\end{equation}

Hence $G(\rho, \rho)$ is the \textit{gravitational potential energy} (also called \textit{gravitational interaction energy}), and $T(\rho, v)$ is the \textit{kinetic energy}.

\begin{remark}\label{finite gravitational interaction energy}
	Since $\rho \in L^{\frac{4}{3}}\left( \mathbb{R}^{3} \right) \cap L^{1}\left( \mathbb{R}^{3} \right)$, we have $G(\rho,\rho)<\infty$, see for example \cite[Section 2]{Che26G1}. Hence $E(\rho,v)$ is well-defined. 
\end{remark}


We can choose a frame of reference in which the \textit{center of mass}
\begin{equation}\label{centerofmass}
	\bar{x}(\rho):=\frac{\int_{\mathbb{R}^3} x \rho(x) \,dx}{\int_{\mathbb{R}^3} \rho(x) \,dx}
\end{equation}
is at rest. We are interested in finding minimum energy configurations subject to constraints of fixed mass ratio and fixed angular momentum $\boldsymbol{J}$ with respect to the center of mass $\bar{x}(\rho)$. The fluid \textit{angular momentum} $\boldsymbol{J}(\rho, v)$ is given by:
\begin{equation} \label{angular momentum}
	\boldsymbol{J}(\rho, v):=\int_{\mathbb{R}^3}(x-\bar{x}(\rho)) \times v \rho(x) \,dx
\end{equation}

We denote by $J_z$ the z-component of $\boldsymbol{J}$, that is, $J_z(\rho, v):=\hat{e}_z \cdot \boldsymbol{J}(\rho, v)$, where $\hat{e}_z=(0,0,1)^T$. For simplicity of notation, we will sometimes use  $J$  to represent $J_z$ and call $J$ the angular momentum of the system when no confusion arises.

Since the z-component of the angular momentum is specified, the moment of inertia $I(\rho)$ of $\rho$ in the direction of $\hat{e}_z$ will be relevant. That is, we define the \textit{moment of inertia} of $\rho$ in the direction of $\hat{e}_z$, denoted by $I(\rho)$, as follows:
\begin{equation} \label{Moment of Inertia}
	I(\rho):=\int_{\mathbb{R}^3} \rho r^2(x-\bar{x}(\rho))\,dx=\int_{\mathbb{R}^3} \rho(x)\left( (x_1-\bar{x}(\rho)_1)^2+(x_2-\bar{x}(\rho)_2)^2\right)\,dx
\end{equation}
where $r$ is given in Definition \ref{notations}.

%
%

\begin{remark} \label{positive MoI}
	When $\rho$ has positive mass, we have $I(\rho)>0$. See \cite[Section 2]{Che26G1}.
\end{remark}

Since the star and planet are separated, it is convenient to describe the relations between the total moment of inertia $I\left(\rho_m+\rho_{1-m}\right)$ and $I\left(\rho_m\right), I\left(\rho_{1-m}\right)$.

\begin{lemma}[Expansion of Moment of Inertia {\cite[Section 2]{Che26G1}}]\label{expansion of MoI}
	Let $\rho\geq 0$, $\sigma \geq 0$ be the density functions in $\mathbb{R}^3$ with mass $\int_{\mathbb{R}^3} \rho\,dx=m_1<\infty$, $\int_{\mathbb{R}^3} \sigma\,dx=m_2<\infty$. $\bar{x}(\rho)$ and $\bar{x}(\sigma)$ denote the centers of mass, $I(\rho)$ and $I(\sigma)$ denote the moments of inertia, function $r$ is given in (\ref{notations}).
	
	\begin{enumerate}[(1)]
		\item If $m_1+m_2=0$, then $I(\rho+\sigma)=0$.
		\item If $m_1+m_2>0$, then we have the moment of inertia of $\rho+\sigma$ satisfies
		\begin{equation} \label{Expansion of MoI}
			I(\rho+\sigma)=I(\rho)+I(\sigma)+\frac{m_1 m_2}{m_1+m_2} r^2(\bar{x}(\rho)-\bar{x}(\sigma))
		\end{equation}
	\end{enumerate}
\end{lemma}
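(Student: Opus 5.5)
Part (1) is immediate. Since $\rho,\sigma\ge 0$ and $m_1+m_2=0$, both masses vanish, so $\rho+\sigma=0$ almost everywhere. The integrand $(\rho+\sigma)\,r^2(x-\bar x)$ then vanishes almost everywhere, whatever convention is adopted for the undefined center of mass $\bar x(\rho+\sigma)$. Hence $I(\rho+\sigma)=0$.

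For part (2), I will reduce everything to the planar coordinates $P_{12}$, because $I$ only involves $r^2(x-\bar x)=|P_{12}(x-\bar x)|^2$ and $P_{12}$ is linear. Write $M=m_1+m_2>0$, $a=P_{12}\bar x(\rho)$, $b=P_{12}\bar x(\sigma)$, and $c=P_{12}\bar x(\rho+\sigma)$. By linearity of the integral in the density,
\[
c=\frac{m_1 a+m_2 b}{M}.
\]
If $m_1=0$, then $\rho=0$ a.e., and the formula reduces to $I(\sigma)=I(\sigma)$. This also disposes of the fact that $\bar x(\rho)$ is undefined in that case: the coefficient $m_1m_2/M$ vanishes, and $I(\rho)=0$ by the convention of part (1). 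The case $m_2=0$ is symmetric, so assume $m_1,m_2>0$.

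The key step is a parallel-axis (Huygens–Steiner) identity for each piece:
\[
\int\rho\,|P_{12}x-c|^2\,dx=\int\rho\,|P_{12}x-a|^2\,dx+m_1|a-c|^2 .
\]
To prove it, expand $|P_{12}x-a+(a-c)|^2$. The cross term is $2(a-c)\cdot\int\rho\,(P_{12}x-a)\,dx$, which vanishes because $\int\rho\,P_{12}x\,dx=m_1a$. This requires a finite first moment, and I assume finite second moments wherever $I$ is finite; otherwise both sides are $+\infty$ and the identity holds trivially. The same identity holds for $\sigma$ with $b$ and $m_2$.

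Summing the two identities gives
\[
I(\rho+\sigma)=I(\rho)+I(\sigma)+m_1|a-c|^2+m_2|b-c|^2 .
\]
Finally, $a-c=\frac{m_2}{M}(a-b)$ and $b-c=-\frac{m_1}{M}(a-b)$. Substituting,
\[
m_1|a-c|^2+m_2|b-c|^2=\frac{m_1m_2^2+m_2m_1^2}{M^2}\,|a-b|^2=\frac{m_1m_2}{M}\,r^2\big(\bar x(\rho)-\bar x(\sigma)\big),
\]
which is exactly (\ref{Expansion of MoI}). The only mildly delicate point is the integrability needed to cancel the cross term and the degenerate zero-mass cases. Both are handled as above, so I expect no real obstacle.
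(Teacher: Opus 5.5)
Your proof is correct. The parallel-axis identity for each piece about the combined planar center of mass $c=\frac{m_1a+m_2b}{m_1+m_2}$, followed by the algebra $m_1|a-c|^2+m_2|b-c|^2=\frac{m_1m_2}{m_1+m_2}|a-b|^2$, is the standard direct expansion behind this lemma, and your handling of the zero-mass cases and of the integrability needed to kill the cross term is adequate. The paper itself does not prove the lemma; it only cites \cite[Section 2]{Che26G1}, so there is no in-paper argument to compare against.
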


Let the support of $\rho$ be the smallest closed set carrying the full mass of $\rho$ (denoted by spt $\rho$). Intuitively we hope spt $\rho$  to be compact as this aligns with the case of a star-planet system and ensures a finite moment of inertia. Therefore, we also introduce a subset $R_a\left(\mathbb{R}^3\right)$ of $R\left(\mathbb{R}^3\right)$:

\begin{equation}\label{R_a}
	{R}_a\left(\mathbb{R}^3\right):=\left\{\rho \in {R}\left(\mathbb{R}^3\right) \mid \bar{x}(\rho)=a ; \text { spt } \rho \text { is bounded.}\right\}
\end{equation}

Since the energy is translation-invariant, we may just consider looking for an energy minimizer $\rho$ such that the center of mass $\bar{x}(\rho)$ is $0$, which means $\rho \in {R}_0\left(\mathbb{R}^3\right)$.

Thanks to Theorem \ref{Properties of LEM} below, it turns out the problem of minimizing $E(\rho, v)$ locally in $R_0(\mathbb{R}^3)\times V(\mathbb{R}^3)$ is equivalent to a minimization problem of the energy $E_J(\rho)$ locally in $R_0(\mathbb{R}^3)$. Here $E_J(\rho)$ corresponds to uniform rotation with the angular momentum $\boldsymbol{J}=J\hat{e}_z=(0,0,J)^T$ specified a priori, and $E_J(\rho)$ is defined as the following: 

\begin{equation} \label{energy of UR}
	E_J(\rho)=U(\rho)-\frac{G(\rho, \rho)}{2}+T_J(\rho)
\end{equation}

Here, $T_J$ is given by
\begin{equation}\label{T_J}
	T_J(\rho):=\frac{J^2}{2 I(\rho)}
\end{equation}
\begin{remark}\label{finite energy under L infty}
	Given $\rho \in R(\mathbb{R}^3)$, if we further know $\rho \in L^\infty(\mathbb{R}^3)$, then we know $U(\rho)$ is finite by interpolation inequality \cite[Section 4.2]{Bre11}: 
	\begin{equation}
		\|\rho\|_{L^r} \leq \|\rho\|_{L^p}^\theta \|\rho\|_{L^q}^{1-\theta}, \quad \text{for } p \leq r \leq q,
	\end{equation}
	where $\frac{1}{r} = \frac{\theta}{p} + \frac{1-\theta}{q}$ for some $\theta \in [0,1]$. This observation, together with Remark \ref{finite gravitational interaction energy} and Remark \ref{positive MoI}, tells us $E_J(\rho)$ is well-defined and finite. Similar results hold for more general pressures; see \cite[Section 5]{Che26G1}.
\end{remark}

We intend to solve (\ref{EP'}) by minimizing $E_J(\rho)$ and using results from calculus of variations, which involves the variational derivative of energy. We therefore now proceed to define it. We first define the perturbation set $P_{\infty}(\rho)$, which depends on energy minimizer $\rho$, as the following:
\begin{equation}\label{perturbation sets}
	P_{\infty}(\rho):=\bigcup_{R<\infty} P_R(\rho)
\end{equation}

Here, $P_R(\rho)$ is defined as:
$$P_R(\rho)=\left\{ \sigma \in L^{\infty}(\mathbb{R}^3)\mid \begin{array}{ll}
	\sigma(x)=0, & \text{where } x \text{ statisfies } \rho(x) >R \text{ or } |X|>R\\
	\sigma(x)\geq 0, & \text{where } x \text{ statisfies } \rho(x) <R^{-1}
\end{array} \right\}$$

One can see $P_{\infty}(\rho)$ is a convex cone. And then we have the following result:

\begin{lemma}[Differentiability of Energy $E_J(\rho)$ {\cite[Section 5]{Che26G1}}]\label{diff. of energy}
	Given $\rho \in mR(\mathbb{R}^3)$ with $U(\rho)<\infty$, $E_J(\rho)$ is $P_{\infty}(\rho)$-differentiable at $\rho$, i.e. it is differentiable at $\rho$ in the direction of $P_\infty (\rho)$. Moreover, the derivative at $\rho$ is $E_J^{\prime}(\rho)$ in the sense that $\forall \sigma\in P_{\infty}(\rho)$, $E_J^{\prime}(\rho)(\sigma)=\int_{\mathbb{R}^3} E_J^{\prime}(\rho) \sigma\,dx$ \footnote{To remain consistent with the notation in \cite[Section 4]{AB71}, we use $E_J^{\prime}$ as the symbol for both linear functional and function,  provided it does not cause confusion.}. The function $E_J^{\prime}(\rho)$ on the right‑hand side is given by
	\begin{equation}\label{variational derivative}
		E_J^{\prime}(\rho)(x):=A^{\prime}(\rho(x))-V_\rho(x)-\frac{J^2}{2 I^2(\rho)} r^2(x-\bar{x}(\rho))
	\end{equation}
\end{lemma}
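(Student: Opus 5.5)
We prove the lemma by differentiating each of the three terms of $E_J$ separately along $\rho+t\sigma$ for $\sigma\in P_\infty(\rho)$ and $t\downarrow 0$. Fix $\sigma\in P_R(\rho)$. For small $t>0$, $\rho+t\sigma\geq 0$: where $\rho\geq R^{-1}$ we have $|t\sigma|\le t\|\sigma\|_\infty<R^{-1}$, and where $\rho<R^{-1}$ we have $\sigma\ge0$. Also $\sigma$ vanishes outside the ball $\{|x|\le R\}$, so $\sigma\in L^1\cap L^\infty$. We do not require $\int\sigma=0$ for the derivative itself; the mass constraint only enters later through a Lagrange multiplier. So we simply compute the one-sided directional derivative $\frac{d}{dt}\big|_{t=0^+}E_J(\rho+t\sigma)$ and check that it equals $\int E_J'(\rho)\sigma\,dx$ with $E_J'(\rho)$ as in (\ref{variational derivative}).

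\textbf{Internal energy.} Pointwise, $\frac{A(\rho+t\sigma)-A(\rho)}{t}\to A'(\rho)\sigma$. On $\operatorname{spt}\sigma$ we have $\rho\le R$ (since $\sigma=0$ where $\rho>R$), so $\rho+t\sigma\le R+\|\sigma\|_\infty$ there. By the mean value theorem the difference quotient is bounded by $\sup_{[0,R+\|\sigma\|_\infty]}|A'|\,|\sigma|$, which is integrable because $\sigma$ is bounded with compact support. Dominated convergence then gives $U'(\rho)(\sigma)=\int A'(\rho)\sigma\,dx$. This term needs only $U(\rho)<\infty$ to make sense.

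\textbf{Gravitational term.} By bilinearity and symmetry,
\[
G(\rho+t\sigma,\rho+t\sigma)=G(\rho,\rho)+2tG(\rho,\sigma)+t^2G(\sigma,\sigma).
\]
Since $\sigma\in L^1\cap L^\infty\subset L^{4/3}$, Remark \ref{finite gravitational interaction energy} (via Hardy--Littlewood--Sobolev) shows $G(\sigma,\sigma)<\infty$ and $G(\rho,\sigma)<\infty$. Hence the derivative of $-\tfrac12G$ is $-\int V_\rho\sigma\,dx$.

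\textbf{Kinetic term.} Write $M=\int\rho$ and $m_\sigma=\int\sigma$. Then $\bar x(\rho+t\sigma)=\frac{\int x\rho+t\int x\sigma}{M+tm_\sigma}$ is differentiable in $t$, using that $\int|x|\rho<\infty$ and $\int x\sigma$ is finite by compact support. Since $I(\rho)>0$ by Remark \ref{positive MoI}, $T_J=J^2/(2I)$ is differentiable wherever $I$ is, with derivative $-\frac{J^2}{2I^2}\,I'$.

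\textbf{Main obstacle: the derivative of $I$.} The difficulty is that the center of mass moves with $t$. We expand
\[
I(\rho+t\sigma)=\int(\rho+t\sigma)\,r^2(x-\bar x_t)\,dx .
\]
Differentiating at $t=0$ produces two pieces. The first is $\int\sigma\,r^2(x-\bar x(\rho))\,dx$. The second is $-2\langle \dot{\bar x},\int\rho\,(x-\bar x(\rho))\rangle_2$, which vanishes by the definition of the center of mass. Finiteness of $\int\rho\, r^2$ is needed here; it holds for $\rho\in R_0$ since the support is bounded, which is the setting in which the lemma is applied. Alternatively, one can reach the same conclusion through Lemma \ref{expansion of MoI}, whose cross term is quadratic in $t$.

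Combining the three contributions gives exactly $\int E_J'(\rho)\sigma\,dx$ with $E_J'(\rho)$ given by (\ref{variational derivative}). Linearity of the derivative in $\sigma$ on the cone $P_\infty(\rho)$ is evident from this formula.
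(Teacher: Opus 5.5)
Your argument is correct. It is the standard term-by-term computation that the paper defers to \cite[Section 5]{Che26G1}: dominated convergence for $U$, bilinearity plus Hardy--Littlewood--Sobolev for $G$, and the observation that the moving center of mass drops out of $\frac{d}{dt}I(\rho+t\sigma)$ because $\int\rho\,(x-\bar{x}(\rho))\,dx=0$. You are also right that a finite-moment hypothesis is needed; it is implicit in the statement, since the formula involves $\bar{x}(\rho)$ and $I(\rho)$.

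The only slip is your closing aside. Applied to $\rho$ and $t\sigma$, Lemma~\ref{expansion of MoI} has cross term $\frac{Mtm_\sigma}{M+tm_\sigma}\,r^2(\bar{x}(\rho)-\bar{x}(\sigma))$, which is first order in $t$, not quadratic; the lemma also requires $\sigma\geq 0$. The quadratic remainder you have in mind comes instead from the parallel-axis identity $I(\mu)=\int\mu\, r^2(x-\bar{x}(\rho))\,dx-\left(\int\mu\,dx\right) r^2(\bar{x}(\mu)-\bar{x}(\rho))$ with $\mu=\rho+t\sigma$.
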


A simple case is the non-rotating problem, i.e. $\boldsymbol{J}=0$, with energy 
\begin{equation}\label{non-rotating energy}
	E_0(\rho)=U(\rho)-\frac{G(\rho, \rho)}{2}
\end{equation}

Consider the non-rotating minimizer $\sigma_m$ of $E_0(\rho)$ among configurations of mass $m \geq 0$, the corresponding minimum energy turns out to be finite due to Theorem \ref{non-rotating}. For the sake of convenience, we denote them by
\begin{equation}\label{e_0}
	e_0(m):=E_0\left(\sigma_m\right)=\inf _{\rho \in {R}\left(\mathbb{R}^3\right)} E_0(m \rho)<\infty.
\end{equation}
We also denote $e_0(1)$ by $e_0$.
Properties of non-rotating minimizers are introduced in \cite{Che26R2, McC06, AB71, LY87}. We will also discuss them in Theorem \ref{non-rotating}.

In general, the angular momentum $\mathbf{J}$ is not 0. But under the constraint $J_z(\rho,v)=J$, where $J_z$ denotes the $z$-component of the angular momentum $\mathbf{J}$, if we know $\widetilde{\rho}$ is a local (w.r.t the topology induced by Wasserstein $L^\infty$ distance) minimizer over ${R}_0\left(\mathbb{R}^3\right)$ of $E_J(\rho)$, then $\widetilde{\rho}$ is a solution to (\ref{EP'}). Note the internal energy and gravitational potential energy are rotation‑invariant, and physically we expect celestial bodies to be rotating rather than stationary. Therefore, we can further define
$$
({\rho}(t, x), v(t, x)):=\left(\widetilde{\rho}\left(R_{-w t} x\right), \omega\left(-x_2, x_1, 0\right)^T\right) 
$$
where $\omega=\frac{J}{I\left(\rho\right)}$ and $R_\theta$ is a rotation map about $x_3$ axis given in Definition \ref{notations}, then $({\rho}(t, x), v(t, x))$ gives a uniform rotating star-planet system. Here ``rotating'' means the orbital revolution around each other, while the rotations of objects around their own axes are not considered. Moreover, $E(\rho,v)$ is local minimum, and $({\rho}(t, x), v(t, x))$ is solution to (\ref{EP}). We will discuss these results in detail in Theorem \ref{Properties of LEM}. 

\vspace{0.8em}
For prescribed angular momentum, we also know the energy $E_J(\rho)$ is bounded from below on ${R}_0\left(\mathbb{R}^3\right)$ by the non-rotating energy $e_0$. However, as in Morgan \cite{Mor02}, McCann \cite[Section 3]{McC06} demonstrates that this bound --- although approached --- will not be attained, see also \cite{McC94, Che26G1}. Therefore, instead of searching for a global energy minimizer in star-planet system, we adopt McCann's strategy \cite{McC06}: first establishing the existence of a constrained minimizer of $E_J(\rho)$, and then proving that it is also a local minimizer under the appropriate topology induced by Wasserstein $L^\infty$ distance.

Due to this uniform rotation observation, in the presence of 2-body systems, although the systems rotate in universe, after fixing a time, we can still assume that they fall within two disjoint regions $\Omega_m$ and $\Omega_{1-m}$, widely separated relative to $\frac{J^2}{\mu_r^{2}}$, where $\mu_r=m(1-m)$ is their reduced mass. For the planet's mass $m \in(0,1)$ and the star's mass $(1-m)$, we consider $E_J(\rho)$ is minimized subject to the constraint
\begin{equation} \label{admissible class}
	W_{m}:=\left\{\rho(m)=\rho_m+\rho_{1-m} \in R\left(\mathbb{R}^3\right) \mid \int_{\mathbb{R}^3} \rho_m\,dx=m, \text{spt } \rho_m \subset \Omega_m, \text{spt } \rho_{1-m} \subset \Omega_{1-m}\right\}
\end{equation}
where $\Omega_m$ and $\Omega_{1-m}$ are subsets of $\mathbb{R}^3$, which are given in the following. Note $W_{m}$, $\Omega_m$ and $\Omega_{1-m}$ are actually related to both $m$ and $J$.

Fix two points $y_m$ and $y_{1-m}$ in $\mathbb{R}^3$ from the plane $z=0$, which are separated by 
\begin{equation}\label{eta}
	\eta=\frac{J^2}{\mu_r^2}=\frac{J^2}{m^2(1-m)^2}
\end{equation}
i.e. $\eta=\left|y_m-y_{1-m}\right|$. The $\Omega_m$ and $\Omega_{1-m}$ are defined as closed balls in $\mathbb{R}^3$ centered at $y_m$ and $y_{1-m}$, whose size and separation scale with $\eta$ as the following:

\begin{equation}\label{domains}
	\begin{aligned}
		\Omega_m&:=\left\{x \in \mathbb{R}^3 \mid | x-y_m |\, \leq \frac{\eta}{4}\right\} \\
		\Omega_{1-m}&:=\left\{x \in \mathbb{R}^3\mid | x-y_{1-m} |\, \leq \frac{\eta}{4}\right\}
	\end{aligned}
\end{equation}

The distance separating $\Omega_m$ and $\Omega_{1-m}$, and the diameter of their union is given by:

\begin{align}
	{dist}\left(\Omega_m, \Omega_{1-m}\right) &=\frac{\eta}{2} \label{dist} \\
	{diam}\left(\Omega_m, \Omega_{1-m}\right) & =\frac{3 \eta}{2} \label{diam}
\end{align}

\begin{remark}
	The reason we set the separation $\eta=\frac{J^2}{\mu_r^2}$ in the definitions above is inspired by the Kepler problem. Given two point masses $m$ and $1-m$, rotating with angular momentum $J>0$ about their fixed center of mass, if we assume their separation is $d$, then the gravitational energy plus kinetic energy is $-\frac{\mu_r}{d}+\frac{J^2}{2 \mu_r d^2}$, which reaches its minimum at separation $d=\eta$. 
	
	In the star-planet model, when $\eta$ is large, the distance between two objects is large, and the gravitational interaction becomes weak. The system thus approximates the ideal case of two isolated, non-rotating, and non-interacting bodies. Therefore, it can be proven that the supports of the objects remain bounded, whose sizes are hence negligible compared to the distance between them. It implies the distance between their centers of mass asymptotically approaches that of the point mass model. We will discuss these facts in more detail in Section \ref{section5-Bound of the Supports of Density Functions} and Section \ref{section6-Existence for Star-Planet Systems}, assuming $m$ is sufficiently small. We will then establish their connection to the existence of local energy minimizers under the topology induced by $W^\infty$ (Wasserstein $L^\infty$) distance. For the case $J$ is sufficiently large, one can also check McCann's paper \cite[Section 6]{McC06}.
\end{remark}

	We note the choice of topology for ${R}\left(\mathbb{R}^3\right)$ is quite delicate: for $J>0$, it turns out local energy minimizer for $E_J(\rho)$ will not exist if the topology of $R(\mathbb{R}^3)$ is inherited from a topological vector space (\cite[Remark 3.7]{McC06} and \cite[Section 5]{Che26G1}). Roughly speaking, the reason is that for any reasonable candidate local minimizer $\rho$, one can always split off a portion of it with very small mass and shift the portion arbitrarily far away, thereby decreasing the energy. However, the resulting configuration still lies in a small neighborhood of $\rho$ with respect to such topology, which implies that $\rho$ cannot be a local minimizer. 
	
	Hence we need another topology, i.e., the topology induced by the $W^\infty$ distance.

\begin{definition}[Wasserstein $L^{\infty}$ distance]\label{Wasser.}
	Let $(X, d)$ be a metric space and $(S, \Sigma, v)$ be a finite positive measure space, given $\rho, \kappa \in \mathcal{P}(X)$, the \textit{Wasserstein $L^{\infty}$ distance} between $\rho$ and $\kappa$ is defined as
	
	\begin{equation}
		W^{\infty}(\rho, \kappa):=\inf \left\{\begin{array}{l}
			\|d(f(x), g(x))\|_{L^{\infty},v} \mid f: S \rightarrow X  \text { Borel }, \\
			f_{\#} v=\rho \text { and } g: S \rightarrow X  \text { Borel, } g_{\#} v=\kappa
		\end{array}\right\}
	\end{equation}
	
	Here $\|h\|_{L^{\infty},v}$ denotes the supremum of $|h|$ over $S$, discarding sets of $v$-measure zero. $M(S)$ and $M(X)$ are measure spaces. $f_{\#}: M(S) \rightarrow M(X)$ ($g_{\#}$ is similar) is the corresponding push forward operator defined by
	$$
	f_{\#} \tilde{\mu}(B):=\tilde{\mu}\left(f^{-1}(B)\right) \quad  \text{ for all } \tilde{\mu}\in M(S) \text{ and all Borel sets } B \subseteq X
	$$ 
\end{definition}

We call $f, g$ the \textit{transport maps}. Thanks to Strassen's Theorem, one can check $W^{\infty}$ is truly a metric, as explained in Givens and Shortt \cite{GS84}. We give two properties of the $W^\infty$ distance. For other properties, one can see \cite[Lemma 5.1]{McC06} and \cite[Section 4]{Che26G1}.

\begin{lemma}[Simple Properties of the Wasserstein $L^{\infty}$ Metric {\cite[Lemma 5.1]{McC06}}]\label{properties of Wasser.}
	Let $\rho, \kappa$ in $t {R}\left(\mathbb{R}^3\right)$, then
	\begin{enumerate}[(i)]
		\item [\mylabel{i}{($i$)}] $W^{\infty}(\rho, \kappa)$ does not exceed the diameter of $\operatorname{spt}(\rho-\kappa)$;
		\setcounter{enumi}{1}
		\item if $W^{\infty}(\rho, \kappa)<\delta$, each connected component of the $\delta$-neighbourhood of spt $\rho$ has the same mass for $\kappa$ as for $\rho$. Here the $\delta$-neighbourhood of some $\Omega \subset \mathbb{R}^3$ is defined as $\bigcup_{y \in \Omega}\left\{x \in \mathbb{R}^3\mid | x-y |<\delta\right\}$.
	\end{enumerate}
\end{lemma}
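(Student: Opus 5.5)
Both properties follow from the existence of a common source space and transport maps realizing (or nearly realizing) the distance. So I would fix $\delta' $ with $W^\infty(\rho,\kappa)<\delta'$ where needed, and choose a finite measure space $(S,\Sigma,v)$ with Borel maps $f,g$ satisfying $f_\#v=\rho$, $g_\#v=\kappa$ and $\|\,|f-g|\,\|_{L^\infty,v}<\delta'$. This is possible directly from the infimum in Definition \ref{Wasser.}. Throughout, $\rho$ and $\kappa$ have the same total mass $t$, which is needed for the pushforwards of one measure $v$.

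For (i), set $D=\operatorname{diam}\operatorname{spt}(\rho-\kappa)$; we may assume $D<\infty$. I would construct an explicit coupling.
\begin{itemize}
\item Let $\mu=\min(\rho,\kappa)$ be the common part. Then $\rho-\mu$ and $\kappa-\mu$ are nonnegative, have equal mass, and are supported in $\operatorname{spt}(\rho-\kappa)$, since off this set $\rho=\kappa=\mu$ a.e.
\item Take $S=\mathbb{R}^3\sqcup(\mathbb{R}^3\times\mathbb{R}^3)$ with the measure $\mu$ on the first piece. On the second piece use the product measure $(\rho-\mu)\otimes(\kappa-\mu)/c$, where $c$ is the common mass; if $c=0$ the two measures coincide and the claim is trivial.
\item Define $f=g=\mathrm{id}$ on the first piece, and $f(x,y)=x$, $g(x,y)=y$ on the second.
\end{itemize}
Then $f_\#v=\rho$ and $g_\#v=\kappa$. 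The displacement $|f-g|$ vanishes on the first piece, and on the second piece it is at most $D$ for $v$-a.e. point, because both coordinates lie in $\operatorname{spt}(\rho-\kappa)$. Hence $W^\infty(\rho,\kappa)\le D$.

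For (ii), let $N$ be the $\delta$-neighbourhood of $\operatorname{spt}\rho$, and let $C$ be one of its connected components. Since $N$ is open, its components are open and pairwise disjoint. Choose $f,g$ with essential displacement $<\delta$. The key step is to show that, up to a $v$-null set,
\[
f^{-1}(C)=g^{-1}(C).
\]

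One inclusion goes as follows. For $v$-a.e. $s$ we have $f(s)\in\operatorname{spt}\rho$, because $\rho(\mathbb{R}^3\setminus\operatorname{spt}\rho)=0$, and also $|f(s)-g(s)|<\delta$. The segment from $f(s)$ to $g(s)$ then lies in the ball $B(f(s),\delta)\subset N$. This ball is connected, so $g(s)$ lies in the same component as $f(s)$. Therefore, for a.e. $s$, $f(s)\in C$ if and only if $g(s)\in C$, provided $g(s)\in N$ — and the argument itself shows $g(s)\in N$.

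The reverse inclusion uses the same fact: if $g(s)\in C$, then $f(s)$, which a.e. lies in some component, must lie in $C$ by the connectedness of the ball. Consequently
\[
\rho(C)=v(f^{-1}C)=v(g^{-1}C)=\kappa(C).
\]

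The main subtlety is measure-theoretic bookkeeping. We must check that $f(s)\in\operatorname{spt}\rho$ holds a.e., and that the components of the open set $N$ are Borel; the latter holds because they are open. In (i), we must verify the pushforward identities for the product coupling.
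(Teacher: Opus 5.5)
The paper gives no proof of this lemma; it quotes McCann \cite[Lemma 5.1]{McC06}. Your argument is correct and is the standard proof. For (i) you keep the common part $\min(\rho,\kappa)$ in place and couple the excesses $(\rho-\kappa)_+$ and $(\kappa-\rho)_+$, both carried by $\operatorname{spt}(\rho-\kappa)$, by the product measure. For (ii) you note that for near-optimal maps, $f(s)\in\operatorname{spt}\rho$ and $g(s)\in B(f(s),\delta)$ lie in the same component of the $\delta$-neighbourhood for $v$-a.e.\ $s$.

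One small remark: Definition~\ref{Wasser.} can be read with a fixed source space $(S,\Sigma,v)$ rather than an infimum over all such spaces. Under that reading, your coupling in (i) must be realized on that fixed space. This is routine when the space is a non-atomic standard measure space of mass $t$, since every Borel measure of mass $t$ on $\mathbb{R}^3\times\mathbb{R}^3$ is a push-forward of it.
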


\begin{remark}
	The advantage of the topology induced by Wasserstein $L^\infty$ metric is that a neighborhood of $\rho$ does not include such “portion-shifted-to-far-away” configurations mention above due to Lemma \ref{properties of Wasser.} (ii), while thanks to Lemma \ref{properties of Wasser.} (i), the neighborhood still contains all perturbations of the form $\rho+t\sigma$ for all $t$, where $\sigma$ is supported on a set whose diameter is small enough, with $\int_{\mathbb{R}^3} \sigma\,dx =0$. Hence this setting still allows one to legitimately consider variational derivatives (\ref{variational derivative}) and consequently Euler-Lagrange equations (\ref{EL}) and Euler-Poisson equations \eqref{EP'} (see also Remark \ref{difference between McCann and Chen binary star system}).
\end{remark}

\begin{remark}\label{motivation of interior arguements}
	According to Lemma \ref{properties of Wasser.}, to prove $\rho(m)$ is a $W^\infty$ local minimizer (and hence a solution to (\ref{EP'}) due to Theorem \ref{Properties of LEM}), it suffices to show $\rho(m)$ is a constraint minimizer on $W_m$ such that its support lies in the interior of $\Omega_m \cup \Omega_{1-m}$. In fact, once we know $\rho(m)$'s support lies in the interior of $\Omega_m \cup \Omega_{1-m}$, one can also show $\rho(m)$ is a solution to (\ref{EP'}), as well as most results in Theorem \ref{Properties of LEM} (excluding the equivalence result that $(\rho, v)$ minimizes $E(\rho, v)$ locally if and only if $\rho$ minimizes $E_J(\rho)$ locally) without specifying $\rho(m)$ is a $W^\infty$ local minimizer, see \cite[Section 3]{Che26G1}. 
	
	It motivates our choice to define $\Omega_m$ and $\Omega_{1-m}$ with their radii that increase as $\eta$ increases, and then show the size of stars or planets will not expand too much as $\eta$ increases. We will show those results for small $m$ later, inspired by McCann's arguments for large $J$ \cite[Section 6]{McC06}.
\end{remark}

\subsection{Statement of Main Results}\label{subsection2.2-main results}

We are ready to describe our main results:
\begin{theorem}[Existence of Star-Planet Systems for $\gamma>2$]\label{themA}
	Given polytropic law $P(\rho)=K\rho^\gamma$ indexed by $\gamma>2$, fix $J>0$, there is a $\delta>0$, such that for all $m \in (0,\delta)$, a constrained energy minimizer $\rho(m)=\rho_{m}+\rho_{1 - m}$ on $W_m$ exists, which is actually also a Wasserstein $L^\infty$ local energy minimizer of $E_J (\rho)$ on $R(\mathbb{R}^3)$. Moreover, $\rho(m)$ satisfies the following properties:
	\begin{itemize}
		\item [$(i)$] $\rho(m)\in R_0(\mathbb{R}^3)$, and $(\rho(m), v)$ minimizes $E(\rho,v)$ locally on $R(\mathbb{R}^3)\times V(\mathbb{R}^3)$ subject to the constraint $J_z (\rho,v)\coloneq \hat{e}_z \cdot \boldsymbol{J}(\rho,v)=J$ or $\mathbf{J}(\rho,~v) = J{\hat{e}}_{z}$. Here $v(x):=\omega \hat{e}_z \times x =\omega\left(-x_2, x_1, 0\right)^T$, where $\omega = \frac{J}{I(\rho(m))}$.
		\item [$(ii)$] 	$\rho(m)$ is symmetric about the plane $z=0$ and a decreasing function of $|z|$.
		\item [$(iii)$] $\rho(m)$ is continuous and satisfies (\ref{EP'}) with $V_{\rho(m)}(x) = {\int_{\mathbb{R}^3}{\frac{\rho(m)(y)}{\left| {y - x} \right|}\,dy}}$. Moreover, the uniformly rotating fluid $(\widetilde{\rho(m)}, v)$ satisfies (\ref{EP}) with $V(t, x)=V_{{\rho(m)}}\left(R_{-\omega t} x\right)$, here $({\widetilde{\rho(m)}}(t, x), v(t, x))=\left({\rho(m)}\left(R_{-w t} x\right), \omega\left(-x_2, x_1, 0\right)^T\right)$.
		\item [$(iv)$] For the planet density, we have ${\lim\limits_{m\rightarrow 0}\left\| \rho_{m} \right\|_{L^{\infty}(\mathbb{R}^{3})}} = 0$. For the star density, $\left\| \rho_{1 - m} \right\|_{L^{\infty}(\mathbb{R}^{3})}$ is bounded uniformly for all small $m$, that is $m$ is located in a small interval $(0,\delta)$.
		\item [$(v)$] The support for the planet density spt $\rho_{m}$ is contained in a ball of a radius which goes to $0$ when $m$ goes to $0$. The support for the star density spt $\rho_{1-m}$ is contained in a ball of radius $R(J)$ for all small $m$.
	\end{itemize}
\end{theorem}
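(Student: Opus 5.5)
We will follow McCann's scheme \cite[Sections 4--6]{McC06}, with the large-$J$ regime replaced by the small-$m$ regime. Since $\eta=J^2/(m^2(1-m)^2)\to\infty$ as $m\to 0$ with $J$ fixed, the two balls $\Omega_m,\Omega_{1-m}$ become huge and widely separated. This plays the same role as large $J$ in \cite{McC06}. The difference is that the planet's mass now degenerates, so the estimates must be uniform in $m$.

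\textbf{Step 1: existence of a constrained minimizer.} First we show that $E_J$ attains its infimum on $W_m$. We take a minimizing sequence, symmetrize each piece (Steiner symmetrization in $z$ within each ball, which lowers $E_J$ and preserves the constraints), and bound $U$ uniformly using the Hardy--Littlewood--Sobolev bound on $G$ and $\gamma>4/3$. We extract weak limits in $L^\gamma$. The kinetic term $J^2/(2I)$ is continuous because the supports lie in the fixed compact set $\Omega_m\cup\Omega_{1-m}$. Lower semicontinuity of $U$ and continuity of $G$ on bounded sets then give a minimizer $\rho(m)=\rho_m+\rho_{1-m}$ satisfying (ii). On each component, the Euler--Lagrange relation from Lemma \ref{diff. of energy} reads $A'(\rho)=(V_\rho+\frac{J^2}{2I^2}r^2-\lambda_i)_+$ on $\Omega_i$, with constants $\lambda_m,\lambda_{1-m}$. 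This gives continuity.

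\textbf{Step 2: energy comparison.} Next we compare with a trial state: the Lane--Emden minimizers $\sigma_m,\sigma_{1-m}$ placed at $y_m,y_{1-m}$, with the pair recentered. This yields
\[E_J(\rho(m))\le e_0(m)+e_0(1-m)-\frac{\mu_r^2}{2J^2}+o(\mu_r^2/J^2).\]
We then use the scaling $e_0(m)=m^{(5\gamma-6)/(3\gamma-4)}e_0$ (from $\sigma_m(x)=a\sigma_1(bx)$). The matching lower bound, obtained by splitting the energy into self-energies, the cross term bounded by $m(1-m)/(\eta/2)$, and the kinetic term via Lemma \ref{expansion of MoI}, shows that each piece is nearly optimal for the mass-$m$ (respectively mass-$(1-m)$) non-rotating problem. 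The defect is $O(m^2)$ relative to the Kepler term. The condition $\gamma>2$ should enter here: for $\gamma>2$ the exponent satisfies $(5\gamma-6)/(3\gamma-4)<2$, so the planet's self-energy dominates the $O(m^2)$ interaction and rotation corrections. After rescaling the planet to unit mass via $x\mapsto m^{-(\gamma-2)/(3\gamma-4)}x$, it is therefore an approximate unit-mass Lane--Emden minimizer.

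\textbf{Step 3: support and $L^\infty$ bounds (main obstacle).} We then bound $\lambda_i$ from below and $V_\rho$ from above on each ball, in the spirit of \cite[Section 6]{McC06}. Rescaling the planet's Euler--Lagrange equation, the external potential from the star and the centrifugal term become small perturbations, of size $O(m\cdot\text{power})$ after scaling, on the region where the planet lives. A bootstrap then applies: an $L^\infty$ bound on $\rho$ gives $V_\rho\lesssim\|\rho\|_\infty^{2/3}$, which through $A'(\rho)\le V_\rho+\dots$ improves the $L^\infty$ bound. Combining this with concentration-compactness or uniqueness of Lane--Emden minimizers (Theorem \ref{non-rotating}), we get that outside a fixed ball in rescaled variables $V_\rho+\frac{J^2}{2I^2}r^2<\lambda_m$. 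In original variables, $\operatorname{spt}\rho_m$ lies in a ball of radius $Cm^{(\gamma-2)/(3\gamma-4)}\to0$, and $\|\rho_m\|_\infty\lesssim m^{2/(3\gamma-4)\cdot\ldots}\to0$ by scaling. The star is treated the same way with mass $\to1$, giving radius $R(J)$ and a uniform $L^\infty$ bound. Hence (iv) and (v) hold. The delicate point is to control $\lambda_m$ uniformly, since the planet's chemical potential tends to $0$ with $m$. We will first try to compare $\lambda_m$ to the rescaled Lane--Emden chemical potential using the energy estimates of Step 2.

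\textbf{Step 4: conclusion.} Since $\eta\to\infty$ while the radii stay bounded, and the centers of mass lie near $y_m,y_{1-m}$ (within $o(\eta)$ by the energy estimate), the supports lie in the interiors of $\Omega_m,\Omega_{1-m}$ for $m<\delta$. Remark \ref{motivation of interior arguements} and Lemma \ref{properties of Wasser.}(ii) then show that $\rho(m)$ is a $W^\infty$ local minimizer, and that a translate of it lies in $R_0$. Theorem \ref{Properties of LEM} then gives (i) and (iii).
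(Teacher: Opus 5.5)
You are right that the architecture from Step 2 onward is the paper's: a Lane--Emden trial state, rescaling by $A(m),B(m)$, compactness plus uniqueness, a multiplier bound, a support bound, a separation estimate, and the $W^\infty$ conclusion. Step 1 is a genuinely different and simpler route, and it works. Every element of $W_m$ lives in the fixed bounded set $\Omega_m\cup\Omega_{1-m}$, and $\gamma>4/3$ makes $U$ coercive, so the direct method in $L^\gamma$ applies to $W_m$ itself. This bypasses the paper's truncated classes $W_{m,R}$ and its approximation argument showing that $\rho_{k_1}$ minimizes over all of $W_m$. Symmetrizing the minimizing sequence also gives (ii) for the minimizer you construct; the paper instead proves (ii) for every minimizer by strict rearrangement.

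\textbf{The gap.} It sits in Step 3, exactly where you flag it, and the route you suggest there does not close. In the paper's sign convention ($\lambda_m<0$ in (\ref{ELm})), the support bound needs the uniform estimate $A(m)^{\gamma-1}\lambda_m\le\kappa_1+\epsilon<0$. Testing (\ref{ELm}) against $\rho_m$ and rescaling gives
\[
A(m)^{\gamma-1}\lambda_m=\gamma U(\widetilde{\rho_m})-G(\widetilde{\rho_m},\widetilde{\rho_m})+O\big(m^{\frac{4\gamma-6}{3\gamma-4}}\big).
\]
The energy estimates of Step 2 only control $U(\widetilde{\rho_m})-\tfrac12 G(\widetilde{\rho_m},\widetilde{\rho_m})$, so they cannot separate $U$ from $G$. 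What is missing is the strong convergence $\widetilde{\rho_m}\to\sigma$ in $L^1\cap L^{4/3}$ up to translation. This comes from Lions' lemma (applicable since $E_0(\widetilde{\rho_m})\to e_0<0$) together with uniqueness of $\sigma$, as in Theorem \ref{convergence of scaling densities of planets}. With it, you can finish in either of two ways:
\begin{itemize}
\item the identity above gives $A^{\gamma-1}\lambda_m\to\gamma U(\sigma)-G(\sigma,\sigma)=\kappa_1$;
\item as the paper does, evaluate $A^{\gamma-1}\lambda_m\le\mathcal{A}'(\widetilde{\rho_m})-V_{\widetilde{\rho_m}}$ at a point of $\{\sigma>C/N\}$ where convergence in measure makes the right side $\epsilon$-close to $\kappa_1$.
\end{itemize}

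The step ``outside a fixed ball $V_\rho+\frac{J^2}{2I^2}r^2<\lambda_m$'' also needs control of the potential of the mass $\delta$ that $L^1$ convergence leaves unlocalized. The paper bounds it by $k\delta^{2/3}\|\widetilde{\rho_m}\|_\infty^{1/3}$, using $\|V_f\|_{L^\infty}\le k\|f\|_{L^1}^{2/3}\|f\|_{L^\infty}^{1/3}$. Note that the exponent on $\|f\|_\infty$ is $1/3$, not $2/3$, and it is the $L^1$ factor that makes this term small. So the uniform $L^\infty$ bound must come before the support bound. Your a priori closure $c\|\widetilde{\rho_m}\|_\infty^{\gamma-1}\le k\|\widetilde{\rho_m}\|_\infty^{1/3}+o(1)$ is a valid shortcut to that bound, replacing the paper's $L^{4/3}\to L^{12}\to L^6\to L^\infty$ chain. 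However, it presupposes an upper bound on the rescaled multiplier, for instance negativity as in Lemma \ref{negative Lagrange-multipliers in binary case}. The order must therefore be: strong convergence, then multiplier, then $L^\infty$, then support.

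\textbf{Two smaller corrections.}
\begin{itemize}
\item The Kepler term is $-\mu_r^3/(2J^2)$, not $-\mu_r^2/(2J^2)$. The cross and centrifugal corrections are $O(m^3)$, as your own bound $2\mu_r/\eta=2\mu_r^3/J^2$ shows. Hence $E_0(\widetilde{\rho_m})<e_0+Cm^{\frac{4\gamma-6}{3\gamma-4}}$ tends to $e_0$ already for $\gamma>3/2$. Your $O(m^2)$ bookkeeping still closes for $\gamma>2$, but $\gamma>2$ is used only to make $B(m)=m^{\frac{\gamma-2}{3\gamma-4}}\to 0$ in (v). That is why Theorem \ref{themA'} holds for $\gamma>3/2$.
\item In Step 4, $E_J$ is invariant under translations and rotations about the $z$-axis. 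The energy comparison (Proposition \ref{estimate of separation}) therefore fixes only the separation $d=|\bar x(\rho_m)-\bar x(\rho_{1-m})|$, with $|d/\eta-1|<\zeta$, and not where the centers sit. A common horizontal shift of both pieces that keeps them in the balls is again a minimizer, with centers nearly $\eta/4$ from $y_m,y_{1-m}$. You must first reposition by such a rigid motion, which preserves membership in $W_m$ and the energy, before concluding the support is interior, as in Lemma \ref{interior}.
\end{itemize}
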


Actually, the results can be true in a larger range of index $\gamma$ if we do not require the support of the planet to shrink as the mass goes to 0.

\begin{theorem}[Existence of Star-Planet Systems for $\gamma>\frac{3}{2}$] \label{themA'}
	Given polytropic law $P(\rho)=K\rho^\gamma$ indexed by $\gamma>\frac{3}{2}$, fix $J>0$, there is a $\delta>0$, such that for all $m \in (0,\delta)$, a constrained energy minimizer $\rho(m)=\rho_{m}+\rho_{1 - m}$ on $W_m$ exists, which is actually also a Wasserstein $L^\infty$ local energy minimizer of $E_J (\rho)$ on $R(\mathbb{R}^3)$. Moreover, $\rho(m)$ satisfies the following properties:
	\begin{itemize}
		\item [$(i)$] $\rho(m)\in R_0(\mathbb{R}^3)$, and $(\rho(m), v)$ minimizes $E(\rho,v)$ locally on $R(\mathbb{R}^3)\times V(\mathbb{R}^3)$ subject to the constraint $J_z (\rho,v)\coloneq \hat{e}_z \cdot \boldsymbol{J}(\rho,v)=J$ or $\mathbf{J}(\rho,~v) = J{\hat{e}}_{z}$. Here $v(x):=\omega \hat{e}_z \times x =\omega\left(-x_2, x_1, 0\right)^T$, where $\omega = \frac{J}{I(\rho(m))}$.
		\item [$(ii)$] 	$\rho(m)$ is symmetric about the plane $z=0$ and a decreasing function of $|z|$.
		\item [$(iii)$] $\rho(m)$ is continuous and satisfies (\ref{EP'}) with $V_{\rho(m)}(x) = {\int_{\mathbb{R}^3}{\frac{\rho(m)(y)}{\left| {y - x} \right|}\,dy}}$. Moreover, the uniformly rotating fluid $(\widetilde{\rho(m)}, v)$ satisfies (\ref{EP}) with $V(t, x)=V_{{\rho(m)}}\left(R_{-\omega t} x\right)$, here $({\widetilde{\rho(m)}}(t, x), v(t, x))=\left({\rho(m)}\left(R_{-w t} x\right), \omega\left(-x_2, x_1, 0\right)^T\right)$.
		\item [$(iv)$] For the planet density, we have ${\lim\limits_{m\rightarrow 0}\left\| \rho_{m} \right\|_{L^{\infty}(\mathbb{R}^{3})}} = 0$. For the star density, $\left\| \rho_{1 - m} \right\|_{L^{\infty}(\mathbb{R}^{3})}$ is bounded uniformly for all small $m$ , that is $m$ is located in a small interval $(0,\delta)$.
		\item [$(v)$] The support for the star density spt $\rho_{1-m}$ is contained in a ball of radius $R(J)$ for all small $m$.
	\end{itemize}
\end{theorem}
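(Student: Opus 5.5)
Our plan follows McCann's scheme \cite[Sections 3--6]{McC06}, with the smallness of $m$ replacing the largeness of $J$. We proceed in three stages.

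\textbf{Existence of a constrained minimizer.} First we show that $E_J$ attains its infimum on $W_m$. We take a minimizing sequence $\rho^k=\rho^k_m+\rho^k_{1-m}$. Since $\Omega_m$ and $\Omega_{1-m}$ are bounded, $I(\rho^k)\le C\eta^2$ and $T_J$ is bounded. Because $\gamma>\frac43$, the standard estimate $G(\rho,\rho)\le C\|\rho\|_{L^{4/3}}^{4/3}$, interpolated against $U$, gives coercivity of $U$. Thus $\rho^k$ is bounded in $L^\gamma\cap L^1$, and a subsequence converges weakly. No mass escapes because the domains are compact, so the mass constraint and the centers of mass pass to the limit. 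The functional $U$ is weakly lower semicontinuous since $A$ is convex. The term $G$ is weakly continuous on bounded sets with uniformly bounded supports (compactness of the Newtonian potential on $L^{\gamma}$, $\gamma>\frac65$). Finally, $I$, and hence $T_J$, is continuous by compact support. Steiner symmetrization in $z$ within each ball decreases $U-\frac G2$ and preserves $I$ and $W_m$, which gives (ii). Recentering yields $\rho(m)\in R_0(\mathbb R^3)$.

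\textbf{Euler--Lagrange equation and uniform support bounds.} Perturbing within $P_\infty(\rho(m))$ separately inside each ball, and using Lemma \ref{diff. of energy}, gives
\[
E_J'(\rho(m))=\lambda_m \text{ on spt }\rho_m\cap\Omega_m, \qquad E_J'(\rho(m))=\lambda_{1-m} \text{ on spt }\rho_{1-m}\cap\Omega_{1-m},
\]
with $E_J'(\rho(m))\ge\lambda$ elsewhere in the corresponding ball. Hence $\rho_i=(A')^{-1}\big((V_{\rho}+\frac{\omega^2}{2}r^2+\lambda_i)_+\big)$ there. This gives continuity, and once interiority is known, (iii) via Theorem \ref{Properties of LEM}.

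The central step is an energy comparison. We compare $E_J(\rho(m))$ with a test configuration consisting of a Lane--Emden minimizer $\sigma_{1-m}$ and a scaled $\sigma_m$, placed at the Kepler separation $\eta$. This gives
\[
E_J(\rho(m))\le e_0(1-m)+e_0(m)-\frac{\mu_r}{2\eta}+o(\cdot).
\]
Combined with the lower bound $E_0(\rho_i)\ge e_0(\text{mass})$, this forces each piece to be nearly a non-rotating minimizer. In addition, the cross terms (the gravitational interaction of size $\sim m/\eta$, and the centrifugal term with $\omega^2\sim\mu_r^2/J^2\cdot\ldots$) are small relative to the single-body binding energy.

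For the star we argue as follows. The star's potential $V_{\rho_{1-m}}$ is bounded by a uniform $L^\infty$ bound obtained by a bootstrap from the $L^\gamma$ bound via $V_\rho\lesssim\|\rho\|_{L^1}^{a}\|\rho\|_{L^\infty}^{b}$. The perturbing potentials are $O(m/\eta)$. Together with the stability of the Lane--Emden minimizer (Theorem \ref{non-rotating}) and $\lambda_{1-m}<0$ bounded away from $0$, this shows that $\rho_{1-m}$ vanishes outside a ball $B(\bar x(\rho_{1-m}),R(J))$.

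For the planet we use the scaling $\rho_m(x)=m^{a}\tilde\rho(m^{b}x)$ that makes $E_0$ homogeneous: $e_0(m)=m^{(5\gamma-6)/(3\gamma-4)}e_0$, with radius $\sim m^{-(\gamma-2)/(3\gamma-4)}$. For $\gamma>2$ this radius tends to $0$ and $\|\rho_m\|_\infty\to0$, giving (iv) and (v) of Theorem \ref{themA}. The rescaled planet sees external potentials whose strength, measured in rescaled units, tends to zero. The same support argument then applies to it.

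\textbf{Interiority and conclusion.} We next check that the supports lie in the interior of $\Omega_m\cup\Omega_{1-m}$. This requires radii $\ll\eta/4$, and also that $\bar x(\rho_m)-\bar x(\rho_{1-m})$ stays within $o(\eta)$ of $|y_m-y_{1-m}|$. The latter follows from the Kepler comparison: the reduced two-point energy $-\mu_r/d+J^2/(2\mu_r d^2)$ has a strict minimum at $d=\eta$, and the energy gap forces $d/\eta\to1$.

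The main obstacle is the case $\frac32<\gamma\le2$ of Theorem \ref{themA'}. Here the planet radius may grow like $m^{-(\gamma-2)/(3\gamma-4)}$, and this must be compared with $\eta\sim J^2m^{-2}$. Since $(2-\gamma)/(3\gamma-4)<2$ exactly when $\gamma>\frac32$ (indeed $2-\gamma<6\gamma-8\iff\gamma>10/7$; the sharper threshold comes from controlling the tidal and centrifugal perturbation in the rescaled frame), the planet still fits inside $\Omega_m$.

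I would first try to bound the rescaled external potential, namely the star's potential gradient plus the centrifugal term over the planet's region. This quantity is of order $m^{-2}\cdot(\text{radius})^2\cdot\eta^{-3}$ relative to the planet's own potential $m^{(2\gamma-2)/(3\gamma-4)}$, and we need it to vanish as $m\to0$. Its vanishing should hold exactly for $\gamma>\frac32$.

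Once interiority holds, Lemma \ref{properties of Wasser.}(ii) shows that any $W^\infty$-close competitor with $\delta<\operatorname{dist}(\operatorname{spt}\rho(m),\partial(\Omega_m\cup\Omega_{1-m}))$ lies in $W_m$. Hence $\rho(m)$ is a local minimizer (Remark \ref{motivation of interior arguements}), and Theorem \ref{Properties of LEM} yields (i) and (iii).
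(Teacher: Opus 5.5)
Your outline follows the paper's architecture: comparison with translated Lane--Emden profiles at separation $\eta$, the rescaling $\widetilde{\rho_m}=A\rho_m(B\,\cdot)$, convergence to $\sigma$, a multiplier bound, a tail-mass support argument, the Kepler comparison giving $d/\eta\to1$, interiority, and then Theorem~\ref{Properties of LEM}. Your Stage~1 is a valid and shorter substitute for the paper's detour through the truncated classes $W_{m,R}$: a direct method on $W_m$, using $L^\gamma$-coercivity of $U-\frac{G}{2}$ for $\gamma>\frac43$ and weak compactness on the bounded set $\Omega_m\cup\Omega_{1-m}$.

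\textbf{The gap} is the step you yourself call the main obstacle. The estimate that actually uses $\gamma>\frac32$ is never established, and your heuristic for it is off.
\begin{itemize}
\item Your radius exponent has the wrong sign. The planet lies in a ball of radius $B\,R(J)$ with $B=m^{(\gamma-2)/(3\gamma-4)}$. The comparison you finally make, $(2-\gamma)/(3\gamma-4)<2$, is nevertheless the right one and gives $\gamma>\frac{10}{7}$, as in the paper.
\item Your ratio $m^{-2}(\mathrm{radius})^2\eta^{-3}/m^{(2\gamma-2)/(3\gamma-4)}=m^{4-2/(3\gamma-4)}$ reaches the threshold $\frac32$ only through the unexplained factor $m^{-2}$. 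A genuine tidal term $(\mathrm{radius})^2\eta^{-3}$, with the same normalization, gives $m^{6-2/(3\gamma-4)}$, i.e.\ $\gamma>\frac{13}{9}$.
\item Tidal cancellation is not what controls the argument anyway. Compactness of $\widetilde{\rho_m}$ comes from the energy inequality, and before any support bound one only knows $G(\rho_{1-m},\rho_m)\le Cm^3$ there.
\end{itemize}
The paper's mechanism is crude and explicit. First, $E_0(\widetilde{\rho_m})-e_0\le A^{\gamma}B^{-3}G(\rho_{1-m},\rho_m)\le Cm^{(4\gamma-6)/(3\gamma-4)}$ (Proposition~\ref{energy converges to non-rotating minimum}). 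Second, in the rescaled Euler--Lagrange equation the centrifugal and stellar potentials, each $\le Cm^2$ on $\Omega_m$, become $\le CA^{\gamma-1}m^2=Cm^{(4\gamma-6)/(3\gamma-4)}$ (Theorem~\ref{bound on the sizes of scaling densities}). Both vanish exactly when $\gamma>\frac32$. Your remark that the cross terms are ``small relative to the single-body binding energy'' is precisely the first estimate, $m^3/|e_0(m)|\sim m^{(4\gamma-6)/(3\gamma-4)}$. It must be computed, because that is where the hypothesis enters.

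\textbf{Further omissions.} The phrase ``the same support argument then applies'' skips what makes it work.
\begin{itemize}
\item[(a)] The star's domain $\Omega_{1-m}$ and the rescaled planet's domain, of radius $\eta/(4B)$, both grow without bound. Compactness therefore needs Lions' concentration--compactness together with uniqueness of $\sigma$ up to translation. Theorem~\ref{non-rotating} contains no ``stability'' statement you can cite for this.
\item[(b)] You need the crude sign $\lambda_m<0$ (as in Lemma~\ref{negative Lagrange-multipliers in binary case}) to bootstrap a uniform $L^\infty$ bound on $\widetilde{\rho_m}$, not only on the star. That bound feeds the tail estimate $\|V_\rho\|_{L^\infty}\le k\|\rho\|_{L^1}^{2/3}\|\rho\|_{L^\infty}^{1/3}$. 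It also feeds the sharp bound $A^{\gamma-1}\lambda_m\le\kappa_1+\epsilon$ of Lemma~\ref{bound for the Lagrange Multiplier for oiginal densities}, which is proved via convergence in measure.
\item[(c)] You obtain $\|\rho_m\|_{L^\infty}\to0$ only for $\gamma>2$, but (iv) asserts it for all $\gamma>\frac32$. It follows from $\|\rho_m\|_{L^\infty}=A^{-1}\|\widetilde{\rho_m}\|_{L^\infty}$ with $A^{-1}=m^{2/(3\gamma-4)}\to0$, once (b) is available.
\end{itemize}
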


We provide the proof of Theorem \ref{themA} and Theorem \ref{themA'} from Section \ref{section4-Existence of Constrained Minimizers} to Section \ref{section6-Existence for Star-Planet Systems}. The general outline is as follows: Given Theorem \ref{Properties of LEM}, the remaining task is to demonstrate the existence of a Wasserstein $L^\infty$ ($W^\infty$) local energy minimizer. To achieve this, we first prove the existence of constrained minimizers $\rho(m)$ of $E_J(\rho)$ on $W_m$ when the mass ratio is sufficiently small in Section \ref{section4-Existence of Constrained Minimizers}. In Section \ref{section5-Bound of the Supports of Density Functions}, we prove convergence of scaling densities in $L^p$, uniform $L^\infty$ bound, and then derive quantitative bounds on the supports of minimizers. In Section \ref{section6-Existence for Star-Planet Systems} we estimate the center of mass separation and show that $\rho(m)$ can be situated within the interior of the considered domains we consider, that is, $\text{dist } \left( \text{spt}~  \rho(m),~\mathbb{R}^{3}\backslash\left( \Omega_{m} \cup \Omega_{1 - m} \right) \right) > 0$, and then we verify $\rho(m)$ satisfies results in Theorem \ref{themA} and Theorem \ref{themA'}.

\section{Preliminary Results}\label{section3-Preliminary Results}

Recall that in the calculus of variations, if an energy functional admits a minimizer, then this minimizer satisfies Euler–Lagrange equation. When the minimization is carried out over a restricted class, the Euler–Lagrange equation may take the form of an inequality or involve taking the positive part; see, for example, Theorem \ref{Properties of LEM} or \cite[Section 2]{AB71}. In this section, we review some variational‑based results and their extensions. In subsection \ref{subsection3.1-results for 2-body systems}, we discuss McCann's work on Wasserstein $L^\infty$ local minimizers and existence of binary star system results. In subsection \ref{subsection3.2-Results for Non-rotating Bodies}, we review some results for single-star systems.

\subsection{Results for 2-body systems}\label{subsection3.1-results for 2-body systems}

In this subsection, we review McCann's results for Wasserstein $L^\infty$ ($W^{\infty}$)  local energy minimizers and existence theorem of binary star solution with slight modifications. We also make some remarks on them.

The following are McCann's results for $W^\infty$ local minimizers.

\begin{theorem}[Properties of $W^{\infty}$-Local Energy Minimizers {\cite[Section 2]{McC06}{\cite[Section 2]{Che26G1}}}] \label{Properties of LEM}
	Let $J>0$. If $(\rho, v)$ minimizes $E(\rho, v)$ locally on ${R}_0\left(\mathbb{R}^3\right) \times {V}\left(\mathbb{R}^3\right)$ subject to the constraint $J_z(\rho, v)=J$, where $J_z$ denotes the 
	$z$-component of the angular momentum $\mathbf{J}$, then:
	\begin{enumerate}[(i)]
		\item the z-axis is a principal axis of inertia for $\rho$, with a moment of inertia $I(\rho)$ which is maximal and non-degenerate;
		\item $(\rho, v)$ minimizes $E(\rho, v)$ locally on ${R}_0\left(\mathbb{R}^3\right) \times {V}\left(\mathbb{R}^3\right)$ if and only if $\rho$ minimizes $E_J(\rho)$ locally on ${R}_0\left(\mathbb{R}^3\right)$ and $v(x)=\omega \hat{e}_z \times x=\omega\left(-x_2, x_1, 0\right)^T$, where $\omega = \frac{J}{I(\rho(m))}$, which implies $E(\rho,v)=E_J(\rho)$. It describes uniformly rotating fluid, in the sense that after defining $({\widetilde{\rho}}(t, x), v(t, x))=\left({\rho}\left(R_{-w t} x\right), \omega\left(-x_2, x_1, 0\right)^T\right)$. In particular, $E(\rho, v)=E(\widetilde{\rho},v)=E_J(\widetilde{\rho})=E_J(\rho)$.
		\item $\rho$ is continuous on $\mathbb{R}^3$;
		\item on each connected component $\Omega_i$ of $\{\rho>0\}, \rho$ satisfies Euler-Lagrange equation:
		\begin{equation} \label{EL}
			A^{\prime}(\rho(x))=\left[\frac{J^2}{2 I^2(\rho)} r^2(x-\bar{x}(\rho))+V_\rho(x)+\lambda_i\right]_{+} \tag{EL}
		\end{equation}
		for some Lagrange multiplier $\lambda_i<0$ depending on the component. Here $[\cdot]_{+}$ is the nonnegative (positive) part function defined as $[\lambda]_{+}:=\max \{\lambda, 0\}$;
		\item the equations (\ref{EL}) continue to hold on a $\delta$-neighbourhood of the $\Omega_i$;
		\item $\rho\in C^1(\{\rho>0\})$;
		\item $\rho$ satisfies the reduced Euler-Poisson equations (\ref{EP'}) on $\mathbb{R}^3$, where the pressure $P(\rho)$ is given in (\ref{polytropic law}). Moreover, $({\widetilde{\rho}}(t, x), v(t, x))$ solves Euler-Poisson equations (\ref{EP}) with $V(t, x)=V_{{\rho}}\left(R_{-\omega t} x\right)$.
		\item this solution is stable with respect to $L^{\infty}$-small perturbations of the Lagrangian fluid variables.
	\end{enumerate}
\end{theorem}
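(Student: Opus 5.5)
The plan is to follow McCann's argument \cite[Section 2]{McC06}. The proof splits into three parts: the reduction of the velocity (items (i)–(ii)), the Euler–Lagrange analysis for $\rho$ (items (iii)–(vi)), and then the reconstruction of the Euler–Poisson solution and its stability (items (vii)–(viii)).

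\textbf{Reduction of the velocity.} Fix $\rho$ and minimize $T(\rho,v)$ over $v$ subject to $J_z(\rho,v)=J$. Decompose $v$ into the rigid rotation $\omega\hat e_z\times(x-\bar x(\rho))$ plus an $L^2(\rho)$-orthogonal remainder. By Cauchy–Schwarz, the minimum of $T$ is $J^2/(2I(\rho))$, and it is attained only by the rigid rotation with $\omega=J/I(\rho)$, where we note that $I(\rho)>0$ by Remark \ref{positive MoI}. This identifies $E(\rho,v)$ with $E_J(\rho)$ at the optimal $v$. Since local minimality in the product topology then reduces to local minimality of $E_J$ in $\rho$ alone, we obtain the equivalence in (ii).

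For (i), compare $\rho$ with its rigid rotations, which lie $W^\infty$-close to $\rho$ for small angles. Tilting the rotation axis changes the relevant moment of inertia to $I_{\hat n}(\rho)$ for a nearby unit vector $\hat n$. Minimality forces $\hat e_z$ to be a critical point of $\hat n\mapsto I_{\hat n}$, so $\hat e_z$ is a principal axis, and it must be the maximal one. A second variation argument then gives non-degeneracy. The rotated states $(\tilde\rho(t),v)$ have the same energy because $U$ and $G$ are rotation invariant.

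\textbf{Euler–Lagrange equation.} By Lemma \ref{properties of Wasser.}(i), $\rho+t\sigma$ stays in a $W^\infty$ neighbourhood of $\rho$ whenever $\sigma\in P_\infty(\rho)$ has small support diameter and zero mean. Using Lemma \ref{diff. of energy}, we get $\int E_J'(\rho)\sigma\,dx\ge 0$ for all such $\sigma$. Taking $\sigma$ to be a difference of localized bumps shows that $E_J'(\rho)$ is constant $=\lambda$ $\rho$-a.e. on small balls meeting $\{\rho>0\}$, and that $E_J'(\rho)\ge\lambda$ on the part of those balls where $\rho=0$. A chaining argument extends the constant to each connected component $\Omega_i$. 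Since $A'$ is invertible with $A'(0)=0$, this yields (EL) with the positive part, and the same argument gives the $\delta$-neighbourhood statement (v).

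To see $\lambda_i<0$, use that $V_\rho\to0$ at infinity, together with the fact that the support is bounded while the centrifugal term is finite there. Alternatively, shifting a small mass to the boundary shows that a nonnegative multiplier is incompatible with bounded support.

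Regularity comes from a bootstrap. Since $\rho\in L^{4/3}\cap L^1$, $V_\rho$ is Hölder continuous, and so $\rho=(A')^{-1}([\cdots]_+)$ is continuous, giving (iii). On $\{\rho>0\}$ we have $V_\rho\in C^1$, which gives (vi).

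\textbf{Euler–Poisson equations and stability.} The identity $\nabla P(\rho)=\rho\nabla A'(\rho)$ holds for polytropes, with $P(\rho)=\rho A'(\rho)-A(\rho)$. Taking the gradient of (EL) on the neighbourhoods from (v), and using that $\rho=0$ elsewhere, gives (\ref{EP'}) on all of $\mathbb R^3$. Here we use the fact that $P(\rho)$ is $C^1$ across the free boundary because $\gamma>1$. The time-dependent claim in (vii) follows by direct substitution, as in \cite[Section 2]{Che26G1}.

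Stability (viii) follows from energy conservation combined with local minimality: Lagrangian $L^\infty$-small perturbations correspond to $W^\infty$-small perturbations, and the energy cannot drop below the local minimum.

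The main obstacle is the Euler–Lagrange step. The admissible perturbations are restricted to small-diameter, mean-zero variations, so getting a single multiplier per component, and in particular handling points near the free boundary where $\rho$ is small, requires the careful chaining argument in the style of \cite[Section 4]{AB71}. A secondary difficulty is the non-degeneracy claim in (i).
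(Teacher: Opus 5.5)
Your outline follows McCann's argument in \cite[Section 2]{McC06}, which is what the paper relies on. The paper does not reprove this theorem; it only adds the two refinements of \cite{Che26G1} described in the remark after it. Several of your steps are fine as they stand: the Cauchy--Schwarz reduction of the velocity, the principal-axis and maximality argument via small rotations, and the derivation of (\ref{EP'}) across the free boundary. For the last one, note that $P(\rho)$ is a multiple of $[\frac{J^2}{2I^2}r^2+V_\rho+\lambda_i]_+^{\gamma/(\gamma-1)}$, hence $C^1$.

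Two steps, however, fail as written. The first is non-degeneracy in (i), which cannot come from a second variation along rotations. Suppose the top principal moment is shared by $\hat e_z$ and some unit $\hat u\perp\hat e_z$, and let $Q_\theta$ be rotation by $\theta$ about $\hat e_z\times\hat u$. Then $I(\rho\circ Q_\theta^{-1})=I(\rho)$ exactly. So $E_J$ is constant along this family and every variation along it vanishes, which is compatible with a non-strict local minimum. The missing idea is first-order, and it needs (iv) first:
\begin{itemize}
\item For small $\theta$, the $W^\infty$-ball of radius $\delta/2$ about $\rho_\theta:=\rho\circ Q_\theta^{-1}$ lies inside the $\delta$-ball about $\rho$, and $E_J(\rho_\theta)=E_J(\rho)$.
\item Hence $\rho_\theta$ is again a local minimizer and satisfies (\ref{EL}) with centrifugal term $\frac{J^2}{2I^2}r^2(x)$.
\item Transporting the equation for $\rho$ gives the same relation with $r^2(Q_\theta^{-1}x)$ instead.
\item Subtracting, the nonzero quadratic form $(x\cdot Q_\theta\hat e_z)^2-(x\cdot\hat e_z)^2$ would be constant on a component of the open set $\{\rho_\theta>0\}$, which is impossible.
\end{itemize}

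The second failure is in the Euler--Lagrange and regularity steps. Your claim that $\rho\in L^{4/3}\cap L^1$ makes $V_\rho$ H\"older continuous is false. The usual sufficient condition is $\rho\in L^1\cap L^p$ with $p>3/2$. For example, $\rho=|x|^{-a}\mathbf{1}_{\{|x|<1\}}$ with $2\le a<9/4$ lies in $L^1\cap L^{4/3}$ but has $V_\rho(0)=+\infty$.

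Continuity must instead be bootstrapped through the Euler--Lagrange inequality, as in the paper's uniform $L^\infty$ bound in Section \ref{section4-Existence of Constrained Minimizers}:
\begin{itemize}
\item On the bounded support, $A'(\rho)\le C(1+V_\rho)$.
\item $V_\rho\in L^{12}$ by Proposition \ref{HLSI}, which gives $\rho\in L^{12(\gamma-1)}$.
\item Once $12(\gamma-1)>3$, Proposition \ref{diff. of poten.} gives $V_\rho\in C^1$, and then $\rho\in L^\infty\cap C^0$.
\end{itemize}
This forces a different order from yours. You should derive the relation a.e.\ on small balls, bootstrap, and only then chain to the components $\Omega_i$. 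Those components are not even defined before $\{\rho>0\}$ is known to be open.

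You also apply Lemma \ref{diff. of energy} without checking $U(\rho)<\infty$. This is one of the points the paper credits to \cite{Che26G1}. It holds because averaging $\rho$ over a fine grid of cubes, and re-centering, gives a $W^\infty$-close bounded density with finite energy, so a local minimizer cannot have $U=\infty$.

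Finally, $\lambda_i<0$ should not be argued from the decay of $V_\rho$ at infinity, since (\ref{EL}) is only available near $\Omega_i$. Instead take any $x\in\partial\Omega_i$, which exists because $\Omega_i$ is bounded. There $\rho(x)=0$, and continuity gives $\lambda_i=-V_\rho(x)-\frac{J^2}{2I^2}r^2(x)<0$.
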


\begin{remark}
	In Theorem \ref{Properties of LEM} we only mention the case where $P(\rho)$ satisfies polytropic law (\ref{polytropic law}), but the results hold true for a more general form of $P(\rho)$, see for instance \cite[Section 2]{Che26G1}.
\end{remark}

\begin{remark}\label{retaining center of mass}
	Notice in Theorem \ref{Properties of LEM}, since the local minimizer $\rho \in {R}_0\left(\mathbb{R}^3\right)$, we know $\bar{x}(\rho)=(0,0,0)^T$ in (\ref{EL}). However, one can generalize to the case where the center of mass is not at the origin, hence we retain the term $\bar{x}(\rho)$ in (\ref{EL}) in Theorem \ref{Properties of LEM} (iv). This is consistent with the form in McCann's paper \cite[Section 6]{McC06}. Furthermore, if $\rho$ is a constrained minimizer instead of a $W^\infty$ local minimizer, (\ref{EL}) still holds on $\Omega_i$.
\end{remark}

\begin{remark}\label{difference between McCann and Chen binary star system}
	McCann initially introduced and proved Theorem \ref{Properties of LEM} in \cite{McC06}. We later complemented this work in \cite{Che26G1} by showing why (\ref{EP}) also holds on the boundary of the support of minimizer. Additionally, in \cite{Che26G1} we verified the local minimizer $\rho$ has finite internal energy $U(\rho)<\infty$, hence $E_J$ is indeed differentiable at $\rho$ due to Lemma \ref{diff. of energy}. Further discussion—including the reason why the local minimum energy is attained under uniform rotation (Theorem \ref{Properties of LEM} (ii)), the interpretation of the stability result (Theorem \ref{Properties of LEM} (viii)), the generalization of the fixed angular momentum from $J_z$ to $\mathbf{J}$, and the equivalence between (\ref{EP'}) and (\ref{EL})—can be found in \cite{McC06, Che26G1, JS22}.

\end{remark}

Thanks to Theorem \ref{Properties of LEM}, together with the arguments of McCann \cite{McC06}, we know a constrained energy minimizer on $W_{m}$ exists and its support lies in the interior of $\Omega_m \cup \Omega_{1-m}$ when $J$ is sufficiently large. Therefore, this constrained minimizer is a $W^\infty$ local minimizer. And we know the following theorem:

\begin{theorem}[Existence of Binary Stars {\cite[Theorem 6.1, Corollary 6.2]{McC06}}] \label{McCann's existence}
	Given $m \in(0,1)$, choose the angular momentum $J$ to be sufficiently large depending on $m$. Then any constrained minimizer $\widetilde{\rho}=\rho^{-}+\rho^{+}$of $E_J(\rho)$ on $W_{m,J}$ will, after a rotation about the z-axis and a translation, have support contained in the interior of $\Omega:=\Omega_{-} \cup \Omega_{+}$, that is, dist$\left(\operatorname{spt} \widetilde{\rho}, \mathbb{R}^3 \backslash \Omega\right)>0$. It will also be symmetric about the plane $z=0$ and a decreasing function of $|z|$. 
	
	What's more, after another translation the center of mass of $\widetilde{\rho}$ can be 0 and is a local minimizer of $E_J(\rho)$. Let $v(x):=\omega \hat{e}_z \times x =\omega\left(-x_2, x_1, 0\right)^T$, where $\omega = \frac{J}{I(\widetilde{\rho})}$, then the pair $(\widetilde{\rho}, v)$ minimizes $E(\rho, v)$ locally on ${R}\left(\mathbb{R}^3\right) \times {V}\left(\mathbb{R}^3\right)$ (thus on ${R}_0\left(\mathbb{R}^3\right) \times {V}\left(\mathbb{R}^3\right))$ subject to the constraint $J_z(\rho, v)=J$ or $\boldsymbol{J}(\rho, v)=J \hat{e}_z$. $\widetilde{\rho}$ satisfies reduced Euler-Poisson equations (\ref{EP'}). Moreover, the uniformly rotating fluid $({\rho}(t, x), v(t, x))$ solves (\ref{EP}) with $V(t, x)=V_{\widetilde{\rho}}\left(R_{-\omega t} x\right)$, here $({\rho}(t, x), v(t, x))=\left(\widetilde{\rho}\left(R_{-\omega t} x\right), \omega\left(-x_2, x_1, 0\right)^T\right)$.
\end{theorem}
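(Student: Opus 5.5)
The statement is McCann's Theorem 6.1 and Corollary 6.2, cited with minor modifications. The proof therefore follows McCann's large-$J$ argument, specialised to fixed $m\in(0,1)$, and rests on the earlier results: Theorem \ref{Properties of LEM}, Lemma \ref{expansion of MoI}, Lemma \ref{properties of Wasser.}, and the non-rotating theory (Theorem \ref{non-rotating}).

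\emph{Step 1: existence of a constrained minimizer.} We first show that a constrained minimizer exists on $W_{m,J}$. Any $\rho\in W_{m,J}$ satisfies $E_J(\rho)\ge e_0(m)+e_0(1-m)-\frac{m(1-m)}{\eta/2}$. This holds because the cross gravitational term is bounded by the separation $\eta/2$ of the two balls, and $T_J\ge 0$. So the energy is bounded below.

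Take a minimizing sequence. Rearranging each piece symmetrically in $z$ (Steiner symmetrization about $z=0$) does not increase $U$ or $-G$. It also does not decrease $I$, since $I$ depends only on $x_1,x_2$. Hence we may assume symmetry and monotonicity in $|z|$.

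Uniform $L^{\gamma}$ bounds follow from the energy bound, using the standard estimate $G(\rho,\rho)\le C\|\rho\|_{4/3}^{4/3}$ together with interpolation. On the bounded domain $\Omega_m\cup\Omega_{1-m}$ we can pass to a weak limit. The functional $U$ is lower semicontinuous. The functionals $G$ and $I$ are continuous: $G$ by compactness of the Newtonian kernel on bounded sets, and $I$ because the supports are confined. Hence the weak limit is a minimizer $\widetilde\rho=\rho^-+\rho^+$.

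\emph{Step 2: Kepler-type energy comparison.} Use the trial density given by the non-rotating minimizers $\sigma_m$ and $\sigma_{1-m}$ centred at $y_m$ and $y_{1-m}$, truncated or rescaled as needed. This gives
\[
E_J(\widetilde\rho)\le e_0(m)+e_0(1-m)-\frac{\mu_r}{2\eta}+o(\eta^{-1}).
\]

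Conversely, let $d$ be the distance between the centres of mass. By Lemma \ref{expansion of MoI}, $I(\widetilde\rho)\ge \mu_r d^2$ up to the internal moments. Expanding the cross term as $-\mu_r/d$ plus a small error then yields
\[
E_J\ge e_0(m)+e_0(1-m)+\Big(-\frac{\mu_r}{d}+\frac{J^2}{2\mu_r d^2}\Big)-o(\eta^{-1}).
\]

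Comparing the two bounds forces three things. First, $d/\eta\to1$. Second, each piece nearly minimizes $E_0$ at its mass, i.e. $E_0(\rho^\pm)-e_0(\cdot)=o(\eta^{-1})$. Third, the pieces are close to translates of Lane–Emden minimizers, by concentration-compactness for the non-rotating problem.

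\emph{Step 3: support control (main obstacle).} The main obstacle is showing that $\operatorname{spt}\widetilde\rho$ stays a fixed distance away from $\partial\Omega$. The rough plan is: apply the Euler–Lagrange relation (\ref{EL}) on each ball (valid for constrained minimizers by Remark \ref{retaining center of mass}), deduce a uniform $L^\infty$ bound, and then bound the radius of the support independently of $J$.

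\begin{itemize}
\item The centrifugal term $\frac{J^2}{2I^2}r^2$ has size $O(\eta^{-1})$ on $\Omega$. The potential of the other body is nearly constant, $\approx \mu/\eta$, with oscillation $O(\eta^{-1})$ across each ball.
\item So on each piece, $A'(\rho)=[V_{\rho^\pm}+\lambda^\pm+O(\eta^{-1})]_+$. Comparing with the limiting Lane–Emden equation gives a uniform $L^\infty$ bound via a bootstrap on $V_\rho\in L^\infty$ from $L^{\gamma}$ bounds.
\item The multipliers satisfy $\lambda^\pm\le -c<0$ uniformly, by convergence to the non-rotating multipliers. Since $V_{\rho^\pm}(x)\le \text{mass}/\operatorname{dist}(x,\text{mass concentration})+\text{small}$, the set where the bracket is positive lies in a ball of radius $R$ independent of $J$ around each concentration point.
\item The concentration points are near the centres of mass. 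Those are within $o(\eta)$ of the ball centres after a rotation about the $z$-axis and a translation: rotate so that the centres of mass lie on the line through $y_m,y_{1-m}$, then translate by $O(\eta\cdot o(1))$.
\end{itemize}

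Choosing $J$ large, so that $R+o(\eta)<\eta/4$, the support lies in the interior of $\Omega$.

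\emph{Step 4: conclusion.} Once the support is interior, any $\kappa$ with $W^\infty(\widetilde\rho,\kappa)$ small keeps the masses of each component by Lemma \ref{properties of Wasser.}(ii). Its support also stays in $\Omega$, so $\kappa\in W_{m,J}$, and $\widetilde\rho$ is a $W^\infty$-local minimizer.

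Translate so that the centre of mass is $0$. Theorem \ref{Properties of LEM} then gives the remaining conclusions:
\begin{itemize}
\item $(\widetilde\rho,v)$ with $v=\omega\hat e_z\times x$ is a local minimizer of $E$;
\item $\widetilde\rho$ satisfies (\ref{EP'});
\item the rotating pair solves (\ref{EP}).
\end{itemize}
The $z$-symmetry and monotonicity in $|z|$ follow from the strict rearrangement inequality applied to the minimizer itself, which forces equality in the Steiner symmetrization.
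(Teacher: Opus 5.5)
Your skeleton is McCann's. The paper gives no proof of this statement and only cites McCann, then reproduces his scheme for small $m$ in Sections 4--6. Your Step 1, the direct method in $L^\gamma$ on the bounded set $\Omega$ (fine for $\gamma>4/3$), is a legitimate and shorter substitute for the capped classes $W_{m,R}$. The real problem is that Step 2 is circular as written.

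A lower bound on $E_J$ needs a lower bound on $T_J=J^2/(2I)$. That means an \emph{upper} bound $I(\widetilde\rho)\le \mu_r d^2(1+o(1))$, equivalently $I(\rho^-)+I(\rho^+)=o(\eta^2)$; Lemma \ref{expansion of MoI} by itself gives $I\ge\mu_r d^2$, which is the wrong direction. It also needs $G(\rho^-,\rho^+)\le \mu_r/d+o(\eta^{-1})$. Neither holds for a general element of $W_{m,J}$. A piece spread over its ball of radius $\eta/4$ has internal moment of order $\eta^2$. Its cross interaction can exceed $\mu_r/d$ by order $\eta^{-1}$ (e.g.\ spread it along the line of centres). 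That is exactly the size of the Kepler term $-\mu_r/d+J^2/(2\mu_r d^2)$ you want to resolve. These errors become small only once each piece is known to be concentrated, and you list that concentration as the third \emph{consequence} of the comparison.

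You can break the circle with a crude first pass.
Your two bounds, using only $G(\rho^-,\rho^+)\le 2\mu_r/\eta$ and $T_J\ge 0$, already give $0\le E_0(\rho^\pm)-e_0(\cdot)\le 2\mu_r/\eta$. Concentration-compactness, as in the proof of Theorem \ref{convergence of scaling densities of planets} without scaling, then gives points $z^\pm$ such that all but mass $\delta(J)\to0$ of $\rho^\pm$ lies in $B_{R_0}(z^\pm)$. Since $\operatorname{diam}\Omega_\pm=\eta/2$, this yields $|\bar x(\rho^\pm)-z^\pm|=o(\eta)$, $I(\rho^\pm)=o(\eta^2)$, and $G(\rho^-,\rho^+)\le \mu_r/(d-o(\eta))+4\delta/\eta$. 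Only then does your comparison give $\tfrac12(1-\eta/d)^2=o(1)$, i.e.\ $d/\eta\to1$.

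Alternatively, use the order McCann and the paper use. The multiplier, $L^\infty$ and support-radius bounds of your Step 3 need only this crude concentration. Once $\operatorname{spt}\rho^\pm\subset B_R(\bar x(\rho^\pm))$, one has $\mu_r/(d+2R)\le G(\rho^-,\rho^+)\le \mu_r/(d-2R)$ and $\mu_r d^2\le I\le \mu_r d^2+R^2$. This is how Proposition \ref{estimate of separation} runs the Kepler comparison.

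There are two smaller omissions in Step 3:
\begin{enumerate}[(1)]
\item The $L^\infty$ bootstrap needs an upper bound on $\lambda^\pm$ before you prove $\lambda^\pm\le -c$. This is easy: a positive $\lambda^\pm$ forces $A'(\rho^\pm)\ge\lambda^\pm$ on all of $\Omega_\pm$, so the mass constraint gives $\lambda^\pm\le C\eta^{-3(\gamma-1)}$. Alternatively, argue as in Lemma \ref{negative Lagrange-multipliers in binary case}.
\item After the rigid motion, you should note that the moved density is again in $W_{m,J}$, since its support lies within $O(R)+|d-\eta|/2$ of $y_\pm$. It is therefore again a constrained minimizer.
\end{enumerate}
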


\subsection{Results for Non-rotating Bodies}\label{subsection3.2-Results for Non-rotating Bodies}

 In this subsection, we introduce the existence and uniqueness of non-rotating solutions of Euler-Poisson equations, corresponding to the single-star case. We also indicate some useful properties about the solutions.

\begin{theorem}[Non-rotating Stars {\cite{AB71, LY87, McC06}}]\label{non-rotating}
	For $E_0(\rho)$ from (\ref{non-rotating energy}), $e_0(m)$ from (\ref{e_0}) and $m \in [0,  \infty)$:
	\begin{enumerate}[(i)]
		\item $E_0(\rho)$ attains its minimum $e_0(m)$ among $\rho$ such that $\rho \in m{R}\left(\mathbb{R}^3\right)$, and the minimizer $\sigma_m$ is unique up to translation;
		\item $e_0(m)$ decreases continuously from $e_0(0)=0$ and is strictly concave. In particular, for $m>0$, $e_0(m)<0$, $e_0^{\prime}\left(m^{+}\right) \coloneq\lim\limits_{\widetilde{m}\rightarrow m^{+}}{e_{0}^{\prime}\left( \widetilde{m} \right)}$ and $e_0^{\prime}\left(m^{-}\right)\coloneq \lim\limits_{\widetilde{m}\rightarrow m^{-}}{e_{0}^{\prime}\left( \widetilde{m} \right)}$ exists, and we have $e_0^\prime(m^+)\leq e_0^\prime(m^-)<0$;
		\item $\sigma_m$ is spherically symmetric and radially decreasing after translation;
		\item $\left\|\sigma_m\right\|_{L^{\infty}} \leq C(m)$, and $U(\sigma_m)<\infty$ due to Remark \ref{finite energy under L infty};
		\item spt $\sigma_m$ is contained in a ball of radius $R_0(m)$;
		\item $\sigma_m \in C^0 (\mathbb{R}^3) \cap C^1 (\{\sigma_m>0\})$; 
		\item $\sigma_m$ satisfies 
		\begin{equation} \label{EL0}
			A^{\prime}(\sigma_m(x))=\left[V_{\sigma_m}(x)+\lambda_m\right]_{+} \tag{EL0}
		\end{equation}
		on all of $\mathbb{R}^3$ and a single Lagrange multiplier $ \lambda_m=-(5 \gamma-6) m^{\frac{2 \gamma-2}{3 \gamma-4}} U(\sigma) \left\{\begin{array}{l}<0, m>0 \\ =0, m=0\end{array}\right.$, where $\sigma = \sigma_1$ is the minimizer of $E_0(\rho)$ with mass $1$;
		\item the derivatives of $e_0(m)$ exists and satisfies: $e_0^{\prime}(m)=\lambda_m$;
		\item $\sigma_m$  satisfies the reduced Euler-Poisson equations (\ref{EP'}).
	\end{enumerate}
\end{theorem}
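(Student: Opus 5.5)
The plan is to treat the case $m=1$ directly, under the standing assumption $\gamma>\frac43$ (in particular for $\gamma>\frac32$), and then transfer everything to general $m>0$ by an exact scaling. For $\rho\in R(\mathbb{R}^3)$ I would set $\rho^{(m)}(x):=m^{\alpha}\rho(m^{\beta}x)$ with
\[
\alpha=\frac{2}{3\gamma-4},\qquad \beta=\frac{2-\gamma}{3\gamma-4}.
\]
These exponents are chosen so that $\alpha-3\beta=1$, i.e.\ $\rho^{(m)}$ has mass $m$. They also make $U$ and $G$ scale identically: $U(\rho^{(m)})=m^{p}U(\rho)$ and $G(\rho^{(m)},\rho^{(m)})=m^{p}G(\rho,\rho)$ with $p=\frac{5\gamma-6}{3\gamma-4}>1$. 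Hence $E_0(\rho^{(m)})=m^pE_0(\rho)$, and $\rho\mapsto\rho^{(m)}$ is a bijection $R(\mathbb{R}^3)\to mR(\mathbb{R}^3)$. Therefore $e_0(m)=m^pe_0$ and $\sigma_m=\sigma^{(m)}$. This immediately gives (ii): $e_0(0)=0$, and $e_0(m)$ is continuous, strictly concave and decreasing once $e_0<0$ is known, since $p>1$. It also reduces (i), (iii)--(vi) and (ix) to $m=1$, and later yields (viii) by differentiating $m^pe_0$.

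For $m=1$ I would argue as follows. First, $E_0$ is bounded below on $R(\mathbb{R}^3)$. By Hardy--Littlewood--Sobolev and interpolation, $G(\rho,\rho)\le C\|\rho\|_{L^{6/5}}^2\le C\,\|\rho\|_{L^1}^{a}\,U(\rho)^{b}$ with $b<1$ exactly when $\gamma>\frac43$. Second, $e_0<0$: spread a fixed profile so that $U$ decays faster than $G$. Third, take a minimizing sequence and replace it by its Schwarz symmetrization. This does not increase $E_0$, because $U$ is invariant and $G$ increases by Riesz's rearrangement inequality. For radially decreasing functions the mass bound gives $\rho(x)\le C|x|^{-3}$, which yields tightness of $G$ at infinity. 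Weak $L^\gamma$ compactness, together with strict subadditivity $e_0(1)<e_0(\mu)+e_0(1-\mu)$ (a consequence of $p>1$) to exclude mass loss, then produces a minimizer $\sigma$ with (iii).

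Next I would perturb within $P_\infty(\sigma)$ with mass-zero directions, via the analogue of Lemma~\ref{diff. of energy} with $J=0$. This gives \eqref{EL0} on $\{\sigma>0\}$ and the inequality $A'(\sigma)\ge V_\sigma+\lambda$ elsewhere, hence \eqref{EL0} on all of $\mathbb{R}^3$. Since $V_\sigma\in L^\infty$ and $A'$ is invertible, $\sigma=\bigl(\tfrac{\gamma-1}{K\gamma}[V_\sigma+\lambda]_+\bigr)^{1/(\gamma-1)}$ is bounded, which is (iv). Because $V_\sigma(x)\sim|x|^{-1}\to0$, a strictly negative $\lambda$ forces compact support, which is (v). Continuity of $V_\sigma$ gives $C^0$, and $C^1$ follows on $\{\sigma>0\}$, which is (vi). Rewriting \eqref{EL0} as $\nabla P(\sigma)=\sigma\nabla V_\sigma$ gives (ix). Uniqueness up to translation in (i) I would take from the radial Lane--Emden ODE: every minimizer is radial by the equality case of Riesz's inequality. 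Any two radial solutions of the same mass are then related by the scaling $\rho(x)\mapsto\mu^{2/(\gamma-1)}\rho(\mu x)$, and the mass constraint fixes $\mu=1$. This is the argument of Lieb--Yau \cite{LY87}, and it is the step I expect to be the main obstacle; I would cite it rather than redo it.

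Finally, the multiplier in (vii) comes from two identities. The virial identity follows from stationarity of $t\mapsto E_0(t^3\sigma(tx))$ at $t=1$:
\[
3(\gamma-1)\,U(\sigma)=\tfrac12 G(\sigma,\sigma).
\]
Multiplying \eqref{EL0} by $\sigma_m$, integrating, and using $sA'(s)=\gamma A(s)$ gives
\[
\lambda_m m=\gamma U(\sigma_m)-G(\sigma_m,\sigma_m)=-(5\gamma-6)\,U(\sigma_m).
\]
Combined with $U(\sigma_m)=m^pU(\sigma)$, this yields $\lambda_m=-(5\gamma-6)m^{\frac{2\gamma-2}{3\gamma-4}}U(\sigma)$. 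On the other hand, $e_0=-(3\gamma-4)U(\sigma)$, so $e_0'(m)=pm^{p-1}e_0$ equals the same quantity, proving (viii). This also shows that $e_0'$ is continuous, so the one-sided derivatives in (ii) coincide and are negative.
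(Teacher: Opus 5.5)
\textbf{The uniqueness step fails as written.} The dilation $\rho(x)\mapsto\mu^{2/(\gamma-1)}\rho(\mu x)$ is not a symmetry of \eqref{EL0}. For $\rho_\mu(x)=\mu^{a}\rho(\mu x)$ one has $A'(\rho_\mu)(x)=\mu^{a(\gamma-1)}A'(\rho)(\mu x)$ and $V_{\rho_\mu}(x)=\mu^{a-2}V_\rho(\mu x)$. So \eqref{EL0} is preserved, with a rescaled multiplier, only if $a(\gamma-1)=a-2$, i.e.\ $a=\frac{2}{2-\gamma}$. Your exponent agrees with this only at $\gamma=\frac32$; it is the dilation for $\Delta\theta+\theta^{\gamma}=0$, whereas the Lane--Emden index here is $n=\frac{1}{\gamma-1}$. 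At $\gamma=2$ no dilation works at all: the equation is linear on the support, and solutions of different mass differ only by amplitude.

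The repair is already in your first paragraph. The map $\rho\mapsto\rho^{(m)}$ equals $\mu^{2/(2-\gamma)}\rho(\mu x)$ with $\mu=m^{\beta}$ when $\gamma\neq2$, and $\rho\mapsto m\rho$ when $\gamma=2$. Because $\alpha(\gamma-1)=\alpha-2\beta$, it maps solutions of \eqref{EL0} to solutions, with $\lambda\mapsto m^{\frac{2\gamma-2}{3\gamma-4}}\lambda$, and it multiplies the central density by $m^{\alpha}$ with $\alpha>0$. Now let $\sigma_1,\sigma_2$ be radial minimizers of mass $1$ with central densities $c_1,c_2$, and set $m=(c_2/c_1)^{1/\alpha}$. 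Then the functions $w=V+\lambda$ of $\sigma_1^{(m)}$ and of $\sigma_2$ solve the same regular radial Lane--Emden initial value problem (same $w(0)$, $w'(0)=0$). Hence the two densities coincide, and comparing masses forces $m=1$.

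Two smaller points:
\begin{itemize}
\item $V_\sigma\in L^\infty$ is immediate only for $\gamma>\frac32$. For $\frac43<\gamma\le\frac32$ you need the bootstrap of Theorem~\ref{uniform bound of minimizers in L^infty in binary case}: from $\sigma^{\gamma-1}\le C(V_\sigma+|\lambda|)$ and Hardy--Littlewood--Sobolev, the integrability exponent grows geometrically because $\gamma>\frac65$.
\item The fact $\lambda<0$, which you use for (v), should be taken from your virial computation. That computation needs neither (iv) nor (v), so do it first.
\end{itemize}

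\textbf{Comparison with the paper.} With these corrections your argument is sound, but it is organized differently. The paper does not prove this theorem; it cites \cite{AB71, LY87, McC06} and \cite{Che26R2}. It then derives the $m$-dependence in Theorem~\ref{scaling relation for energy} from the scaling identities \emph{combined with} uniqueness (Theorem~\ref{non-rotating}(i)). You instead observe that $\rho\mapsto\rho^{(m)}$ is a bijection $R(\mathbb{R}^3)\to mR(\mathbb{R}^3)$ multiplying $E_0$ by $m^{p}$. This gives $e_0(m)=m^{p}e_0$ and the correspondence of minimizers with no appeal to uniqueness. Items (ii) and (viii) then reduce to calculus, and the virial identity yields the constant $-(5\gamma-6)U(\sigma)$ in (vii) directly. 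What you give up is generality: everything rests on exact homogeneity of $P$. The cited arguments (see Remark~\ref{non-rotating for general case}) cover general pressure laws, where one only gets concavity of $e_0$ and $e_0'(m^+)\le\lambda_m\le e_0'(m^-)$.
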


\begin{remark}\label{simply connected for non-rotating case}
		Theorem \ref{non-rotating} (iii), which is based on the rearrangement inequality, establishes that the support of $\rho$ is simply connected. This configuration corresponds to a single-star system and aligns with our theoretical expectations. In contrast, although McCann's binary star model \cite{McC06} demonstrates that the support of $\rho$ is contained in two disjoint balls, it does not establish that the support consists of exactly two connected components. Whether the support actually comprises exactly two connected components—thus truly representing a binary star system—remains an open question. In Section \ref{section7-The Maximum Number of Connected Components of Minimizers}, we estimate the distance between possibly multiple connected components and propose Conjecture \ref{conjecture 2 connected components}, which asserts that there can be at most two components.
\end{remark}

\begin{remark}\label{non-rotating for general case}
	In Theorem \ref{non-rotating}, we only mention the case where $P(\rho)$ satisfies polytropic law (\ref{polytropic law}), but some results hold true for a more general form of $P(\rho)$, see for instance \cite{AB71, LY87, McC06, Che26R2}. Auchmuty and Beals initially concluded that (\ref{EP'}) is satisfied in the region where $\rho > 0$ \cite[Theorem A]{AB71}. We later improved the result in \cite[Section 2]{Che26R2} by showing the equation holds over the entire $\mathbb{R}^3$ space. 
	Theorem \ref{non-rotating} (ii) was stated without proof by McCann \cite[Section 3]{McC06}, who mentioned it could be demonstrated using a method analogous to that in \cite{LY87}—which is in the quantum mechanical framework. A proof for the polytropic case was later given in \cite{Che26R2}. 
	
	Note that without polytropic law assumption, the uniqueness of minimizer, the existence of $e'_0(m)$, and the equality relations between $\lambda_m$, $U(\sigma$) and $e^\prime_0(m)$ in (vii) and (viii) are not guaranteed. Instead, we have $e_0^{\prime}\left(m^{+}\right) \leq \lambda_m \leq e_0^{\prime}\left(m^{-}\right)$. Note $e_0^{\prime}\left(m^{-}\right)<0$ if $m>0$ due to Theorem \ref{non-rotating} (ii). Although uniqueness result in the general case is unknown, uniqueness can still be established under assumptions slightly weaker than the polytropic law; see \cite{McC06, Che26R2}.
\end{remark}


It turns out under polytropic law (\ref{polytropic law}) assumption, minimizers with different masses satisfy scaling relation, which helps to strengthen Theorem \ref{non-rotating} (ii), as shown in the following results:

\begin{theorem}[Relations between Minimizers and Minimal Energies with Different Mass {\cite[Section 3]{Che26R2}}] \label{scaling relation for energy}
	Let $\sigma$ be the minimizer with mass 1 and the corresponding minimal energy is $e_0=E_0(\sigma)$, then any other minimizer with mass $m$ can be represented as $\sigma_m(x)=\frac{1}{A} \sigma\left(\frac{1}{B} x\right)$, where $A=m^{-\frac{2}{3 \gamma-4}}, B=m^{\frac{\gamma-2}{3 \gamma-4}}$, and the corresponding minimal energy is $e_0(m)=E_0\left(\sigma_m\right)=m^{\frac{5 \gamma-6}{3 \gamma-4}} e_0$.
\end{theorem}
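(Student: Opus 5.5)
The plan is a scaling argument that reduces everything to mass one. For a fixed mass $m>0$, I will define the rescaling $\rho\mapsto\rho^{(m)}(x):=\frac{1}{A}\rho\left(\frac{x}{B}\right)$, where $A,B>0$ are to be chosen. Three quantities transform as follows:
\[
\int\rho^{(m)}\,dx=\frac{B^3}{A}\int\rho\,dx,\qquad U(\rho^{(m)})=\frac{B^3}{A^\gamma}U(\rho),\qquad G(\rho^{(m)},\rho^{(m)})=\frac{B^5}{A^2}G(\rho,\rho).
\]
The second identity uses $A(s)=\frac{K}{\gamma-1}s^\gamma$ from (\ref{A}). The third follows from the change of variables $x=Bx'$, $y=By'$, which contributes $B^6$ from the measures and $B^{-1}$ from the kernel.

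I will impose two conditions. First, the rescaled function should have mass $m$, so $B^3/A=m$. Second, $U$ and $G$ should scale by the same factor, so $B^3A^{-\gamma}=B^5A^{-2}$, which gives $B^2=A^{2-\gamma}$. Solving the two conditions, from $B^3=mA$ and $B^2=A^{2-\gamma}$ we get $B^6=m^2A^2=A^{6-3\gamma}$. Hence $A^{3\gamma-4}=m^{-2}$, that is, $A=m^{-\frac{2}{3\gamma-4}}$. Then $B=A^{\frac{2-\gamma}{2}}=m^{\frac{\gamma-2}{3\gamma-4}}$. These are exactly the exponents in the statement. With this choice we have $E_0(\rho^{(m)})=\frac{B^5}{A^2}E_0(\rho)$. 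The common factor is $\frac{B^5}{A^2}=m^{\frac{5\gamma-10+4}{3\gamma-4}}=m^{\frac{5\gamma-6}{3\gamma-4}}$.

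Next I will show that the scaling preserves minimizers. The map $\rho\mapsto\rho^{(m)}$ is a bijection from $R(\mathbb{R}^3)$ onto $mR(\mathbb{R}^3)$, since its inverse is the analogous rescaling and $L^{4/3}$ membership is preserved. It also multiplies $E_0$ by the positive constant $m^{\frac{5\gamma-6}{3\gamma-4}}$, because $3\gamma-4>0$ in the regime $\gamma>\frac43$ where Theorem \ref{non-rotating} applies. Taking infima on both sides gives $e_0(m)=m^{\frac{5\gamma-6}{3\gamma-4}}e_0$. The same reasoning shows that $\rho$ minimizes $E_0$ on $R(\mathbb{R}^3)$ if and only if $\rho^{(m)}$ minimizes $E_0$ on $mR(\mathbb{R}^3)$. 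So $\sigma^{(m)}$ is a minimizer of mass $m$.

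To conclude that every minimizer of mass $m$ has this form, I will invoke the uniqueness up to translation in Theorem \ref{non-rotating} (i). Any minimizer $\sigma_m$ equals a translate of $\sigma^{(m)}$, and that translate is itself the rescaling of a translate of $\sigma$. So, after fixing the translation, we get $\sigma_m(x)=\frac1A\sigma(\frac xB)$.

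No step here is a serious obstacle. The only care needed is to verify that the rescaling really is a bijection between the admissible classes, and to track the exponent algebra correctly. If uniqueness were unavailable, the energy identity would still hold, but the representation of $\sigma_m$ would not be forced.
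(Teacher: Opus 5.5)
Your proposal is correct and follows essentially the same route as the paper's sketch. You compute how mass, $U$ and $G$ transform under $\rho\mapsto\frac{1}{A}\rho(\cdot/B)$, choose $A,B$ so that the mass becomes $m$ and $U$, $G$ both scale by $m^{\frac{5\gamma-6}{3\gamma-4}}$, and then invoke uniqueness up to translation from Theorem \ref{non-rotating} (i); your explicit bijection $R(\mathbb{R}^3)\to mR(\mathbb{R}^3)$ fills in the infimum step that the paper defers to \cite{Che26R2}. One minor point: that factor is positive for every $m>0$ whatever the sign of its exponent, so $3\gamma-4\neq 0$ is needed only to define $A$ and $B$, and $\gamma>\frac{4}{3}$ only to make the variational problem meaningful, not to ensure positivity.
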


\begin{proof}[Sketch of Proof]
	Recall (\ref{non-rotating energy}) that $E_0(\sigma)=U(\sigma)-\frac{G(\sigma, \sigma)}{2}=\int_{\mathbb{R}^3} A(\sigma)\,dx-\frac{1}{2} \iint_{\mathbb{R}^3\times \mathbb{R}^3} \frac{\sigma(x) \sigma(y)}{|x-y|} \,dx \,dy$, where $A(\sigma)=\frac{K}{\gamma-1} \sigma^\gamma$. Then let $\sigma_{\widetilde{A}, B}=\widetilde{A} \sigma(B x)$, we can compute
	
	\begin{equation}\label{scaling relation for inertial energy}
		\begin{aligned}U\left(\sigma_{\widetilde{A}, B}\right)&=\int_{\mathbb{R}^3} A\left(\sigma_{\widetilde{A}, B}\right)\,dx\\
			&=\int_{\mathbb{R}^3} \frac{K}{\gamma-1}(\widetilde{A} \sigma(B x))^\gamma\,dx\\
			&=\frac{\widetilde{A}^\gamma}{B^3} \int_{\mathbb{R}^3} \frac{K}{\gamma-1}(\sigma(B x))^\gamma \,d(Bx)\\
			&=\widetilde{A}^\gamma B^{-3} U(\sigma)
		\end{aligned}
	\end{equation}

	\begin{equation}\label{scaling relation for gravitational potential energy}
		\begin{aligned}
			G\left(\sigma_{\widetilde{A}, B}, \sigma_{\widetilde{A}, B}\right)&=\iint_{\mathbb{R}^3\times \mathbb{R}^3} \frac{\sigma_{\widetilde{A}, B}(x) \sigma_{\widetilde{A}, B}(y)}{|x-y|} \,dx\,dy\\
			&=\widetilde{A}^2 B^{-5} \iint_{\mathbb{R}^3\times \mathbb{R}^3}\frac{\sigma(B x) \sigma(B y)}{|B x-B y|} \,d(B x) \,d(B y) \\
			&=\widetilde{A}^2 B^{-5} G(\sigma, \sigma) 
		\end{aligned}
	\end{equation}
	
	\begin{equation}\label{scaling relation for mass}
		\begin{aligned}
			\int_{\mathbb{R}^3} \sigma_{\widetilde{A}, B}\,dx&=\widetilde{A} B^{-3} \int_{\mathbb{R}^3} \sigma(B x) \,d(B x)\\
			&=\widetilde{A} B^{-3} \int_{\mathbb{R}^3} \sigma\,dx
		\end{aligned}
	\end{equation}

	From the scaling relations (\ref{scaling relation for inertial energy}) (\ref{scaling relation for gravitational potential energy}) (\ref{scaling relation for mass}), together with the uniqueness result Theorem \ref{non-rotating} (i), if we take $A=m^{-\frac{2}{3 \gamma-4}}, B=m^{\frac{\gamma-2}{3 \gamma-4}}$, then we conclude the proof. For a complete discussion, one can see \cite[Section 3]{Che26R2}.
\end{proof}

\begin{remark}\label{asymptotic behaviour of radius and norm in single star case}
	Thanks to Theorem \ref{non-rotating}, we know $\|\sigma\|_{L^{\infty}\left(\mathbb{R}^3\right)} \leq C_1$ and spt $\sigma$ is contained in a ball of radius $R_1$, therefore, $\left\|\sigma_m\right\|_{L^{\infty}\left(\mathbb{R}^3\right)} \leq C_m=\frac{C_1}{A}$ and spt $\sigma_m$ is contained in a ball of radius $R_m=B R_1$. Since $A=m^{-\frac{2}{3 \gamma-4}}, B=m^{\frac{\gamma-2}{3 \gamma-4}}$, if we further assume $\gamma >2$, we know $\lim\limits_{m \rightarrow 0} A=+\infty$ and $\lim\limits_{m \rightarrow 0} B=0$, thus $\left\|\sigma_m\right\|_{L^{\infty}\left(\mathbb{R}^3\right)}$ and the size of $\sigma_m$'s support will go to 0 when $m \rightarrow 0$, with rates $m^{\frac{2}{3 \gamma-4}}$ and $m^{\frac{\gamma-2}{3 \gamma-4}}$ respectively. When $\gamma<2$, we still have $\lim\limits_{m \rightarrow 0} A=+\infty$, but $\lim\limits_{m \rightarrow 0} B=\infty$, and the size of $\sigma_m$'s support can go to $\infty$ when $m \rightarrow 0$ (flatten out or dispread). It coincides with a result mentioned in Lieb and Yau's paper \cite[Theorem 5]{LY87}, which says that the radius $R_m \rightarrow \infty$ as $m \rightarrow 0$. In their paper quantum mechanics (fermions case) is discussed.
\end{remark}

\begin{remark} 
	Now we know the minimizer satisfies (\ref{EL}). Actually the converse also holds true: all radial solutions of (\ref{EL}) are in fact minimizers of $E_0(\rho)$ among $mR(\mathbb{R}^3)$ for some mass $m$. Furthermore, the minimizer (or radial solution to (\ref{EL})) can be uniquely parameterized by its central density, with each central density corresponding to a specific mass, see \cite[Section 2]{Che26R2}. Thanks to Theorem \ref{non-rotating}, $\left\|\sigma_m\right\|_{L^{\infty}\left(\mathbb{R}^3\right)}$ is actually the central density of $\sigma_m$, thus Theorem \ref{scaling relation for energy} shows as the central density decreases, the mass also decreases. Moreover, it gives the decay rate. For more general pressure $P(\rho)$, this decreasing relationship is also established in \cite[Section 2]{Che26R2}.
\end{remark}

Let $\sigma_{\widetilde{A}, B}=\widetilde{A} \sigma(B x)$, with $A=m^{-\frac{2}{3 \gamma-4}}$, $B=m^{\frac{\gamma-2}{3 \gamma-4}}$ as above, actually we also have the following scaling relations:
\begin{equation}\label{scaling for A prime}
	A^{\prime}\left(\sigma_{\widetilde{A}, B}\right)(x)=A^{\prime}\left(\widetilde{A} \sigma\right)\left(Bx\right)=\widetilde{A}^{\gamma-1} A^{\prime}(\sigma)\left({B} x\right)=m^{\frac{2-2 \gamma}{3 \gamma-4}}A^{\prime}(\sigma)\left({B} x\right)
\end{equation}

\begin{equation}\label{scaling for V}
	V_{\left(\sigma_{\widetilde{A}, B}\right)}(x)=\frac{\widetilde{A}}{B^2} V_\sigma\left(Bx\right)=\widetilde{A}^{\gamma-1} V_\sigma\left({B}x\right)=m^{\frac{2-2 \gamma}{3 \gamma-4}} V_\sigma\left({B}x\right)
\end{equation}

\begin{equation}\label{scaling for E prime}
	E_0^{\prime}\left(\sigma_{\widetilde{A}, B}\right)(x)=\widetilde{A}^{\gamma-1} E_0^{\prime}(\sigma)\left(Bx\right)=m^{\frac{2-2 \gamma}{3 \gamma-4}} E_0^{\prime}(\sigma)\left({B}x\right)
\end{equation}
Therefore, we can further discuss the relationship between variational derivatives (see Lemma \ref{diff. of energy}) of densities with different mass (not necessarily to be minimizers). 
\begin{proposition}[Relations Between Variational Derivatives of Densities with Different Mass {\cite[Section 3]{Che26R2}}]\label{scaling relation for derivatives}
	Given $\rho_m \in mR(\mathbb{R}^3)$ with $U(\rho_m)<\infty$, then $E_0\left(\rho_m\right)$ is $P_{\infty}\left(\rho_m\right)$ differentiable at $\rho_m$ and the derivative at $\rho_m$ satisfies $$E_0^{\prime}\left(\rho_m\right)(x)= A^{-\gamma-1}E_0^{\prime}(\rho)\left(\frac{x}{B}\right)=m^{\frac{2 \gamma-2}{3 \gamma-4}}E_0^{\prime}(\rho)\left(\frac{1}{B} x\right)$$ 
	where $\rho(x):=\left(\rho_m\right)_{A, B}(x)=A \rho_m(B x) \in R(\mathbb{R}^3)$ with $U(\rho)<\infty$, $A=m^{-\frac{2}{3 \gamma-4}}, B=m^{\frac{\gamma-2}{3 \gamma-4}}$. In particular, $\rho$ has mass 1.
\end{proposition}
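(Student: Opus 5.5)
The plan is to reduce everything to the scaling identities (\ref{scaling for A prime})–(\ref{scaling for E prime}), applied to $\rho_m$ written as a rescaling of $\rho$, and to obtain differentiability from Lemma \ref{diff. of energy}. The only things to check are the mass, finiteness of $U$, and the exponent bookkeeping.

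First, the basic facts about $\rho(x)=A\rho_m(Bx)$. By (\ref{scaling relation for mass}), $\int\rho\,dx=AB^{-3}m$. With $A=m^{-\frac{2}{3\gamma-4}}$ and $B=m^{\frac{\gamma-2}{3\gamma-4}}$ this equals
\[
m^{\frac{-2-3\gamma+6}{3\gamma-4}}\cdot m=m^{-1}\cdot m=1,
\]
so $\rho\in R(\mathbb{R}^3)$. By (\ref{scaling relation for inertial energy}), $U(\rho)=A^\gamma B^{-3}U(\rho_m)<\infty$. Lemma \ref{diff. of energy} with $J=0$ then gives $P_\infty(\rho_m)$-differentiability of $E_0$ at $\rho_m$ and $P_\infty(\rho)$-differentiability at $\rho$. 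The derivatives are the functions
\[
E_0'(\rho_m)=A'(\rho_m)-V_{\rho_m},\qquad E_0'(\rho)=A'(\rho)-V_\rho .
\]
So the statement reduces to a pointwise identity between these two functions.

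Next, invert the scaling: $\rho_m(x)=A^{-1}\rho(x/B)$. This is $\rho_{\widetilde A,\widetilde B}$ with $\widetilde A=A^{-1}$ and $\widetilde B=B^{-1}$. For the internal-energy term, since $A'(s)=\frac{K\gamma}{\gamma-1}s^{\gamma-1}$,
\[
A'(\rho_m)(x)=A^{-(\gamma-1)}A'(\rho)(x/B)=m^{\frac{2\gamma-2}{3\gamma-4}}A'(\rho)(x/B).
\]
For the potential term, substitute $y=Bz$:
\[
V_{\rho_m}(x)=A^{-1}\int\frac{\rho(y/B)}{|x-y|}\,dy=\frac{B^3}{A}\int\frac{\rho(z)}{|x-Bz|}\,dz=\frac{B^2}{A}V_\rho(x/B).
\]
Here $\frac{B^2}{A}=m^{\frac{2\gamma-4+2}{3\gamma-4}}=m^{\frac{2\gamma-2}{3\gamma-4}}$, which is the same factor as for the first term. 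This coincidence is exactly why the exponents of $A$ and $B$ were chosen, and it mirrors (\ref{scaling for V}). Subtracting the two identities yields $E_0'(\rho_m)(x)=m^{\frac{2\gamma-2}{3\gamma-4}}E_0'(\rho)(x/B)$.

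The only real obstacle is bookkeeping. The prefactor written as $A^{-\gamma-1}$ in the statement should be read as $A^{-(\gamma-1)}$: this is $\widetilde A^{\gamma-1}$ with $\widetilde A=A^{-1}$, and it equals $m^{\frac{2\gamma-2}{3\gamma-4}}$. If one also wants the derivative identity as linear functionals, note that $\sigma\in P_\infty(\rho_m)$ if and only if $\sigma(B\,\cdot)$ lies in the corresponding cone $P_\infty(\rho)$, after enlarging $R$ by a factor depending on $A$ and $B$. A change of variables in $\int E_0'(\rho_m)\sigma\,dx$ then transfers the identity to the functionals.
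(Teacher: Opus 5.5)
Your proposal is correct and follows essentially the paper's argument. Differentiability comes from Lemma~\ref{diff. of energy} with $J=0$, and the pointwise identity comes from the scaling relations \eqref{scaling for A prime}--\eqref{scaling for E prime}, run in the inverse direction $\rho_m=A^{-1}\rho(\cdot/B)$ with common factor $A^{-(\gamma-1)}=B^2/A=m^{\frac{2\gamma-2}{3\gamma-4}}$. You are also right that the prefactor $A^{-\gamma-1}$ in the statement must be read as $A^{-(\gamma-1)}$, since $A^{-\gamma-1}=m^{\frac{2\gamma+2}{3\gamma-4}}$ would contradict the second equality.
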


\section{Existence of Constrained Minimizers of $E_{J}(\rho)$ on ${W}_{m}$}\label{section4-Existence of Constrained Minimizers}
Similar to the proof of Theorem \ref{non-rotating} (see e.g. \cite{Che26R2}), instead of applying direct method to \textit{admissible class} $W_{m}$ (\ref{admissible class}), we first construct a ``double constrained'' admissible class $W_{m, R}$ in which we can show the lower semi-continuity of the energy. We follow the definition of $W_{m, R}$ introduced in \cite{McC06}.

\begin{definition}[``Double Constrained'' Admissible Class] \label{W_{m,R}}
	Let $W_m$ be as defined in (\ref{admissible class}), we then define $W_{m, R}$ as the following: 	
	$$W_{m, R}:=\left\{\rho(m)=\rho_{m}+\rho_{1-m} \in W_m \mid \rho(m) \leq R \text{ a.e.}\right\}$$
\end{definition}

\begin{remark}\label{non-empty of W_{m, R}}
	$W_{m, R} \subset L^{1} \cap L^{\infty}$ and it is not empty when $R$ is larger than $\frac{384}{\pi \eta^{3}}$, since we can pick $\rho_{m}(x)=\left\{\begin{array}{ll}\frac{384 m}{\pi \eta^{3}},&\left|x-y_{m}\right|<\frac{\eta}{8} \\ 0,&\left|x-y_{m}\right| \geq \frac{\eta}{8}\end{array}\right.$ and $\rho_{1-m}$ similarly, then check $\rho(m)=$ $\rho_{1-m}+\rho_{m}$ is in $W_{m, R}$.
\end{remark}

We want to prove the existence of ``double constrained'' minimizers via direct method \cite[Theorem 1.15]{Dal93}. Here, ``double constrained'' means $\rho(m) \in W_{m}$ and $\rho(m) \leq R$ a.e.. To do that, we first show the lower semicontinuity of $E_{J}$ by reformulating arguments in \cite{AB71} where the authors only consider non-rotating one-body case.

With an abuse of notation, we denote sequences in $W_{m, R}$ by $\left\{\rho_{n, R}\right\}$ and their limit by $\rho_{R}$. Note their total mass ${\int_{\mathbb{R}^3}\rho_{n,R}}\,dx$ is always one.

\begin{lemma}[Lower Semicontinuity of $E_{J}$ in $W_{m, R}$ w.r.t the Weak Topology on $L^{\frac{4}{3}}\left(\mathbb{R}^{3}\right)$] \label{l.s.c on weak L^4/3}
	Fix $J>0$, $m>0$, if $\left\{\rho_{n, R}\right\} \subset W_{m, R}$ and $\rho_{n, R} \rightharpoonup \rho_{R}$ in $L^{\frac{4}{3}}\left(\mathbb{R}^{3}\right)$, then $\rho_{R} \in W_{m, R}$ and $E_{J}\left(\rho_{R}\right) \leq \liminf\limits_{n \rightarrow \infty} E_{J}\left(\rho_{n, R}\right)$.
\end{lemma}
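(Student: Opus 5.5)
We prove the two assertions separately: first that the weak limit $\rho_R$ stays in $W_{m,R}$, then that each of the three pieces of $E_J$ behaves correctly under the limit. The key structural fact is that every $\rho_{n,R}$ is supported in the fixed compact set $\Omega:=\Omega_m\cup\Omega_{1-m}$ and satisfies $0\le\rho_{n,R}\le R$.

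\textbf{Membership $\rho_R\in W_{m,R}$.} Write $\rho_{n,R}=\rho_{n,m}+\rho_{n,1-m}$, where $\rho_{n,m}=\rho_{n,R}\chi_{\Omega_m}$ and $\rho_{n,1-m}=\rho_{n,R}\chi_{\Omega_{1-m}}$. The balls are disjoint by (\ref{dist}).
\begin{enumerate}[(1)]
\item Test the weak convergence against $\chi_{\Omega_m}\in L^4$, against $\chi_{\Omega_{1-m}}\in L^4$, and against $\chi_\Omega$. This gives $\int\rho_R\chi_{\Omega_m}\,dx=m$ and $\int\rho_R\chi_{\Omega}\,dx=1$.
\item Test against $\chi_{\mathbb{R}^3\setminus\Omega}\,\mathrm{sgn}(\rho_R)|\rho_R|^{1/3}$ restricted to bounded sets. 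This shows $\rho_R=0$ a.e. outside $\Omega$.
\item Test against indicators of $\{\rho_R<0\}\cap B$ and of $\{\rho_R>R\}\cap B$ for bounded $B$. Since the constraints $0\le\rho\le R$ define a closed convex set, this gives $0\le\rho_R\le R$ a.e. (Mazur's lemma gives the same conclusion.)
\item Hence $\rho_R\in L^1\cap L^\infty\subset L^{4/3}$, $\rho_R$ has total mass $1$, its support is bounded and split correctly between the two balls, so $\rho_R\in W_{m,R}$.
\end{enumerate}

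\textbf{Internal energy.} $U(\rho)=\frac{K}{\gamma-1}\int\rho^\gamma\,dx$ is convex and strongly continuous on the relevant set $\{0\le\rho\le R,\ \operatorname{spt}\rho\subset\Omega\}$, which lies in $L^{4/3}\cap L^\gamma$. A convex, strongly lower semicontinuous functional is weakly lower semicontinuous. Therefore $U(\rho_R)\le\liminf_n U(\rho_{n,R})$.

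\textbf{Moment of inertia and $T_J$.} The functions $x$ and $r^2(x)$ are bounded on $\Omega$, so their restrictions lie in $L^4$. Weak convergence therefore gives $\bar x(\rho_{n,R})\to\bar x(\rho_R)$ and $\int\rho_{n,R}\,r^2\,dx\to\int\rho_R\,r^2\,dx$. Expanding the square in (\ref{Moment of Inertia}) then gives $I(\rho_{n,R})\to I(\rho_R)$. The limit is positive by Remark~\ref{positive MoI}, so $T_J(\rho_{n,R})\to T_J(\rho_R)$.

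\textbf{Gravitational term (main obstacle).} We need genuine convergence $G(\rho_{n,R},\rho_{n,R})\to G(\rho_R,\rho_R)$, since this term enters with a minus sign. Weak convergence alone does not suffice; the fixed compact support is what makes it work.
\begin{enumerate}[(1)]
\item The map $\rho\mapsto V_\rho$ restricted to $\Omega$ is compact from $L^{4/3}(\Omega)$ into $L^4(\Omega)$. One way to see this: $1/|x|\in L^{q}_{loc}$ for $q<3$, so on the bounded set $\Omega$ the kernel $|x-y|^{-1}\chi_\Omega(x)\chi_\Omega(y)$ is Hilbert--Schmidt type in the appropriate mixed norm. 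Alternatively, use Hardy--Littlewood--Sobolev together with Rellich-type compactness, since $V_\rho\in W^{2,4/3}_{loc}$.
\item Consequently $V_{\rho_{n,R}}\to V_{\rho_R}$ strongly in $L^4(\Omega)$.
\item Write $G(\rho_n,\rho_n)-G(\rho_R,\rho_R)=\int(V_{\rho_n}-V_{\rho_R})\rho_n\,dx+\int V_{\rho_R}(\rho_n-\rho_R)\,dx$, with $\rho_n:=\rho_{n,R}$. The first term tends to $0$ by the strong convergence in $L^4(\Omega)$ and the uniform bound on $\|\rho_n\|_{4/3}$. The second tends to $0$ by weak convergence, since $V_{\rho_R}\chi_\Omega\in L^4$.
\item An alternative route is to split the kernel into a bounded truncation, for which weak convergence in the product measure on $\Omega\times\Omega$ suffices, plus a singular remainder controlled uniformly by HLS on small scales. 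This is the Auchmuty--Beals argument \cite{AB71}.
\end{enumerate}

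Combining the three parts gives $E_J(\rho_R)\le\liminf_n E_J(\rho_{n,R})$.
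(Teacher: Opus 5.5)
Your proposal is correct and follows essentially the same route as the paper: weak closedness of $W_{m,R}$, weak lower semicontinuity of the convex functional $U$ via Mazur, convergence of $\bar x$ and $I$ by testing against bounded functions on $\Omega$, and convergence of $G$ from compactness of $\rho\mapsto V_\rho$ from $L^{4/3}(\Omega)$ into $L^4(\Omega)$ combined with the same two-term splitting. One small caveat: the ``Hilbert--Schmidt in a mixed norm'' justification of that compactness fails as stated, since $\int_\Omega |x-y|^{-4}\,dy=\infty$; your other two arguments do work, namely Rellich compactness via $W^{2,4/3}_{loc}$ and the truncated-kernel argument of your item (4).
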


\begin{proof}
	{
		\rm
		If $\left\{\rho_{n, R}\right\} \subset W_{m, R}$ and $\rho_{n, R} \rightarrow \rho_{R}$ in $L^{\frac{4}{3}}$, then for almost every $x \in \mathbb{R}^{3}, 0 \leq \rho_{n, R} \leq$ $R$ and up to a subsequence $\rho_{n, R} \rightarrow \rho_{R}$. Then $0 \leq \rho_{R} \leq R$ almost everywhere. Since $\Omega:=$ $\Omega_{m} \cup \Omega_{1-m}$ is bounded, by Hölder's inequality we know $\left\|\rho_{n, R}-\rho_{R}\right\|_{L^{1}} \leq C \| \rho_{n, R}-$ $\rho_{R} \|_{L^{\frac{4}{3}}} \rightarrow 0$. It implies $\rho_{R} \in W_{m, R}$ thus $W_{m, R}$ is closed. Easy to check it is also a convex subset of $L^{\frac{4}{3}}$. Therefore, $W_{m, R}$ is weakly closed.
		
		To prove the lower semicontinuity, we recall 
		$$
		\begin{aligned}
			E_{J}(\rho)&=U(\rho)-\frac{G(\rho, \rho)}{2}+T_{J}(\rho)\\
			&=\int_{\mathbb{R}^3} A(\rho)\,dx-\frac{\int_{\mathbb{R}^3} V_{\rho} \cdot \rho\,dx}{2}+T_{J}(\rho)\\
			&=\int_{\mathbb{R}^3} \frac{K(\rho(x))^{\gamma}}{\gamma-1} \,dx-\frac{1}{2} \iint_{\mathbb{R}^3\times \mathbb{R}^3} \frac{\rho(x) \cdot \rho(y)}{|x-y|} \,d x \,d y+\frac{J^{2}}{2(I(\rho))}
		\end{aligned}
		$$
		We first notice $A(\rho)$ is a convex function, so $\int_{\mathbb{R}^3} A(\rho)\,dx$ is a convex functional. Then for each constant $k$, $\left\{\rho_{R} \in\right.$ $\left.W_{m, R} \mid \int_{\mathbb{R}^3} A\left(\rho_{R}\right)\,dx \leq k\right\}$ is convex. $\rho_{n, R} \rightarrow \rho_{R}$ in $L^{\frac{4}{3}}$ implies almost everywhere convergence up to a subsequence, and by construction we know $\rho_{n,R} \leq R\cdot1_\Omega$, here $1_\Omega$ is the indicator function of compact set $\Omega$. By (\ref{A}) we know $A(0)=0$, $A(\cdot)$ is non-decreasing, and thus $A(\rho_{n,R}) \leq A(R)\cdot1_\Omega \in L^1$. Therefore, if $\int_{\mathbb{R}^3} A\left(\rho_{n, R}\right)\,dx \leq k$ then $\int_{\mathbb{R}^3} A\left(\rho_{R}\right)\,dx \leq k$ by dominated convergence theorem or Fatou's lemma, which means $\left\{\rho_{R} \in W_{m, R} \mid \int_{\mathbb{R}^3} A\left(\rho_{R}\right)\,dx \leq k\right\}$ is not only convex but also closed. Thus, it is also weakly closed and then $\int_{\mathbb{R}^3} A(\rho)\,dx$ is a weakly lower semicontinuous functional. 
		
		Next, we know $\mathcal{V}: \rho \rightarrow V_{\rho}$ is compact from $L^{\frac{4}{3}}(\Omega)$ to $L^{r}(\Omega)$ for any $1 \leq r<12$ (see, e.g., \cite[Appendix A]{Che26R2}), thus $\mathcal{V}$ is completely continuous. Take $r=11$, then $\rho_{n, R} \rightharpoonup \rho_{R}$ in $L^{\frac{4}{3}}(\Omega)$ implies $V_{\rho_{n, R}} \rightarrow V_{\rho_{R}}$ in $L^{11}(\Omega)$. Since $\Omega$ is bounded, by Holder's inequality we know $V_{\rho_{n, R}} \rightarrow V_{\rho_{R}}$ in $L^{4}(\Omega)$. Since $\rho_{R}$ and $\rho_{n, R}$ vanish outside $\Omega$, we have
		\begingroup
		\small
		$$
		\begin{aligned}
			\left|G\left(\rho_{R}, \rho_{R}\right)-G\left(\rho_{n, R}, \rho_{n, R}\right)\right|  =&\left|\int_{\mathbb{R}^{3}} V_{\rho_{R}} \cdot \rho_{R}\,dx-\int_{\mathbb{R}^{3}} V_{\rho_{n, R}} \cdot \rho_{n, R}\,dx\right| \\
			=&\left|\int_{\Omega} V_{\rho_{R}} \cdot \rho_{R}\,dx-\int_{\Omega} V_{\rho_{R}} \cdot \rho_{n, R}\,dx\right|+\left|\int_{\Omega} V_{\rho_{R}} \cdot \rho_{n, R}\,dx-\int_{\Omega} V_{\rho_{n, R}} \cdot \rho_{n, R}\,dx\right| \\
			\leq&\left|\int_{\Omega} V_{\rho_{R}} \cdot \rho_{R}\,dx-\int_{\Omega} V_{\rho_{R}} \cdot \rho_{n, R}\,dx\right| +\left\|\rho_{n, R}\right\|_{L^{\frac{4}{3}}(\Omega)} \cdot\left\|V_{\rho_{R}}-V_{\rho_{n, R}}\right\|_{L^{4}(\Omega)}
		\end{aligned}
		$$
		\endgroup
		The last inequality comes from Hölder's inequality. Note $V_{\rho_{R}} \in L^{11}(\Omega) \subset L^{4}(\Omega)=\left(L^{\frac{4}{3}}(\Omega)\right)^{*}$, together with $\rho_{n, R} \rightharpoonup \rho_{R}$ in $L^{\frac{4}{3}}\left(\mathbb{R}^{3}\right)$ we know $\left|\int_{\Omega} V_{\rho_{R}} \cdot \rho_{R}\,dx-\int_{\Omega} V_{\rho_{R}} \cdot \rho_{n, R}\,dx\right| \rightarrow 0$. Note $\rho_{n, R} \rightharpoonup \rho_{R}$ in $L^{\frac{4}{3}}\left(\mathbb{R}^{3}\right)$ also implies $\left\|\rho_{n, R}\right\|_{L^{\frac{4}{3}}(\Omega)} \leq C$, thus $\left\|\rho_{n, R}\right\|_{L^{\frac{4}{3}}(\Omega)} \cdot\left\|V_{\rho_{R}}-V_{\rho_{n, R}}\right\|_{L^{4}(\Omega)} \rightarrow 0$ due to $V_{\rho_{n, R}} \rightarrow V_{\rho_{R}}$ in $L^{4}(\Omega)$. Hence we have $G\left(\rho_{n, R}, \rho_{n, R}\right) \rightarrow G\left(\rho_{R}, \rho_{R}\right)$. 
		
		Finally, if $\rho \in W_{m, R}$, then $\bar{x}(\rho)=\frac{\int_{\mathbb{R}^{3}} \rho(x) x\,dx}{\int_{\mathbb{R}^{3}} \rho\,dx}=\int_{\Omega} \rho(x) x\,dx$, and $\rho_{n, R} \rightharpoonup \rho_{R}$ implies $\bar{x}\left(\rho_{n, R}\right) \rightarrow \bar{x}\left(\rho_{R}\right)$ since $f(x)=x \in L^{\infty}\left(\Omega ; \mathbb{R}^{3}\right) \subset\left(L^{\frac{4}{3}}\left(\Omega ; \mathbb{R}^{3}\right)\right)^{*}$. Moreover,

		\begingroup
		\small  
		\[
		\begin{aligned}
			&\quad \left|I\left(\rho_{n, R}\right)-I\left(\rho_{R}\right)\right|  \\
			&= \left|\int_{\Omega} \rho_{n, R}(x) r^{2}\left(x-\bar{x}\left(\rho_{n, R}\right)\right)\,dx-\int_{\Omega} \rho_{R}(x) r^{2}\left(x-\bar{x}\left(\rho_{R}\right)\right)\,dx\right| \\
			&\leq \left|\int_{\Omega} \rho_{n, R}(x) r^{2}\left(x-\bar{x}\left(\rho_{n, R}\right)\right)\,dx-\int_{\Omega} \rho_{n, R}(x) r^{2}\left(x-\bar{x}\left(\rho_{R}\right)\right)\,dx\right| \\
			&+ \left|\int_{\Omega} \rho_{n, R}(x) r^{2}\left(x-\bar{x}\left(\rho_{R}\right)\right)\,dx-\int_{\Omega} \rho_{R}(x) r^{2}\left(x-\bar{x}\left(\rho_{R}\right)\right)\,dx\right| \\
			&\leq \left\|\rho_{n, R}\right\|_{L^{\frac{4}{3}}(\Omega)} \cdot\left\|r^{2}\left(x-\bar{x}\left(\rho_{n, R}\right)\right)-r^{2}\left(x-\bar{x}\left(\rho_{R}\right)\right)\right\|_{L^{4}(\Omega)} + \left|\int_{\Omega}\left(\rho_{n, R}(x)-\rho_{R}(x)\right) r^{2}\left(x-\bar{x}\left(\rho_{R}\right)\right)\,dx\right|
		\end{aligned}
		\]
		\endgroup
		By an argument similar to the above, we know $r^{2}\left(x-\bar{x}\left(\rho_{n, R}\right)\right)$ and $r^{2}\left(x-\bar{x}\left(\rho_{R}\right)\right)$ are in $L^{\infty}(\Omega) \subset$ $L^{4}(\Omega)=\left(L^{\frac{4}{3}}(\Omega)\right)^{*}$.  $\bar{x}\left(\rho_{n, R}\right) \rightarrow \bar{x}\left(\rho_{R}\right)$ means $r^{2}\left(x-\bar{x}\left(\rho_{n, R}\right)\right) \rightarrow r^{2}\left(x-\bar{x}\left(\rho_{R}\right)\right)$ everywhere, thus by dominated convergence theorem we know $\| r^{2}\left(x-\bar{x}\left(\rho_{n, R}\right)\right)-r^{2}\left(x-\bar{x}\left(\rho_{R}\right)\right) \|_{L^{4}(\Omega)} \rightarrow 0$. $\rho_{n, R} \rightarrow \rho_{R}$ in $L^{\frac{4}{3}}\left(\mathbb{R}^{3}\right)$ implies weak convergence in $L^{\frac{4}{3}}(\Omega)$, $\left\|\rho_{n, R}\right\|_{L^{3}(\Omega)} \leq C$, then we have $I\left(\rho_{n, R}\right) \rightarrow I\left(\rho_{R}\right)$, and then $\frac{J^{2}}{2\left(I\left(\rho_{n, R}\right)\right)} \rightarrow \frac{J^{2}}{2\left(I\left(\rho_{R}\right)\right)}$. 
		
		Collecting the results above, we obtain $E_{J}\left(\rho_{R}\right) \leq \liminf\limits_{n \rightarrow \infty} E_{J}\left(\rho_{n, R}\right)$.
	}
\end{proof}

\begin{remark}
	Several results can be established using similar arguments:
	\begin{enumerate}[(i)]
	\item The functional $E_J$ is lower semicontinuous with respect to the weak topology on $L^p(\mathbb{R}^3)$ for $\frac{6}{5} < p \leq \infty$.
	\item $E_J$ is continuous in $W_{m,R}$ with respect to the norm topology on $L^p(\mathbb{R}^3)$, where $\frac{6}{5} < p \leq \infty$.
	\item The non‑rotating energy $E_0$ is lower semicontinuous in $W_m$ (not only in $W_{m,R}$) with respect to the norm topology on $L^{\frac{4}{3}}(\mathbb{R}^3)$. The proof follows the same line of reasoning as in the proof of Theorem~\ref{convergence of scaling densities of planets} below.
	\item We note that a different energy functional $E$ is studied in \cite{AB71} for a single‑star model.
	\end{enumerate}
\end{remark}

We then have the existence of ``double constrained'' minimizers, and results about their variational derivatives $E_J^\prime$ (\ref{variational derivative}).
\begin{theorem}[Existence of ``Double Constrained'' Minimizers]\label{existence of ``double constrained'' minimizers}
	Fix $J>0$, given $m>0, R>\frac{384}{\pi \eta^{3}}$, there is a $\rho_{R} \in W_{m, R}$ which minimizes $E_{J}(\rho)$ on $W_{m, R}$. Any such $\rho_{R}$ is continuous on $\Omega=\Omega_{m} \cup \Omega_{1-m}$. Moreover, there are some constants $\lambda_{m, R}$ and $\lambda_{1-m, R}$ such that
	\begin{equation}\label{ELg in binary case}
		E_J^{\prime}\left(\rho_R\right) \geq\left\{\begin{array}{ll}
			\lambda_{m, R}, & \text { where } x \in \Omega_m \text { and } \rho_R(x)<R \\
			\lambda_{1-m, R}, & \text { where } x \in \Omega_{1-m} \text { and } \rho_R(x)<R
		\end{array}\right. 
	\end{equation}
	\begin{equation}\label{ELl in binary case}
		E_J^{\prime}\left(\rho_R\right) \leq\left\{\begin{array}{ll}
			\lambda_{m, R}, & \text { where } x \in \Omega_m \text { and } \rho_R(x)>0 \\
			\lambda_{1-m, R}, & \text { where } x \in \Omega_{1-m} \text { and } \rho_R(x)>0
		\end{array}\right.
	\end{equation}
	
	In particular,
	\begin{equation}\label{EL in binary case}
		E_J^{\prime}\left(\rho_R\right)=\left\{\begin{array}{ll}
			\lambda_{m, R}, & \text { where } x \in \Omega_m \text { and } 0<\rho_R(x)<R \\
			\lambda_{1-m, R}, & \text { where } x \in \Omega_{1-m} \text { and } 0<\rho_R(x)<R
		\end{array}\right.   
	\end{equation}
\end{theorem}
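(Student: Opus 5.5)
The argument has three parts: the direct method for existence, a first-variation argument for the inequalities (\ref{ELg in binary case}) and (\ref{ELl in binary case}), and a bootstrap from the Euler--Lagrange relations to continuity.

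\textbf{Existence.} By Remark \ref{non-empty of W_{m, R}}, $W_{m,R}$ is nonempty when $R>\frac{384}{\pi\eta^3}$. On $W_{m,R}$ the energy is bounded below. The term $U\ge 0$. The term $T_J>0$. The term $G(\rho,\rho)$ is bounded above, since $\|\rho\|_{L^{4/3}}\le R^{1/4}$ and $\Omega$ is bounded, so by Hardy--Littlewood--Sobolev $G(\rho,\rho)\le C\|\rho\|_{L^{6/5}}^2\le C(R,\Omega)$.

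Take a minimizing sequence $\{\rho_{n,R}\}\subset W_{m,R}$. It is bounded in $L^{4/3}$, so by reflexivity a subsequence converges weakly in $L^{4/3}(\mathbb{R}^3)$. Lemma \ref{l.s.c on weak L^4/3} then shows that the limit $\rho_R$ lies in $W_{m,R}$ and that $E_J(\rho_R)\le\liminf E_J(\rho_{n,R})$. Hence $\rho_R$ is a minimizer.

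\textbf{Variational inequalities.} Since $\rho_R\le R$, $U(\rho_R)<\infty$, so by Lemma \ref{diff. of energy} $E_J$ is differentiable at $\rho_R$ in the admissible directions. I treat $\Omega_m$ and $\Omega_{1-m}$ separately, because mass may not move between them. The masses $m$ and $1-m$ are fixed, but the center of mass is not constrained in $W_m$, so no extra multiplier appears.

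Fix small $\epsilon>0$. Let $S_\epsilon=\{x\in\Omega_m:\rho_R<R-\epsilon\}$ and $T_\epsilon=\{x\in\Omega_m:\rho_R>\epsilon\}$. Take bounded nonnegative $\phi$ supported in $S_\epsilon$ and $\psi$ supported in $T_\epsilon$, with $\int\phi=\int\psi=1$. For small $t>0$, $\rho_R+t(\phi-\psi)\in W_{m,R}$, and $\phi-\psi\in P_\infty(\rho_R)$. Minimality gives
\[
\int E_J'(\rho_R)\phi\,dx\ \ge\ \int E_J'(\rho_R)\psi\,dx .
\]
Taking suprema over $\psi$ and infima over $\phi$, and then letting $\epsilon\to0$, yields
\[
\operatorname*{ess\,inf}_{\{\rho_R<R\}\cap\Omega_m}E_J'(\rho_R)\ \ge\ \operatorname*{ess\,sup}_{\{\rho_R>0\}\cap\Omega_m}E_J'(\rho_R).
\]
Choose $\lambda_{m,R}$ between these two numbers; the same argument on $\Omega_{1-m}$ gives $\lambda_{1-m,R}$. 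This proves (\ref{ELg in binary case}) and (\ref{ELl in binary case}), and (\ref{EL in binary case}) follows on the set where both hold.

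Finiteness of $\lambda_{m,R}$ must be checked. Both sets have positive measure: $\{\rho_R>0\}$ because the mass in $\Omega_m$ is $m$, and $\{\rho_R<R\}$ because $R|\Omega_m|>m$. Moreover $E_J'(\rho_R)$ is bounded on $\Omega$, since $A'(\rho_R)\le A'(R)$, $V_{\rho_R}$ is bounded for $\rho_R\in L^1\cap L^\infty$, and the $r^2$ term is bounded on the bounded set $\Omega$.

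\textbf{Continuity.} The functions $V_{\rho_R}$ and $\frac{J^2}{2I^2}r^2(x-\bar x)$ are continuous; $V_{\rho_R}$ is even H\"older continuous, since $\rho_R\in L^1\cap L^\infty$. Put $\Phi=V_{\rho_R}+\frac{J^2}{2I^2(\rho_R)}r^2(x-\bar x(\rho_R))$, which is continuous on $\Omega$. The inequalities say that on $\Omega_m$ (and similarly on $\Omega_{1-m}$), a.e.,
\[
A'(\rho_R)=\min\Big\{A'(R),\ \big[\Phi+\lambda_{m,R}\big]_+\Big\}.
\]
To verify this, split into three cases:
\begin{itemize}
\item Where $0<\rho_R<R$, (\ref{EL in binary case}) gives $A'(\rho_R)=\Phi+\lambda_{m,R}$.
\item Where $\rho_R=0$, (\ref{ELg in binary case}) gives $\Phi+\lambda_{m,R}\le A'(0)=0$.
\item Where $\rho_R=R$, (\ref{ELl in binary case}) gives $\Phi+\lambda_{m,R}\ge A'(R)$.
\end{itemize}
Since $A'(s)=\frac{K\gamma}{\gamma-1}s^{\gamma-1}$ is a continuous strictly increasing bijection of $[0,\infty)$, $\rho_R$ agrees a.e.\ with a continuous function on each closed ball. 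Because $\Omega_m$ and $\Omega_{1-m}$ are disjoint, this gives continuity on $\Omega$ after choosing the continuous representative.

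\textbf{Main obstacle.} The delicate step is the variational inequality. One must make sure the perturbations stay in $W_{m,R}$, keep the masses fixed in each ball separately, and handle the fact that $I(\rho)$ and $\bar x(\rho)$ depend on the perturbation. That dependence is already accounted for in the formula for $E_J'$ in Lemma \ref{diff. of energy}, so I would invoke it directly rather than recomputing.
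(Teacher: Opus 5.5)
Your proposal is correct and follows essentially the same route as the paper: the direct method via the weak lower semicontinuity lemma, an Auchmuty--Beals-type first variation with mass-preserving perturbations placed separately in $\Omega_m$ and $\Omega_{1-m}$, and continuity read off from the Euler--Lagrange relations. Your representation $A'(\rho_R)=\min\{A'(R),[\Phi+\lambda_{m,R}]_+\}$ is a clean addition: it handles the saturated set $\{\rho_R=R\}$, which the paper does not discuss, and it shows the bootstrap is a single step here because $\rho_R\le R$ already. The same formula also upgrades \eqref{ELg in binary case}--\eqref{ELl in binary case} from almost everywhere to everywhere for the continuous representative, and you should state that step explicitly.
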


\begin{proof}
	{
		\rm
		Given a minimizing sequence $\left\{\rho_{n, R}\right\}$ with $\lim\limits_{n \rightarrow \infty} E\left(\rho_{n, R}\right)=\inf\limits_{\rho \in W_{m, R}} E(\rho)$. By Interpolation Inequality \cite[Section 4.2]{Bre11} we know $W_{m, R}$ is bounded in $L^{\frac{4}{3}}\left(\mathbb{R}^{3}\right)$. Since $L^{\frac{4}{3}}\left(\mathbb{R}^{3}\right)$ is reflexive, $\exists \rho_{R} \in L^{\frac{4}{3}}\left(\mathbb{R}^{3}\right)$ such that $\rho_{n, R} \rightharpoonup \rho_{R}$ up to subsequence \cite[Theorem 3.18]{Bre11}. By Lemma \ref{l.s.c on weak L^4/3} $W_{m, R}$ is weakly compact in $L^{\frac{4}{3}}\left(\mathbb{R}^{3}\right), \rho_{R} \in W_{m, R}$, and 
		$$\inf\limits_{\rho \in W_{m, R}} E(\rho) \leq E\left(\rho_{R}\right) \leq \lim\limits_{n \rightarrow \infty} E\left(\rho_{n, R}\right)=\inf\limits_{\rho \in W_{m, R}} E(\rho)$$
		It means $\rho_{R}$ is a minimizer. A slight modification of the proof of \cite[Lemma 2]{AB71}, \cite[Section 5]{Che26G1} and \cite[Section 2]{Che26R2}, which means we need to consider the perturbations have compact supports in $\Omega$, shows that (\ref{ELg in binary case}) and (\ref{ELl in binary case}) hold for almost all points which are mentioned in (\ref{ELg in binary case}) (\ref{ELl in binary case}) with suitable $\lambda_{m, R}$ or $\lambda_{1-m, R}$. The continuity of $\rho_{R}$ on $\Omega$ then follows as in the proof of Theorem \ref{non-rotating} (see, e.g., subsection \ref{subsection5.2-Uniform bound of scaling densities in L^{infty}} of this paper, \cite{AB71}, \cite[Section 2]{Che26G1} and \cite[Section 2]{Che26R2}), where the idea is making use of (\ref{ELg in binary case}) (\ref{ELl in binary case}) (\ref{EL in binary case}) and improving the regularity of potential $V_{\rho_{R}}$ and density $\rho_{R}$ in turns (bootstrap method). Then we know (\ref{ELg in binary case}) and (\ref{ELl in binary case}) hold indeed for all points we considered.
	}
\end{proof}

\begin{remark}\label{bootstrap difference}
	Since we consider potential and density alternatively, we call such method the \textit{bootstrap method}. However, another type of bootstrap method, commonly used in fluid dynamics and general relativity, refers to bootstrapping with respect to time. It differs from the regularity bootstrap discussed in this paper.
\end{remark}
We modify the arguments in \cite[Section 6]{AB71} and apply again bootstrap method to show that there is a $\widetilde{R}$ such that the ``double constrained'' (``DC'' in short) minimizer $\rho_{R}$ remains in $W_{m, \widetilde{R}}$ for all $R \geq \widetilde{R}$. Then we argue that $\rho_{R}$ minimizes $E_J(\rho)$ on $W_{m}$ to prove the existence of the constrained minimizers.

\begin{lemma}[Uniform Bound of DC Minimizers in $L^{\frac{4}{3}}$ Space] \label{uniform bound of minimizers in L^4/3 in binary case}
	There is a constant $k_{0}$ such that $\left\|\rho_{R}\right\|_{L^{\frac{4}{3}}} \leq k_{0}$, all $R \geq R_{0}=\frac{384}{\pi \eta^{3}}$.
\end{lemma}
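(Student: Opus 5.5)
The plan is to play an upper bound on $E_J(\rho_R)$ against a lower bound that controls the internal energy $U(\rho_R)$. Since $U$ is comparable to $\|\cdot\|_{L^\gamma}^\gamma$, interpolation between $L^1$ and $L^\gamma$ will then bound the $L^{\frac43}$ norm. This requires $\gamma \geq \frac43$, which holds because $\gamma>\frac32$ throughout.

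\textbf{Upper bound.} Fix the explicit element $\rho^{*}\in W_{m,R_0}$ from Remark \ref{non-empty of W_{m, R}}. It lies in $W_{m,R}$ for every $R\geq R_0$, so minimality gives
\[
E_J(\rho_R)\leq E_J(\rho^{*})=:C^{*}
\]
for all such $R$. This constant is finite by Remark \ref{finite energy under L infty} and is independent of $R$.

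\textbf{Lower bound.} Since $T_J\geq 0$, we have $U(\rho_R)-\frac12 G(\rho_R,\rho_R)\leq C^{*}$. It therefore suffices to show that the gravitational term is dominated by a sublinear power of $U$. We use the Hardy--Littlewood--Sobolev inequality,
\[
G(\rho,\rho)\leq C\|\rho\|_{L^{\frac65}}^2 ,
\]
and interpolate $L^{\frac65}$ between $L^1$ (where the mass equals one) and $L^\gamma$:
\[
\|\rho\|_{L^{\frac65}}\leq \|\rho\|_{L^\gamma}^{\theta}, \qquad \frac56=(1-\theta)+\frac{\theta}{\gamma}, \qquad \theta=\frac{\gamma}{6(\gamma-1)}.
\]
This yields
\[
G(\rho_R,\rho_R)\leq C\,U(\rho_R)^{\frac{2\theta}{\gamma}}=C\,U(\rho_R)^{\frac{1}{3(\gamma-1)}}.
\]
The exponent is less than $1$ exactly when $\gamma>\frac43$.

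\textbf{Bound on $U$.} Combining the two bounds gives
\[
U(\rho_R)-C\,U(\rho_R)^{\frac{1}{3(\gamma-1)}}\leq C^{*}.
\]
Because the power is sublinear, this forces $U(\rho_R)\leq k$ for a constant $k$ independent of $R$.

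\textbf{Conclusion.} Interpolating once more between $L^1$ and $L^\gamma$ gives $\|\rho_R\|_{L^{\frac43}}\leq \|\rho_R\|_{L^\gamma}^{\theta'}$ for the appropriate $\theta'\in(0,1]$. This is bounded by $k_0:=\left(\frac{(\gamma-1)k}{K}\right)^{\theta'/\gamma}$.

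\textbf{Main point of care.} The essential step is the sublinear control of $G$ by $U$. There are two alternatives if needed. One can avoid global HLS by restricting to the bounded set $\Omega$, where the potential estimates cited in the proof of Lemma \ref{l.s.c on weak L^4/3} apply. One can also argue as in \cite[Section 6]{AB71}, using the bound $\|V_\rho\|_{L^\infty}\leq C\|\rho\|_{L^1}^{a}\|\rho\|_{L^\gamma}^{b}$ with $a+b=1$ (valid for $\gamma>\frac32$) to get $G\leq C\|\rho\|_{L^\gamma}^{b}$ with $b<\gamma$. Either route gives the needed sublinearity and hence the uniform constant $k_0$.
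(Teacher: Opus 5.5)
Your proof is correct and follows the same route as the paper. Both bound $E_J(\rho_R)$ above by the energy of a fixed element of $W_{m,R_0}\subset W_{m,R}$ and then apply the non-rotating coercivity argument. The paper only cites that argument (\cite[Section 6]{AB71}, \cite[Section 2]{Che26R2}), whereas you write it out: you drop $T_J\ge 0$ and use HLS plus interpolation to get $G(\rho_R,\rho_R)\le C\,U(\rho_R)^{\frac{1}{3(\gamma-1)}}$.

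Using the explicit competitor from Remark \ref{non-empty of W_{m, R}} rather than the paper's $\rho_{R_0}$ is a slight improvement. The existence theorem for double-constrained minimizers is stated only for $R>R_0$, so it does not literally supply $\rho_{R_0}$.
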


\begin{proof}
	{
		\rm
		We have $\forall R \in\left[R_{0}, \infty\right), \rho_{R_{0}} \in W_{m, R}$, then $E\left(\rho_{R}\right) \leq E\left(\rho_{R_{0}}\right)<\infty$. Then, one can adapt the arguments from the non-rotating case—see, for instance, \cite[Section 2]{Che26R2} or \cite[Section 6]{AB71}—to show that the $L^{\frac{4}{3}}$ norm of $\rho_R$ is actually bounded by a constant that depends only on $E\left(\rho_{R_{0}}\right)$.
	}
\end{proof}

Deducing the uniform boundedness in $L^\infty$ from that in $L^\frac{4}{3}$ can be accomplished in several ways. For instance, one may refer to the methods of Li in \cite[Proposition 1.4]{Li91}, as McCann mentioned in \cite[Section 6]{McC06}. Li’s method \cite{Li91} can be applied to obtain uniform boundedness in $L^\infty$ for $\gamma > \frac{4}{3}$ by choosing an appropriate test function, although it is not first verified that Lagrange multiplier is negative — Li verified this after the $L^\infty$ bound was obtained in \cite{Li91}. 

Here, we present an alternative approach, which is inspired by the method in \cite{AB71}. Both methods rely on the result that velocity $v^{2}(x)=\frac{J^{2} r^{2}(x-\bar{x}(\rho))}{I^{2}(\rho)}$ is sufficiently small. Under assumption $\gamma> \frac{3}{2}$, we first prove that Lagrange multipliers are negative, using the idea from \cite[Lemma 5]{AB71} and \cite[Proposition 2.5]{Li91}.

\begin{lemma}[Negavie Lagrange-multipliers for DC Minimizers] \label{negative Lagrange-multipliers in binary case}
	$\exists m_{0}>0, \forall 0<m<m_{0}, \exists R_{1}>0$, such that $\forall R>R_{1}, \lambda_{m, R}<0$, $\lambda_{1-m, R}<0$
\end{lemma}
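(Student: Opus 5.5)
The plan is to argue by contradiction, in the spirit of \cite[Lemma 5]{AB71} and \cite[Proposition 2.5]{Li91}. If a Lagrange multiplier were nonnegative, the inequality (\ref{ELg in binary case}) would force $\rho_R$ to be bounded below by a positive constant on the \emph{whole} ball $\Omega_m$ (resp.\ $\Omega_{1-m}$). Because these balls have radius $\eta/4$ with $\eta=J^2/(m^2(1-m)^2)\to\infty$ as $m\to 0$, that lower bound would put more mass into the ball than the constraint in $W_m$ allows.

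\textbf{Step 1 (a uniform lower bound on the potential).} For any $\rho\in W_{m,R}$ the total mass is $1$ and $\operatorname{diam}(\Omega_m\cup\Omega_{1-m})=\frac{3\eta}{2}$ by (\ref{diam}). Hence for every $x\in\Omega:=\Omega_m\cup\Omega_{1-m}$,
\[
V_{\rho_R}(x)=\int_\Omega \frac{\rho_R(y)}{|x-y|}\,dy\ \ge\ \frac{2}{3\eta}.
\]
The rotational term $\frac{J^2}{2I^2(\rho_R)}r^2(x-\bar x(\rho_R))$ in (\ref{variational derivative}) is nonnegative, so it can simply be discarded.

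\textbf{Step 2 (positivity and a density lower bound if $\lambda\ge 0$).} Suppose $\lambda_{m,R}\ge 0$. Take $x\in\Omega_m$ with $\rho_R(x)<R$. By (\ref{ELg in binary case}) and (\ref{variational derivative}),
\[
A'(\rho_R(x))\ \ge\ V_{\rho_R}(x)+\frac{J^2}{2I^2}r^2+\lambda_{m,R}\ \ge\ \frac{2}{3\eta}.
\]
In particular $\rho_R(x)>0$. Since $A'(s)=\frac{K\gamma}{\gamma-1}s^{\gamma-1}$, this gives
\[
\rho_R(x)\ \ge\ c_{K,\gamma}\,\eta^{-\frac{1}{\gamma-1}} .
\]
At points where $\rho_R(x)=R$ the same lower bound holds trivially, provided $R\ge R_1$, where $R_1$ is chosen larger than $c_{K,\gamma}\eta^{-1/(\gamma-1)}$ and larger than $\frac{384}{\pi\eta^3}$ (the latter keeps $W_{m,R}$ nonempty). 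Since $\rho_R$ is continuous on $\Omega$ by Theorem \ref{existence of ``double constrained'' minimizers}, the bound holds at every point of $\Omega_m$.

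\textbf{Step 3 (mass contradiction).} Integrating the lower bound over the ball $\Omega_m$, which has volume $\frac{\pi}{48}\eta^3$, gives
\[
m=\int_{\Omega_m}\rho_R\,dx\ \ge\ c\,\eta^{3-\frac{1}{\gamma-1}}\ \sim\ c\,J^{6-\frac{2}{\gamma-1}}\,m^{-6+\frac{2}{\gamma-1}} .
\]
For $\gamma>\frac{3}{2}$ we have $\frac{2}{\gamma-1}<4$, so the right-hand side is at least of order $m^{-2}$. This exceeds $m$ once $m<m_0(J,K,\gamma)$, which is the desired contradiction. The same computation on $\Omega_{1-m}$ gives, if $\lambda_{1-m,R}\ge 0$,
\[
1-m\ \ge\ c\,\eta^{3-\frac{1}{\gamma-1}}\ \to\infty \quad (m\to 0),
\]
which contradicts $1-m<1$ for small $m$. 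Therefore both multipliers are negative for $0<m<m_0$ and $R>R_1$.

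\textbf{Expected obstacle.} The delicate point is Step 2: passing from the almost-everywhere inequality (\ref{ELg in binary case}) to a pointwise lower bound on all of $\Omega_m$, including the set $\{\rho_R=R\}$. This is handled by the continuity statement in Theorem \ref{existence of ``double constrained'' minimizers} together with the choice of $R_1$. The other thing to check is that the constants in Step 3 depend only on $J,K,\gamma$ and not on $R$. They do, because Step 1 uses only the total mass and the geometry of $\Omega$.
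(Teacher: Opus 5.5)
Your proof is correct and is essentially the paper's argument in contrapositive form. The paper picks a point $x_0\in\Omega_m$ (resp.\ $\Omega_{1-m}$) where $\rho_R(x_0)$ is at most the average density $m/\mathrm{Vol}$, and evaluates (\ref{ELg in binary case}) there against the lower bound $V_{\rho_R}\gtrsim\eta^{-1}$. You instead assume $\lambda\ge 0$, get $\rho_R\ge c\,\eta^{-1/(\gamma-1)}$ a.e.\ on the ball, and integrate to violate the mass constraint.

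One difference is worth noting. Your potential bound uses the full unit mass, whereas on $\Omega_{1-m}$ the paper uses only the planet's mass and gets the weaker $V\gtrsim m\,\eta^{-1}$. As a result your Step~3 computation actually works for all $\gamma>\frac{4}{3}$, not just $\gamma>\frac{3}{2}$.
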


\begin{proof}
	{
		\rm
		For $\lambda_{m, R}$, since the volume (measure) of $\Omega_{m}$ is ${Vol}:=\frac{4}{3} \pi\left(\frac{\eta}{4}\right)^{3}=$ $\frac{\pi J^{6}}{48 m^{6}(1-m)^{6}}$, while the mass in $\Omega_{m}$ is $\int_{\Omega_{m}} \rho_{R}\,dx=m$. 
		
		Claim: $\mu\left(\left\{x \in \Omega_{m} \left\lvert\, \rho_{R}(x) \leq \frac{m}{V o l}\right.\right\}\right)>0$, where $\mu$ denotes the Lebesgue measure. In fact, if not, then $\rho_{R}(x)-\frac{m}{V o l}>0$ almost everywhere in $\Omega_{m}$, while $\int_{\Omega_{m}}\left(\rho_{R}(x)-\frac{m}{V o l}\right)\,dx=0$ implies $\rho_{R}(x)-\frac{m}{V o l}=0$ almost everywhere in $\Omega_{m}$ \cite[Section 4.3]{RF10}. Since the measure of $\Omega_{m}$ is not 0, it makes a contradiction. Now we have $\mu\left(\left\{x \in \Omega_{m} \left\lvert\, \rho_{R}(x) \leq \frac{m}{V o l}\right.\right\}\right)>0$, which means there is at least a point $x_{0}$ in $\Omega_{m}$ such that $\rho_{R}\left(x_{0}\right) \leq \frac{m}{V o l}$ and (\ref{ELg in binary case}) holds true. Notice ${Vol} \sim m^{-6}$ as $m\ll1$, i.e., $\exists C_{1}>C_{2}>0$, such that if $m$ is sufficiently small, then $C_{2} m^{-6} \leq {Vol} \leq C_{1} m^{-6}$. Therefore, $\frac{m}{V o l} \sim$ $m^{7}, A^{\prime}\left(\rho_{R}\left(x_{0}\right)\right)=\frac{K \gamma}{\gamma-1} \rho_{R}^{\gamma-1}\left(x_{0}\right) \stackrel{\gamma>\frac{3}{2}}{=} O\left(m^{\frac{7}{3}}\right)$. By construction of $\Omega_{m}$ and $\Omega_{1-m}$, we have $\forall y \in \Omega_{1-m}$, $$\left|x_{0}-y\right| \leq \operatorname{diam}\left(\Omega_{m}, \Omega_{1-m}\right)=\frac{3 \eta}{2}=\frac{3 J^{2}}{2 m^{2}(1-m)^{2}}$$,  $$V_{\rho_{R}}\left(x_{0}\right) \geq\int_{\Omega_{1-m}} \frac{\rho_{R}(y)}{\left|x_{0}-y\right|} \,dy \geq \frac{2 m^{2}(1-m)^{3}}{3 J^{2}}>A^{\prime}\left(\rho_{R}\left(x_{0}\right)\right)$$ when $0<m<m_{1}$ for some $m_{1}$. Let $R>\frac{m_{1}}{{Vol}\left(m_{1}\right)}$, then $\rho_R(x_0)$ that satisfies the above relation exists, and (\ref{ELg in binary case}) holds true thus
		$$
		\begin{aligned}
			\lambda_{m, R} &\leq E_{J}^{\prime}\left(\rho_{R}\right)\left(x_{0}\right)\\
			&=A^{\prime}\left(\rho_{R}\left(x_{0}\right)\right)-V_{\rho_{R}}\left(x_{0}\right)-\frac{J^{2} r^{2}\left(x_{0}-\bar{x}\left(\rho_{R}\right)\right)}{I^{2}(\rho)} \\
			&\leq A^{\prime}\left(\rho_{R}\left(x_{0}\right)\right)-V_{\rho_{R}}\left(x_{0}\right)<0
		\end{aligned}
		$$
		
		Similarly, $\exists m_{2}>0, \forall 0<m<m_{2}, \exists \widetilde{x_{0}} \in \Omega_{1-m}$, such that $\rho_{R}\left(\widetilde{x_{0}} \right) \leq \frac{1-m}{V o l}$, and since $\gamma>\frac{3}{2}$,
		$$V_{\rho_{R}}\left(\widetilde{x_{0}}\right)\geq \frac{2 m^{3}(1-m)^{2}}{3 J^{2}}>A^{\prime}\left(\rho_{R}\left(\widetilde{x_{0}}\right)\right)$$
		
		Let $R>\frac{m_{2}}{{Vol}\left(m_{2}\right)}$, we have $\lambda_{1-m, R}<0$. Set $m_{0}=\min \left\{m_{1}, m_{2}\right\}, R_{1}=\max$ $\left\{\frac{m_{1}}{{Vol}\left(m_{1}\right)}, \frac{m_{2}}{{Vol}\left(m_{2}\right)}\right\}$, then we get the conclusion.
	}
\end{proof}

\begin{lemma}[Moment of Inertia Increasing Rate] \label{moment of inertia increasing rate}
	Fix $J>0$, there exist an $M>0$ and a $C>0$, such that $\forall 0<m<M$ and $\forall R>0$, $\rho(m)=\rho_{1-m}+\rho_{m}$ is in $W_{m, R}$, we have $I\left(\rho_{1-m}+\rho_{m}\right)>{Cm}^{-3}$.
\end{lemma}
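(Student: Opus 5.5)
We plan to use Lemma \ref{expansion of MoI} to split $I(\rho_{1-m}+\rho_m)$ into the two individual moments of inertia plus a cross term coming from the separation of the centers of mass. The cross term already has the required size $m^{-3}$, so the whole argument reduces to bounding that separation from below.

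Apply Lemma \ref{expansion of MoI}(2) with $\rho=\rho_m$ (mass $m_1=m$) and $\sigma=\rho_{1-m}$ (mass $m_2=1-m$). This gives
\[
I(\rho_{1-m}+\rho_m)=I(\rho_m)+I(\rho_{1-m})+m(1-m)\,r^2\bigl(\bar x(\rho_m)-\bar x(\rho_{1-m})\bigr).
\]
Both $I(\rho_m)$ and $I(\rho_{1-m})$ are nonnegative, being integrals of $\rho\, r^2$ with $\rho\ge 0$ (positive by Remark \ref{positive MoI}). Hence it suffices to bound the last term from below.

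Since $\operatorname{spt}\rho_m\subset\Omega_m$ and $\Omega_m$ is a closed ball, hence convex, the center of mass $\bar x(\rho_m)$ is an average of points of $\Omega_m$ and so lies in $\Omega_m$. Likewise $\bar x(\rho_{1-m})\in\Omega_{1-m}$.

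The key step is to pass from the three-dimensional distance to the horizontal quantity $r$. The balls are centered at $y_m,y_{1-m}$ in the plane $z=0$ and have radius $\eta/4$. Writing $\bar x(\rho_m)=y_m+a$ and $\bar x(\rho_{1-m})=y_{1-m}+b$ with $|a|,|b|\le\eta/4$, we get $|P_{12}a|,|P_{12}b|\le \eta/4$. Since $P_{12}(y_m-y_{1-m})=y_m-y_{1-m}$ has length $\eta$,
\[
r\bigl(\bar x(\rho_m)-\bar x(\rho_{1-m})\bigr)\ge \eta-\tfrac{\eta}{4}-\tfrac{\eta}{4}=\tfrac{\eta}{2},
\]
which agrees with \eqref{dist}. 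Therefore
\[
I(\rho_{1-m}+\rho_m)\ge m(1-m)\frac{\eta^2}{4}=\frac{J^4}{4m^3(1-m)^3}.
\]

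To finish, take $M=1/2$. For $0<m<M$ we have $(1-m)^{-3}\ge 1$, so $I>\frac{J^4}{4}m^{-3}\cdot\frac12$; choosing $C=J^4/8$ makes the inequality strict. This holds uniformly in $R$, because the constraint $\rho\le R$ was never used.

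The only delicate point is the projection step. The bound must be on the horizontal separation $r$, not the Euclidean distance, and this is exactly where the fact that $y_m,y_{1-m}$ lie in the plane $z=0$ is needed. Everything else is direct.
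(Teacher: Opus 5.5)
Your proof is correct and follows the paper's argument. Both apply the expansion of the moment of inertia with $\rho_m$ and $\rho_{1-m}$, discard the nonnegative individual moments, and bound the horizontal separation of the centers of mass below by $\eta/2$ to obtain $I(\rho(m))\ge J^4/(4\mu_r^3)$. Your projection step, via $|P_{12}a|,|P_{12}b|\le\eta/4$, and your explicit constants $M=1/2$, $C=J^4/8$ fill in details the paper only asserts.
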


\begin{proof}
	{
		\rm
		$\rho(m)=\rho_{1-m}+\rho_{m} \in {W}_{m, R}$. The centers of mass $\bar{x}\left(\rho_{1-m}\right) \in \Omega_{1-m}$, $\bar{x}\left(\rho_{m}\right) \in \Omega_{m}$, as well as their projections onto $z=0$, are separated by at least $\frac{\eta}{2}$, where $\eta=\frac{J^{2}}{\mu_r^{2}}=\frac{J^{2}}{(m(1-m))^{2}}$. Due to Lemma \ref{expansion of MoI}, we have
		$$
		I(\rho(m))=\mu_r r^{2}\left(\bar{x}\left(\rho_{1-m}\right)-\bar{x}\left(\rho_{m}\right)\right)+I\left(\rho_{1-m}\right)+I\left(\rho_{m}\right) \geq \mu_r r^{2}\left(\bar{x}\left(\rho_{1-m}\right)-\bar{x}\left(\rho_{m}\right)\right) \geq \frac{J^{4}}{4 \mu_r^{3}}
		$$
		
		Since $\mu_r=m(1-m)$, we know $\exists C>0$, such that $I(\rho(m))>C m^{-3}$ when $m \rightarrow 0$.
	}
\end{proof}

\begin{corollary}[Velocity Decreasing Rate]\label{velocity decreasing rate}
	Fix $J>0$, there exist an $M>0$ and a $C>0$, such that $\forall x \in \Omega_{m} \cup \Omega_{1-m}$, $0<m<M$, $R>0$ and $\rho(m)=\rho_{1-m}+\rho_{m}$ is in $W_{m, R}$, we have $v(x, m)=\frac{J r(x-\bar{x}(\rho(m)))}{I(\rho(m))}<C m$.
\end{corollary}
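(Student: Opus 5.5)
Fix $x\in\Omega_m\cup\Omega_{1-m}$, $0<m<M$ and $\rho(m)=\rho_m+\rho_{1-m}\in W_{m,R}$. The plan is to bound the numerator $r(x-\bar{x}(\rho(m)))$ from above by a multiple of $\eta$, and the denominator $I(\rho(m))$ from below by Lemma \ref{moment of inertia increasing rate}; the ratio is then $O(m)$.

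\textbf{Numerator.} Since $\rho(m)$ is a probability density supported in $\Omega_m\cup\Omega_{1-m}$, its center of mass $\bar{x}(\rho(m))$ is a convex combination of points of this set. Hence it lies in the convex hull of $\Omega_m\cup\Omega_{1-m}$. The convex hull of two closed balls of radius $\eta/4$ whose centers are $\eta$ apart has diameter equal to $\operatorname{diam}(\Omega_m\cup\Omega_{1-m})=\frac{3\eta}{2}$, by (\ref{diam}). Therefore $|x-\bar{x}(\rho(m))|\le \frac{3\eta}{2}$ for every $x\in\Omega_m\cup\Omega_{1-m}$. Because $r$ is the norm of the projection $P_{12}$, we get
\[
r(x-\bar{x}(\rho(m)))\le \tfrac{3}{2}\eta=\frac{3J^2}{2m^2(1-m)^2}.
\]

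\textbf{Denominator.} The proof of Lemma \ref{moment of inertia increasing rate} gives
\[
I(\rho(m))\ge \frac{J^4}{4\mu_r^3}=\frac{J^4}{4m^3(1-m)^3}.
\]

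\textbf{Combining.} Dividing the two bounds,
\[
v(x,m)=\frac{J\,r(x-\bar{x}(\rho(m)))}{I(\rho(m))}\le J\cdot \frac{3J^2}{2m^2(1-m)^2}\cdot\frac{4m^3(1-m)^3}{J^4}=\frac{6\,m(1-m)}{J}\le \frac{6}{J}\,m .
\]
Any $C>6/J$, together with $M=1$ (or the $M$ from Lemma \ref{moment of inertia increasing rate}), gives the strict inequality $v(x,m)<Cm$ for $m>0$. The constants depend only on $J$, not on $R$ or on the particular $\rho(m)$.

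The only step needing care is the numerator: it must hold uniformly over all admissible $\rho(m)$, which requires locating $\bar{x}(\rho(m))$. The convex-hull observation handles this directly. The remaining steps are algebra.
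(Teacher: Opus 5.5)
Your proof is correct and takes the same approach as the paper. It places $\bar{x}(\rho(m))$ in the convex hull of $\Omega_m\cup\Omega_{1-m}$ to get $r(x-\bar{x}(\rho(m)))\le \frac{3}{2}\eta$, and combines this with the bound $I(\rho(m))\ge J^4/(4\mu_r^3)$ from the proof of Lemma \ref{moment of inertia increasing rate}; your explicit estimate $v(x,m)\le 6m(1-m)/J$ sharpens the paper's ``$Cm$ for $m$ small enough'' and shows the bound holds for every $m\in(0,1)$.
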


\begin{proof}
	{
		\rm
		By construction we know $\bar{x}(\rho(m)) \in \text{conv} (\Omega_{m} \cup \Omega_{1-m})$, where $\text{conv}$ menas the convex hull. Moreover, $r(x-\bar{x}(\rho(m))) \leq$ $\frac{3}{2} \eta$ for all $x \in \Omega_{m} \cup \Omega_{1-m}$. Thus, when $m$ small enough, by Lemma \ref{moment of inertia increasing rate} we have $v(x, m)=\frac{J r(x-\bar{x}(\rho(m)))}{I(\rho(m))}<C m$ for some $C>0$.
	}
\end{proof}

\begin{theorem}[Uniform Bound for DC Minimizer in $L^{\infty}\left(\mathbb{R}^{3}\right)$] \label{uniform bound of minimizers in L^infty in binary case}
	Fix $J>0$, $\exists M>0$, such that $\forall 0<m<M, \exists k_{1}>0, \forall R>0$ and $\rho_{R}$ is a ``double constrained''
	minimizer in $W_{m, R}$, we have $\left\|\rho_{R}\right\|_{L^{\infty}\left(\mathbb{R}^{3}\right)} < k_{1}$. In particular, $E_J(\rho_R)=E_J(\rho_{k_1})$ for $R \geq k_1$.
\end{theorem}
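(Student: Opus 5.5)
Fix $m$ small enough that Lemma \ref{negative Lagrange-multipliers in binary case} and Corollary \ref{velocity decreasing rate} apply. The plan is an Auchmuty--Beals style bootstrap on the Euler--Lagrange relations (\ref{ELg in binary case})--(\ref{ELl in binary case}), with $R$-independent constants coming from Lemma \ref{uniform bound of minimizers in L^4/3 in binary case}. For $R\le R_1$ (and $R<R_0$, where $W_{m,R}$ is empty or trivially bounded) the bound $\|\rho_R\|_{L^\infty}\le R$ is automatic. So it suffices to treat $R>\max\{R_0,R_1\}$.

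On the set $\{0<\rho_R\}\cap\Omega_m$, relation (\ref{ELl in binary case}) together with (\ref{variational derivative}) gives
\[
A'(\rho_R(x))\le V_{\rho_R}(x)+\frac{J^2}{2I^2(\rho_R)}r^2(x-\bar x(\rho_R))+\lambda_{m,R}.
\]
Since $\lambda_{m,R}<0$ by Lemma \ref{negative Lagrange-multipliers in binary case}, and the rotational term equals $\tfrac12 v^2\le Cm^2$ by Corollary \ref{velocity decreasing rate}, we get the pointwise inequality $A'(\rho_R)\le V_{\rho_R}+Cm^2$ on $\operatorname{spt}\rho_R$. The same holds on $\Omega_{1-m}$. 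The constant $Cm^2$ is uniform in $R$.

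Next I would bootstrap. From $\|\rho_R\|_{L^{4/3}}\le k_0$ and $\|\rho_R\|_{L^1}=1$, Hardy--Littlewood--Sobolev (or Young's inequality after splitting $|x|^{-1}$ into a near part in $L^{p}$, $p<3$, and a far part in $L^\infty$) gives $V_{\rho_R}\in L^{q_0}+L^\infty$ with bounds depending only on $k_0$; here $q_0=12$, since $\tfrac1{q_0}=\tfrac34-\tfrac23$. The pointwise inequality then yields $\rho_R^{\gamma-1}\lesssim V_{\rho_R}+Cm^2$, hence $\rho_R\in L^{p_1}$ with $p_1=(\gamma-1)q_0$. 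Because $\gamma>\tfrac32$, we have $p_1>\tfrac43\cdot\tfrac32\cdot\ldots$; more precisely, each step improves the integrability of $\rho_R$ and thus of $V_{\rho_R}$. After finitely many steps, $\rho_R\in L^p$ for some $p>\tfrac32$. At that point $V_{\rho_R}$ is bounded: split $|x-y|^{-1}$ into the part with $|x-y|<1$, controlled in $L^{p'}$ since $p'<3$, and the part with $|x-y|\ge1$, bounded by the total mass $1$. This gives $\|V_{\rho_R}\|_{L^\infty}\le C(k_0)$, and therefore $\rho_R\le \big((\gamma-1)(C(k_0)+Cm^2)/(K\gamma)\big)^{1/(\gamma-1)}$. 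Choose $k_1$ strictly larger than this bound and than $R_1$.

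For the last assertion, take $R\ge k_1$. Since $\rho_R\le k_1$, we have $\rho_R\in W_{m,k_1}\subset W_{m,R}$, which gives $E_J(\rho_{k_1})\le E_J(\rho_R)\le E_J(\rho_{k_1})$.

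The main obstacle is the bootstrap exponents. I must check that the iteration $p\mapsto (\gamma-1)q(p)$, with $\tfrac1q=\tfrac1p-\tfrac23$, strictly increases and exceeds $\tfrac32$ in finitely many steps, uniformly in $R$. Here $\gamma>\tfrac32$, via $\tfrac1{p_{n+1}}=\tfrac{1}{\gamma-1}\big(\tfrac1{p_n}-\tfrac23\big)$, should produce a decreasing sequence of reciprocals. If it stalls, the fallback is to use the cruder bound $V_{\rho_R}\le C\|\rho_R\|_{L^\infty}^{2/3}$ (from mass $1$ and rearrangement) in the inequality $\rho_R^{\gamma-1}\lesssim V+Cm^2$. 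This closes when $\gamma-1>\tfrac23$, and for smaller $\gamma$ it is combined with the $L^{4/3}$ interpolation. Care is also needed that the bound holds at points where $\rho_R=R$, which (\ref{ELl in binary case}) covers since $\rho_R>0$ there.
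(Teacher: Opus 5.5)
Your proposal is correct and follows the paper's own bootstrap. The ingredients are the same: the Euler--Lagrange inequality on $\{\rho_R>0\}$ with $\lambda_{m,R}<0$ and an $O(m^2)$ velocity term, Hardy--Littlewood--Sobolev from the uniform $L^{4/3}$ bound to $V_{\rho_R}\in L^{12}$, then $\rho_R\in L^{12(\gamma-1)}$, then $R$-independent bounds on $V_{\rho_R}$ and $\rho_R$, and the energy identity from minimality on $W_{m,k_1}$. The only differences are cosmetic: you keep the velocity term as an additive $Cm^2$ instead of absorbing it via the lower bound $V_{\rho_R}\ge C_2m^2$ coming from the star, and you use the kernel-splitting criterion $p>\tfrac32$ instead of interpolating to $L^6$ and invoking the $p>3$ potential estimate.

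Three small corrections. Your ``main obstacle'' is moot, since a single step already gives $p_1=12(\gamma-1)>6>\tfrac32$. The fallback should read $V_{\rho_R}\le k\|\rho_R\|_{L^\infty}^{1/3}$, not exponent $\tfrac23$; with $\|\rho_R\|_{L^\infty}\le R<\infty$ a priori, it would by itself close for every $\gamma>\tfrac43$. Finally, $k_1$ should exceed $R_0$ as well as $R_1$ and your bound, so that every $R$ not covered by the main argument is handled.
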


\begin{remark}
	Here we assume that the inequality $\left\|\rho_{R}\right\|_{L^{\infty}\left(\mathbb{R}^{3}\right)} < k_{1}$ holds automatically if $W_{m, R}$ is empty. But due to Remark \ref{non-empty of W_{m, R}}, $W_{m, R}$ is not empty for large $R$.
\end{remark}

\begin{proof}[Proof of Theorem \ref{uniform bound of minimizers in L^infty in binary case}]
	{
		\rm
		We want to apply bootstrap method. Thanks to Corollary \ref{velocity decreasing rate}, when $m$ is sufficiently small, $\exists C_{1}>0$, such that $\forall x \in \Omega=\Omega_{m} \cup \Omega_{1-m}$, $v^{2}(x) \leq C_{1} m^{2}$. Moreover, similarly as above if $x \in \Omega$, we have $\forall y \in \Omega_{1-m}$, $|x-y| \leq \operatorname{diam}\left(\Omega_{m} \cup \Omega_{1-m}\right)=\frac{3 \eta}{2}=\frac{3 J^{2}}{2 m^{2}(1-m)^{2}}$, and
		$$V_{\rho_{R}}(x) \geq\int_{\Omega_{1-m}} \frac{\rho_{R}(y)}{|x-y|} \,dy \geq \frac{2 m^{2}(1-m)^{3}}{3 J^{2}}>C_{2} m^{2}$$ 
		for some $C_{2}>0$ when $m$ is small enough. Therefore, we know $\exists \widetilde{C}>0$, such that $\forall x \in \Omega$, $v^{2}(x) \leq \widetilde{C} V_{\rho_{R}}(x)$. By (\ref{ELl in binary case}), and Lemma \ref{negative Lagrange-multipliers in binary case} (Lagrange-multipliers are less than 0), when $R>R_{1}$, we have if $\rho_{R}(x)>0$, then $A^{\prime}\left(\rho_{R}(x)\right) \leq V_{\rho}(x)+v^{2}(x) \leq C V_{\rho}(x)$. If $\rho_{R}(x)=0$, then $0=$ $A^{\prime}\left(\rho_{R}(x)\right) \leq C V_{\rho_{R}}(x)$ is obvious.
		
		Thanks to Lemma \ref{uniform bound of minimizers in L^4/3 in binary case}, we know if $R>R_{0}+R_{1},\left\|\rho_{R}\right\|_{L^{\frac{4}{3}}} \leq k_{0}$. By HardyLittlewood-Sobolev Inequality (Proposition \ref{HLSI}), with an abuse of notation $C$, we have $\left\|V_{\rho_{R}}\right\|_{L^{12}\left(\mathbb{R}^{3}\right)} \leq$ $C\left\|\rho_{R}\right\|_{L^{\frac{4}{3}}\left(\mathbb{R}^{3}\right)} \leq C$.   And then $A^{\prime}\left(\rho_{R}\right)=\frac{K \gamma}{\gamma-1} \rho_{R}^{\gamma-1} \leq C V_{\rho}$ implies 
		$$\left\|\rho_{R}\right\|_{L^{12(\gamma-1)}}=\left(\int_{\mathbb{R}^3} \rho_{R}{ }^{12(\gamma-1)}(x) \,dx\right)^{\frac{1}{12(\gamma-1)}} \leq C\left(\int_{\mathbb{R}^3} V_{\rho_{R}}^{12}(x) \,dx\right)^{\frac{1}{12(\gamma-1)}}=C\left\|V_{\rho_{R}}\right\|_{L^{12}}^{\frac{1}{\gamma-1}} \leq C$$
		Since we know $\gamma > \frac{3}{2}>\frac{4}{3}$ and $\Omega$ is bounded, by interpolation inequality \cite[Section 4.2]{Bre11} we have $\left\|\rho_{R}\right\|_{L^{6}} \leq C$. Since we also know $\left\|\rho_{R}\right\|_{L^{1}\left(\mathbb{R}^{3}\right)}=1$ are bounded, thanks to Proposition \ref{diff. of poten.}, we have $V_{\rho_{R}}$ is continuous and $\left\|V_{\rho_{R}}\right\|_{L^{\infty}\left(\mathbb{R}^{3}\right)}$ is bounded uniformly. Again by $\rho_{R}{ }^{\gamma-1} \leq C V_{\rho_{R}}$ we know $\left\|\rho_{R}\right\|_{L^{\infty}\left(\mathbb{R}^{3}\right)} \leq \hat{C}$ for some $\hat{C}>0$ when $R>R_{1}+R_{0}$. When $R \leq R_{1}+R_{0}$, by construction of $W_{m, R}$ we know $\left\|\rho_{R}\right\|_{L^{\infty}\left(\mathbb{R}^{3}\right)} \leq R_{1}+R_{0}$. Pick $k_{1}= \max \left\{\hat{C}, R_{1}+R_{0}\right\} +1$ and then we get the result. For $R\geq k_1$, $E_J(\rho_R)=E_J(\rho_{k_1})$ comes from the fact that they both minimize $E_J$ on $W_{m, k_1}$.
	}
\end{proof}

\begin{remark}\label{rho bounded by V_rho}
	One can observe in $A^{\prime}\left(\rho_{R}(x)\right) \leq C V_{\rho_{R}}(x)$, such $C$ is actually not only independent of $R$ but also a uniform constant when $0<m<m_{0}$ for some $m_{0}$. This inequality also holds for constrained minimizer $\rho$, not only for a double constrained minimizer. The existence of a constrained minimizer follows from Theorem \ref{existence of the constrained minimizers in binary case} below.
\end{remark}

Given any $\sigma \in W_{m} \subset {L^1(\mathbb{R}^3)\cap L^\frac{4}{3}(\mathbb{R}^3)}$, the $L^\infty$ norm of $\sigma$ may be unbounded, hence in order to show the ``double constrained minimizer" $\rho_R$ is indeed a constrained minimizer for $R$ large enough, we need to prove that energy of $\sigma$ still exceeds energy of $\rho_R$, which was not addressed in McCann's work \cite{McC06}. We provide a supplementary proof for it by adapting the arguments in the single-star case \cite[Section 6]{AB71}.

\begin{theorem}[Existence of the Constrained Minimizers]\label{existence of the constrained minimizers in binary case}
	Fix $J>0$, $\exists m_{0}>0$, $\forall 0<m<m_{0}$, there is a $\rho \in W_{m}$ which minimizes $E_{J}(\rho)$ on $W_{m}$. Moreover, there is a constant $R(m)$ such that for any constrained minimizer $\rho$, $\rho$ is continuous on $\Omega=\Omega_{m} \cup \Omega_{1-m}$ and $\rho<R(m)$, that is $\rho \in W_{m, R(m)}$.
\end{theorem}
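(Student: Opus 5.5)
The plan is to show that the double constrained minimizer $\rho_{k_1}$ from Theorem \ref{uniform bound of minimizers in L^infty in binary case} already minimizes $E_J$ on all of $W_m$. The argument follows \cite[Section 6]{AB71}: approximate an arbitrary competitor by truncated ones that lie in $W_{m,R}$.

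Fix $m_0$ smaller than the thresholds $M$ in Theorem \ref{uniform bound of minimizers in L^infty in binary case} and $m_0$ in Lemma \ref{negative Lagrange-multipliers in binary case}, and take $0<m<m_0$. Let $k_1$ be as in that theorem. Then for every $R\ge k_1$ every minimizer $\rho_R$ of $E_J$ on $W_{m,R}$ satisfies $E_J(\rho_R)=E_J(\rho_{k_1})=:e_{m}$. Now take any $\sigma=\sigma_m+\sigma_{1-m}\in W_m$. If $U(\sigma)=\infty$ then $E_J(\sigma)=+\infty$, because $G(\sigma,\sigma)<\infty$ by Remark \ref{finite gravitational interaction energy} and $T_J>0$; so we may assume $U(\sigma)<\infty$.

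For each $R$, truncate separately on each ball: $\tilde\sigma^R_i=\min\{\sigma_i,R\}$ for $i\in\{m,1-m\}$. Then put the truncated mass $\delta_i^R=\int(\sigma_i-R)_+\,dx$ back as a uniform layer, $\sigma^R_i=\tilde\sigma^R_i+\delta_i^R|\Omega_i|^{-1}1_{\Omega_i}$. For $R$ large we have $\sigma^R_i\le R+\delta_i^R/|\Omega_i|\le R+1$, so $\sigma^R\in W_{m,R+1}$: the masses on each ball and the support constraints are preserved. By minimality, $E_J(\sigma^R)\ge e_m$ for $R+1\ge k_1$. It remains to show $E_J(\sigma^R)\to E_J(\sigma)$, which gives $E_J(\sigma)\ge e_m$.

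Since $\sigma\in L^1$, we have $\delta_i^R\to0$. Then $\sigma^R\to\sigma$ in $L^1$, and also in $L^{4/3}$ and $L^\gamma$ by dominated convergence, since $|\sigma^R|\le\sigma+1$ on the bounded set $\Omega$ and $\sigma\in L^\gamma$ by $U(\sigma)<\infty$. Hence $U(\sigma^R)\to U(\sigma)$, because $A(\sigma^R)\le C(\sigma^\gamma+1)1_\Omega$. Next, $G(\sigma^R,\sigma^R)\to G(\sigma,\sigma)$ by the Hardy–Littlewood–Sobolev estimate (Proposition \ref{HLSI}) from $L^{6/5}$ convergence. Finally $I(\sigma^R)\to I(\sigma)$ and $\bar x(\sigma^R)\to\bar x(\sigma)$ by $L^1$ convergence against bounded weights on $\Omega$, with the limit moment of inertia positive by Remark \ref{positive MoI}. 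This shows $\rho_{k_1}$ is a constrained minimizer on $W_m$. I expect this approximation step to be the main technical point; the rest is bookkeeping.

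For the second assertion, let $\rho$ be any constrained minimizer on $W_m$. For every finite $R$ with $\rho\le R$ a.e., $\rho$ also minimizes on $W_{m,R}$, so it is a double constrained minimizer. To get an a priori bound without assuming one, I would rerun the proof of Theorem \ref{uniform bound of minimizers in L^infty in binary case} directly for $\rho$ (cf. Remark \ref{rho bounded by V_rho}). The steps are as follows.
\begin{enumerate}[(1)]
\item The variational inequalities of Theorem \ref{existence of ``double constrained'' minimizers} hold for $\rho$ with $R=\infty$, using perturbations in $P_\infty(\rho)$ supported in $\Omega$ (Lemma \ref{diff. of energy}), which is legitimate since $U(\rho)<\infty$.
\item The multipliers are negative by the argument of Lemma \ref{negative Lagrange-multipliers in binary case}.
\item Corollary \ref{velocity decreasing rate} gives $A'(\rho)\le C V_\rho$.
\item The $L^{4/3}$ bound comes from $E_J(\rho)=e_m$, and bootstrapping via HLS and Proposition \ref{diff. of poten.} gives $\|\rho\|_{L^\infty}\le\hat C$.
\end{enumerate}
Setting $R(m)=\max\{\hat C,k_1\}+1$ then gives $\rho<R(m)$. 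Continuity of $\rho$ on $\Omega$ follows exactly as in Theorem \ref{existence of ``double constrained'' minimizers}: $V_\rho$ is continuous, and on each ball $\rho=(A')^{-1}\bigl([V_\rho+\tfrac{J^2}{2I^2}r^2+\lambda_i]_+\bigr)$.
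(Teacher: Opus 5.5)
Your proposal is correct and follows essentially the same route as the paper. Both arguments first use $E_J(\rho_R)=E_J(\rho_{k_1})$ for $R\ge k_1$, then approximate an arbitrary competitor $\sigma\in W_m$ with $U(\sigma)<\infty$ by bounded competitors in $W_{m,s}$ that keep the mass split and have converging energy, and finally get continuity and the bound $R(m)$ by rerunning the Euler--Lagrange and bootstrap argument used for double constrained minimizers. The differences are cosmetic: you truncate with $\min\{\sigma_i,R\}$ and spread the excess mass uniformly over each ball, which gives strong $L^1\cap L^{4/3}\cap L^\gamma$ convergence directly. The paper instead cuts off $\{\sigma\ge s/2\}$, refills with a bounded profile $\tau$, and passes to the limit through weak $L^{4/3}$ convergence and a subsequence argument.
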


\begin{proof}
	{
		\rm
		The facts that $\rho$ is continuous on $\Omega$ and $\rho<R(m)$ can be handled similarly to the case for ``double constrained" minimizer, see Theorem \ref{existence of ``double constrained'' minimizers}, Theorem \ref{uniform bound of minimizers in L^infty in binary case}, and Remark \ref{rho bounded by V_rho}. 
		
		To prove the existence of constrained minimizer, let $k_1$ be given in Theorem \ref{uniform bound of minimizers in L^infty in binary case}, we have $ E_{J}\left(\rho_{R}\right)=E_{J}\left(\rho_{k_{1}}\right)$ for $R\geq k_1$ due to Theorem \ref{uniform bound of minimizers in L^infty in binary case}. We will prove $\rho_{k_1}$ is a constrained minimizer. First we note that $\rho_{k_1}\in W_{m,k_1} \subset L^1(\mathbb{R}^3)\cap L^\infty(\mathbb{R}^3)$, then $E_J(\rho_k)<\infty$ due to Remark \ref{finite energy under L infty}. Given any $\sigma \in W_{m} \subset {L^1(\mathbb{R}^3)\cap L^\frac{4}{3}(\mathbb{R}^3)}$, due to Remark \ref{finite gravitational interaction energy} we know $G(\sigma,\sigma)<\infty$. For $\sigma$ with $U(\sigma) =\int_{\mathbb{R}^3} A(\sigma(x)) \,dx=\infty$, we know $E_J(\rho_{k_1})<E_J(\sigma)=\infty$. For $\sigma$ with $U(\sigma) <\infty$, let 
		
		$$
		\widetilde{P_{R}}(\sigma)=\left\{\tau \in L^{\infty}\left(\mathbb{R}^{3}\right) \left\lvert\, \begin{array}{ll}
			\tau(x)=0, & \text { where } x \text { satisfies } \sigma(x)>R \text { or } x \notin \Omega \\
			\tau(x) \geq 0, & \text { where } x \text { satisfies } \sigma(x)<R^{-1}
		\end{array}\right.\right\}
		$$
		
		When $R$ is large enough, we can find a $\tau \in \widetilde{P_{R}}(\sigma)$, with $\tau \geq 0$ and $\int_{\Omega_{m}} \tau\,dx=\int_{\Omega_{1-m}} \tau\,dx=1$. (See e.g. \cite[Section 4]{Che26G1}.) Let $\sigma_{s}^{[1]}(x), \sigma_{s}^{[2]}(x)$ and $\sigma_{s}$ be
		$$\sigma_{s}^{[1]}(x)=\left\{\begin{array}{ll}
			\sigma(x), & \text { if } x \in \Omega \text { and } \sigma(x)<\frac{1}{2} s \\
			0, & \text { otherwise }
		\end{array}\right.$$
		$$\sigma_{s}^{[2]}(x)=\left\{\begin{array}{ll}
			\left(m-\int_{\Omega_{m}} \sigma_{s}^{[1]}\,dx\right) \tau, & \text { if } x \in \Omega_{m} \\
			\left((1-m)-\int_{\Omega_{1-m}} \sigma_{s}^{[1]}\,dx\right) \tau, & \text { if } x \in \Omega_{1-m} \\
			0, & \text { otherwise }
		\end{array}\right.$$
		$$
		\sigma_{s}=\sigma_{s}^{1}+\sigma_{s}^{2}
		$$
		
		By construction we have $\int_{\Omega_{m}} \sigma_s\,dx=m$ and $ \int_{\Omega_{1-m}} \sigma_s\,dx=1-m$, then $\sigma_{s} \in W_{m}$. Similar again to \cite[Section 4]{Che26G1}, we have $\int_{\Omega_{m}} \sigma_{s}^{[1]}\,dx \rightarrow \int_{\Omega_{m}} \sigma\,dx=m$, $\int_{\Omega_{1-m}} \sigma_{s}^{[1]}\,dx \rightarrow \int_{\Omega_{1-m}} \sigma\,dx=1-m$, therefore $\sigma_{s}^{[2]} \rightarrow 0$ almost everywhere in $\mathbb{R}^3$ as $s \rightarrow \infty$ due to $\tau \in L^{\infty}$. $\sigma \in W_{m}$ implies $\sigma$ is finite almost everywhere, thus $\sigma_{s}^{[1]}$ increases to $\sigma$ almost everywhere as $s \rightarrow \infty$. Then we know when $s \rightarrow \infty$, $\sigma_{s} \rightarrow \sigma$ a.e. in $\mathbb{R}^{3}$. Moreover, for almost every point $x \in\left\{x \in \mathbb{R}^{3} \mid \tau(x)=0\right\}$, $ A\left(\sigma_{s}\right)=A\left(\sigma_{s}^{[1]}\right)$ increases to $A(\sigma)$ (except at points where $\tau=\infty$). For $\Omega_{>0} \coloneq \{x\in \mathbb{R}^3\mid \tau(x)>0\}$ (a subset of $\Omega$ thus bounded), $\sigma_{s} \rightarrow \sigma$ implies $A\left(\sigma_{s}\right) \rightarrow A(\sigma)$ a.e. as $s \rightarrow \infty$, and we also know $\left|\sigma_{s}\right| \leq\left|\sigma_{s}^{[1]}\right|+\left|\sigma_{s}^{[2]}\right| \leq \sigma+\|\tau\|_{L^{\infty}}$ a.e., which implies $A(\sigma_s)\leq A(\sigma+\|\tau\|_{L^{\infty}})$ a.e. We claim $A(\sigma+\|\tau\|_{L^{\infty}})\in L^1(\Omega_{>0})$. In fact, since $A(\rho)=\frac{K}{\gamma-1}\rho^\gamma$ is convex, we know 
		$$
		A(\sigma+\|\tau\|_{L^{\infty}})=A(\frac{2\sigma+2\|\tau\|_{L^{\infty}}}{2})\leq \frac{A(2\sigma)+A(2\|\tau\|_{L^{\infty}})}{2}
		$$
		Note $U(\sigma) =\int_{\mathbb{R}^3} A(\sigma(x)) \,dx<\infty$ implies $A(2\sigma)\in L^1(\mathbb{R}^3)$, and $\Omega_{>0}$ is bounded implies $A(2\|\tau\|_{L^{\infty}})\in L^1(\Omega_{>0})$. Therefore, we know $A(\sigma+\|\tau\|_{L^{\infty}})\in L^1(\Omega_{>0})$.
		
		We can then split $\mathbb{R}^3$ into $\mathbb{R}^3 \setminus \Omega_{>0}$ and $\Omega_{>0}$, and show $\int_{\mathbb{R}^3} A\left(\sigma_{s}\right)\,dx \rightarrow \int_{\mathbb{R}^3} A(\sigma)\,dx$ as $s \rightarrow \infty$ by the monotone convergence theorem and dominated convergence theorem. Since $\Omega$ is bounded, by Hölder's inequality we know $\|\tau\|_{L^{\frac{4}{3}}(\Omega)} \leq C\|\tau\|_{L^{\infty}(\Omega)}$, then $$\left\|\sigma_{s}\right\|_{L^{\frac{4}{3}}(\Omega)} \leq\left\|\sigma_{s}^{[1]}\right\|_{L^{\frac{4}{3}}(\Omega)}+\left\|\sigma_{s}^{[2]}\right\|_{L^{\frac{4}{3}}(\Omega)} \leq\|\sigma\|_{L^{\frac{4}{3}}(\Omega)}+\|\tau\|_{L^{\frac{4}{3}}(\Omega)} \leq\|\sigma\|_{L^{\frac{4}{3}}(\Omega)}+C\|\tau\|_{L^{\infty}(\Omega)}$$
		It means $\left\{\sigma_{s}\right\}$ is a bounded set in $L^{\frac{4}{3}}(\Omega)$. Since $L^{\frac{4}{3}}(\Omega)$ is reflexive, thanks to \cite[Theorem 3.18]{Bre11}, there is a sequence $\left\{\sigma_{s_{n}}\right\}$ which converges weakly to a weak limit $\widetilde{\sigma}$. Since we know $\sigma_{s_{n}} \rightarrow \sigma$ a.e., by \cite{To20}, we have $\widetilde{\sigma}=\sigma$. That is, $\sigma_{s_{n}} \rightharpoonup \sigma$ in $L^{\frac{4}{3}}(\Omega)$, therefore, we can apply what we did in the proof of Lemma \ref{l.s.c on weak L^4/3}, and obtain $G\left(\sigma_{s_{n}}, \sigma_{s_{n}}\right) \rightarrow G(\sigma, \sigma)$, $T_{J}\left(\sigma_{s_{n}}\right) \rightarrow T_{J}(\sigma)$. Collecting the results above, we have $E_{J}(\sigma)=\lim\limits_{n \rightarrow \infty} E_{J}\left(\sigma_{s_{n}}\right)$. By the same arguments, we know for any sequence of $\left\{\sigma_{s_{m}}\right\}$, we can always pick a subsequence $\left\{\sigma_{s_{m_{k}}}\right\}$ which also satisfies $E_{J}(\sigma)=\lim\limits_{k \rightarrow \infty} E_{J}\left(\sigma_{s_{m_{k}}}\right)$. Thus, we get, $E_{J}(\sigma)=\lim\limits_{s \rightarrow \infty} E_{J}\left(\sigma_{s}\right)$ (one can show it by contradiction argument).
		
		Since for almost every $x \in \mathbb{R}^{3}$, $\left|\sigma_{s}\right| \leq\left|\sigma_{s}^{[1]}\right|+\left|\sigma_{s}^{[2]}\right|<\frac{1}{2} s+\|\sigma\|_{L^{\infty}}$. For $s> \max \{2\|\sigma\|_{L^{\infty}}, k_1\}$, we have $\left|\sigma_{s}\right|<s$ a.e., thus $\sigma_{s} \in W_{m, s}$, and then $E_{J}\left(\sigma_{s}\right) \geq E_{J}\left(\rho_{s}\right)=E_{J}\left(\rho_{k_{1}}\right)$ due to Theorem \ref{uniform bound of minimizers in L^infty in binary case}. Let $s\rightarrow 0$, then we know $E_{J}(\sigma) \geq E_{J}\left(\rho_{k_{1}}\right)=E\left(\rho_{R}\right)$ for $R \geq k_{1}$ by Theorem \ref{uniform bound of minimizers in L^infty in binary case}. Since we choose $\sigma\in W_m$ arbitrarily, we know $\rho_{R}$ is a constrained minimizer (i.e. $\rho_{R}$ minimizes $E_J$ on $W_{m}$) for $R \geq k_{1}$. In particular, we find that $\rho_{k_{1}}$ is a constrained minimizer. This gives the result. 
	}
\end{proof}

\begin{remark}
	In the proof of Theorem \ref{existence of the constrained minimizers in binary case}, the verification that $A(\sigma+|\tau|{L^{\infty}})\in L^1(\Omega{>0})$ relies only on the convexity of $A$, not on its specific form. Consequently, the same argument is applicable to more general pressure. For the polytropic case studied in this paper, one may also use the following estimate:
	\begingroup
	\small
		\begin{align*}
		A(\sigma+\|\tau\|_{L^{\infty}}) 
		&= \frac{K}{\gamma-1}(\sigma+\|\tau\|_{L^{\infty}})^\gamma \\
		&\leq \frac{K}{\gamma-1}(2\max\{\sigma, \|\tau\|_{L^{\infty}}\})^\gamma \\
		&= \frac{K2^\gamma}{\gamma-1} \max\{\sigma^\gamma, \|\tau\|_{L^{\infty}}^\gamma\} \\
		&\leq \frac{K2^\gamma}{\gamma-1}(\sigma^\gamma + \|\tau\|_{L^{\infty}}^\gamma) \\
		&= 2^\gamma A(\sigma) + 2^\gamma A(\|\tau\|_{L^{\infty}})
		\end{align*}
	\endgroup
\end{remark}

\begin{remark}
	Following the statement in Section \ref{section5-Bound of the Supports of Density Functions}, one can show that the constant $R(m)$ is actually a uniform constant for all $m\in (0, m_{0})$ for some small $m_0$.
\end{remark}

%
%
%

\section{Bound of the Supports of Density Functions}\label{section5-Bound of the Supports of Density Functions}

In this section we estimate a uniform bound for the supports of the density functions, under small mass ratio assumption which is physically desirable in star-planet systems. The analysis focuses on the planet’s density. For the star's density, the estimates are similar or even simpler, and can be adapted directly from the arguments in \cite{McC06} and do not require the scaling method in subsection \ref{subsection5.1-Convergence of scaling densities in L^{4/3} and L^1} and subsection \ref{subsection5.2-Uniform bound of scaling densities in L^{infty}}. With these uniform bounds we will show in Section \ref{section6-Existence for Star-Planet Systems} that the minimizers can be located in the interior of the domains we define.

\subsection{Convergence of scaling densities in $L^{\frac{4}{3}}\left( \mathbb{R}^{3} \right)$ and $L^{1}\left( \mathbb{R}^{3} \right)$}\label{subsection5.1-Convergence of scaling densities in L^{4/3} and L^1}

As the mass ratio $m$ goes to 0, the energy of planet $\rho_m$ goes to 0 (see Remark \ref{reason of scaling}), hence the concentration-compactness lemma in \cite{Lio84} cannot be directly applied. Therefore, we introduce scaling method. Given the constrained minimizers $\rho(m)=\rho_m+\rho_{1-m}$ of $E_J(\rho)$, we show that, after suitable scaling and translation, the scaling planet densities $\widetilde{\rho_m}$, which have mass $1$, will converge to some limit function as $m \rightarrow 0$ by showing the energies will converge. Intuitively, when $m \rightarrow 0$, the distances between star and planet become large. As a result, the gravitational influence they exert on each other becomes relatively weak, even after scaling. Therefore, the limit function turns out to be minimizer of $E_0(\rho)$ with respect to mass $1$, as we will prove later.

\begin{definition}[{Scaling Density for Planet}]\label{scaling density for planet}
	{	\rm
		For any given planet's mass $m>0$, we define \textit{scaling density} $\widetilde{\rho_{m}}$ as following
		
		\begin{equation}\label{scaling density for planet expression}
			\widetilde{\rho_{m}}(x) = A\rho_{m}\left( {Bx} \right)
		\end{equation}
		where $\rho_m$ is the planet's density of the constrained minimizer $\rho(m) = \rho_{1 - m} + \rho_{m}$ in Section \ref{section4-Existence of Constrained Minimizers}, $A$,$B$ are the same coefficients introduced in Theorem \ref{scaling relation for energy}, that is, $A(m)=m^{- \frac{2}{3\gamma - 4}}$, $B(m)=m^{\frac{\gamma - 2}{3\gamma - 4}}$.
	}
\end{definition}

\begin{remark}\label{order of gravitational potential energy}

	Similar to the scaling relations in Theorem \ref{scaling relation for energy} and its proof, we get $\widetilde{\rho_{m}}$ has mass $1$, and $E_{0}\left( \rho_{m} \right) = \frac{B^{3}}{A^{\gamma}}E_{0}\left( \widetilde{\rho_{m}} \right)$.
	
	Moreover, we have
	\begin{equation} \label{expandsion and compare of E_J}
		\begin{aligned}
			&\quad E_{J}\left( {\rho_{1 - m} + \rho_{m}} \right) \\
			&= {\int_{\mathbb{R}^3}{\frac{K{\left( {\rho_{1 - m}(x) + \rho_{m}(x)} \right)}^{\gamma}}{\gamma - 1}\,dx}} - \frac{1}{2}{\iint_{\mathbb{R}^3\times \mathbb{R}^3}{\frac{\rho(m)(x) \cdot \rho(m)(y)}{\left| {x - y} \right|}\,dx\,dy}} + \frac{J^{2}}{2\left( {I\left( {\rho_{1 - m} + \rho_{m}} \right)} \right)} \\
			&= E_{0}\left( \rho_{1 - m} \right) + E_{0}\left( \rho_{m} \right) - G\left( {\rho_{1 - m},\rho_{m}} \right) + T_{J}\left( {\rho_{1 - m} + \rho_{m}} \right) \\
			&= E_{0}\left( \rho_{1 - m} \right) + \frac{B^{3}}{A^{\gamma}}E_{0}\left(\widetilde{\rho_{m}} \right) - G\left( {\rho_{1 - m},\rho_{m}} \right) + T_{J}\left( {\rho_{1 - m} + \rho_{m}} \right) \\
			&> E_{0}\left( \rho_{1 - m} \right) + \frac{B^{3}}{A^{\gamma}}E_{0}\left(\widetilde{\rho_{m}} \right) - G\left( {\rho_{1 - m},\rho_{m}} \right)
		\end{aligned}
	\end{equation}
	
	Notice since $\rho_{1 - m} \in \Omega_{1 - m}$, $\rho_{m} \in \Omega_{m}$, whose distance is at least $\frac{\eta}{2}=\frac{J^2}{2m^2(1-m)^2}$ (\ref{dist}), we obtain $G\left( {\rho_{1 - m},\rho_{m}} \right) \leq \frac{2m\left( {1 - m} \right)}{\eta} < Cm^{3}$ for some $C>0$.
\end{remark}

\vspace*{\baselineskip} 
Our goal is to estimate $E_{0}\left( \widetilde{\rho_m} \right)$. But before doing it, we first show the moment of inertia $I\left( \rho_{1 - m} + \rho_{m} \right)$ will be larger, thus kinetic energy $T_J$ will be small.

\begin{lemma}[{Moment of Inertia Increasing Rate as Mass Decreases}]\label{moment of inertia increasing rate as mass decreases}
	Fix $J>0$, there exist an $M>0$ and a $C>0$, such that $\forall m < M$ and $\rho(m) = \rho_{1 - m} + \rho_{m}$ is the constrained minimizer, we have  $I\left( \rho_{1 - m} + \rho_{m} \right) > Cm^{- 3}$.
\end{lemma}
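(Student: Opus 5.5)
This follows from Lemma \ref{moment of inertia increasing rate}. First, I would check that the constrained minimizer lies in one of the classes $W_{m,R}$ covered there. By Theorem \ref{existence of the constrained minimizers in binary case}, for $0<m<m_0$ every constrained minimizer $\rho(m)=\rho_{1-m}+\rho_m$ of $E_J$ on $W_m$ satisfies $\rho(m)<R(m)$. Hence $\rho(m)\in W_{m,R(m)}$.

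Second, Lemma \ref{moment of inertia increasing rate} holds for all $R>0$ and every element of $W_{m,R}$, not only minimizers. It therefore applies to $\rho(m)$ and gives $I(\rho(m))>Cm^{-3}$ for $m<M'$, where $M'$ and $C$ are the constants of that lemma. Taking $M=\min\{M',m_0\}$ proves the statement.

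For completeness, the geometric bound should be recorded directly, since it does not need the $L^\infty$ bound at all. The centers of mass satisfy $\bar x(\rho_m)\in\Omega_m$ and $\bar x(\rho_{1-m})\in\Omega_{1-m}$ because the balls are convex. The centers $y_m,y_{1-m}$ lie in the plane $z=0$ at distance $\eta$, and each ball has radius $\eta/4$. So the horizontal separation satisfies $r(\bar x(\rho_m)-\bar x(\rho_{1-m}))\ge \eta/2$.

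Lemma \ref{expansion of MoI} with $m_1=m$, $m_2=1-m$ then gives
\[
I(\rho(m))\ge \mu_r\frac{\eta^2}{4}=\frac{J^4}{4\mu_r^3}=\frac{J^4}{4m^3(1-m)^3}.
\]
For $m<1/2$ the right-hand side exceeds $\frac{J^4}{4}m^{-3}$, so one can take $C=J^4/4$.

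The only real point to verify is the projection step: the horizontal separation must still be at least $\eta/2$ when the centers of mass are off the plane. This holds because moving a point vertically does not change its horizontal position. Projecting the two balls onto $z=0$ gives two discs of radius $\eta/4$ whose centers are $\eta$ apart. Any two points taken from these discs are therefore at least $\eta-2\cdot\frac{\eta}{4}=\eta/2$ apart.
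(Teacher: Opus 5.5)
Your proposal is correct and follows the paper's route. The paper proves this lemma by pointing back to the proof of Lemma \ref{moment of inertia increasing rate}, which is exactly your direct argument: the centers of mass lie in the convex balls, their horizontal separation is at least $\eta/2$, and Lemma \ref{expansion of MoI} then gives $I(\rho(m))\ge J^4/(4\mu_r^3)$. You also note that the bound holds for every element of $W_m$, so the $L^\infty$ bound from Theorem \ref{existence of the constrained minimizers in binary case} is unnecessary; the paper makes the same point in the remark that follows.
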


\begin{proof}
	{\rm
		The proof is similar to the proof of Lemma \ref{moment of inertia increasing rate}. 
	}
\end{proof}

\begin{corollary}[Kinetic Energy Decreasing Rate as Mass Decreases] \label{kinetic energy decreasing rate}
Fix $J>0$, there exist an $M>0$ and a $C>0$, such that $\forall m<M$ and $\rho(m)=\rho_{1 - m}+\rho_m$ is the constrained minimizer, we have $T_J (\rho(m))<Cm^3$.
\end{corollary}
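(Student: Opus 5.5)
The plan is to read the bound off the definition of the kinetic term, $T_J(\rho(m)) = \frac{J^2}{2I(\rho(m))}$ from (\ref{T_J}). Lemma \ref{moment of inertia increasing rate as mass decreases} gives a matching lower bound on the moment of inertia, and inverting it produces the factor $m^3$.

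\textbf{Step 1.} Fix $J>0$. Take $M>0$ and $C_0>0$ as in Lemma \ref{moment of inertia increasing rate as mass decreases}, so that $I(\rho_{1-m}+\rho_m) > C_0 m^{-3}$ for every constrained minimizer with $m<M$. If needed, shrink $M$ below the threshold $m_0$ of Theorem \ref{existence of the constrained minimizers in binary case}, so that constrained minimizers on $W_m$ exist.

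\textbf{Step 2.} Since $I(\rho(m))>0$ by Remark \ref{positive MoI}, the map $I \mapsto \frac{J^2}{2I}$ is decreasing on $(0,\infty)$. Hence
$$T_J(\rho(m)) = \frac{J^2}{2I(\rho(m))} < \frac{J^2}{2C_0}\, m^3.$$
Setting $C := \frac{J^2}{2C_0}$ gives the claim.

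\textbf{Step 3 (checking the lemma's input).} The only substantive ingredient is the inertia bound, which is proved as in Lemma \ref{moment of inertia increasing rate}. By Lemma \ref{expansion of MoI},
$$I(\rho(m)) \geq \mu_r\, r^2\bigl(\bar{x}(\rho_{1-m})-\bar{x}(\rho_m)\bigr).$$
The two centers of mass lie in $\Omega_{1-m}$ and $\Omega_m$, respectively. These balls are centered on the plane $z=0$ at distance $\eta$ apart and have radius $\eta/4$, so the horizontal separation of the centers is at least $\eta/2$. This gives $I(\rho(m)) \geq \frac{J^4}{4\mu_r^3}$, which is of order $m^{-3}$ for small $m$.

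There is no real obstacle here. The one point to watch is that the projection $r$ onto the $x_1x_2$ plane does not shrink the separation of the centers. This holds because $y_m$ and $y_{1-m}$ both lie in the plane $z=0$. For $c \in \Omega_m$ and $c' \in \Omega_{1-m}$, the $x_1x_2$-projections of $c$ and $c'$ lie within $\eta/4$ of $y_m$ and $y_{1-m}$, respectively. By the triangle inequality in the plane, $r(c-c') \geq \eta - \eta/2 = \eta/2$.
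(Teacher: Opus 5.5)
Your proposal is correct and follows the paper's own route: the paper also obtains the bound directly from Lemma \ref{moment of inertia increasing rate as mass decreases} and the formula $T_J(\rho(m))=\frac{J^2}{2I(\rho(m))}$. Your Step 3 re-derives the inertia lower bound $I(\rho(m))\geq \frac{J^4}{4\mu_r^3}$ as in Lemma \ref{moment of inertia increasing rate}, and it also justifies why the horizontal separation of the centers of mass is at least $\eta/2$, a point the paper leaves implicit.
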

\begin{proof}
{\rm
	The result can be obtained from Lemma \ref{moment of inertia increasing rate as mass decreases} and the definition of $T_J (\rho(m))=\frac{J^2}{2I(\rho(m))}$.}
	\end{proof}
	
	Not only the kinetic energy, we can also estimate velocity $v(x,m)=\frac{Jr(x-\overline{x}({\rho(m)}))}{I(\rho(m))}$, which will be useful later.
	
\begin{corollary}[Velocity Decreasing Rate as Mass Decreases]\label{velocity decreasing rate as mass decreases}
Fix $J>0$, there exist an $M>0$ and a $C>0$, such that $\forall x \in \Omega_m \cup \Omega_{1 - m}$, $m<M$ and $\rho(m)=\rho_{1-m}+\rho_m$ is the constrained minimizer, we have $v(x,m)<Cm$.
\end{corollary}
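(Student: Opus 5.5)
The plan is to follow the proof of Corollary \ref{velocity decreasing rate} essentially verbatim, replacing Lemma \ref{moment of inertia increasing rate} with Lemma \ref{moment of inertia increasing rate as mass decreases}. The constrained minimizer $\rho(m)$ from Theorem \ref{existence of the constrained minimizers in binary case} lies in $W_m$. So its total center of mass $\bar{x}(\rho(m))$ is a convex combination of points of $\Omega_m\cup\Omega_{1-m}$, and hence belongs to $\mathrm{conv}(\Omega_m\cup\Omega_{1-m})$.

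First I bound the numerator. For any $x\in\Omega_m\cup\Omega_{1-m}$, both $x$ and $\bar{x}(\rho(m))$ lie in the convex hull, whose diameter equals $\operatorname{diam}(\Omega_m,\Omega_{1-m})=\frac{3\eta}{2}$ by (\ref{diam}). Since the projection $P_{12}$ does not increase lengths, this gives
\[
r(x-\bar{x}(\rho(m)))\le |x-\bar{x}(\rho(m))|\le \frac{3\eta}{2}=\frac{3J^2}{2m^2(1-m)^2}.
\]
For $m<\frac12$, say, the right-hand side is at most $6J^2m^{-2}$.

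Next I bound the denominator. By Lemma \ref{moment of inertia increasing rate as mass decreases}, there are $M_1,C_1>0$ such that $I(\rho(m))>C_1m^{-3}$ whenever $m<M_1$. Combining the two bounds, for $m<M:=\min\{M_1,\frac12\}$,
\[
v(x,m)=\frac{J\,r(x-\bar{x}(\rho(m)))}{I(\rho(m))}\le \frac{6J^3m^{-2}}{C_1m^{-3}}=\frac{6J^3}{C_1}\,m .
\]
Taking any $C>6J^3/C_1$ gives the strict inequality $v(x,m)<Cm$.

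There is no real obstacle here. The only step needing care is confirming that Lemma \ref{moment of inertia increasing rate as mass decreases} is uniform over all constrained minimizers for $m<M$. This holds because its proof only uses the separation $\frac{\eta}{2}$ between $\Omega_m$ and $\Omega_{1-m}$ together with Lemma \ref{expansion of MoI}, giving $I(\rho(m))\ge\mu_r\frac{\eta^2}{4}=\frac{J^4}{4\mu_r^3}$. It does not depend on the particular minimizer.
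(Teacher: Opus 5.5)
Your proposal is correct and follows essentially the paper's argument. The paper likewise bounds $r(x-\bar{x}(\rho(m)))$ by $\frac{3}{2}\eta$ using $\bar{x}(\rho(m))\in\text{conv}(\Omega_m\cup\Omega_{1-m})$, and then divides by the lower bound $I(\rho(m))>Cm^{-3}$. As you observe, that lower bound uses only the separation of $\Omega_m$ and $\Omega_{1-m}$, so it holds for every element of $W_m$ and not just for minimizers.
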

\begin{proof}
{\rm
	The proof is similar to the proof of Lemma \ref{velocity decreasing rate}.
}
\end{proof}

\begin{remark}
One can show the Lemma \ref{moment of inertia increasing rate as mass decreases}, Corollary \ref{kinetic energy decreasing rate} and Corollary \ref{velocity decreasing rate as mass decreases} actually hold true for any $\rho \in W_m$.
\end{remark}

\vspace*{\baselineskip} 
To find the behavior of $E_{0}\left( \widetilde{\rho_{m}} \right)$ when $m\rightarrow 0$, we compare $E_J (\rho(m))$ with energy $E_J (\varrho(m))$, where
\begin{equation}
\varrho(m)(x) = \varrho_{1 - m}(x) + \varrho_{m}(x) = \sigma_{1 - m}\left( {x - y_{1 - m}} \right) + \sigma_{m}\left( x - y_{m} \right)
\end{equation}
$y_{1-m}$ and $y_m$ are the centers of $\Omega_{1-m}$ and $\Omega_m$. $\sigma_{1-m}$ and $\sigma_m$ are the non-rotating minimizers in Theorem \ref{non-rotating} with mass $(1-m)$ and $m$. In particular, $E_0 (\varrho_m )=E_0 (\sigma_m )\leq E_0 (\rho_m)$, since translations do not change the non-rotating energy. Similarly, $E_0 (\varrho_{1-m} )=E_0 (\sigma_{1-m} )\leq E_0 (\rho_{1-m})$. 

\begin{remark}\label{energy scaling}
By the scaling method and uniqueness of the minimizer, we know $\sigma_{m}(x) = \frac{1}{A}\sigma\left( \frac{1}{B}x \right)$ (see Theorem \ref{scaling relation for energy}), where $A = m^{- \frac{2}{3\gamma - 4}}$, $B = m^{\frac{\gamma - 2}{3\gamma - 4}}$ are the same as before, $\sigma$ is the non-rotating minimizer with mass 1. Similarly, $\sigma_{1 - m} = \frac{1}{\widetilde{A}}\sigma\left( \frac{1}{\widetilde{B}}x \right)$, where $\widetilde{A} = (1 - m)^{- \frac{2}{3\gamma - 4}}$, $\widetilde{B} = (1 - m)^{\frac{\gamma - 2}{3\gamma - 4}}$. Moreover, $\frac{B^{3}}{A^{\gamma}}E_{0}(\sigma) = E_{0}\left( \sigma_{m} \right) = E_{0}\left( \varrho_{m} \right)$, and $\frac{{\widetilde{B}}^{3}}{{\widetilde{A}}^{\gamma}}E_{0}(\sigma) = E_{0}\left( \sigma_{1 - m} \right) = E_{0}\left( \varrho_{1 - m} \right)$.
\end{remark}

\begin{proposition}[Energy Converges to Non-Rotating Minimum]\label{energy converges to non-rotating minimum}
Fix $J>0$, there exist a $M>0$ and a $C>0$, such that $\forall m<M$ and $\rho(m)=\rho_{1-m}+\rho_m$ is the constrained minimizer, we have
\begin{equation}\label{scaling density approaches non-rotating density}
	\begin{split}
		e_{0} \leq E_{0}\left( \widetilde{\rho_{m}} \right) < e_{0} + Cm^{\frac{4\gamma - 6}{3\gamma - 4}}
	\end{split}
\end{equation}
where $e_0$ is the infimum of $E_0 (\rho)$ with mass 1, that is, $e_0=E_0 (\sigma)$. Since $\gamma>\frac{3}{2}$, in particular we have ${\lim\limits_{m\rightarrow 0}{E_{0}\left( \widetilde{\rho_{m}} \right)}} = e_{0}$.
\end{proposition}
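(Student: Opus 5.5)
The lower bound $e_0\le E_0(\widetilde{\rho_m})$ is immediate: by Remark \ref{order of gravitational potential energy}, $\widetilde{\rho_m}$ has mass $1$, so by the definition (\ref{e_0}) of $e_0$ we get $E_0(\widetilde{\rho_m})\ge e_0$. For the upper bound we compare $\rho(m)$ with the competitor $\varrho(m)=\sigma_{1-m}(\cdot-y_{1-m})+\sigma_m(\cdot-y_m)$.

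The first step is to check that $\varrho(m)\in W_m$ for small $m$. By Remark \ref{asymptotic behaviour of radius and norm in single star case}, $\operatorname{spt}\sigma_m$ lies in a ball of radius $BR_1=m^{\frac{\gamma-2}{3\gamma-4}}R_1$. Since $\gamma>\frac32$, this radius is $o(\eta)=o(m^{-2})$, because the exponent satisfies $\frac{\gamma-2}{3\gamma-4}>-2$ (equivalently $7\gamma>10$, true). Likewise $\operatorname{spt}\sigma_{1-m}$ lies in a ball of bounded radius. Hence for $m$ small both translated supports sit inside $\Omega_m$ and $\Omega_{1-m}$ respectively, and $\varrho(m)\in W_m$. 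Minimality of $\rho(m)$ then gives $E_J(\rho(m))\le E_J(\varrho(m))$.

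Next we expand both sides as in (\ref{expandsion and compare of E_J}). For the right side,
\[
E_J(\varrho(m))=E_0(\sigma_{1-m})+E_0(\sigma_m)-G(\varrho_{1-m},\varrho_m)+T_J(\varrho(m))\le E_0(\sigma_{1-m})+\tfrac{B^3}{A^\gamma}e_0+Cm^3,
\]
where we drop $-G\le 0$ and bound $T_J(\varrho(m))<Cm^3$ using Corollary \ref{kinetic energy decreasing rate} (valid for any element of $W_m$ by the subsequent remark). For the left side, (\ref{expandsion and compare of E_J}) gives
\[
E_J(\rho(m))>E_0(\rho_{1-m})+\tfrac{B^3}{A^\gamma}E_0(\widetilde{\rho_m})-G(\rho_{1-m},\rho_m),
\]
with $G(\rho_{1-m},\rho_m)<Cm^3$. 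Since $\rho_{1-m}$ has mass $1-m$, we also have $E_0(\rho_{1-m})\ge e_0(1-m)=E_0(\sigma_{1-m})$, so the star energies cancel. Combining,
\[
\tfrac{B^3}{A^\gamma}\bigl(E_0(\widetilde{\rho_m})-e_0\bigr)<Cm^3 .
\]

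Finally we compute the scaling factor:
\[
\frac{B^3}{A^\gamma}=m^{\frac{3(\gamma-2)+2\gamma}{3\gamma-4}}=m^{\frac{5\gamma-6}{3\gamma-4}},
\]
consistent with Theorem \ref{scaling relation for energy}. Dividing through,
\[
E_0(\widetilde{\rho_m})-e_0<Cm^{3-\frac{5\gamma-6}{3\gamma-4}}=Cm^{\frac{4\gamma-6}{3\gamma-4}},
\]
which is the claimed bound. Since $\gamma>\frac32$ forces $4\gamma-6>0$ and $3\gamma-4>0$, the exponent is positive and the limit statement follows.

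The only genuinely delicate point is the admissibility check $\varrho(m)\in W_m$, that is, the support-radius comparison $BR_1<\eta/4$ when $\frac32<\gamma\le2$ and $B\to\infty$. This is where the restriction $\gamma>\frac32$, or at least $\gamma>\frac{10}{7}$, enters alongside the positivity of the final exponent. Everything else is bookkeeping with the constants, where $C$ must be uniform in $m$.
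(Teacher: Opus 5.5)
Your proof is correct and follows the paper's argument. You compare with $\varrho(m)=\sigma_{1-m}(\cdot-y_{1-m})+\sigma_m(\cdot-y_m)$, cancel the star energies via $E_0(\rho_{1-m})\ge e_0(1-m)$, and divide by $B^3/A^\gamma=m^{\frac{5\gamma-6}{3\gamma-4}}$ using $G(\rho_{1-m},\rho_m)\le Cm^3$. The differences are only cosmetic: you bound $T_J(\varrho(m))\le Cm^3$ and drop $-G(\varrho_{1-m},\varrho_m)$, whereas the paper uses the point-mass comparison to show $-G+T_J<0$; and your admissibility check $BR_1=o(\eta)$ for $\frac32<\gamma\le 2$ is more careful than the paper's claim that the supports of $\sigma_m$ and $\sigma_{1-m}$ have bounded radius.
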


\begin{proof}
{	\rm
	Thanks to Remark \ref{order of gravitational potential energy}, we know $\widetilde{\rho_m}$ has mass $1$, hence we know $e_{0} \leq E_{0}\left( \widetilde{\rho_{m}} \right) $. By Theorem \ref{non-rotating} (v), Theorem \ref{scaling relation for energy} and Remark \ref{asymptotic behaviour of radius and norm in single star case}, we know $\sigma_{1-m}$ and $\sigma_m$ are supported in balls with same constant radius, while the radii of $\Omega_{1-m}$ and $\Omega_m$ go to $\infty$ when $m\rightarrow 0$. Take $m$ large enough such that $\varrho_{1-m} (x)$ and $\varrho_m (x)$ are supported in $\Omega_{1-m}$ and $\Omega_m$, thus $\varrho(m)\in W_m$, and $E_J (\varrho(m))\geq E_J (\rho(m))$ since $\rho(m)$ is the constrained minimizer. Similar to (\ref{expandsion and compare of E_J}), we consider its energy decomposition: 
	\begin{equation}\label{energy decomposition}
		E_{J}\left( {\varrho_{m} + \varrho_{1 - m}} \right) - E_{0}\left( \varrho_{1 - m} \right) - E_{0}\left( \varrho_{m} \right) = - G\left( {\varrho_{1 - m},\varrho_{m}} \right) + T_{J}\left( {\varrho_{1 - m} + \varrho_{m}} \right)
	\end{equation}
	Again, by Theorem \ref{non-rotating}, $\varrho_{1-m}$ and $\varrho_m$ have some nice properties, in particular they are spherically symmetric and radially decreasing after translations. Therefore, by comparison with the point masses, we get $G\left( {\varrho_{1 - m},\varrho_{m}} \right) = \mu_r\eta^{- 1} = \frac{\mu_r^{3}}{J^{2}}$ (one can verify this by using Legendre polynomials or refer to \cite{McC06} \cite[Section 2]{Che26R2}), and $T_{J}\left( {\varrho_{1 - m} + \varrho_{m}} \right) \leq \frac{\mu_r^{3}}{2J^{2}}$ similar to that in Corollary \ref{kinetic energy decreasing rate}. Thus, the right side of (\ref{energy decomposition}) is less than 0. This observation, together with Remark \ref{energy scaling} gives us that: 
	\begin{equation*}
		\begin{aligned}
			E_{0}\left( \varrho_{1 - m} \right) + \frac{B^{3}}{A^{\gamma}}E_{0}(\sigma) 
			&= E_{0}\left( \varrho_{1 - m} \right) + E_{0}\left( \varrho_{m} \right) \\
			&> E_{J}\left( {\varrho_{m} + \varrho_{1 - m}} \right) \\
			&\geq E_{J}\left( {\rho_{m} + \rho_{1 - m}} \right) \\
			&> E_{0}\left( \rho_{1 - m} \right) + \frac{B^{3}}{A^{\gamma}}E_{0}\left( \widetilde{\rho_{m}} \right) - G\left( {\rho_{1 - m},\rho_{m}}\right) \\
			&\geq E_{0}\left( \varrho_{1 - m} \right) + \frac{B^{3}}{A^{\gamma}}E_{0}\left( \widetilde{\rho_{m}} \right) - G\left( {\rho_{1 - m},\rho_{m}} \right)
		\end{aligned}
	\end{equation*}
	Therefore, we get $\frac{B^{3}}{A^{\gamma}}E_{0}\left( \widetilde{\rho_{m}} \right) < G\left( {\rho_{1 - m},\rho_{m}} \right) + \frac{B^{3}}{A^{\gamma}}E_{0}(\sigma)$ (i.e. $E_{0}\left( \widetilde{\rho_{m}} \right) < \frac{A^{\gamma}}{B^{3}}G\left( {\rho_{1 - m},\rho_{m}} \right) + E_{0}(\sigma)$). Since $\frac{A^{\gamma}}{B^{3}} = m^{\frac{6 - 5\gamma}{3\gamma - 4}}$, $G\left( {\rho_{1 - m},\rho_{m}} \right) \leq Cm^{3}$ due to Remark \ref{order of gravitational potential energy}, we have
	\begin{equation*}
		E_{0}\left( \widetilde{\rho_{m}} \right) < Cm^{\frac{4\gamma - 6}{3\gamma - 4}} + E_{0}(\sigma)
	\end{equation*}
	as we expect.
}
\end{proof}

Finally, we can show the convergence of $\left\{ \widetilde{\rho_{m}} \right\}$ in $L^{\frac{4}{3}}\left( \mathbb{R}^{3} \right)$ and $L^{1}\left( \mathbb{R}^{3} \right)$.
\begin{theorem}[Convergence of Scaling Densities of Planets in $L^{\frac{4}{3}}\left( \mathbb{R}^{3} \right)$ and $L^{1}\left( \mathbb{R}^{3} \right)$] \label{convergence of scaling densities of planets}
Fix $J>0$, $m_0>0$ which is given in Theorem \ref{existence of the constrained minimizers in binary case}. Let $0<m<m_0$, and $\rho(m)=\rho_{1-m}+\rho_m$ is the constrained minimizer on $W_m$, $\widetilde{\rho_{m}} = A\rho_{m}\left( {Bx} \right)$ as in Definition \ref{scaling density for planet}, we have, up to translation, $\{\widetilde{\rho_{m}}\}$ converges strongly to $\sigma$ in $L^{\frac{4}{3}}\left( \mathbb{R}^{3} \right)$ and $L^{1}\left( \mathbb{R}^{3} \right)$ as $m \rightarrow 0$. Here $\sigma$ is the non-rotating minimizer with mass 1.
\end{theorem}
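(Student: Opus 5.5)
The proof is a concentration-compactness argument for minimizing sequences of the mass-one problem $E_0$ on $R(\mathbb{R}^3)$. By Proposition \ref{energy converges to non-rotating minimum}, $E_0(\widetilde{\rho_m})\to e_0$ as $m\to 0$, so any sequence $m_k\to 0$ produces a minimizing sequence $\{\widetilde{\rho_{m_k}}\}$ of mass one. The plan is to show that every such sequence has a subsequence which, after translation, converges strongly in $L^{\frac43}$ and $L^1$ to a minimizer. Uniqueness up to translation (Theorem \ref{non-rotating} (i)) then forces the limit to be $\sigma$. Since every sequence $m_k\to0$ has such a subsequence, the whole family converges up to translation.

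\emph{Uniform bounds.} The standard estimate $G(\rho,\rho)\le C\|\rho\|_{L^{4/3}}^{\frac43}\|\rho\|_{L^1}^{\frac23}$ (Hardy–Littlewood–Sobolev plus interpolation), together with $\gamma>\frac43$, gives a uniform bound on $U(\widetilde{\rho_m})$ and $\|\widetilde{\rho_m}\|_{L^{4/3}}$ whenever $E_0(\widetilde{\rho_m})\le e_0+1$. This is the same argument as in Lemma \ref{uniform bound of minimizers in L^4/3 in binary case}. Interpolating with $\|\widetilde{\rho_m}\|_{L^1}=1$ bounds $\|\widetilde{\rho_m}\|_{L^\gamma}$ and $\|\widetilde{\rho_m}\|_{L^p}$ for $1\le p\le\gamma$.

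\emph{Concentration compactness.} Apply Lions' lemma \cite{Lio84} to $\widetilde{\rho_{m_k}}$. Vanishing is ruled out because it forces $G(\widetilde{\rho_{m_k}},\widetilde{\rho_{m_k}})\to0$: split the kernel into near and far parts and use the $L^{4/3}$ bound. Then $\liminf E_0\ge0>e_0$, using $e_0<0$ from Theorem \ref{non-rotating} (ii).

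Dichotomy is ruled out by strict subadditivity, $e_0(1)<e_0(\alpha)+e_0(1-\alpha)$ for $\alpha\in(0,1)$. This follows from Theorem \ref{scaling relation for energy}: $e_0(m)=m^{p}e_0$ with $p=\frac{5\gamma-6}{3\gamma-4}>1$ when $\gamma>\frac43$, and $e_0<0$. The supporting computation is the usual one. Split $\widetilde{\rho}=\rho^1+\rho^2+$remainder, where $\rho^1,\rho^2$ have masses tending to $\alpha$ and $1-\alpha$ and their supports separate to infinity. Convexity and superadditivity of $A$ give $U(\widetilde\rho)\ge U(\rho^1)+U(\rho^2)$, while the cross interaction term and the remainder contribution vanish. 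Hence $\liminf E_0\ge e_0(\alpha)+e_0(1-\alpha)$, a contradiction.

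This dichotomy step is where most of the care goes. I expect the main obstacle to be controlling the remainder terms in $G$ using only the $L^{4/3}$ bound, and justifying the rescaled masses with the scaling law.

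\emph{Compactness and strong convergence.} In the compactness case, translate by $y_k$ so that the sequence is tight. Extract a weak $L^{4/3}$ limit $\rho_\infty$; tightness gives $\int\rho_\infty=1$. On tight sequences $G$ is weakly continuous: compactness of $\rho\mapsto V_\rho$ on balls, as in Lemma \ref{l.s.c on weak L^4/3}, plus tails controlled by tightness. $U$ is weakly lower semicontinuous by convexity. So $E_0(\rho_\infty)\le e_0$, and $\rho_\infty$ is a minimizer, hence equal to $\sigma$ up to translation.

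For strong convergence, the convergence of energies and of $G$ gives $U(\widetilde{\rho_{m_k}})\to U(\sigma)$, i.e. $\|\cdot\|_{L^\gamma}\to\|\sigma\|_{L^\gamma}$. Uniform convexity of $L^\gamma$ upgrades weak to strong $L^\gamma$ convergence. Tightness combined with weak convergence and equal total mass gives strong $L^1$ convergence (the positive parts satisfy $\int(\sigma-\widetilde\rho)_+\to0$ by the Brezis–Lieb/Scheffé argument after obtaining a.e. convergence from $L^\gamma$). Interpolation between $L^1$ and $L^\gamma$ then yields strong convergence in $L^{4/3}$.
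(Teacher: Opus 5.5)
Your proposal is correct and follows the same route as the paper. Both arguments use the energy convergence from Proposition \ref{energy converges to non-rotating minimum}, concentration-compactness to get a subsequence converging up to translation, lower semicontinuity to identify the limit as a mass-one minimizer, uniqueness to get $\sigma$, and a subsequence-of-subsequence argument.

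The only difference is that the paper cites Lions' Theorem II.2 and Corollary II.1 as a black box for strong compactness in $L^{\frac43}$ and $L^1$, whereas you verify it by hand. You exclude vanishing via $e_0<0$ and dichotomy via the strict subadditivity coming from $e_0(m)=m^{\frac{5\gamma-6}{3\gamma-4}}e_0$, and you upgrade weak to strong convergence through convergence of $\|\cdot\|_{L^\gamma}$ and Scheff\'e. This usefully makes explicit the subadditivity hypothesis that the paper leaves implicit.
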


\begin{proof}
{\rm
	Due to Proposition \ref{energy converges to non-rotating minimum}, we know ${\lim\limits_{m\rightarrow 0}{E_{0}\left( \widetilde{\rho_{m}} \right)}} = e_{0} < 0$. Moreover, we can apply the concentration-compactness lemma of Lions \cite[Theorem II.2 and Corollary II.1]{Lio84} to get that $\{\widetilde{\rho_{m}}\}$ is relatively compact up to translations in $L^{\frac{4}{3}}\left( \mathbb{R}^{3} \right)$ and $L^{1}\left( \mathbb{R}^{3} \right)$. Thus, there is a subsequence $\{\widetilde{\rho_{m_{k}}}\}$ that converges, up to translations, to some function $f$ in $L^{\frac{4}{3}}\left( \mathbb{R}^{3} \right)$, $L^{1}\left( \mathbb{R}^{3} \right)$ and almost everywhere in $\mathbb{R}^{3}$. We denote again the subsequence after translations by $\left\{\widetilde{\rho_{m_{k}}}\right\}$. In particular, $\left\| f \right\|_{L^{1}} = 1$. 
	
	And we claim that $E_{0}(f) \leq {\lim\limits_{k\rightarrow\infty}{E_{0}\left( \widetilde{\rho_{m_{k}}} \right)}}$. In fact, we recall 
	$$E_{0}(\rho) = {\int_{\mathbb{R}^3}{\frac{K\left( {\rho(x)} \right)^{\gamma}}{\gamma - 1}\,dx}} - \frac{1}{2}{\iint_{\mathbb{R}^3\times \mathbb{R}^3}{\frac{\rho(x) \cdot \rho(y)}{\left| {x - y} \right|}\,dx\,dy}}$$ 
	
    For the inertial energy $\int_{\mathbb{R}^3}{\frac{K\left( {\rho(x)} \right)^{\gamma}}{\gamma - 1}\,dx}$, by $\left. \widetilde{\rho_{m_{k}}}\rightarrow f \right.$ a.e. and Fatou's Lemma we have ${\int_{\mathbb{R}^3}{\frac{K\left( {f(x)} \right)^{\gamma}}{\gamma - 1}\,dx}} \leq \lim\limits_{k\rightarrow\infty}\inf{\int_{\mathbb{R}^3}{\frac{K\left( {\widetilde{\rho_{m_{k}}}(x)} \right)^{\gamma}}{\gamma - 1}\,dx}}$. 
    
    For the potential energy $\frac{1}{2}{\iint_{\mathbb{R}^3\times \mathbb{R}^3}{\frac{\rho(x) \cdot \rho(y)}{\left| {x - y} \right|}\,dx\,dy}} = \frac{1}{2}{\int_{\mathbb{R}^3}{\rho(x)V_{\rho}(x)\,dx}}$, we have
	\begin{equation*}
		\begin{split}			
			&\quad\, \left| {{\int_{\mathbb{R}^3}{\widetilde{\rho_{m_{k}}}(x)V_{\widetilde{\rho_{m_{k}}}}(x)\,dx}} - {\int_{\mathbb{R}^3}{f(x)V_{f}(x)\,dx}}} \right| \\
			&= \left| {{\int_{\mathbb{R}^3}{\widetilde{\rho_{m_{k}}}(x)V_{\widetilde{\rho_{m_{k}}}}(x)\,dx}} - {\int_{\mathbb{R}^3}{f(x)V_{\widetilde{\rho_{m_{k}}}}(x)\,dx}} + {\int_{\mathbb{R}^3}{f(x)V_{\widetilde{\rho_{m_{k}}}}(x)\,dx}} - {\int_{\mathbb{R}^3}{f(x)V_{f}(x)\,dx}}} \right| \\
			&\leq \left| {{\int_{\mathbb{R}^3}{\widetilde{\rho_{m_{k}}}(x)V_{\widetilde{\rho_{m_{k}}}}(x)\,dx}} - {\int_{\mathbb{R}^3}{f(x)V_{\widetilde{\rho_{m_{k}}}}(x)\,dx}}} \right| + \left| {{\int_{\mathbb{R}^3}{f(x)V_{\widetilde{\rho_{m_{k}}}}(x)\,dx}} - {\int_{\mathbb{R}^3}{f(x)V_{f}(x)\,dx}}} \right| \\
			&\leq \left\| {\widetilde{\rho_{m_{k}}} - f} \right\|_{L^{\frac{4}{3}}} \cdot \left\| V_{\widetilde{\rho_{m_{k}}}} \right\|_{L^{4}} + \left\| f \right\|_{L^{\frac{12}{11}}} \cdot \left\| {V_{f} - V_{\widetilde{\rho_{m_{k}}}}} \right\|_{L^{12}}
		\end{split}
	\end{equation*}
	The last inequality above comes from Hölder's inequality. Thanks to Hardy-Littlewood-Sobolev Inequality (Proposition \ref{HLSI}), we have $\left\| V_{\widetilde{\rho_{m_{k}}}} \right\|_{L^{4}} \leq C\left\| \widetilde{\rho_{m_{k}}} \right\|_{L^{\frac{12}{11}}}$, and $\left. \widetilde{\rho_{m_{k}}}\rightarrow f \right.$ in $L^{\frac{4}{3}}\left( \mathbb{R}^{3} \right)$ implies $\left. V_{\widetilde{\rho_{m_{k}}}}\rightarrow V_{f}(x) \right.$ in $L^{12}\left( \mathbb{R}^{3} \right)$. Due to the Interpolation Inequality \cite[Section 4.2]{Bre11}, we have $\left\| \widetilde{\rho_{m_{k}}} \right\|_{L^{\frac{12}{11}}} \leq \left\| \widetilde{\rho_{m_{k}}} \right\|_{L^{1}} \cdot \left\| \widetilde{\rho_{m_{k}}} \right\|_{L^{\frac{4}{3}}}$, and $\left\| f \right\|_{L^{\frac{12}{11}}} \leq \left\| f \right\|_{L^{1}} \cdot \left\| f \right\|_{L^{\frac{4}{3}}}$. Thus $\left\| \widetilde{\rho_{m_{k}}} \right\|_{L^{\frac{12}{11}}}$ and $\left\| f \right\|_{L^{\frac{12}{11}}}$ are bounded, which—together with the inequality $\left\| V_{\widetilde{\rho_{m_{k}}}} \right\|_{L^{4}} \leq C\left\| \widetilde{\rho_{m_{k}}} \right\|_{L^{\frac{12}{11}}}$—implies the boundedness of $\left\| V_{\widetilde{\rho_{m_{k}}}} \right\|_{L^{4}}$. Then
	$$\left. \left| {{\int_{\mathbb{R}^3}{\widetilde{\rho_{m_{k}}}(x)V_{\widetilde{\rho_{m_{k}}}}(x)\,dx}} - {\int_{\mathbb{R}^3}{f(x)V_{f}(x)\,dx}}} \right|\rightarrow 0 \right.$$ 
	Thus, we know the claim above holds that $E_{0}(f) \leq {\lim\limits_{k\rightarrow\infty}{E_{0}\left( \widetilde{\rho_{m_{k}}} \right)}} = e_{0} = E_{0}(\sigma)$. 
	
	In fact, the convergence up to a subsequence can be strengthened to the convergence of the whole sequence. Since the non-rotating minimizer is unique up to translation due to Theorem \ref{non-rotating}, we get $\left\{\widetilde{\rho_{m_{k}}} \right\}$ converges strongly to $\sigma$ in $L^{\frac{4}{3}}\left( \mathbb{R}^{3} \right)$ and $L^{1}\left( \mathbb{R}^{3} \right)$ up to translations. By the same arguments, we know for any subsequence $\left\{ \widetilde{\rho_{m_{k}}} \right\}$, we can always pick a subsequence $\left\{ \widetilde{\rho_{m_{k_{l}}}} \right\}$ of $\left\{ \widetilde{\rho_{m_{k}}} \right\}$ which converges strongly to $\sigma$ up to translations. Thus, by an argument of contradiction, we get, up to translations, $\left\{\widetilde{\rho_{m}} \right\}$ converges strongly to $\sigma$ in $L^{\frac{4}{3}}\left( \mathbb{R}^{3} \right)$ and $L^{1}\left( \mathbb{R}^{3} \right)$.
}
\end{proof}

Actually, we can also show the convergence of $\{\widetilde{\rho_{1-m}}\}$ and $\{{\rho_{1-m}}\}$ in $L^{\frac{4}{3}}\left( \mathbb{R}^{3} \right)$ and $L^{1}\left( \mathbb{R}^{3} \right)$. 

\begin{corollary}[Convergence of Scaling Densities of Stars in $L^{\frac{4}{3}}\left( \mathbb{R}^{3} \right)$ and $L^{1}\left( \mathbb{R}^{3} \right)$] \label{convergence of scaling densities of stars}
Fix $J>0$, $m_0>0$ which is given in Theorem \ref{existence of the constrained minimizers in binary case}. Let $0<m<m_0$, and $\rho(m)=\rho_{1-m}+\rho_m$ is the constrained minimizer on $W_m$, $\widetilde{\rho_{1-m}} = A(1-m)\rho_{1-m}\left( {B(1-m)x} \right)$ as in Definition \ref{scaling density for planet}, we have, up to translations, $\{\widetilde{\rho_{1-m}}\}$ converges strongly to $\sigma$ in $L^{\frac{4}{3}}\left( \mathbb{R}^{3} \right)$ and $L^{1}\left( \mathbb{R}^{3} \right)$ as $m\rightarrow 0$.
\end{corollary}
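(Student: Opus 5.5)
We mirror the proof of Theorem \ref{convergence of scaling densities of planets}, with $m$ replaced by $1-m$ in the scaling. Write $A_\ast=A(1-m)=(1-m)^{-\frac{2}{3\gamma-4}}$ and $B_\ast=B(1-m)=(1-m)^{\frac{\gamma-2}{3\gamma-4}}$. Then $\widetilde{\rho_{1-m}}$ has mass $1$ and $E_0(\rho_{1-m})=\frac{B_\ast^3}{A_\ast^\gamma}E_0(\widetilde{\rho_{1-m}})$, exactly as in Remark \ref{order of gravitational potential energy}. Since $A_\ast,B_\ast\to1$ as $m\to0$, these factors are harmless. The main work is an energy estimate analogous to Proposition \ref{energy converges to non-rotating minimum}, showing $E_0(\widetilde{\rho_{1-m}})\to e_0$.

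\textbf{Energy estimate.} Compare with the competitor $\varrho(m)=\sigma_{1-m}(\cdot-y_{1-m})+\sigma_m(\cdot-y_m)\in W_m$. This membership holds for small $m$ because $\sigma_{1-m}$ has support radius $B_\ast R_1$, which stays bounded. For the planet we only need that the support of $\sigma_m$ fits in $\Omega_m$. Its radius $BR_1=m^{\frac{\gamma-2}{3\gamma-4}}R_1$ may grow when $\gamma<2$, but only like $m^{-c}$ with $c=\frac{2-\gamma}{3\gamma-4}<1$ for $\gamma>\frac32$, which is much smaller than $\eta/4\sim m^{-2}$. The chain of inequalities from the proof of Proposition \ref{energy converges to non-rotating minimum} then gives
\[
E_0(\rho_{1-m})+E_0(\rho_m)-G(\rho_{1-m},\rho_m)<E_0(\varrho_{1-m})+E_0(\varrho_m).
\]
Now use $E_0(\rho_m)\ge E_0(\varrho_m)=e_0(m)$, since $\rho_m$ has mass $m$. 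We obtain
\[
\frac{B_\ast^3}{A_\ast^\gamma}E_0(\widetilde{\rho_{1-m}})<\frac{B_\ast^3}{A_\ast^\gamma}e_0+G(\rho_{1-m},\rho_m)\le \frac{B_\ast^3}{A_\ast^\gamma}e_0+Cm^3 .
\]
Hence $e_0\le E_0(\widetilde{\rho_{1-m}})<e_0+Cm^3$. No negative power of $m$ appears here, so this step is in fact easier than the planet case.

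\textbf{Compactness and identification of the limit.} Because $E_0(\widetilde{\rho_{1-m}})\to e_0<0$ and each $\widetilde{\rho_{1-m}}$ has mass one, $\{\widetilde{\rho_{1-m}}\}$ is a minimizing sequence for $E_0$ on $R(\mathbb{R}^3)$. Lions' concentration-compactness lemma \cite[Theorem II.2, Corollary II.1]{Lio84} then yields relative compactness up to translations in $L^{\frac43}$ and $L^1$. For a convergent subsequence with limit $f$:
\begin{itemize}
\item Fatou's lemma handles the internal energy.
\item The Hardy--Littlewood--Sobolev argument from the proof of Theorem \ref{convergence of scaling densities of planets} handles the gravitational term.
\end{itemize}
Together these give $E_0(f)\le e_0$ with $\|f\|_{L^1}=1$. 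Uniqueness of the minimizer up to translation (Theorem \ref{non-rotating} (i)) forces $f=\sigma$ up to translation. The usual subsequence-of-subsequence argument then upgrades this to convergence of the whole family.

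\textbf{Main obstacle.} The delicate point is the applicability of the concentration-compactness lemma. We must exclude vanishing and dichotomy, which follows from the strict subadditivity $e_0<e_0(\alpha)+e_0(1-\alpha)$ for $0<\alpha<1$. This is the same input already used for the planet, and it follows from the scaling law $e_0(m)=m^{\frac{5\gamma-6}{3\gamma-4}}e_0$ with exponent $>1$ when $\gamma>\frac43$. Everything else transfers verbatim.

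Finally, the statement about $\{\rho_{1-m}\}$ itself follows because $\rho_{1-m}(x)=A_\ast^{-1}\widetilde{\rho_{1-m}}(x/B_\ast)$ with $A_\ast,B_\ast\to1$. Continuity of dilations in $L^p$ then shows $\rho_{1-m}\to\sigma$ up to translation as well.
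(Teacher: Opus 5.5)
Your proposal is correct and follows the paper's route. Comparing with $\varrho(m)$ and cancelling the planet terms via $E_0(\rho_m)\ge e_0(m)$ gives exactly the paper's bound $E_0(\widetilde{\rho_{1-m}})<e_0+C(1-m)^{\frac{6-5\gamma}{3\gamma-4}}m^3$ (you absorb the factor tending to $1$ into $C$), after which the concentration-compactness, lower-semicontinuity and uniqueness argument of Theorem~\ref{convergence of scaling densities of planets} is repeated; your explicit checks that $\varrho(m)\in W_m$ also when $\gamma<2$ and that vanishing and dichotomy are excluded by strict subadditivity fill in points the paper leaves implicit.
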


\begin{proof}
{
	\rm
	We can apply the same arguments as above and obtain:
	\begin{equation*}
		E_{0}\left( \widetilde{\rho_{1-m}} \right) < C(1-m)^{\frac{6-5\gamma}{3\gamma - 4}}\cdot m^3 + E_{0}(\sigma)
	\end{equation*}
	Thus ${\lim\limits_{m\rightarrow 0}{E_{0}\left( \widetilde{\rho_{1-m}} \right)}} = e_{0} < 0$, and we can follow a proof similar to that in Theorem \ref{convergence of scaling densities of planets} above to get the result.
}
\end{proof}

\begin{corollary}[Convergence of Densities of Stars in $L^{\frac{4}{3}}\left( \mathbb{R}^{3} \right)$ and $L^{1}\left( \mathbb{R}^{3} \right)$] \label{convergence of densities of stars}
Fix $J>0$, $m_0>0$ which is given in Theorem \ref{existence of the constrained minimizers in binary case}. Let $0<m<m_0$, and $\rho(m)=\rho_{1-m}+\rho_m$ is the constrained minimizer on $W_m$, we have, up to translations, $\{\rho_{1-m}\}$ converges strongly to $\sigma$ in $L^{\frac{4}{3}}\left( \mathbb{R}^{3} \right)$ and $L^{1}\left( \mathbb{R}^{3} \right)$ as $m \rightarrow 0$.
\end{corollary}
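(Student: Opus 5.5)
We deduce this from Corollary \ref{convergence of scaling densities of stars} by undoing the scaling, since the scaling parameters for the star tend to $1$ as $m\to 0$. Write $a=a(m):=A(1-m)=(1-m)^{-\frac{2}{3\gamma-4}}$ and $b=b(m):=B(1-m)=(1-m)^{\frac{\gamma-2}{3\gamma-4}}$. Then $a\to 1$ and $b\to 1$ as $m\to0$. Moreover $\rho_{1-m}(x)=a^{-1}\widetilde{\rho_{1-m}}(x/b)$. By the corollary there are translations $z_m$ such that $g_m:=\widetilde{\rho_{1-m}}(\cdot+z_m)\to\sigma$ in $L^{\frac43}$ and $L^1$. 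It then suffices to show that $a^{-1}g_m(\cdot/b)\to\sigma$ in both norms, because $\rho_{1-m}(\cdot+bz_m)=a^{-1}g_m(\cdot/b)$ is a translate of $\rho_{1-m}$.

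We split by the triangle inequality:
\[
\|a^{-1}g_m(\cdot/b)-\sigma\|_{L^p}\le \|a^{-1}g_m(\cdot/b)-a^{-1}\sigma(\cdot/b)\|_{L^p}+\|a^{-1}\sigma(\cdot/b)-\sigma\|_{L^p},\qquad p\in\{1,\tfrac43\}.
\]
The first term equals $a^{-1}b^{3/p}\|g_m-\sigma\|_{L^p}$ after the change of variables $x=by$. Since $a,b\to1$ and $g_m\to\sigma$, this term tends to $0$.

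The second term is where the work lies, and it is a continuity-of-dilation statement. We bound it by $|a^{-1}-1|\,b^{3/p}\|\sigma\|_{L^p}+\|\sigma(\cdot/b)-\sigma\|_{L^p}$. The first piece tends to $0$. For $\|\sigma(\cdot/b)-\sigma\|_{L^p}\to0$ as $b\to1$ we use the standard density argument. Approximate $\sigma$ in $L^p$ by $\varphi\in C_c(\mathbb{R}^3)$. For such $\varphi$, uniform continuity together with uniformly bounded supports (for $b$ near $1$) gives $\varphi(\cdot/b)\to\varphi$ in $L^p$. The errors $\|\sigma(\cdot/b)-\varphi(\cdot/b)\|_{L^p}=b^{3/p}\|\sigma-\varphi\|_{L^p}$ are controlled uniformly for $b$ near $1$.

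A simpler route for this step is also available. By Theorem \ref{non-rotating}, $\sigma$ is continuous, bounded, and compactly supported, so dominated convergence on a fixed ball gives $\sigma(\cdot/b)\to\sigma$ pointwise and in $L^p$ directly.

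There is no real obstacle here. The only care needed is to track the translations correctly, since the translation of the scaled density becomes $bz_m$ after unscaling, and to note that the statement is itself only up to translations. Convergence of the whole family, not merely of a subsequence, is inherited from the corollary.
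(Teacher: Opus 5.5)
Your proof is correct, but it takes a different route from the paper. The paper does not use Corollary~\ref{convergence of scaling densities of stars}. It redoes the energy comparison for the unscaled star density, obtaining $E_0(\rho_{1-m}) < Cm^3 + (1-m)^{\frac{5\gamma-6}{3\gamma-4}}E_0(\sigma)$, hence $E_0(\rho_{1-m})\to e_0<0$. It then reruns the argument of Theorem~\ref{convergence of scaling densities of planets} directly on $\{\rho_{1-m}\}$: Lions' concentration-compactness, lower semicontinuity of $E_0$, and uniqueness of the mass-one minimizer.

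You instead treat the statement as a formal consequence of the scaled corollary. Since $A(1-m),B(1-m)\to 1$, undoing the scaling costs only the factor $a^{-1}b^{3/p}\to 1$ in the first term. The second term is continuity of dilations on the fixed profile $\sigma$, where continuity and compact support of $\sigma$ (Theorem~\ref{non-rotating}) make dominated convergence immediate.

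Your route is more economical. It avoids a second application of concentration-compactness, and it sidesteps the minor point that $\rho_{1-m}$ has mass $1-m$ rather than exactly $1$, since the scaled densities have mass one. It also identifies the translations explicitly as $bz_m$, with whole-family convergence inherited automatically. The paper's route is independent of the scaled corollary and produces an explicit energy bound for the unscaled star along the way. The price is applying the compactness lemma to a family whose mass only tends to $1$.
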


\begin{proof}
{
	\rm
	Similarly, we can still compare $E_0 (\sigma_{1-m})$ and $E_0 (\rho_{1-m})$ without scaling the densities and obtain: 
	\begin{equation*}
		E_{0}\left( \rho_{1-m} \right) < Cm^3 + (1-m)^{\frac{5\gamma-6}{3\gamma - 4}}E_{0}(\sigma)
	\end{equation*}
	
	Thus ${\lim\limits_{m\rightarrow 0}{E_{0}\left( {\rho_{1-m}} \right)}} = e_{0} < 0$, and we can follow a proof similar to that in Theorem \ref{convergence of scaling densities of planets} above to get the result.
}
\end{proof}

\begin{remark}\label{reason of scaling}
For $\rho_m$, we can similarly obtain:
$$
E_{0}\left( \rho_{m} \right) < Cm^{3} + m^{\frac{5\gamma - 6}{3\gamma - 4}} \cdot E_{0}(\sigma)
$$
Then we have ${\lim\limits_{m\rightarrow 0}{E_{0}\left( \rho_{m} \right)}} = 0$, which violates the condition of concentration-compactness lemma \cite[Theorem II.2 and Corollary II.1]{Lio84}. In this case, we can actually show that $\rho_m$ converges to $0$: from the scaling relation (\ref{scaling density for planet expression}) and the fact ${\lim\limits_{m\rightarrow 0}{A(m)}} = \infty$ and ${\lim\limits_{m\rightarrow 0}{B(m)}} = 0$, we can show $\left. \left\| \rho_{m} \right\|_{L^{\frac{4}{3}}} = \left( {A(m)} \right)^{- \frac{4}{3}}\left( {B(m)} \right)^{3}\left\| \widetilde{\rho_{m}} \right\|_{L^{\frac{4}{3}}}\rightarrow 0 \right.$ as $m \rightarrow 0$. However, this vanishing limit result is not enough. We seek a stronger estimate that will allow us to estimate the Lagrange multiplier and then the support size of $\rho_m$, as we will see later. This is the reason we introduce the scaling density.
\end{remark}

\begin{remark}
Note the convergence holds true for the whole sequence instead of up to a subsequence. 
\end{remark}

\subsection{Uniform bound of scaling densities in $L^{\infty}\left(\mathbb{R}^{3}\right)$}\label{subsection5.2-Uniform bound of scaling densities in L^{infty}}

Thanks to the convergence results in subsection \ref{subsection5.1-Convergence of scaling densities in L^{4/3} and L^1}, now we can estimate the uniform $L^\infty$ bound of scaling densities $\left\{ \widetilde{\rho_{m}} \right\}$ when $m$ is sufficiently small. Since $\widetilde{\rho_{m}} \rightarrow \sigma $ in $L^{\frac{4}{3}}\left( \mathbb{R}^{3} \right)$ (Theorem \ref{convergence of scaling densities of planets}), we know $\left\{ \widetilde{\rho_{m}} \right\}$ are bounded uniformly in $L^{\frac{4}{3}}\left( \mathbb{R}^{3} \right)$ when $m$ is sufficiently small. The next steps are to apply bootstrap method as follows: (1) $\left\{ V_{\widetilde{\rho_{m}}} \right\}$ are bounded uniformly in $L^{12}\left( \mathbb{R}^{3} \right)$; (2) $\left\{ \widetilde{\rho_{m}} \right\}$ are bounded uniformly in $L^{6}\left( \mathbb{R}^{3} \right)$; (3) $\left\{ V_{\widetilde{\rho_{m}}} \right\}$ are bounded uniformly in $L^{\infty}\left( \mathbb{R}^{3} \right)$; (4) $\left\{ \widetilde{\rho_{m}} \right\}$ are bounded uniformly in $L^{\infty}\left( \mathbb{R}^{3} \right)$.

\begin{theorem}[Uniform bound of Scaling Densities of Planets and Potentials in $L^{\infty}\left( \mathbb{R}^{3} \right)$] \label{uniform bound of scaling densities of planets and potential in L^infty}
Fix $J>0$, $m_0>0$ which is given in Theorem \ref{existence of the constrained minimizers in binary case}. Let $0<m<m_0$ and $\rho(m)=\rho_{1-m}+\rho_m$ is the constrained minimizer on $W_m$, $\widetilde{\rho_{m}} = A(m)\rho_{m}\left( {B(m)x} \right)$ as in Definition \ref{scaling density for planet}, then $\exists C>0$, $\delta>0$, $\forall 0<m<\delta$, we have $\widetilde{\rho_{m}}\in L^{\infty}\left( \mathbb{R}^{3} \right)$, and  
$\left\| \widetilde{\rho_{m}} \right\|_{L^{\infty}(\mathbb{R}^{3})} \leq C$, $V_{\widetilde{\rho_{m}}} \in L^{\infty}\left( \mathbb{R}^{3} \right)$, and $\left\| V_{\widetilde{\rho_{m}}} \right\|_{L^{\infty}(\mathbb{R}^{3})} \leq C$, where $V_{\widetilde{\rho_{m}}}(x) = {\int_{\mathbb{R}^3}{\frac{\widetilde{\rho_{m}}(y)}{\left| {x - y} \right|}\,dy}}$ is the potential from $\widetilde{\rho_{m}}$. Moreover, $V_{\widetilde{\rho_{m}}}(x)$ is continuous.
\end{theorem}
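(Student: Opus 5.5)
We follow the four-step bootstrap announced just before the statement. The starting point is an Euler–Lagrange inequality for the scaled planet density. By Theorem \ref{existence of the constrained minimizers in binary case}, $\rho(m)\in W_{m,R(m)}$ for some $R(m)$. The analogue of (\ref{ELl in binary case}) for the constrained minimizer (Remark \ref{retaining center of mass}) gives, wherever $\rho_m>0$ in $\Omega_m$,
\begin{equation*}
A'(\rho_m(x))\le V_{\rho(m)}(x)+\frac{J^2}{2I^2(\rho(m))}r^2(x-\bar{x}(\rho(m)))+\lambda_m ,
\end{equation*}
with $\lambda_m<0$ by Lemma \ref{negative Lagrange-multipliers in binary case}. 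Split $V_{\rho(m)}=V_{\rho_m}+V_{\rho_{1-m}}$. On $\Omega_m$ we have $V_{\rho_{1-m}}\le 2(1-m)/\eta\le Cm^2$ by (\ref{dist}), and the rotation term is at most $Cm^2$ by Corollary \ref{velocity decreasing rate as mass decreases}. Dropping $\lambda_m$ gives
\begin{equation*}
A'(\rho_m(x))\le V_{\rho_m}(x)+Cm^2 \quad\text{on } \mathbb{R}^3 .
\end{equation*}
Now rescale with (\ref{scaling for A prime}) and (\ref{scaling for V}), i.e. multiply by $m^{\frac{2\gamma-2}{3\gamma-4}}$ and evaluate at $Bx$. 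This yields
\begin{equation*}
A'(\widetilde{\rho_m}(x))\le V_{\widetilde{\rho_m}}(x)+Cm^{2+\frac{2\gamma-2}{3\gamma-4}}\le V_{\widetilde{\rho_m}}(x)+C \quad\text{for } 0<m<\delta .
\end{equation*}
The additive constant is harmless, and it even tends to $0$.

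\emph{Step 1: $L^{12}$ bound on the potential.} By Theorem \ref{convergence of scaling densities of planets}, $\widetilde{\rho_m}\to\sigma$ in $L^{4/3}$, so $\|\widetilde{\rho_m}\|_{L^{4/3}}\le C$ for small $m$. Hardy–Littlewood–Sobolev (Proposition \ref{HLSI}) then gives $\|V_{\widetilde{\rho_m}}\|_{L^{12}}\le C$.

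\emph{Step 2: $L^6$ bound on the density.} The additive constant prevents taking global $L^p$ norms directly, so split by level sets. On $\{\widetilde{\rho_m}\le 1\}$ the density is already bounded. On $\{\widetilde{\rho_m}>1\}$, which has measure at most $\|\widetilde{\rho_m}\|_{L^1}=1$, we have $\widetilde{\rho_m}^{\gamma-1}\le C(V_{\widetilde{\rho_m}}+1)$. This gives $\|\widetilde{\rho_m}\|_{L^{12(\gamma-1)}(\{\widetilde{\rho_m}>1\})}\le C$. Since $12(\gamma-1)>6$ for $\gamma>3/2$, interpolating with the unit $L^1$ norm yields $\|\widetilde{\rho_m}\|_{L^6}\le C$ uniformly.

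\emph{Step 3: $L^\infty$ bound and continuity of the potential.} With $\widetilde{\rho_m}$ bounded in $L^1\cap L^6$, Proposition \ref{diff. of poten.} gives that $V_{\widetilde{\rho_m}}$ is continuous with uniformly bounded sup norm. Explicitly, split the kernel at $|x-y|=1$ and apply Hölder with $L^{6/5}_{\mathrm{loc}}$ integrability of $|x|^{-1}$. Then $\|V_{\widetilde{\rho_m}}\|_{L^\infty}\le C\big(\|\widetilde{\rho_m}\|_{L^1}+\|\widetilde{\rho_m}\|_{L^6}\big)$, and continuity comes from dominated convergence.

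\emph{Step 4: $L^\infty$ bound on the density.} Returning to $\widetilde{\rho_m}^{\gamma-1}\le C(V_{\widetilde{\rho_m}}+1)$ gives $\|\widetilde{\rho_m}\|_{L^\infty}\le C$.

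\textbf{Main obstacle.} The delicate point is the initial reduction: checking that the inequality holds at every point (or a.e.) of $\mathbb{R}^3$ uniformly in $m$. In particular the constants from the star's potential and the centrifugal term must not blow up after multiplying by $m^{\frac{2\gamma-2}{3\gamma-4}}$, and the negativity of $\lambda_m$ must hold for the constrained minimizer (via Lemma \ref{negative Lagrange-multipliers in binary case} with $R\ge k_1$). Outside $\{\rho_m>0\}$ the inequality is trivial, since $A'(0)=0$ and the right-hand side is nonnegative.
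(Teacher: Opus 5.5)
Your argument is correct and uses the same four-step bootstrap as the paper. It differs at the pointwise inequality, and there your version is the more careful one.

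\textbf{The pointwise inequality.} The paper asserts the multiplicative bound $\widetilde{\rho_m}^{\gamma-1}\le C\,V_{\widetilde{\rho_m}}$ by quoting Remark~\ref{rho bounded by V_rho} and rescaling. That remark, however, bounds $A'(\rho(m))$ by the \emph{full} potential $V_{\rho(m)}=V_{\rho_m}+V_{\rho_{1-m}}$; the star's potential is exactly what dominates the centrifugal term there. After rescaling, the star's part becomes an additive term $A^{\gamma-1}V_{\rho_{1-m}}(B\,\cdot)\le Cm^{\frac{4\gamma-6}{3\gamma-4}}$ on $\Omega_m^B$.

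This term cannot be absorbed into $C\,V_{\widetilde{\rho_m}}$ uniformly in $m$. A priori one only knows $V_{\widetilde{\rho_m}}\gtrsim B/\eta$ on $\Omega_m^B$, so the ratio can be of order $m^{-1}$. The sharper estimate $A^{\gamma-1}\lambda_m\le\kappa_1+\epsilon$, which would cancel the term, is proved later using this very theorem.

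Keeping the constant additive and splitting along $\{\widetilde{\rho_m}>1\}$, a set of measure at most $1$ by Chebyshev, is what fixes this. Your route also gains in two further places:
\begin{itemize}
\item You aim at $L^6$. The paper interpolates to $L^{12}$, which from $L^1$ and $L^{12(\gamma-1)}$ is only possible when $\gamma\ge 2$.
\item Your explicit kernel split gives a sup bound on $V_{\widetilde{\rho_m}}$ that is uniform in $m$. Proposition~\ref{diff. of poten.}, as stated, does not quantify this.
\end{itemize}

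\textbf{One slip to fix.} The rescaling factor is $A^{\gamma-1}=m^{-\frac{2\gamma-2}{3\gamma-4}}$, not $m^{+\frac{2\gamma-2}{3\gamma-4}}$; compare the exponent $\frac{2-2\gamma}{3\gamma-4}$ in (\ref{scaling for A prime}). The additive constant is therefore $Cm^{2-\frac{2\gamma-2}{3\gamma-4}}=Cm^{\frac{4\gamma-6}{3\gamma-4}}$. This matches the paper's later computation $A^{\gamma-1}m^2=m^{\frac{4\gamma-6}{3\gamma-4}}$ in the proof of Theorem~\ref{bound on the sizes of scaling densities}.

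The constant still tends to $0$, but only because $\gamma>\frac32$. That is where the hypothesis enters your initial reduction, in addition to the condition $12(\gamma-1)>6$ in Step 2. Your stated exponent would have been positive for every $\gamma>\frac43$, which hides this dependence.
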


\begin{proof}
{
	\rm
	We modify a little bit the arguments in Theorem \ref{uniform bound of minimizers in L^infty in binary case}. By Hardy-Littlewood-Sobolev Inequality (Proposition \ref{HLSI}) and the uniform bound of $\left\{ \widetilde{\rho_{m}} \right\}$ in $L^{\frac{4}{3}}\left( \mathbb{R}^{3} \right)$ due to Theorem \ref{convergence of scaling densities of planets} (here uniform bound means a bound independent of $m$ in a small interval $(0,\tilde \delta)$), we have $\left\| V_{\widetilde{\rho_{m}}} \right\|_{L^{12}(\mathbb{R}^{3})} \leq C_{1}\left\| \widetilde{\rho_{m}} \right\|_{L^{\frac{4}{3}}{(\mathbb{R}^{3})}} \leq C$. Due to Remark \ref{rho bounded by V_rho}, together with (\ref{scaling for A prime}) (\ref{scaling for V}), we know $\exists C>0$, such that ${\widetilde{\rho_{m}}}^{\gamma - 1} \leq CV_{\widetilde{\rho_{m}}}$, then there is a $C_2>0$, such that
	$$
	\left\| \widetilde{\rho_{m}} \right\|_{L^{12{({\gamma - 1})}}} = \left( {\int_{\mathbb{R}^3}{{\widetilde{\rho_{m}}}^{12{({\gamma - 1})}}(x)\,dx}} \right)^{\frac{1}{12{({\gamma - 1})}}} \leq {\widetilde{C}\left( {\int_{\mathbb{R}^3}{V_{\widetilde{\rho_{m}}}^{12}(x)\,dx}} \right)}^{\frac{1}{12{({\gamma - 1})}}} = \widetilde{C}\left\| V_{\widetilde{\rho_{m}}} \right\|_{L^{12}}^{\frac{1}{\gamma - 1}} \leq C_{2}
	$$ 	
	By assumption $\gamma > \frac{3}{2}>\frac{4}{3}$, together with Interpolation Inequality \cite[Section 4.2]{Bre11} we have $\left\| \widetilde{\rho_{m}} \right\|_{L^{12}} \leq \left\| \widetilde{\rho_{m}} \right\|_{L^{12{({\gamma - 1})}}}^{\alpha} \cdot \left\| \widetilde{\rho_{m}} \right\|_{L^{1}}^{1 - \alpha} \leq C$ for some $C>0$. Since we already know $\left\| \widetilde{\rho_{m}} \right\|_{L^{1}{(\mathbb{R}^{3})}}=1$ and $\left\| \widetilde{\rho_{m}} \right\|_{L^{12}{(\mathbb{R}^{3})}}$ are bounded uniformly, due to Proposition \ref{diff. of poten.}, we have $V_{\widetilde{\rho_{m}}}$  is continuous and $\left\| V_{\widetilde{\rho_{m}}} \right\|_{L^{\infty}{(\mathbb{R}^{3})}}$ is bounded uniformly. Again by ${\widetilde{\rho_{m}}}^{\gamma - 1} \leq CV_{\widetilde{\rho_{m}}}$ we know $\left\| \widetilde{\rho_{m}} \right\|_{L^{\infty}(\mathbb{R}^{3})}$ is also bounded uniformly.
}
\end{proof}

\begin{remark}
The uniform bound in $L^{\infty}(\mathbb{R}^{3})$ can be comparable with Li's result \cite[Proposition 1.4]{Li91}, where the rotating single stars are considered and the bound $C(m)$ however depends on $m$. Hence, if it can be shown that ${\lim\limits_{m\rightarrow 0}{\sup{C(m)}}} < \infty$, then an analogous result to Theorem \ref{uniform bound of scaling densities of planets and potential in L^infty} should also hold.

%
\end{remark}

\begin{remark}\label{uniform bound for the norm of stars or scaling stars}
By the similar arguments, we can obtain that when $m$ is sufficiently small, $\left\{ \widetilde{\rho_{1 - m}} \right\}$ and $\left\{ \rho_{1 - m} \right\}$ are also bounded uniformly in $L^{\infty}(\mathbb{R}^{3})$.
\end{remark}

\begin{remark}\label{bound convergence of planets}
Similar to the discussion in Remark \ref{asymptotic behaviour of radius and norm in single star case}, once we know $\left\| \widetilde{\rho_{m}} \right\|_{L^{\infty}(\mathbb{R}^{3})}$ is bounded uniformly, by definition we have $\rho_{m} = \frac{1}{A}\widetilde{\rho_{m}}\left( \frac{x}{B} \right)$, $A = m^{- \frac{2}{3\gamma - 4}}$, thus ${\lim\limits_{m\rightarrow 0}\left\| \rho_{m} \right\|_{L^{\infty}(\mathbb{R}^{3})}} = 0$. 
\end{remark}

\subsection{Estimate for the supports of densities}\label{subsection5.3-Estimate for the supports of densities}

In this subsection, we finish the estimate for the supports of scaling densities as well as the original densities. We first seek an upper bound for the Lagrange multiplier. To avoid confusion with the Lagrange multiplier $\lambda_m$ corresponding to star-planet system, we will use $\kappa_m$ to represent the corresponding non-rotating Lagrange multiplier with mass $m$ in this subsection.

Since both the function $A(\rho)=\frac{K}{\gamma-1} \rho^\gamma$ in (\ref{A}) and the scaling coefficient $A$ in (\ref{scaling density for planet expression}) will appear frequently in this subsection, to avoid confusion, we will use $\mathcal{A}(\rho)$ to denote the function $A(\rho)$, while keeping $A$ to represent the coefficient $A$ in this subsection.

Similar to the arguments in the proof of Theorem \ref{existence of ``double constrained'' minimizers} (see also \cite[Lemma 2]{AB71} \cite[Section 5]{Che26G1} \cite[Section 2]{Che26R2}), it can be shown that given a constrained minimizer $\rho(m)$, $\rho(m)$ satisfies Euler-Lagrange equation in the following sense:
\begin{equation} \label{ELm}
\mathcal{A}^{\prime}(\rho (m)(x))=\left[\frac{J^2}{2 I^2(\rho (m))} r^2(x-\bar{x}(\rho (m)))+V_{\rho (m)}(x)+\lambda_\alpha\right]_{+} \text{on } \Omega_\alpha \tag{ELm}
\end{equation}
where $\alpha \in \{m,1-m\}$.

Similar to Proposition \ref{negative Lagrange-multipliers in binary case} we can show $\lambda_\alpha <0$ for $m$ small enough. Now we give a stronger result in the following lemma.

\begin{lemma}[Bound for the Lagrange Multiplier]\label{bound for the Lagrange Multiplier for oiginal densities}
Fix $J>0$, $\forall 0<\epsilon<\kappa_{1}$, $\exists \delta>0$, $\forall 0<m<\delta$, if $\rho(m)=\rho_{m}+\rho_{1-m}$ minimizes $E_{J}(\rho)$ on $W_{m}$, then the Lagrange multiplier $\lambda_{m}$ in the Euler-Lagrange equations (\ref{ELm}) satisfies $A^{\gamma-1}(m) \cdot \lambda_{m} \leq \kappa_{1}+\epsilon<0$. Here $\kappa_{1}$ denotes the non-rotating Lagrange multiplier from Theorem \ref{non-rotating} with respect to mass $1$.
\end{lemma}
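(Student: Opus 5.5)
The plan is to rescale the planet's Euler--Lagrange equation (\ref{ELm}) on $\Omega_m$ by the same $A=A(m)$, $B=B(m)$ as in Definition \ref{scaling density for planet}. After rescaling, $\widetilde{\rho_m}$ should satisfy the non-rotating equation (\ref{EL0}) up to error terms that vanish as $m\to0$, with multiplier $\widetilde{\lambda}_m:=A^{\gamma-1}(m)\lambda_m$. Then I would pass to the limit using Theorem \ref{convergence of scaling densities of planets} and Theorem \ref{uniform bound of scaling densities of planets and potential in L^infty}. (I read the hypothesis on $\epsilon$ as $0<\epsilon<|\kappa_1|$, since $\kappa_1<0$.) I will in fact aim to prove $\widetilde{\lambda}_m\to\kappa_1$, which gives the lemma at once.

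\textbf{Step 1 (scaled equation).} Split $V_{\rho(m)}=V_{\rho_m}+V_{\rho_{1-m}}$. By (\ref{scaling for A prime}) and (\ref{scaling for V}), $\mathcal{A}'(\rho_m(Bx))=A^{1-\gamma}\mathcal{A}'(\widetilde{\rho_m}(x))$ and $V_{\rho_m}(Bx)=A^{1-\gamma}V_{\widetilde{\rho_m}}(x)$. Multiplying (\ref{ELm}) for $\alpha=m$ by $A^{\gamma-1}$ then gives, for $Bx\in\Omega_m$,
\[
\mathcal{A}'(\widetilde{\rho_m}(x))=\Bigl[V_{\widetilde{\rho_m}}(x)+\widetilde{\lambda}_m+h_m(x)\Bigr]_+,
\]
where
\[
h_m(x):=A^{\gamma-1}\Bigl(V_{\rho_{1-m}}(Bx)+\tfrac{J^2}{2I^2(\rho(m))}r^2(Bx-\bar{x}(\rho(m)))\Bigr)\ge0 .
\]

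\textbf{Step 2 (the error is uniformly small).} Since $\operatorname{dist}(\Omega_m,\Omega_{1-m})=\eta/2\sim m^{-2}$, we have $0\le V_{\rho_{1-m}}\le Cm^2$ on $\Omega_m$. By Corollary \ref{velocity decreasing rate as mass decreases}, the rotational term is also $\le Cm^2$. Hence
\[
\|h_m\|_{L^\infty}\le C\,m^{2}\,m^{-\frac{2(\gamma-1)}{3\gamma-4}}=C\,m^{\frac{4\gamma-6}{3\gamma-4}}\to0,
\]
and this is exactly where $\gamma>\frac32$ is used.

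\textbf{Step 3 (identify the multiplier by integration).} On $\{\widetilde{\rho_m}>0\}$ the positive part is inactive, so there
\[
\widetilde{\lambda}_m=\mathcal{A}'(\widetilde{\rho_m})-V_{\widetilde{\rho_m}}-h_m .
\]
Multiplying by $\widetilde{\rho_m}$ and integrating, using $\int\widetilde{\rho_m}\,dx=1$, gives
\[
\widetilde{\lambda}_m=\frac{K\gamma}{\gamma-1}\int\widetilde{\rho_m}^{\,\gamma}\,dx-G(\widetilde{\rho_m},\widetilde{\rho_m})-\int h_m\widetilde{\rho_m}\,dx .
\]
The same computation applied to (\ref{EL0}) with $m=1$ gives
\[
\kappa_1=\frac{K\gamma}{\gamma-1}\int\sigma^\gamma\,dx-G(\sigma,\sigma).
\]
The last term above is bounded by $\|h_m\|_\infty\to0$. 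After translation, $\widetilde{\rho_m}\to\sigma$ in $L^1$ (Theorem \ref{convergence of scaling densities of planets}), and $\|\widetilde{\rho_m}\|_{L^\infty}\le C$ uniformly for small $m$ (Theorem \ref{uniform bound of scaling densities of planets and potential in L^infty}). By interpolation this gives convergence in every $L^p$, $1\le p<\infty$, so $\int\widetilde{\rho_m}^{\,\gamma}\to\int\sigma^\gamma$. The convergence $G(\widetilde{\rho_m},\widetilde{\rho_m})\to G(\sigma,\sigma)$ was already shown in the proof of Theorem \ref{convergence of scaling densities of planets}. Translations do not affect any of these quantities. Therefore $\widetilde{\lambda}_m\to\kappa_1$, and for every admissible $\epsilon$ there is a $\delta>0$ such that $A^{\gamma-1}(m)\lambda_m\le\kappa_1+\epsilon<0$ for all $0<m<\delta$.

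\textbf{Main obstacle.} I expect the delicate point to be Step 3, and specifically the justification that (\ref{ELm}) holds with equality at almost every point of $\{\rho_m>0\}\cap\Omega_m$ with a \emph{single} multiplier $\lambda_m$, even if the planet's support has several components. This must come from the constrained-minimizer version of Theorem \ref{existence of ``double constrained'' minimizers}, in which the multiplier attaches to the mass constraint on $\Omega_m$ as a whole rather than to each connected component. If that equality statement proves awkward to use, the fallback is pointwise: pick $x_0$ near the center of $\sigma$ where $\sigma>0$. The $L^\infty$ convergence $V_{\widetilde{\rho_m}}\to V_\sigma$ (from Proposition \ref{diff. of poten.} together with $L^1\cap L^{12}$ convergence) and $L^1$ convergence of $\widetilde{\rho_m}$ then give a positive-measure set near $x_0$ on which $\widetilde{\rho_m}>0$. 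Comparing the two Euler--Lagrange equations on that set again yields $\widetilde{\lambda}_m\to\kappa_1$.
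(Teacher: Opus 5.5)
Your proof is correct, but it takes a different route from the paper's.

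\textbf{The paper's route.} It uses only the one-sided consequence of (\ref{ELm}), namely $\lambda_m\le\mathcal{A}'(\rho_m)-V_{\rho_m}$ on $\Omega_m$. This follows from $[t]_+\ge t$ after discarding the nonnegative rotational and star terms. The paper rescales this inequality and then locates a single good point. From convergence in measure of $\widetilde{\rho_m}$, $V_{\widetilde{\rho_m}}$ and $\mathcal{A}'(\widetilde{\rho_m})$, it extracts a positive-measure set inside $\{\sigma(\cdot/B)>C/N\}$ on which all three are close to their $\sigma$-counterparts. Evaluating there gives $A^{\gamma-1}\lambda_m\le\kappa_1+\frac{2\epsilon}{3}$.

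\textbf{Your route.} You use the equality form of (\ref{ELm}) on $\{\rho_m>0\}$ and integrate it against $\widetilde{\rho_m}$. This gives a virial-type formula for $A^{\gamma-1}\lambda_m$ in terms of $\int\widetilde{\rho_m}^{\,\gamma}$, $G(\widetilde{\rho_m},\widetilde{\rho_m})$ and $\int h_m\widetilde{\rho_m}$. It buys you three things:
\begin{enumerate}[(i)]
\item the two-sided statement $A^{\gamma-1}\lambda_m\to\kappa_1$, not just an upper bound;
\item no bookkeeping with sets whose measure shrinks like $B^3$, which is the issue the paper's subsequent remark dwells on;
\item reliance only on integral convergences that are already available: $\int\widetilde{\rho_m}^{\,\gamma}$ converges by $L^1$ convergence plus the uniform $L^\infty$ bound, and $G$ converges by the proof of Theorem \ref{convergence of scaling densities of planets}.
\end{enumerate}
On the third point, the paper's mean-value estimate $K\gamma\xi^{\gamma-2}\le K\gamma C^{\gamma-2}$ for $\mathcal{A}'$ is only valid for $\gamma\ge2$. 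When $\frac32<\gamma<2$ it needs a H\"older-continuity replacement, which your argument never requires.

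\textbf{Your ``main obstacle'' is not an issue.} You do need equality on the support with a single multiplier for all of $\Omega_m$, but that is exactly what (\ref{ELm}) states. $W_m$ only prescribes the mass in each ball, so the multiplier is attached to $\Omega_m$ rather than to connected components. Your fallback is therefore not needed.

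\textbf{Step 2 is more than the lemma needs.} For the one-sided bound actually claimed, you can simply drop $-\int h_m\widetilde{\rho_m}$ because $h_m\ge0$, just as the paper drops those terms. The smallness of $h_m$ only matters for the matching lower bound.
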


\begin{proof}[Proof of Lemma \ref{bound for the Lagrange Multiplier for oiginal densities}]
{
	\rm
	Let $m$ be smaller than $m_{0}$ given in Theorem \ref{existence of the constrained minimizers in binary case}, then minimizers exist. From Euler-Lagrange equations (\ref{ELm}) we have
	$$
	\lambda_{m} \leq \mathcal{A}^{\prime}\left(\rho_{m}\right)-V_{\rho_{m}} \text { a.e. on } \Omega_{m}
	$$
	where $\mathcal{A}(\rho)(x)=\frac{K\left(\rho^{\gamma}(x)\right)}{\gamma-1}$, $\mathcal{A}^{\prime}(\rho)(x)=\frac{K \gamma\left(\rho^{\gamma-1}(x)\right)}{\gamma-1}$. Due to (\ref{scaling for A prime}) (\ref{scaling for V}), we have $$\mathcal{A}^{\prime}\left(\rho_{m}\right)(x)=\mathcal{A}^{\prime}\left(\frac{1}{A} \widetilde{\rho_{m}}\right)\left(\frac{1}{B} x\right)=\frac{1}{A^{\gamma-1}} \mathcal{A}^{\prime}\left(\widetilde{\rho_{m}}\right)\left(\frac{1}{B} x\right)$$ $$V_{\rho_{m}}(x)=\frac{B^{2}}{A} V_{\widetilde{\rho_{m}}}\left(\frac{1}{B} x\right)=\frac{1}{A^{\gamma-1}} V_{\widetilde{\rho_{m}}}\left(\frac{1}{B} x\right)$$
	Therefore, we have
	$$
	A^{\gamma-1} \lambda_{m} \leq \mathcal{A}^{\prime}\left(\widetilde{\rho_{m}}\right)\left(\frac{1}{B} x\right)-V_{\widetilde{\rho_{m}}}\left(\frac{1}{B} x\right) \text { a.e. on } \Omega_{m}
	$$
	
	Then $\mu\left(\left\{x \in \Omega_{m} \mid A^{\gamma-1} \lambda_{m}>\mathcal{A}^{\prime}\left(\widetilde{\rho_{m}}\right)\left(\frac{1}{B} x\right)-V_{\widetilde{\rho_{m}}}\left(\frac{1}{B} x\right)\right\}\right)=0$
	
	Claim: Up to translations, ${\widetilde{\rho_{m}}}$ converges to ${\sigma}$ in (Lebesgue) measure, $V_{\widetilde{\rho_{m}}}$ converges to $V_{\sigma}$ in measure, $\mathcal{A}^{\prime}\left(\widetilde{\rho_{m}}\right)$ converges to $A^{\prime}(\sigma)$ in measure. 
	
	In fact, by Theorem \ref{convergence of scaling densities of planets}, we know up to translations, $\left\{\widetilde{\rho_{m}}\right\}$ converges strongly to $\sigma$ in $L^{\frac{4}{3}}\left(\mathbb{R}^{3}\right)$ and $L^{1}\left(\mathbb{R}^{3}\right)$. We denote the sequence after translations by $\left\{\widetilde{\rho_{m}}\right\}$ as well. Then ${\widetilde{\rho_{m}}}$ converges to ${\sigma}$ in measure. By Hardy-Littlewood-Sobolev Inequality (Proposition \ref{HLSI}), we know $V_{\widetilde{\rho_{m}}} \rightarrow V_{\sigma}$ strongly in $L^{12}\left(\mathbb{R}^{3}\right)$. Thus $V_{\widetilde{\rho_{m}}}$ converges to $V_{\sigma}$ in measure. For $\mathcal{A}^{\prime}\left(\widetilde{\rho_{m}}\right)$, by Theorem \ref{uniform bound of scaling densities of planets and potential in L^infty} and Theorem \ref{non-rotating} (iv), we know $\exists C>0$, such that $0 \leq \widetilde{\rho_{m}}<C$, $0 \leq \sigma<C$ a.e. on $\mathbb{R}^{3}$. Consequently, $\left|\widetilde{\rho_{m}}(x)-\sigma(x)\right|$ is well-defined almost everywhere. At such points, if $\left|\widetilde{\rho_{m}}(x)-\sigma(x)\right| \neq 0$, by mean value theorem, there is $\xi \in(0, C)$, such that
	$$
	\frac{\left | \mathcal{A}^{\prime} (\widetilde{\rho_{m}}) - \mathcal{A}^{\prime} (\sigma)\right |}{ \left |\widetilde{\rho_{m}} -\sigma \right |} =\frac{K_{\gamma}}{\gamma -1} \cdot \frac{\left|\widetilde{\rho_{m}}^{\gamma -1}-\sigma^{\gamma-1} \right|}{\left|\widetilde{\rho_{m}}-\sigma \right|} \leq K \gamma \xi ^{\gamma-2} \leq K \gamma C^{\gamma-2}
	$$
	Thus $\left|\mathcal{A}^{\prime}\left(\widetilde{\rho_{m}}\right)-\mathcal{A}^{\prime}(\sigma)\right| \leq C_{1}\left|\widetilde{\rho_{m}}-\sigma\right|$ with $C_{1}=K \gamma C^{\gamma-2}>0$. If $\left|\widetilde{\rho_{m}}(x)-\sigma(x)\right|=0$, then $\widetilde{\rho_{m}}(x)=\sigma(x)$, thus $\left|\mathcal{A}^{\prime}\left(\widetilde{\rho_{m}}\right)-\mathcal{A}^{\prime}(\sigma)\right| \leq C_{1}\left|\widetilde{\rho_{m}}-\sigma\right|$ also holds true. Since $\widetilde{\rho_{m}}$ converges to $\sigma$ in measure, we know when $m \rightarrow 0$, $$\mu\left(\left\{x \in \mathbb{R}^{3}\mid \left| \mathcal{A}^{\prime}\left(\widetilde{\rho_{m}}\right)-\mathcal{A}^{\prime}(\sigma) \right | (x) \geq \epsilon\right\}\right) \leq \mu\left(\left\{x \in \mathbb{R}^{3}\mid \left| \widetilde{\rho_{m}}(x)-\sigma(x) \right| \geq \frac{\epsilon}{c_{1}}\right\}\right) \rightarrow 0.$$ Thus $\mathcal{A}^{\prime}\left(\widetilde{\rho_{m}}\right)$ converges to $A^{\prime}(\sigma)$ in measure. Hence we prove the claim.
	
	By Theorem \ref{non-rotating} (v) and the fact $\sigma$ has mass 1, we know $0<\mu\left(\left\{x \in \mathbb{R}^{3} \mid \sigma(x)>0\right\}\right)<\infty$. Since $0 \leq \sigma< C$ a.e. on $\mathbb{R}^{3}$, let $E_{n}=\left\{x \in \mathbb{R}^{3} \mid \frac{C}{n} \leq \sigma(x)<C\right\}$, then $\mathbb{R}^{3}=\cup_{n=1}^{\infty} E_{n}$. And then $1=\int_{\mathbb{R}^{3}} \sigma\,dx= \sum_{n=1}^{\infty} \int_{E_{n}} \sigma\,dx$. Then $\exists N>0$, such that $$\frac{1}{2}<\sum_{n=1}^{N} \int_{E_{n}} \sigma\,dx<\sum_{n=1}^{N} \int_{E_{n}} C\,dx=C \mu(\{\{x \in \mathbb{R}^{3}\mid \frac{C}{N} \leq\sigma(x)<C\})$$
	Therefore, let $M:=\mu\left(\left\{x \in \mathbb{R}^{3} \mid \frac{C}{N}<\sigma(x)<C\right\}\right)$, then $0<\frac{1}{2 C}<M<\infty$. Thus $\exists \delta>0$, $\forall 0<m<\delta$, due to the claim we have
	$$
	\begin{aligned}
		&\mu\left(\left\{x \in \mathbb{R}^{3}\mid  \left |\mathcal{A}^{\prime}\left(\widetilde{\rho_{m}}\right)\left(\frac{x}{B}\right)-\mathcal{A}^{\prime}(\sigma)\left(\frac{x}{B}\right) \right | \geq \frac{\epsilon}{3}\right\}\right) \\
		&=B^{3} \mu\left(\left\{y \in \mathbb{R}^{3}\mid \left| \mathcal{A}^{\prime}\left(\widetilde{\rho_{m}}\right)(y)-\mathcal{A}^{\prime}(\sigma)(y)\right| \geq \frac{\epsilon}{3}\right\}\right) \\
		& <B^{3} \frac{M}{4}
	\end{aligned}
	$$
	and similarly,
	$$
	\begin{aligned}
		& \mu\left(\left\{x \in \mathbb{R}^{3}\mid \left | V_{\widetilde{\rho_{m}}}\left(\frac{x}{B}\right)-V_{\sigma}\left(\frac{x}{B}\right) \right | \geq \frac{\epsilon}{3}\right\}\right)<B^{3} \frac{M}{4} \\
		& \mu\left(\left\{x \in \mathbb{R}^{3}\mid \left| \widetilde{\rho_{m}}\left(\frac{x}{B}\right)-\sigma\left(\frac{x}{B}\right) \right | \geq \frac{C}{3 N}\right\}\right)<B^{3} \frac{M}{4}
	\end{aligned}
	$$
	
	Let $E=\left\{x \in \mathbb{R}^{3} \mid \frac{C}{N}<\sigma\left(\frac{x}{B}\right)<C\right\}$, $\mu(E)=B^{3} M$. Let
	\begin{align*}
		E_{0} &= \left\{ x \in \mathbb{R}^{3} \mid \left| \mathcal{A}^{\prime}(\widetilde{\rho_{m}})\left(\frac{x}{B}\right) - \mathcal{A}^{\prime}(\sigma)\left(\frac{x}{B}\right) \right| \geq \frac{\epsilon}{3} \right\} \\
		&\quad \cup \left\{ x \in \mathbb{R}^{3} \mid \left| V_{\widetilde{\rho_{m}}}\left(\frac{x}{B}\right) - V_{\sigma}\left(\frac{x}{B}\right) \right| \geq \frac{\epsilon}{3} \right\} \\
		&\quad \cup \left\{ x \in \mathbb{R}^{3} \mid \left| \widetilde{\rho_{m}}\left(\frac{x}{B}\right) - \sigma\left(\frac{x}{B}\right) \right| \geq \frac{C}{3N} \right\} \\
		&\quad \cup \left\{ x \in \Omega_{m} \mid A^{\gamma-1} \lambda_{m} > \mathcal{A}^{\prime}(\widetilde{\rho_{m}})\left(\frac{1}{B} x\right) - V_{\widetilde{\rho_{m}}}\left(\frac{1}{B} x\right) \right\}
	\end{align*}
	We further define $\tilde{E}=E \backslash E_{0}$, then from the results above we know $\mu(\tilde{E})> B^{3} \frac{M}{4}>0$. In particular, $\tilde{E}$ is not empty. Pick $x \in \tilde{E}$, then $\left|\widetilde{\rho_{m}}\left(\frac{x}{B}\right)-\sigma\left(\frac{x}{B}\right)\right|<\frac{C}{3 N}$, and 
	$$\widetilde{\rho_{m}}\left(\frac{x}{B}\right) \geq \sigma\left(\frac{x}{B}\right)-\left|\widetilde{\rho_{m}}\left(\frac{x}{B}\right)-\sigma\left(\frac{x}{B}\right)\right|>\frac{C}{N}-\frac{C}{3 N}>0$$
	Hence $\rho_{m}(x)=\frac{1}{A} \widetilde{\rho_{m}}\left(\frac{x}{B}\right)>0$. Thus $x \in \Omega_{m}$ (up to translations),
	\begin{equation}\label{estimate of lambda_m}
			\begin{aligned}
			A^{\gamma-1} \lambda_{m} & \leq \mathcal{A}^{\prime}\left(\widetilde{\rho_{m}}\right)\left(\frac{1}{B} x\right)-V_{\widetilde{\rho_{m}}}\left(\frac{1}{B} x\right) \\
			& =\mathcal{A}^{\prime}\left(\widetilde{\rho_{m}}\right)\left(\frac{1}{B} x\right)-\mathcal{A}^{\prime}(\sigma)\left(\frac{x}{B}\right)+\mathcal{A}^{\prime}(\sigma)\left(\frac{x}{B}\right)-V_{\sigma}\left(\frac{x}{B}\right)+V_{\sigma}\left(\frac{x}{B}\right)-V_{\widetilde{\rho_{m}}}\left(\frac{1}{B} x\right) \\
			& \leq \mathcal{A}^{\prime}(\sigma)\left(\frac{x}{B}\right)-V_{\sigma}\left(\frac{x}{B}\right)+\frac{2 \epsilon}{3}\\
			& <\mathcal{A}^{\prime}(\sigma)\left(\frac{x}{B}\right)-V_{\sigma}\left(\frac{x}{B}\right)+\epsilon \\
			& =\kappa_{1}+\epsilon
		\end{aligned}
	\end{equation}
	
	The last inequality comes from the facts that $x \in E$ implies $\frac{x}{B}\in \{\sigma>0\}$ and then $\mathcal{A}^{\prime}(\sigma)(\frac{x}{B})-V_{\sigma}(\frac{x}{B})=\kappa_{1}$ by Theorem \ref{non-rotating} (vii).
}

\end{proof}

\begin{remark}
Note that the above proof relies on convergence in measure. In McCann's paper \cite[Lemma 6.5]{McC06}, only almost everywhere convergence up to a subsequence is used. Nonetheless, since in this paper we assume $m$ is very small, relying solely on almost everywhere convergence up to a subsequence will lead to the following two issues. One is that it is up to a subsequence, i.e., $\widetilde{\rho_{m_{k}}}(x) \rightarrow \sigma(x)$ a.e. on $\mathbb{R}^{3}$ instead of $\left\{\widetilde{\rho_{m}}\right\}$ itself, while we hope the bound for the Lagrange multiplier should be valid for all sufficiently small mass. Another one is when $m \rightarrow 0$, not only $\widetilde{\rho_{m}}(x) \rightarrow \sigma(x)$ a.e. on $\mathbb{R}^{3}$, but the measures of relevant sets above go to $0$. For example, when $m \rightarrow 0$, $\mu\left(\left\{x \in \mathbb{R}^{3} \mid \sigma\left(\frac{x}{B(m)}\right)>0\right\}\right)=B^{3}(m) \mu\left(\left\{x \in \mathbb{R}^{3} \mid \sigma(x)>0\right\}\right) \rightarrow 0$. Therefore, we need to rule out the case that $\widetilde{E}$ is empty; otherwise, it is not clear whether there exists an $x$ that satisfies the last inequality in (\ref{estimate of lambda_m}). Therefore, we make more use of $L^{p}$ convergence—namely, that it implies convergence in measure, to solve the issues.
\end{remark}

Before we finally estimate the sizes of supports of scaling densities, we supplement the proof of a proposition about the $L^{\infty}$ bound of potential mentioned in \cite[Proposition 6.6]{McC06}, which can be viewed as a complement to \cite[Proposition 5]{AB71}, in which the indices $\frac{2}{3}$ and $\frac{1}{3}$ can be approximated but cannot be reached.

\begin{proposition}[Bound of Potential in $L^{\infty}\left(\mathbb{R}^{3}\right)$ {\cite[Section 6]{McC06}}]\label{bound of potential in McCann's paper}
$\exists k>0, \forall \rho \in L^{1}\left(\mathbb{R}^{3}\right) \cap L^{\infty}\left(\mathbb{R}^{3}\right)$, one has $\left\|V_{\rho}\right\|_{L^{\infty}\left(\mathbb{R}^{3}\right)} \leq k\|\rho\|_{L^{1}\left(\mathbb{R}^{3}\right)}^{\frac{2}{3}}\|\rho\|_{L^{\infty}\left(\mathbb{R}^{3}\right)}^{\frac{1}{3}}$, where $V_{\rho}(x)=\int_{\mathbb{R}^3} \frac{\rho(y)}{|x-y|} \,dy$ is the potential from $\rho$.
\end{proposition}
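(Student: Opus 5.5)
The approach is the standard ``near/far splitting'' of the Newtonian kernel, followed by an optimization over the splitting radius. We may assume $\rho \geq 0$ (otherwise replace $\rho$ by $|\rho|$, which only increases $|V_\rho(x)|$ and leaves both norms unchanged) and $\rho\neq 0$. Fix $x\in\mathbb{R}^3$ and a radius $R>0$ to be chosen later, and split
\[
V_\rho(x)=\int_{|x-y|<R}\frac{\rho(y)}{|x-y|}\,dy+\int_{|x-y|\geq R}\frac{\rho(y)}{|x-y|}\,dy .
\]
On the near region we bound $\rho$ by its $L^\infty$ norm and integrate the kernel explicitly in spherical coordinates:
\[
\int_{|x-y|<R}\frac{dy}{|x-y|}=4\pi\int_0^R s\,ds=2\pi R^2 .
\]
This gives a contribution of at most $2\pi R^2\|\rho\|_{L^\infty}$. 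On the far region we use $|x-y|^{-1}\leq R^{-1}$, so the contribution is at most $R^{-1}\|\rho\|_{L^1}$. Hence, for every $x$ and every $R>0$,
\[
|V_\rho(x)|\leq 2\pi R^2\|\rho\|_{L^\infty}+R^{-1}\|\rho\|_{L^1}.
\]

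Next we optimize in $R$. Choosing $R=\left(\|\rho\|_{L^1}/\|\rho\|_{L^\infty}\right)^{1/3}$, which is positive and finite since $\rho\neq0$ lies in $L^1\cap L^\infty$, balances the two terms: each becomes $\|\rho\|_{L^1}^{2/3}\|\rho\|_{L^\infty}^{1/3}$, up to the factor $2\pi$ on the first. This yields the statement with $k=2\pi+1$, a constant independent of $\rho$ and of $x$. Taking the supremum over $x$ gives the $L^\infty$ bound. The estimate holds at every point where the integral is defined, and in fact the integral converges absolutely everywhere by the same splitting, so the essential supremum and the pointwise supremum coincide here.

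There is no real obstacle. The only points needing care are that the exponents land exactly at $\frac{2}{3}$ and $\frac{1}{3}$ rather than merely approximating them, as in \cite[Proposition 5]{AB71}, and this is ensured because we use the endpoint norms $L^1$ and $L^\infty$ directly instead of Hölder with $p<\infty$; and the degenerate case $\rho=0$, which is trivial. A consistency check is scaling: under $\rho\mapsto a\rho(b\,\cdot)$ both sides scale as $a b^{-2}$, which confirms the exponents.
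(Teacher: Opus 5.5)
Your proof is correct and follows essentially the same route as the paper: split $V_\rho(x)$ at radius $R$, bound the near part by $2\pi R^2\|\rho\|_{L^\infty}$ and the far part by $R^{-1}\|\rho\|_{L^1}$, then choose $R$ to balance the two terms. Your explicit choice $R=(\|\rho\|_{L^1}/\|\rho\|_{L^\infty})^{1/3}$, giving $k=2\pi+1$, is slightly cleaner than the paper's optimization step, which calls the critical point of $f(R)$ a maximum when it is actually a minimum.
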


\begin{proof}
{
	Let $B_R(\cdot)$ be the open ball defined by $B_R(\cdot) \coloneq \left\{x \in \mathbb{R}^3\mid |x-\cdot| <R\right\}$.
	$$
	\begin{aligned}
		V_{\rho}(x)&=\int_{\mathbb{R}^{3}} \frac{\rho(y)}{|x-y|} \,dy\\
		& =\int_{B_{R}(x)} \frac{\rho(y)}{|x-y|} \,dy+\int_{\mathbb{R}^{3} \backslash B_{R}(x)} \frac{\rho(y)}{|x-y|} \,dy \\
		& \leq\|\rho\|_{L^{\infty}} \int_{B_{R}(x)} \frac{1}{|x-y|} \,dy+\frac{1}{R} \int_{\mathbb{R}^{3} \backslash B_{R}(x)} \rho(y) \,dy \\
		& \leq C R^{2}\|\rho\|_{L^{\infty}}+\frac{1}{R}\|\rho\|_{L^{1}}
	\end{aligned}
	$$
	
	Consider function $f(R)=C R^{2}\|\rho\|_{L^{\infty}}+\frac{1}{R}\|\rho\|_{L^{1}}, R \geq 0$, it has the maximal value at $R=\left(\frac{\|\rho\|_{L^{1}}}{2 C\|\rho\|_{L^{\infty}}}\right)^{\frac{1}{3}}$ and $f\left(\left(\frac{\|\rho\|_{L^{1}}}{2 C\|\rho\|_{L^{\infty}}}\right)^{\frac{1}{3}}\right)=k\|\rho\|_{L^{1}\left(\mathbb{R}^{3}\right)}^{\frac{2}{3}}\|\rho\|_{L^{\infty}\left(\mathbb{R}^{3}\right)}^{\frac{1}{3}}$ for some $k>0$. Thus, we have $$\left\|V_{\rho}\right\|_{L^{\infty}\left(\mathbb{R}^{3}\right)} \leq k\|\rho\|_{L^{1}\left(\mathbb{R}^{3}\right)}^{\frac{2}{3}}\|\rho\|_{L^{\infty}\left(\mathbb{R}^{3}\right)}^{\frac{1}{3}}$$.
}
\end{proof}

\begin{theorem}[Bound on the sizes of supports of scaling densities $\widetilde{\rho_{m}}$]\label{bound on the sizes of scaling densities}
There exists a radius $R(J)$ and $\delta>0$ independent of $m$, $\forall 0<m<\delta$, if $\rho(m)= \rho_{1-m}+\rho_{m}$ minimizes $E_{J}(\rho)$ on ${W}_{m}$, then spt $\widetilde{\rho_{m}}$ is contained in a ball of radius $R(J)$, where $\widetilde{\rho_{m}}$ is defined in Definition \ref{scaling density for planet}.
\end{theorem}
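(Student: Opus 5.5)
The plan is to read off the support bound from the rescaled Euler--Lagrange equation (\ref{ELm}) on $\Omega_m$, in the spirit of McCann's argument \cite[Section 6]{McC06}. Write $x=By$ and use the scaling relations (\ref{scaling for A prime}) and (\ref{scaling for V}). Multiplying (\ref{ELm}) on $\Omega_m$ by $A^{\gamma-1}$ should give, for $y\in B^{-1}\Omega_m$,
\begin{equation*}
\mathcal{A}'(\widetilde{\rho_m})(y)=\Big[V_{\widetilde{\rho_m}}(y)+A^{\gamma-1}V_{\rho_{1-m}}(By)+A^{\gamma-1}\tfrac{J^2}{2I^2(\rho(m))}r^2(By-\bar{x}(\rho(m)))+A^{\gamma-1}\lambda_m\Big]_+ .
\end{equation*}
Outside $B^{-1}\Omega_m$ the density $\widetilde{\rho_m}$ vanishes anyway. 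It therefore suffices to show that the bracket is negative at every point of $B^{-1}\Omega_m$ lying at distance at least $R(J)$ from a suitable centre $z_m$, uniformly for small $m$.

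\textbf{Step 1: the cross terms are small.} On $\Omega_m$ the star is at distance at least $\eta/2$, so $V_{\rho_{1-m}}\le 2(1-m)/\eta\le Cm^2$. Corollary \ref{velocity decreasing rate as mass decreases} gives that the rotational term is at most $Cm^2$. Since $A^{\gamma-1}=m^{-\frac{2\gamma-2}{3\gamma-4}}$, both cross terms are $O\big(m^{\frac{4\gamma-6}{3\gamma-4}}\big)$, which tends to $0$ because $\gamma>\frac32$. Fix $\epsilon=|\kappa_1|/4$. Lemma \ref{bound for the Lagrange Multiplier for oiginal densities} then gives $A^{\gamma-1}\lambda_m\le \kappa_1+\epsilon=-3|\kappa_1|/4$ for $m$ small. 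Shrinking $m$ further, the two cross terms together are at most $|\kappa_1|/4$.

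\textbf{Step 2: the self-potential is small far from the core.} This is the main point, and it must hold uniformly in $m$. By Theorem \ref{convergence of scaling densities of planets}, there are translations $z_m$ with $\widetilde{\rho_m}(\cdot+z_m)\to\sigma$ in $L^1$. Since $\operatorname{spt}\sigma\subset B_{R_1}(0)$, for any $\tau>0$ and all small $m$ we get $\int_{|y-z_m|>R_1+1}\widetilde{\rho_m}\,dy<\tau$.

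Split $\widetilde{\rho_m}=\rho^{\mathrm{core}}+\rho^{\mathrm{tail}}$ according to whether $|y-z_m|\le R_1+1$. For the core, at points with $|y-z_m|\ge R$ we have $V_{\rho^{\mathrm{core}}}(y)\le 1/(R-R_1-1)$. For the tail, Proposition \ref{bound of potential in McCann's paper} together with the uniform bound $\|\widetilde{\rho_m}\|_{L^\infty}\le C$ from Theorem \ref{uniform bound of scaling densities of planets and potential in L^infty} gives $V_{\rho^{\mathrm{tail}}}\le k\tau^{2/3}C^{1/3}$ everywhere.

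Choose $\tau$ so that this tail bound is less than $|\kappa_1|/8$, and then $R=R(J)$ so that $1/(R-R_1-1)<|\kappa_1|/8$. For $|y-z_m|\ge R$ the bracket is then at most $-\tfrac34|\kappa_1|+\tfrac14|\kappa_1|+\tfrac14|\kappa_1|<0$. Hence $\widetilde{\rho_m}(y)=0$ there, so $\operatorname{spt}\widetilde{\rho_m}\subset \overline{B_{R}(z_m)}$.

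\textbf{Expected obstacle.} The delicate points are making every smallness statement uniform in $m$ rather than along a subsequence, and ruling out small far-away fragments of the planet. Uniformity is handled by the whole-sequence convergence in Theorem \ref{convergence of scaling densities of planets} and the uniform $L^\infty$ bound. Distant fragments are handled by the tail estimate: it uses only the small $L^1$ mass and the $L^\infty$ bound, not the location of the fragment.

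I also need the Euler--Lagrange equation (\ref{ELm}) to hold pointwise rather than only a.e. This follows from the continuity of $\rho(m)$ on $\Omega$ (Theorem \ref{existence of the constrained minimizers in binary case}) and of $V_{\widetilde{\rho_m}}$ (Theorem \ref{uniform bound of scaling densities of planets and potential in L^infty}). The radius $R$ depends only on $\sigma$, $\kappa_1$ and the uniform constants, and $J$ enters only through the choice of $\delta$.
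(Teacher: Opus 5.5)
Your proposal is correct and follows essentially the same route as the paper. You rescale (ELm) on $\Omega_m$ and absorb the star potential and the rotational term using $A^{\gamma-1}m^2=m^{\frac{4\gamma-6}{3\gamma-4}}\to 0$ together with $A^{\gamma-1}\lambda_m\le\kappa_1+\epsilon<0$. You then split $\widetilde{\rho_m}$ into a core near the translate of $\sigma$, whose potential is $O(1/R)$ far away, and a small-mass tail controlled by McCann's $L^\infty$ potential bound and the uniform $L^\infty$ bound on $\widetilde{\rho_m}$. Your explicit thresholds, the cutoff at $R_1+1$, and centring the ball at $z_m$ are only cosmetic differences.
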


\begin{proof}
{
	\rm
	Take $m$ and $\epsilon$ small enough so that constrained minimizers $\rho(m)$ exist by Theorem \ref{existence of the constrained minimizers in binary case} and $A(m)^{\gamma-1} \cdot \lambda_{m} \leq \kappa_{1}+\epsilon<$ 0 by Lemma \ref{bound for the Lagrange Multiplier for oiginal densities}, and the velocity $v(x, m)$ satisfies $v(x, m)^{2} \leq C m^{2}$ by Corollary \ref{velocity decreasing rate as mass decreases}. By the construction of $\Omega_{m}$ and $\Omega_{1-m}$, we have $\operatorname{dist}\left(\Omega_{m}, \Omega_{1-m}\right)=\frac{\eta}{2}$, where $\eta=\frac{J^{2}}{\mu_r^{2}}=\frac{J^{2}}{(m(1-m))^{2}}$, thus $\exists C>0$, $\forall x \in \Omega_{m}$, $V_{\rho_{1-m}}(x)<C m^{2}$. In the Euler-Lagrange equation (\ref{ELm}), we know for almost every $x \in \Omega_{m}$,
	$$
	\mathcal{A}^{\prime}\left(\rho_{m}\right)(x)=\frac{K \gamma}{\gamma-1} \rho_{m}^{\gamma-1}(x)=\left[v(x, m)^{2}+V_{\rho_{m}}(x)+V_{\rho_{1-m}}(x)+\lambda_{m}\right]_{+}
	$$
	
	Since $\rho_{m}(x)=\frac{1}{A} \widetilde{\rho_{m}}\left(\frac{1}{B} x\right)$, without loss of generality we assume $\frac{K \gamma}{\gamma-1}=1$, then we have
	$$
	\begin{aligned}
		{\widetilde{\rho_{m}}}^{\gamma-1}\left(\frac{x}{B}\right) & =A^{\gamma-1}\left[v(x, m)^{2}+\frac{1}{A^{\gamma-1}} V_{\widetilde{\rho_{m}}}\left(\frac{x}{B}\right)+V_{\rho_{1-m}}(x)+\lambda_{m}\right]_{+} \\
		& \leq\left[C A^{\gamma-1} m^{2}+V_{\widetilde{\rho_{m}}}\left(\frac{x}{B}\right)+\kappa_{1}+\epsilon\right]_{+}
	\end{aligned}
	$$
	
	Recall $A=m^{-\frac{2}{3 \gamma-4}}$, $A^{\gamma-1} m^{2}=m^{\frac{4 \gamma-6}{3 \gamma-4}}$. Thus when $m$ is small enough, for almost every $y \in \Omega_{m}^{B}:=\left\{y \in \mathbb{R}^{3} \mid {B}y \in \Omega_{m}\right\}$, we have ${\widetilde{\rho_{m}}}^{\gamma-1}(y) \leq\left[V_{\widetilde{\rho_{m}}}(y)+\frac{\kappa_{1}+\epsilon}{2}\right]_{+}$. Now, let $R_{0}$ from Theorem \ref{non-rotating} (v) bound the support radius of non-rotating minimizer $\sigma$, and choose $\tilde{R}(J) \geq R_{0}$ large enough so that $\frac{1}{\tilde{R}(J)-R_{0}} \leq-\frac{\kappa_{1}+\epsilon}{6}$. Using Theorem \ref{convergence of scaling densities of planets}, we know up to translation, $\widetilde{\rho_{m}} \rightarrow \sigma$ in $L^{1}\left(\mathbb{R}^{3}\right)$. In particular, $\forall \delta>0$, when $m$ is small enough, all but mass $\delta$ of $\widetilde{\rho_{m}}$ is forced into a ball of radius $R_{0}$, which is, without loss of generality, centered on 0. Then $\widetilde{\rho_{m}}=\widetilde{\rho_{m}} \cdot \mathbf{1}_{\left\{|x|<R_{0}\right\}}+\widetilde{\rho_{m}} \cdot \mathbf{1}_{\left\{|x| \geq R_0\right\}}$, $V_{\widetilde{\rho_m}}=V_{\widetilde{\rho_m}} \cdot \mathbf{1}_{\left\{|x|<R_0\right\}}+V_{\widetilde{\rho_m}} \cdot \mathbf{1}_{\left\{|x| \geq R_0\right\}}$. Given $y$ outside the larger ball of radius $\tilde{R}(J)$ centered on $0$, due to $\frac{1}{\tilde{R}(J)-R_{0}} \leq-\frac{\kappa_{1}+\epsilon}{6}$, we have 
	$$V_{\widetilde{\rho_{m}} \cdot \mathbf{1}_{\left\{|x|<R_{0}\right\}}}(y) \leq-\frac{\kappa_{1}+\epsilon}{6}$$ 
	For $V_{\widetilde{\rho_{m}}} \cdot \mathbf{1}_{\left\{|x| \geq R_{0}\right\}}(y)$, by Theorem \ref{uniform bound of scaling densities of planets and potential in L^infty} we can show $\widetilde{\rho_{m}} \cdot \mathbf{1}_{\left\{|x| \geq R_{0}\right\}} \in L^{1}\left(\mathbb{R}^{3}\right) \cap L^{\infty}\left(\mathbb{R}^{3}\right)$ with mass $\delta$. And we pick a $\delta$ satisfying $k \delta^{\frac{2}{3}} \tilde{C}^{\frac{1}{3}}<-\frac{\kappa_{1}+\epsilon}{6}$, where $\tilde{C}$ is the uniform bound of $\left\|\widetilde{\rho_{m}}\right\|_{L^{\infty}}$ in Theorem \ref{uniform bound of scaling densities of planets and potential in L^infty}. Thus, when $m$ is small enough, by Proposition \ref{bound of potential in McCann's paper}, we have (note $V_{\widetilde{\rho_m}}$ is continuous due to Theorem \ref{uniform bound of scaling densities of planets and potential in L^infty})
\begin{align*}
	V_{\widetilde{\rho_m}} \cdot \mathbf{1}_{\{|x| \geq R_0\}}(y)
	&\leq \bigl\| V_{\widetilde{\rho_m}} \cdot \mathbf{1}_{\{|x| \geq R_0\}} \bigr\|_{L^\infty} \\
	&\leq k \bigl\| \widetilde{\rho_m} \cdot \mathbf{1}_{\{|x| \geq R_0\}} \bigr\|_{L^1}^{2/3} 
	\bigl\| \widetilde{\rho_m} \cdot \mathbf{1}_{\{|x| \geq R_0\}} \bigr\|_{L^\infty}^{1/3} \\
	&\leq k \delta^{2/3} \tilde{C}^{1/3} \\
	&< -\frac{\kappa_1+\epsilon}{6}.
\end{align*}
	By construction of $\Omega_{m}$ we know $\widetilde{\rho_{m}}=0$ outside $\Omega_{m}^{B}$. On $\Omega_{m}^{B}$ but outside the ball $B_{\tilde{R}(J)}(0)$, we know $0 \leq \widetilde{\rho_{m}}{ }^{\gamma-1}(y) \leq\left[V_{\widetilde{\rho_{m}}}(y)+\frac{\kappa_{1}+\epsilon}{2}\right]_{+} \leq\left[\frac{\kappa_{1}+\epsilon}{6}\right]_{+}=0$ a.e. In Theorem \ref{existence of the constrained minimizers in binary case} we know $\rho_m$ is continuous on $\Omega_{m}$, hence $\widetilde{\rho_{m}}(x)=A\rho_m(Bx)$ is continuous on $\Omega_{m}^B$, then we can replace ``a.e.'' by ``everywhere''. Therefore, up to translation, spt $\widetilde{\rho_{m}}$ is contained in $B_{2 \tilde{R}(J)}(0)$.
}
\end{proof}

\begin{remark}
There are various ways to bound $\left\|V_{\widetilde{\rho_{m}}} \cdot \mathbf{1}_{\left\{|x| \geq  R_{0}\right\}}\right\|_{L^{\infty}}$. For example, instead of applying Proposition \ref{bound of potential in McCann's paper}, one can apply \cite[Proposition 5]{AB71} directly.
\end{remark}


\begin{corollary}[Bound on the sizes of supports of original densities $\rho_{m}$] \label{bound on the sizes of original densities}
There exists a radius $R(J)$ and $\delta>0$ independent of $m, \forall 0<m<\delta$, if $\rho(m)= \rho_{1-m}+\rho_{m}$ minimizes $E_{J}(\rho)$ on ${W}_{m}$, then spt $\rho_{m}$ is contained in a ball of radius $B \cdot R(J)$, where $B$ is defined in Definition \ref{scaling density for planet}. In particular, if $\gamma>2$, spt $\rho_{m}$ is contained in a ball whose radius goes to 0 with rate $B$ since $\lim\limits_{m \rightarrow 0} B=0$.
\end{corollary}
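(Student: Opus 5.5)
This is a direct rescaling of Theorem \ref{bound on the sizes of scaling densities}. I will use the same $R(J)$ and $\delta$ given by that theorem, and fix $0<m<\delta$ together with a constrained minimizer $\rho(m)=\rho_{1-m}+\rho_m$ on $W_m$.

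By Theorem \ref{bound on the sizes of scaling densities} there is a point $z_m\in\mathbb{R}^3$, coming from the translation used there, such that $\operatorname{spt}\widetilde{\rho_m}\subset \overline{B_{R(J)}(z_m)}$. By Definition \ref{scaling density for planet}, $\widetilde{\rho_m}(y)=A\rho_m(By)$ with $A,B>0$, so
\[
\rho_m(x)=\frac{1}{A}\widetilde{\rho_m}\left(\frac{x}{B}\right).
\]
Hence $\rho_m(x)\neq 0$ exactly when $\widetilde{\rho_m}(x/B)\neq 0$. In other words, $\{\rho_m\neq 0\}$ is the image of $\{\widetilde{\rho_m}\neq 0\}$ under the dilation $y\mapsto By$.

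The dilation $y\mapsto By$ is a homeomorphism of $\mathbb{R}^3$. It therefore commutes with taking closures and maps null sets to null sets, so it maps the support (the smallest closed set carrying the full mass) of $\widetilde{\rho_m}$ onto the support of $\rho_m$. This gives
\[
\operatorname{spt}\rho_m = B\cdot \operatorname{spt}\widetilde{\rho_m}\subset B\cdot\overline{B_{R(J)}(z_m)}=\overline{B_{B R(J)}(Bz_m)}.
\]
So $\operatorname{spt}\rho_m$ lies in a ball of radius $B\cdot R(J)$. The constants $R(J)$ and $\delta$ do not depend on $m$, since they are inherited from Theorem \ref{bound on the sizes of scaling densities}.

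For the final claim, $B(m)=m^{\frac{\gamma-2}{3\gamma-4}}$. When $\gamma>2$ both $\gamma-2>0$ and $3\gamma-4>0$, so the exponent is positive and $B(m)\to 0$ as $m\to 0$, as already noted in Remark \ref{asymptotic behaviour of radius and norm in single star case}. The radius $B(m)R(J)$ then tends to $0$ at rate $B$.

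The only point needing care is that the translation in Theorem \ref{bound on the sizes of scaling densities} is performed on the scaled density. Undoing it only moves the center of the ball to $Bz_m$ and leaves the radius unchanged. The statement asks only for containment in some ball, so this causes no problem, and no step is a genuine obstacle.
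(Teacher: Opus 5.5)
Your proof is correct and follows essentially the same route as the paper. The paper's proof likewise just observes that $\operatorname{spt}\rho_m = B\cdot\operatorname{spt}\widetilde{\rho_m}$ and rescales the ball from the theorem bounding $\operatorname{spt}\widetilde{\rho_m}$; you add harmless detail on the homeomorphism and translation bookkeeping.
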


\begin{proof}
{
	\rm
	The result comes from the fact that spt $\rho_{m}=B \text{ spt } \widetilde{\rho_{m}}=\{x \in \mathbb{R}^{3} \mid  \frac{x}{B} \in \text{ spt } \widetilde{\rho_{m}}\}$.
}
\end{proof}

We have shown the convergence rates of the bound of the supports. In particular, when planets' mass goes to zero, their density supports will be contained in some balls with same radius $R$. Same results can be shown for stars' scaling densities and original densities.

\begin{corollary}[Bound on the sizes of supports of scaling densities $\widetilde{\rho_{1-m}}$]\label{bound on the sizes of scaling stars}
There exists a radius $R(J)$ and $\delta>0$ independent of $m, \forall 0<m<\delta$, if $\rho(m)= \rho_{1-m}+\rho_{m}$ minimizes $E_{J}(\rho)$ on ${W}_{m}$, then spt $\widetilde{\rho_{1-m}}$ is contained in a ball of radius $R(J)$, where $\widetilde{\rho_{1-m}}$ is defined in Definition \ref{scaling density for planet}.
\end{corollary}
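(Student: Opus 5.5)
The plan is to repeat the proof of Theorem \ref{bound on the sizes of scaling densities} for the star, replacing the scaling coefficients $A(m),B(m)$ by $\widetilde{A}=A(1-m)=(1-m)^{-\frac{2}{3\gamma-4}}$ and $\widetilde{B}=B(1-m)=(1-m)^{\frac{\gamma-2}{3\gamma-4}}$. The situation is in fact easier, because $\widetilde{A},\widetilde{B}\to 1$ as $m\to 0$, so no blow-up of the factor $\widetilde{A}^{\gamma-1}$ must be compensated. The three inputs have already been prepared: the $L^1$ and $L^{\frac43}$ convergence $\widetilde{\rho_{1-m}}\to\sigma$ up to translation (Corollary \ref{convergence of scaling densities of stars}); the uniform $L^\infty$ bound on $\widetilde{\rho_{1-m}}$ and the continuity of $V_{\widetilde{\rho_{1-m}}}$ (Remark \ref{uniform bound for the norm of stars or scaling stars}); and the Euler--Lagrange equation (\ref{ELm}) with $\alpha=1-m$.

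The first step is to bound the Lagrange multiplier: I will show that for every $0<\epsilon<-\kappa_1$ one has $\widetilde{A}^{\gamma-1}\lambda_{1-m}\le \kappa_1+\epsilon$ for all small $m$. I will copy the proof of Lemma \ref{bound for the Lagrange Multiplier for oiginal densities} verbatim. Convergence in measure of $\widetilde{\rho_{1-m}}$, $V_{\widetilde{\rho_{1-m}}}$ and $\mathcal{A}'(\widetilde{\rho_{1-m}})$ follows from the $L^1$ and $L^{\frac43}$ convergence, the Hardy--Littlewood--Sobolev inequality, and the Lipschitz bound for $\mathcal{A}'$ on $[0,C]$. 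From this I can find a point of positive density where $\widetilde{A}^{\gamma-1}\lambda_{1-m}\le \mathcal{A}'(\widetilde{\rho_{1-m}})-V_{\widetilde{\rho_{1-m}}}$. At that point the right-hand side is within $\frac{2\epsilon}{3}$ of $\mathcal{A}'(\sigma)-V_\sigma=\kappa_1$.

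I also need the lower bound $\lambda_{1-m}\le \mathcal{A}'(\rho_{1-m})-V_{\rho_{1-m}}-V_{\rho_m}-v^2$. Since $V_{\rho_m}, v^2\ge0$, this only helps. Here the set $E$ has measure $\widetilde{B}^3M$, which stays bounded below, so this step is actually simpler than for the planet.

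The second step is to rescale (\ref{ELm}) on $\Omega_{1-m}$. Normalizing $\frac{K\gamma}{\gamma-1}=1$, I obtain
$\widetilde{\rho_{1-m}}^{\gamma-1}(y)\le\bigl[\widetilde{A}^{\gamma-1}(v^2+V_{\rho_m})+V_{\widetilde{\rho_{1-m}}}(y)+\kappa_1+\epsilon\bigr]_+$.
Corollary \ref{velocity decreasing rate as mass decreases} gives $v^2\le Cm^2$. Since $\operatorname{dist}(\Omega_m,\Omega_{1-m})=\frac{\eta}{2}$, I get $V_{\rho_m}\le 2m/\eta\le Cm^3$ on $\Omega_{1-m}$. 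Together with $\widetilde{A}^{\gamma-1}\le 2$, the extra term is at most $-\frac{\kappa_1+\epsilon}{2}$ for small $m$. This yields
$\widetilde{\rho_{1-m}}^{\gamma-1}\le [V_{\widetilde{\rho_{1-m}}}+\frac{\kappa_1+\epsilon}{2}]_+$ on the rescaled domain $\Omega_{1-m}^{\widetilde B}$.

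The final step is the tail argument. I choose $\tilde R(J)\ge R_0$ with $\frac{1}{\tilde R-R_0}\le-\frac{\kappa_1+\epsilon}{6}$. I then choose $\delta$ so small that $k\delta^{2/3}\tilde C^{1/3}<-\frac{\kappa_1+\epsilon}{6}$, using Proposition \ref{bound of potential in McCann's paper} and the uniform $L^\infty$ bound $\tilde C$. By the $L^1$ convergence, all but mass $\delta$ of $\widetilde{\rho_{1-m}}$ lies in $B_{R_0}(0)$ after translation. Splitting the potential into the parts coming from inside and outside $B_{R_0}(0)$, I find that outside $B_{\tilde R}(0)$ the bracket is at most $\frac{\kappa_1+\epsilon}{6}<0$. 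Hence $\widetilde{\rho_{1-m}}=0$ there, first almost everywhere and then everywhere by continuity (Theorem \ref{existence of the constrained minimizers in binary case}). So the support lies in $B_{2\tilde R(J)}$.

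The main obstacle is the multiplier bound in the first step. It requires the convergence in measure to be uniform along the full family $m\to0$, not just along a subsequence; I handle this exactly as in the planet case, via the whole-sequence convergence in Corollary \ref{convergence of scaling densities of stars}.
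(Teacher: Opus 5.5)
Your proposal is correct and follows the paper's route: the paper proves this corollary by saying the argument of Theorem~\ref{bound on the sizes of scaling densities} carries over, and you reproduce that argument with the correct coefficients $\widetilde A,\widetilde B$. That means the multiplier bound via convergence in measure, the rescaled Euler--Lagrange inequality (where the factor $\widetilde{A}^{\gamma-1}\to 1$ is harmless and the contributions of $v^2$ and $V_{\rho_m}$ are $O(m^2)$ and $O(m^3)$), and then the tail splitting of the potential. One small repair, inherited from the paper's Lemma~\ref{bound for the Lagrange Multiplier for oiginal densities}: for $\frac32<\gamma<2$ the map $s\mapsto s^{\gamma-1}$ is not Lipschitz on $[0,C]$, but it is uniformly continuous there (indeed $|a^{\gamma-1}-b^{\gamma-1}|\le|a-b|^{\gamma-1}$), and that is all you need to transfer convergence in measure from $\widetilde{\rho_{1-m}}$ to $\mathcal{A}'(\widetilde{\rho_{1-m}})$.
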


\begin{proof}
{
	\rm
	The proof is essentially the same as the proof above.
}
\end{proof}

\begin{corollary}[Bound on the sizes of supports of original densities $\rho_{1-m}$]\label{bound on the sizes of original stars}
There exists a radius $R(J)$ and $\delta>0$ independent of $m, \forall 0<m<\delta$, if $\rho(m)= \rho_{1-m}+\rho_{m}$ minimizes $E_{J}(\rho)$ on ${W}_{m}$, then spt $\rho_{1-m}$ is contained in a ball of radius $R(J)$.
\end{corollary}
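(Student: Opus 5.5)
The plan is to deduce the statement from Corollary \ref{bound on the sizes of scaling stars} by undoing the scaling. By Definition \ref{scaling density for planet}, applied with mass $1-m$, we have $\widetilde{\rho_{1-m}}(x)=A(1-m)\rho_{1-m}(B(1-m)x)$. This gives
\[
\operatorname{spt}\rho_{1-m}=B(1-m)\,\operatorname{spt}\widetilde{\rho_{1-m}},
\]
exactly as in the proof of Corollary \ref{bound on the sizes of original densities}. Hence $\operatorname{spt}\rho_{1-m}$ lies in a ball of radius $B(1-m)R(J)$, where $R(J)$ is the radius from Corollary \ref{bound on the sizes of scaling stars}. Since $B(1-m)=(1-m)^{\frac{\gamma-2}{3\gamma-4}}$ and $3\gamma-4>0$ for $\gamma>\frac32$, the factor tends to $1$ as $m\to0$. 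For $m<\delta\le\frac12$ it is therefore bounded by $\max\{1,2^{\frac{2-\gamma}{3\gamma-4}}\}$. Replacing $R(J)$ by this constant multiple of $R(J)$ gives the claim. No smallness of the star's support is needed, so the argument works for every $\gamma>\frac32$.

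Because Corollary \ref{bound on the sizes of scaling stars} is only stated as ``essentially the same'' proof, I would also run the argument of Theorem \ref{bound on the sizes of scaling densities} directly on $\rho_{1-m}$ without scaling; this is easier. The steps are as follows.

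First, by Corollary \ref{convergence of densities of stars}, up to translation $\rho_{1-m}\to\sigma$ in $L^1$ and $L^{\frac43}$.

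Second, by Remark \ref{uniform bound for the norm of stars or scaling stars}, $\|\rho_{1-m}\|_{L^\infty}\le\tilde C$ uniformly in $m$.

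Third, I would repeat the convergence-in-measure argument of Lemma \ref{bound for the Lagrange Multiplier for oiginal densities}. Here no rescaling is needed, so the set $E$ has fixed positive measure. This gives $\lambda_{1-m}\le\kappa_1+\epsilon<0$ for small $m$.

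Fourth, in (\ref{ELm}) on $\Omega_{1-m}$ I would bound the extra terms: $v^2\le Cm^2$ by Corollary \ref{velocity decreasing rate as mass decreases}, and $V_{\rho_m}\le 2m/\eta\le Cm^3$ by the separation (\ref{dist}). For $m$ small these give
\[
\mathcal{A}'(\rho_{1-m})\le\Bigl[V_{\rho_{1-m}}+\tfrac{\kappa_1+\epsilon}{2}\Bigr]_+ .
\]

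Fifth, I would choose $\tilde R\ge R_0$ with $\frac{1}{\tilde R-R_0}\le-\frac{\kappa_1+\epsilon}{6}$. I would then split $\rho_{1-m}$ into its parts inside and outside $B_{R_0}$, with the outside part of mass at most $\delta'$. For $|y|>\tilde R$ the inner part contributes at most $-\frac{\kappa_1+\epsilon}{6}$ to the potential. By Proposition \ref{bound of potential in McCann's paper} the outer part contributes at most $k\delta'^{2/3}\tilde C^{1/3}<-\frac{\kappa_1+\epsilon}{6}$. Hence the bracket vanishes there.

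Sixth, continuity of $\rho_{1-m}$ on $\Omega_{1-m}$ (Theorem \ref{existence of the constrained minimizers in binary case}) upgrades the a.e. vanishing to everywhere. Thus $\operatorname{spt}\rho_{1-m}\subset B_{2\tilde R}$ after translation.

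The main obstacle is the uniformity in $m$ of the constants, particularly the Lagrange multiplier bound for the star. The key point is that, unlike the planet case, the relevant sets do not shrink, so convergence in measure of $\rho_{1-m}$, $V_{\rho_{1-m}}$ and $\mathcal{A}'(\rho_{1-m})$ directly yields a point $x$ with $\sigma(x)>C/N$ where (\ref{ELm}) holds as an equality. I would also check that the translation used in the convergence is harmless, since the support statement is translation invariant.
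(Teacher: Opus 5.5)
Your proposal is correct. Your second argument is what the paper means by ``essentially the same as the proof above, even without scaling stars' densities''. That argument runs directly on $\rho_{1-m}$:
\begin{itemize}
\item $L^1$ convergence from Corollary~\ref{convergence of densities of stars};
\item the uniform $L^\infty$ bound;
\item $\lambda_{1-m}\le\kappa_1+\epsilon$ via convergence in measure;
\item absorbing $v^2$ and $V_{\rho_m}=O(m^3)$;
\item splitting the potential into near and far parts with Proposition~\ref{bound of potential in McCann's paper}.
\end{itemize}
Your remark that the set $E$ no longer shrinks like $B^3$ is exactly why the unscaled version is the easier one.

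Your first argument is a genuinely different, shorter derivation and is valid as stated. It unscales Corollary~\ref{bound on the sizes of scaling stars} through $\operatorname{spt}\rho_{1-m}=B(1-m)\operatorname{spt}\widetilde{\rho_{1-m}}$ and uses $B(1-m)\le\max\{1,2^{\frac{2-\gamma}{3\gamma-4}}\}$ for $m\le\frac12$. However, the paper proves that corollary only by the same rerun, now with factors $A(1-m),B(1-m)\to1$. So this route moves the work elsewhere rather than removing it, and the direct argument is the one that actually carries the proof.

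One detail applies when you repeat the convergence-in-measure step for $\frac32<\gamma<2$. Do not copy the paper's mean-value estimate $K\gamma\xi^{\gamma-2}\le K\gamma C^{\gamma-2}$; it fails for $\gamma<2$ because $\xi$ may be close to $0$. Use instead $|a^{\gamma-1}-b^{\gamma-1}|\le|a-b|^{\gamma-1}$ for $a,b\ge0$. This still gives convergence in measure of $\mathcal{A}'(\rho_{1-m})$ to $\mathcal{A}'(\sigma)$.
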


\begin{proof}
{
	\rm
	The proof is essentially the same as the proof above, even without scaling stars' densities.
}
\end{proof}
\section{Existence for Star-Planet Systems}\label{section6-Existence for Star-Planet Systems}
So far, we have dealt with the uniform bound of the supports of density functions with different mass. Our next step is to show the centers of mass can be chosen to be not too far away from the centers of $\Omega_{1-m}$ and $\Omega_{m}$. Thanks to those results, we can then show the constrained minimizers are actually $W^{\infty}$ local energy minimizers, which represent solutions to the reduced Euler-Poisson system (\ref{EP'}) up to translation due to Theorem \ref{Properties of LEM}.

\subsection{Estimate for the Center of Mass Separation}\label{subsection6.1-Estimate for the Center of Mass Separation}
In this subsection, we estimate the centers of mass separation by modifying the arguments in \cite[Lemma 6.7 and Proposition 6.8]{McC06} such that they can be applied in our case.

\begin{lemma}\label{complex convergence}
	For $\epsilon>0$, define $g_{\epsilon}(z)=-\frac{1}{z-2 \epsilon}+\frac{1}{2\left(z^{2}+\epsilon^{\frac{3}{2}} R^{\frac{1}{2}} J^{-1}\right)}+\frac{1}{1+2 \epsilon}-\frac{1}{2}$. For $\epsilon$ is small enough, $g_{\epsilon}$ converges uniformly to $g_{0}(z)=-\frac{1}{z}+\frac{1}{2 z^{2}}+\frac{1}{2}$ as $\epsilon \rightarrow 0$ on $\left\{z \in \mathbb{C}  \mid |z| \geq \frac{1}{2}\right\}$.
\end{lemma}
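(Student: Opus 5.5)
We estimate $g_\epsilon-g_0$ term by term on the closed set $D:=\{z\in\mathbb{C}\mid |z|\geq \frac{1}{2}\}$, regarding $R$ and $J$ as fixed positive constants. Set $c_\epsilon:=\epsilon^{\frac{3}{2}}R^{\frac{1}{2}}J^{-1}>0$, so $c_\epsilon\to 0$ as $\epsilon\to 0$. We split
\begin{equation*}
	g_\epsilon(z)-g_0(z)=\Bigl(\frac{1}{z}-\frac{1}{z-2\epsilon}\Bigr)+\frac{1}{2}\Bigl(\frac{1}{z^2+c_\epsilon}-\frac{1}{z^2}\Bigr)+\Bigl(\frac{1}{1+2\epsilon}-1\Bigr).
\end{equation*}
The last bracket is independent of $z$ and has modulus $\frac{2\epsilon}{1+2\epsilon}\to 0$. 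It therefore suffices to show that the first two brackets tend to $0$ uniformly on $D$. In the same step we check that $g_\epsilon$ has no poles on $D$ once $\epsilon$ is small.

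For the first bracket, $\frac{1}{z}-\frac{1}{z-2\epsilon}=\frac{-2\epsilon}{z(z-2\epsilon)}$. For $|z|\geq\frac12$ and $\epsilon<\frac18$ we have $|z-2\epsilon|\geq |z|-2\epsilon\geq \frac14$. Hence the bracket is bounded by $\frac{2\epsilon}{(1/2)(1/4)}=16\epsilon$. In particular $z=2\epsilon$ lies outside $D$.

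For the second bracket, $\frac{1}{z^2+c_\epsilon}-\frac{1}{z^2}=\frac{-c_\epsilon}{z^2(z^2+c_\epsilon)}$. On $D$ we have $|z^2|\geq \frac14$, and once $c_\epsilon<\frac18$ we get $|z^2+c_\epsilon|\geq |z|^2-c_\epsilon\geq\frac18$. The bracket is then bounded by $32c_\epsilon$, and $g_\epsilon$ has no poles on $D$. The main point here, and the only place where care is needed, is that the poles of $g_\epsilon$ (namely $z=2\epsilon$ and $z=\pm i\sqrt{c_\epsilon}$) and of $g_0$ (at $z=0$) all lie in the disc $|z|<\frac12$ for small $\epsilon$. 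This is exactly what makes the lower bounds on the denominators uniform on $D$.

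Combining the three estimates gives
\begin{equation*}
	\sup_{|z|\geq 1/2}|g_\epsilon(z)-g_0(z)|\leq 16\epsilon+\frac{1}{2}\cdot 32\, c_\epsilon+2\epsilon=18\epsilon+16\,\epsilon^{\frac32}R^{\frac12}J^{-1}\to 0
\end{equation*}
as $\epsilon\to0$. This gives uniform convergence on $\{|z|\geq\frac12\}$. No step is a genuine obstacle; the only thing to verify is the choice of $\epsilon$ small enough, namely $\epsilon<\frac18$ and $c_\epsilon<\frac18$, which keeps all singularities inside the excluded disc.
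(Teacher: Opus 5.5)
Your proof is correct and follows essentially the same route as the paper. The paper only says the uniform convergence can be checked ``by direct computation and estimation'' (after noting that $g_\epsilon$ and $g_\epsilon'$ are analytic and bounded on $\{|z|>\frac14\}$ for small $\epsilon$), and your term-by-term split with the explicit bound $\sup_{|z|\geq 1/2}|g_\epsilon(z)-g_0(z)|\leq 18\epsilon+16\,\epsilon^{3/2}R^{1/2}J^{-1}$ supplies exactly the details the paper leaves to the reader.
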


\begin{proof}
	{
		\rm
		One can observe when $\epsilon$ is small enough, the functions $g_{\epsilon}$ and $g_{\epsilon}^{\prime}$ are analytic and uniformly bounded on $\left\{z \in \mathbb{C}\mid | z |>\frac{1}{4}\right\}$. The result can be checked by direct computation and estimation.
	}
\end{proof}

\begin{proposition}[Estimate for the Center of Mass Separation]\label{estimate of separation}
	Fix $J>0$, let $0<\zeta<\frac{1}{2}, \exists \delta>0$, such that $\forall 0<m<\delta$, if $\rho(m)=\rho_{1-m}+\rho_{m}$ minimizes $E_{J}(\rho)$ on ${W}_{m}$, then $1-\zeta<\frac{\left|\bar{x}\left(\rho_{m}\right)-\bar{x}\left(\rho_{1-m}\right)\right|}{\left|y_{m}-y_{1-m}\right|}<1+\zeta$, where $y_{m}$ and $y_{1-m}$ are the centers of $\Omega_{m}$ and $\Omega_{1-m}$, $\left|y_{m}-y_{1-m}\right|=\eta=\frac{J^{2}}{\mu_r^{2}}=\frac{J^{2}}{(m(1-m))^{2}}$.
\end{proposition}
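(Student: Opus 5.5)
We compare energies of the constrained minimizer $\rho(m)$ with a competitor built from the same two bodies placed at a Kepler-optimal separation. The goal is to show that the orbital part of the energy, normalized by $\mu_r^3/J^2$, is forced close to the minimum of the one-dimensional Kepler function $g_0$, and hence that the normalized separation is close to $1$.

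\textbf{Setup.} Write $d:=|\bar{x}(\rho_m)-\bar{x}(\rho_{1-m})|$ and $z:=d/\eta$. By Corollaries \ref{bound on the sizes of original densities} and \ref{bound on the sizes of original stars}, both supports lie in balls of radius at most $R=R(J)$ (for $\gamma\le 2$ the planet bound is $B R(J)$; any polynomial growth in $m^{-1}$ that is $o(\eta)$ suffices). Since $\eta\sim J^2m^{-2}$, the ratio $\epsilon:=R/\eta$ tends to $0$.

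\textbf{Energy of $\rho(m)$.} Decompose
\[
E_J(\rho(m))=E_0(\rho_m)+E_0(\rho_{1-m})-G(\rho_m,\rho_{1-m})+\frac{J^2}{2I(\rho(m))}.
\]
The cross term satisfies
\[
G(\rho_m,\rho_{1-m})\le \frac{\mu_r}{d-2R},
\]
because all mutual distances are at least $d-2R$. By Lemma \ref{expansion of MoI},
\[
I(\rho(m))=\mu_r r^2(\bar{x}(\rho_m)-\bar{x}(\rho_{1-m}))+I(\rho_m)+I(\rho_{1-m})\le \mu_r d^2+R^2 .
\]
This gives the lower bound
\[
E_J(\rho(m))\ge E_0(\rho_m)+E_0(\rho_{1-m})-\frac{\mu_r}{d-2R}+\frac{J^2}{2(\mu_r d^2+R^2)} .
\]

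\textbf{Competitor.} Rigidly translate $\rho_m$ so that its center of mass sits exactly at distance $\eta$ from $\bar{x}(\rho_{1-m})$ along the $x_1x_2$-plane. Choose the translation so that the shifted support stays inside $\Omega_m$; this is possible since the supports are small relative to $\eta/4$, after recentering the balls if needed. Translation invariance of the $E_0$-terms leaves them unchanged. For the competitor,
\[
G\ge \frac{\mu_r}{\eta+2R},\qquad I\ge \mu_r\eta^2 .
\]

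\textbf{Comparison.} Minimality gives
\[
-\frac{\mu_r}{d-2R}+\frac{J^2}{2(\mu_r d^2+R^2)}\le -\frac{\mu_r}{\eta+2R}+\frac{J^2}{2\mu_r\eta^2}.
\]
Divide by $\mu_r/\eta=\mu_r^3/J^2$ and use $z=d/\eta$. This becomes $g_\epsilon(z)\le 0$, where $g_\epsilon$ is the function of Lemma \ref{complex convergence} up to the exact placement of constants. The limit $g_0(z)=-\frac1z+\frac1{2z^2}+\frac12=\frac{(z-1)^2}{2z^2}$ is nonnegative and vanishes only at $z=1$.

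\textbf{Range of $z$.} The a priori constraint $\frac12\le z\le\frac32$ comes from the geometry of $\Omega_m$ and $\Omega_{1-m}$ via \eqref{dist} and \eqref{diam}. On this range, $g_0\ge c(\zeta)>0$ whenever $|z-1|\ge\zeta$.

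\textbf{Conclusion.} By the uniform convergence $g_\epsilon\to g_0$ on $|z|\ge\frac12$ (Lemma \ref{complex convergence}), for $m$ small we get $g_\epsilon>c/2>0$ off $(1-\zeta,1+\zeta)$. This contradicts $g_\epsilon(z)\le0$, which yields the claimed bound $1-\zeta<z<1+\zeta$.

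\textbf{Main obstacle.} The delicate step is the competitor: its translated planet must stay in $W_m$. The separation estimate for the rest is just uniform convergence. One must check that a center-of-mass separation of exactly $\eta$, together with support radii $o(\eta)$, fits inside the fixed balls $\Omega_m$ and $\Omega_{1-m}$ of radius $\eta/4$. We handle this by placing the competitor near $y_m$ and $y_{1-m}$, which are exactly $\eta$ apart, translating each body separately; translation invariance of $E_0$ still holds.

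A second point is bookkeeping the error terms. We must verify that $R^2/(\mu_r\eta^2)$ and $R/\eta$ are both $o(1)$ as $m\to0$, uniformly in the case $\frac32<\gamma\le2$. This requires using the planet radius $BR(J)$ together with the star radius $R(J)$, and checking that $B\,m^{2}\to0$, which holds since $\frac{\gamma-2}{3\gamma-4}>-2$ for $\gamma>\frac32$.
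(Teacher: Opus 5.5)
Your proposal is correct and follows essentially the paper's argument. It uses the same competitor (each body translated separately so its center of mass sits at $y_m$ and $y_{1-m}$), the same bounds $G(\rho_m,\rho_{1-m})\le\mu_r/(d-2R)$ and $I(\rho(m))\le\mu_r d^2+R^2$, and the same reduction to $g_\epsilon(z)\le0$ on $[\frac12,\frac32]$ via Lemma \ref{complex convergence}; note that $R$ must be the support radius measured from the centers of mass, i.e.\ twice the radius from the corollaries, as the paper takes it. Your version is in fact slightly tidier: it never needs the symmetry-plane fact the paper borrows from Lemma \ref{interior} (only the unused lower bound $I(\rho(m))\ge\mu_r d^2$ depends on it), and tracking the planet radius $BR(J)$ separately is what actually makes the error terms vanish when $\frac32<\gamma\le2$, a case the paper glosses over.
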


\begin{proof}
	{
		\rm
		Take $m$ small enough such that Corollary \ref{bound on the sizes of original densities} and Corollary \ref{bound on the sizes of original stars} provide bound $R(J)$ for the supports of $\rho_{m}$ and $\rho_{1-m}$, and $R:=2 R(J)<\frac{\eta}{4}$. Due to Corollary \ref{bound on the sizes of original densities}, we know spt $\rho_{m} \subset B_{R(J)}\left(z_{m}\right)$ for some $z_m \in \mathbb{R}^3$, then $\bar{x}\left(\rho_{m}\right) \in B_{R(J)}\left(z_{m}\right)$. If $x$ is outside $B_{R}\left(\bar{x}\left(\rho_{m}\right)\right)$, then $\left|x-z_{m}\right| \geq\left|x-\bar{x}\left(\rho_{m}\right)\right|-\mid z_{m}-$ $\bar{x}\left(\rho_{m}\right) \mid>R(J)$, $\rho_{m}(x)=0$. Therefore, spt $\rho_{m} \subset B_{R}\left(\bar{x}\left(\rho_{m}\right)\right)$. Similarly, spt $\rho_{1-m} \subset$ $B_{R}\left(\bar{x}\left(\rho_{1-m}\right)\right)$. Now we translate $\rho_m$ and $\rho_{1 - m}$ separately so that their centers of mass are positioned at $y_{m}$ and $y_{1-m}$ respectively, and denote the shifted functions as $\kappa_{m}$ and $\kappa_{1-m}$. Therefore, $\kappa(m)\coloneq \kappa_{m}+\kappa_{1-m} \in W_{m}$. Due to the proof of Lemma \ref{interior} below we know $\bar{x}\left(\rho_{m}\right)$ and $\bar{x}\left(\rho_{1-m}\right)$ lie in the plane $z=c$. Hence the distance between them equals the distance between their projections onto $x_1x_2$ plane. That is, $d:=\left|\bar{x}\left(\rho_{m}\right)-\bar{x}\left(\rho_{1-m}\right)\right|=r\left(\bar{x}\left(\rho_{1-m}\right)-\bar{x}\left(\rho_{m}\right)\right)$, where $r$ is given in Definition \ref{notations}. Since spt $\rho_{m} \subset B_{R}\left(\bar{x}\left(\rho_{m}\right)\right)$ and spt $\rho_{1-m} \subset B_{R}\left(\bar{x}\left(\rho_{1-m}\right)\right)$, we have (recall $\mu_r=m(1-m)$)
		
		$$
		\frac{\mu_r}{d+2 R} \leq G\left(\rho_{m}, \rho_{1-m}\right)=\iint_{\mathbb{R}^3\times \mathbb{R}^3} \frac{\rho_{m}(x) \cdot \rho_{1-m}(y)}{|x-y|} \,d x \,d y \leq \frac{\mu_r}{d-2 R}
		$$
		
		By Lemma \ref{expansion of MoI}, we know		
		$$
			I(\rho(m))= \mu_r r^{2}\left(\bar{x}\left(\rho_{1-m}\right)-\bar{x}\left(\rho_{m}\right)\right)+\int_{\mathbb{R}^3} r^{2}\left(x-\bar{x}\left(\rho_{m}\right)\right) \rho_{1-m}(x) \,dx +\int_{\mathbb{R}^3} r^{2}\left(x-\bar{x}\left(\rho_{m}\right)\right) \rho_{m}(x) \,dx
		$$
		Hence
		$$\mu_r r^{2}\left(\bar{x}\left(\rho_{1-m}\right)-\bar{x}\left(\rho_{m}\right)\right)=\mu_r d^{2}\leq I(\rho(m))\leq \mu_r r^{2}\left(\bar{x}\left(\rho_{1-m}\right)-\bar{x}\left(\rho_{m}\right)\right)+(1-m) R^{2}+m R^{2}
		= \mu_r d^{2}+R^{2} $$
		
		By definition of $T_J$ (\ref{T_J}), we have
		$$\frac{J^{2}}{2\left(\mu_r d^{2}+R^{2}\right)} \leq T_{J}(\rho(m))=\frac{J^{2}}{2 I(\rho(m))} \leq \frac{J^{2}}{2 \mu_r d^{2}}$$
		
		Similar inequalities hold true for $G\left(\kappa_{m}, \kappa_{1-m}\right)$ and $T_{J}(\kappa(m))$, but we replace $d$ by $\eta$, since $\left|\bar{x}\left(\kappa_{m}\right)-\bar{x}\left(\kappa_{1-m}\right)\right|=r\left(\bar{x}\left(\kappa_{1-m}\right)-\bar{x}\left(\kappa_{m}\right)\right)=\left|y_m-y_{1-m}\right|=\eta$. Note $\rho(m)$ minimizes $E_{J}(\rho)$ on ${W}_{m}$, based on the energy decomposition similar to \eqref{energy decomposition}, we have $$E_{J}(\kappa(m))-E_{J}(\rho(m))=-G\left(\kappa_{m}, \kappa_{1-m}\right)+T_{J}(\kappa(m))-G\left(\rho_{m}, \rho_{1-m}\right)+T_{J}(\rho(m)) \geq 0$$ 
		Using the previous estimates and $J^{2}=\mu_r^{2} \eta$, we have
		
		$$
		-\frac{1}{d-2 R}+\frac{\mu_r \eta}{2\left(\mu_r d^{2}+R^{2}\right)} \leq-\frac{1}{\eta+2 R}+\frac{1}{2 \eta}
		$$
		
		Let $x=\frac{d}{\eta}$, $\epsilon=\frac{R}{\eta}$, then

		\begin{equation}\label{g_epsilon}
			g_{\epsilon}(x)=-\frac{1}{x-2 \epsilon}+\frac{1}{2\left(x^{2}+\epsilon^{\frac{3}{2}} R^{\frac{1}{2}} J^{-1}\right)}+\frac{1}{1+2 \epsilon}-\frac{1}{2} \leq 0 \tag{3.3.1}
		\end{equation}

		$\rho(m) \in W_{m}$ implies $\bar{x}\left(\rho_{m}\right) \in \Omega_{m}, \bar{x}\left(\rho_{1-m}\right) \in \Omega_{1-m}$, thus $\frac{1}{2} \leq x \leq \frac{3}{2}$ by the construction of $\Omega_{m}$ and $\Omega_{1-m}$. Let $g_{0}(z)=-\frac{1}{z}+\frac{1}{2 z^{2}}+\frac{1}{2}$, by Lemma \ref{complex convergence} we know $g_{\epsilon}(z)$ converges uniformly to $g_{0}(z)$ as $\epsilon \rightarrow 0$ on $|z| \geq \frac{1}{2}$. One can see $g_0(1)=0$, and $\forall \zeta>0$, $\exists C>0$, such that on $\left[\frac{1}{2}, \frac{3}{2}\right] \setminus [1-\zeta, 1+\zeta]$, we have $g_{0}(x)>C$. Due to Lemma \ref{complex convergence}, we have $g_{\epsilon}(x)>\frac{C}{2}>0$ on $\left[\frac{1}{2}, \frac{3}{2}\right] \setminus [1-\zeta, 1+\zeta]$ for $\epsilon$ small enough (i.e. for $m$ small enough since $\epsilon=\frac{R}{\eta}=\frac{Rm^2(1-m)^2}{J^2}$). Thus (\ref{g_epsilon}) implies $x=\frac{d}{\eta}$ has to satisfy $1-\zeta<\frac{d}{\eta}<1+\zeta$, which proves the proposition.
	}
\end{proof}

\subsection{Existence theorem}\label{subsection6.2-Existence theorem}
After making some modifications to McCann's proof in \cite[Section 6]{McC06}, we proceed by translating $\rho$ such that it enjoys a plane of symmetry $z=0$, and then use the results above to show $\rho$ is supported away from the boundary of $\Omega_{1-m} \cup \Omega_{m}$, then we finalize the proof of existence.

\begin{lemma}[Support Separation from the Boundary]\label{interior}
	Fix $J>0$, $\exists \delta>0, \forall 0<m<\delta$, any constrained minimizer $\rho(m)=\rho_{m}+\rho_{1-m}$ of $E_{J}(\rho)$ on $W_{m}$ will, after a rotation with respect to the $z$-axis and a translation, have support contained in the interior of $\Omega_{1-m} \cup \Omega_{m}$, which means dist(spt $\left.\rho(m), \mathbb{R}^{3} \backslash\left(\Omega_{m} \cup \Omega_{1-m}\right)\right)>0$. Moreover, $\bar{x}\left(\rho_{m}\right)$ and $\bar{x}\left(\rho_{1-m}\right)$ lie in the plane $z=0$, and $\rho(m)$ will also be symmetric about the plane $z=0$ and a decreasing function of $|z|$.
\end{lemma}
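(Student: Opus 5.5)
I will follow McCann's argument for \cite[Theorem 6.1]{McC06}, now driven by the small-mass estimates of Section~\ref{section5-Bound of the Supports of Density Functions}. The first step is symmetrization in $z$. Given a constrained minimizer $\rho(m)=\rho_m+\rho_{1-m}$, I replace each piece by its Steiner symmetrization in the $x_3$ variable about a horizontal plane, taken separately on $\Omega_m$ and on $\Omega_{1-m}$.
\begin{itemize}
\item Since $\Omega_m$ and $\Omega_{1-m}$ are balls centred in the plane $z=0$, symmetrizing about $z=0$ maps $W_m$ into itself.
\item The internal energy $U$ is unchanged.
\item The gravitational energy $G$ does not decrease, by the Riesz rearrangement inequality applied on each vertical line.
\item The moment of inertia $I$ is unchanged, because $r$ does not depend on $x_3$.
\end{itemize}
Hence $E_J$ does not increase, and the symmetrized density is again a minimizer. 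For the converse, strictness of the Riesz inequality for the strictly decreasing kernel $1/|x-y|$ gives the rigidity: equality forces each piece to already be a vertical translate of its symmetrization, about a common plane $z=c$. This is the step I expect to be most delicate, since it needs the equality case of Riesz for two functions that are not a priori symmetric. I would first try to cite the standard equality statement (Lieb--Loss), applied to the cross term $G(\rho_m,\rho_{1-m})$ together with the self-interaction terms.

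After a vertical translation, $\rho(m)$ is symmetric about $z=0$ and decreasing in $|z|$. In particular the centres of mass $\bar{x}(\rho_m)$ and $\bar{x}(\rho_{1-m})$ lie in the plane $z=0$, which is the fact used in Proposition~\ref{estimate of separation}.

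Next comes the rotation and translation. By Corollaries~\ref{bound on the sizes of original densities} and~\ref{bound on the sizes of original stars}, for small $m$ we have $\operatorname{spt}\rho_m\subset B_R(\bar{x}(\rho_m))$ and $\operatorname{spt}\rho_{1-m}\subset B_R(\bar{x}(\rho_{1-m}))$ with $R=2R(J)$, independent of $m$. By Proposition~\ref{estimate of separation}, the separation $d=|\bar{x}(\rho_m)-\bar{x}(\rho_{1-m})|$ satisfies $|d-\eta|<\zeta\eta$ for any prescribed $\zeta$ once $m$ is small. I rotate about the $z$-axis so that $\bar{x}(\rho_m)-\bar{x}(\rho_{1-m})$ is parallel to $y_m-y_{1-m}$. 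I then translate horizontally so that the midpoint of the two centres of mass coincides with the midpoint of $y_m$ and $y_{1-m}$. These rigid motions preserve $U$, $G$ and $I$, so $E_J$ is unchanged.

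After these motions each centre of mass is within $\zeta\eta/2$ of the corresponding centre $y_\alpha$. Therefore each support lies in the ball of radius $\zeta\eta/2+R$ around $y_\alpha$. Choosing $\zeta=1/4$ and $m$ small enough that $R<\eta/16$ (possible since $\eta\sim m^{-2}\to\infty$), this radius is strictly less than $\eta/4$. So the moved density lies in $W_m$ and is still a constrained minimizer, and its support has positive distance from $\mathbb{R}^3\setminus(\Omega_m\cup\Omega_{1-m})$.

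Finally, the vertical symmetry from the first step survives these horizontal motions. Because the separation estimate and the support bounds hold for every constrained minimizer, the statement holds for any constrained minimizer, with $\delta$ the minimum of the thresholds from the cited results.
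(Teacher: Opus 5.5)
Your proposal is correct and follows essentially the same route as the paper: vertical Steiner symmetrization with the strict (Lieb) rearrangement inequality gives a symmetry plane $z=c$, and then Corollaries \ref{bound on the sizes of original densities} and \ref{bound on the sizes of original stars} together with Proposition \ref{estimate of separation} let you rotate and translate the minimizer into the interior of $\Omega_m\cup\Omega_{1-m}$. Your midpoint placement with $\zeta=\frac14$ is just a concrete version of the paper's conditions $d-2R>\frac{\eta}{2}$, $d+2R<\frac{3\eta}{2}$.

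The only slip is the claim that $R=2R(J)$ is independent of $m$. This holds for $\gamma\geq 2$, but for $\frac32<\gamma<2$ the planet's support radius is $B(m)R(J)$ with $B(m)=m^{\frac{\gamma-2}{3\gamma-4}}\to\infty$. You should instead take $R=2\max\{B(m),1\}R(J)$ and note that $R/\eta\to 0$ still holds because $\frac{\gamma-2}{3\gamma-4}>-2$, as in the paper's proof of Theorem \ref{themA'}. That is all your requirement $R<\eta/16$ needs.
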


\begin{proof}
	{
		\rm
		Take $m$ small enough so that minimizers exist. First, we claim that any minimizer $\rho(m)=\rho_{m}+\rho_{1-m}$ for $E_{J}(\rho)$ on $W_{m}$ may be translated so that both $\bar{x}\left(\rho_{m}\right)$ and $\bar{x}\left(\rho_{1-m}\right)$ lie in the plane $z=0$. In fact, since the $\Omega_{m}$ and $\Omega_{1-m}$ are convex and symmetric about $z=0$, it is enough to know that $\rho(m)$ enjoys a plane of symmetry $z=c$. This follows from a strong rearrangement inequality in Lieb \cite[Lemma 3]{Lie77} and Fubini's Theorem: by the definition of $U(\rho)$ and $I(\rho)$, they will not be changed after the symmetric decreasing rearrangement of $\rho$ along lines parallel to the z-axis; however, since $\left(r^{2}+z^{2}\right)^{-\frac{1}{2}}$ is strictly decreasing as a function of $|z|$, the rearrangement increases $G(\rho, \rho)$ unless $\rho$ is already symmetric decreasing about a plane $z=c$. Note $\Omega_{1 - m}$ and $\Omega_{m}$ are convex, hence $\rho(m)$'s rearrangement is in $W_{m}$. Since $\rho(m)$ minimizes $E_{J}(\rho)$, $G(\rho, \rho)$ cannot be increased after rearrangement, which means $\rho(m)$ enjoys a plane of symmetry $z=c$.
		
		Now, similar to the arguments in the proof of Proposition \ref{estimate of separation}, we take $m$ small enough so that Corollary \ref{bound on the sizes of original densities} and Corollary \ref{bound on the sizes of original stars} provide a bound $R$ such that spt $\rho_{m} \subset B_{R}\left(\bar{x}\left(\rho_{m}\right)\right)$ and spt $\rho_{1-m} \subset B_{R}\left(\bar{x}\left(\rho_{1-m}\right)\right)$ if $\rho(m)=\rho_{m}+\rho_{1-m}$ minimizes $E_{J}(\rho)$ on $W_{m}$. Translate $\rho(m)$ so that its symmetry plane is $z=0$ and let $d:=$ $\left|\bar{x}\left(\rho_{m}\right)-\bar{x}\left(\rho_{1-m}\right)\right|$. Then by construction of $\Omega_{m}$ and $\Omega_{1-m}$ we know that if $d-2 R>\frac{\eta}{2}$ and $d+2 R<\frac{3 \eta}{2}$, a translation and rotation of $\rho(m)$ with respect to $z$-axis yields a minimizer in $W_{m}$ supported away from the boundary of $\Omega_{m} \cup \Omega_{1-m}$. By Proposition \ref{estimate of separation}, this is certainly true when $m$ is sufficiently small.
	}
\end{proof}

Now we finally come to the proofs of our main results Theorem \ref{themA} and Theorem \ref{themA'}

\begin{proof}[Theorem \ref{themA}]
	{
		\rm
		Let $\delta$ as in Lemma \ref{interior} and $\rho(m)$ be the constrained energy minimizer in Theorem \ref{existence of the constrained minimizers in binary case}, then Lemma \ref{interior} shows that spt $\rho({m})$ is compact in the interior of $\Omega_{m} \cup \Omega_{1-m}$, therefore separated from the boundary by a positive distance $\zeta$. Lemma \ref{properties of Wasser.} (ii) shows that if $\kappa \in {R}\left(\mathbb{R}^{3}\right)$ with $W^{\infty}(\rho(m), \kappa)<\eta$, then $\kappa$ in $W_{m}$. Thus $E_{J}(\rho(m)) \leq E_{J}(\kappa)$, and $\rho (m)$ is a Wasserstein $L^\infty$ ($W^\infty$) local energy minimizer on ${R}\left(\mathbb{R}^{3}\right)$ (\textbf{Existence of $W^\infty$ local energy minimizer}).
		By the definition of $W_{m}$ \eqref{admissible class}, we know all the functions in $W_{m}$ have compact support. Thus after translation we can set $\rho (m) \in {R}_{0}\left(\mathbb{R}^{3}\right)$. 
		Since $\rho (m)$ is a $W^\infty$ local energy minimizer on ${R}\left(\mathbb{R}^{3}\right)$, in particular on ${R}_{0}\left(\mathbb{R}^{3}\right)$, Theorem \ref{Properties of LEM} provides a local minimizer $(\rho(m), v)$ of $E(\rho, v)$ subject to the constraint on $J_{z}=J$, with $v(x)=$ $\frac{J}{I(\rho(m))} \hat{e}_{z} \times x$. \cite[Section 2]{McC06} and \cite[Section 2]{Che26G1} show that $(\rho(m), v)$ minimizes $E(\rho, v)$ subject to the constraint on the vector angular momentum as well (\textbf{Part (i)}). 
		Part (ii) comes from Lemma \ref{interior} (\textbf{Part (ii)}).
		By Theorem \ref{Properties of LEM} we know $\rho(m)$ is continuous and satisfies (\ref{EP'}), and $(\rho(m), v)$ satisfies (\ref{EP}) (\textbf{Part (iii)}). Parts (iv) and (v) come from Remark \ref{uniform bound for the norm of stars or scaling stars}, Remark \ref{bound convergence of planets}, Corollary \ref{bound on the sizes of original densities} and Corollary \ref{bound on the sizes of original stars} (\textbf{Part (iv) and Part (v)}).
	}
\end{proof}

\begin{proof}[Theorem \ref{themA'}]
	{
		\rm
		The proof of Theorem \ref{themA'} is essentially the same as the proof of Theorem \ref{themA}. Two things we need to note are (1) if we do not know $\gamma>2$, we may not have the fact that there is a ``uniform bound on the radius of $\rho(m)$'s support for all small $m$" from Corollary \ref{bound on the sizes of original densities}. 
		However, since the radii of $\Omega_{1 - m}$ and $\Omega_{m}$ is $\frac{\eta}{4} \sim m^{-2}$, recall $B(m) = m^{\frac{\gamma - 2}{3\gamma - 4}}$, due to Corollary \ref{bound on the sizes of original densities}, we can still have the fact that $\rho(m)$ is located in the interior of $\Omega_m \cup \Omega_{1-m}$ if $ {\frac{\gamma - 2}{3\gamma - 4}}>-2$, that is $\gamma>\frac{10}{7}$; (2) in order to obtain the energy convergence in Proposition \ref{energy converges to non-rotating minimum}, we hope the term $Cm^{\frac{4\gamma - 6}{3\gamma - 4}}$ in (\ref{scaling density approaches non-rotating density}) satisfies $\frac{4\gamma - 6}{3\gamma - 4}>0$, which means $\gamma>\frac{3}{2}$. Therefore, when $\gamma > \max \{\frac{3}{2}, \frac{10}{7}\}=\frac{3}{2}$, Theorem \ref{themA'} holds.
	}
\end{proof}

\section{Upper Bounds for Distances Between Connected Components of Minimizers}\label{section7-The Maximum Number of Connected Components of Minimizers}

It should be noted that, although we prove the existence of Wasserstein $L^\infty$ local minimizer on $R(\mathbb{R}^3$), which is also a constrained minimizer $\rho(m)=\rho_{1 - m}+\rho_{m}$ on $W_m$ (Theorem \ref{themA} and Theorem \ref{themA'}), in particular, spt $\rho(m)$ has at least two connected components, it is still unclear whether it has exactly two connected components. Neither we in this paper nor McCann \cite[Theorem 6.1 and Corollary 6.2]{McC06} elaborated on or provided proof for the result. If spt $\rho_{1 - m}$ or spt $\rho_{m}$ have multiple connected components, the model may correspond to a system with multiple stars and planets, such as the relationship between the Solar System and the asteroid belt. Moreover, one could interpret it as describing a large and a small galaxy; therefore the existence results retain physical relevance and may inspire further study of the Vlasov–Poisson system via Rein’s reduction method (see e.g. \cite{JS22, Rei03, Rei07}).

Although we have not proven simple connectedness in each subdomain, we can still make the following observations and conclusions that the distance between each two convex connected components of spt $\rho_{1 - m}$ or spt $\rho_{m}$ should not be too large.

We assume spt $\rho_m$ has 2 convex connected components, said $U$ and $V$, and set dist $(U,V)=d$. By Lemma \ref{interior} we can assume $\rho(m)$ is symmetric about the plane $z=0$. We recall $\rho(m)$ is continuous in the whole space (Theorem \ref{themA} or Theorem \ref{themA'}). Also we notice that $E_J$ is invariant with respect to rotation around $z$-axis.Without loss of generality, we can assume there exists $b$, $b'$ such that
\begin{equation}\label{UV in order}
	\sup\limits_{x\in U}x_1 <b<\inf\limits_{y\in V} y_1 \leq \sup\limits_{y \in V} y_1< b' <\inf\limits_{z\in \operatorname{spt} \rho_{1 - m}} z_1
\end{equation}
Then we know $d= \inf\limits_{y\in V} y_1-\sup\limits_{x\in U}x_1$. We denote the mass in $U$ by $m_1\coloneq \int_{U}\rho_{m}\,dx$, then the mass in $V$ is $m_2\coloneq\int_{V}\rho_{m}\,dx=m-m_1$. Notice $\rho_m(x)=\rho_U(x)+\rho_V(x)$, where $\rho_U(x)=\rho_m(x)\cdot\mathbf{1}_U(x)$, $\rho_V(x)=\rho_m(x)\cdot\mathbf{1}_V(x)$, $\mathbf{1}_E$ is the indicator function of set $E$. 
We want to move $\rho_U(x)$ and $\rho_V(x)$ in the following way: given $h>0$, let $h_1>0$ and $h_2>0$ satisfy $h_1+h_2=h$ and $h_1\cdot m_1=h_2 \cdot m_2$. With an abuse of notation we denote $h_1=h_1\cdot \hat{e_1}= (h_1,0,0)^T$, similarly $h_2=(h_2,0,0)^T$, $h=(h,0,0)^T$. Then we define $\rho_m^{[h]}=\rho_U(x-h_1)+\rho_V(x+h_2)$. 

Easy to see when $h$ is small enough, the connected components of spt $\rho_m^{[h]}$ are $U_h=U+h_1$ and $V_h=V-h_2$, and we still have $\sup\limits_{x\in U_h} x_1 <b<\inf \limits_{y\in V_h} y_1$. We can write $\rho_m^{[h]}=\rho_{U}^{\lbrack h\rbrack} + \rho_{V}^{\lbrack h\rbrack}$ where  $\rho_{U}^{\lbrack h\rbrack}=\rho_m^{[h]}\cdot\mathbf{1}_{U_h}$ and $\rho_{V}^{\lbrack h\rbrack}=\rho_m^{[h]}\cdot\mathbf{1}_{V_h}$. We notice the centers of mass of $\rho_m$ and $\rho_m^{[h]}$ are the same. 

Let $\rho^{[h]}(m)=\rho_m^{[h]}+\rho_{1-m}$, then we will show if $d$ is too large, then $E_J(\rho^{[h]}(m))<E_J(\rho(m))$.

In fact, 
\begin{equation}\label{delta E_J}
	E_{J}\left( {\rho(m)} \right) - E_{J}\left( {\rho^{\lbrack h\rbrack}(m)} \right) = T_{J}\left( {\rho(m)} \right) - T_{J}\left( {\rho^{\lbrack h\rbrack}(m)} \right) - \frac{1}{2}\left( {G\left( {\rho(m),\rho(m)} \right) - G\left( {\rho^{\lbrack h\rbrack}(m),\rho^{\lbrack h\rbrack}(m)} \right)} \right)
\end{equation}

We apply Lemma \ref{expansion of MoI} (Expansion of Moment of Inertia) twice and obtain
\begingroup
\small
\begin{equation*}
	\begin{split}
		I\left( {\rho^{\lbrack h\rbrack}(m)} \right) - I\left( {\rho(m)} \right) = I\left( \rho_{m}^{\lbrack h\rbrack} \right) - I\left( \rho_{m} \right) = \frac{m_{1}m_{2}}{m_{1} + m_{2}}r^{2}\left( {\bar{x}\left( \rho_{U}^{\lbrack h\rbrack} \right) - \bar{x}\left( \rho_{V}^{\lbrack h\rbrack} \right)} \right) - \frac{m_{1}m_{2}}{m_{1} + m_{2}}r^{2}\left( {\bar{x}\left( \rho_{U} \right) - \bar{x}\left( \rho_{V} \right)} \right)
	\end{split}
\end{equation*}
\endgroup
Notice ${\bar{x}}\left( \rho_{V}^{\lbrack h\rbrack} \right) - {\bar{x}}\left( \rho_{U}^{\lbrack h\rbrack} \right) = {\bar{x}}\left( \rho_{V} \right) - {\bar{x}}\left( \rho_{U} \right) - h$, thus we have 
\begingroup
\small
\begin{equation*}
	\begin{split}
		I\left( {\rho^{\lbrack h\rbrack}(m)} \right) - I\left( {\rho(m)} \right)&= \frac{m_{1}m_{2}}{m}\left( {- 2h\left( {{\bar{x}}_{1}\left( \rho_{V} \right) - {\bar{x}}_{1}\left( \rho_{U} \right)} \right) + h^{2}} \right)\\
		&= \frac{1}{m}{(- 2h)\iint{\left( {y_{1} - x_{1}} \right)\rho_{U}(x)\rho_{V}(y)\,dx\,dy}} + \frac{m_{1}m_{2}}{m}h^{2} 
	\end{split}
\end{equation*}
\endgroup
Therefore, we have
\begin{equation}\label{delta T_J}
	\begin{split}		
		&\quad T_{J}\left( {\rho(m)} \right) - T_{J}\left( {\rho^{\lbrack h\rbrack}(m)} \right) \\
		&= \frac{J^{2}}{2I\left( {\rho(m)} \right)} - \frac{J^{2}}{2I\left( {\rho^{\lbrack h\rbrack}(m)} \right)} \\
		&= \frac{J^{2}\left( {I\left( {\rho^{\lbrack h\rbrack}(m)} \right) - I\left( {\rho(m)} \right)} \right)}{2I\left( {\rho(m)} \right)I\left( {\rho^{\lbrack h\rbrack}(m)} \right)} \\
		&= \frac{J^{2}}{2I\left( {\rho(m)} \right)I\left( {\rho^{\lbrack h\rbrack}(m)} \right)}\frac{1}{m}\cdot {\left( {{(- 2h)\iint_{\{{\rho_{U}(x) > 0,\rho_{V}(y) > 0}\}}{\left( {y_{1} - x_{1}} \right)\rho_{U}(x)\rho_{V}(y)\,dx\,dy}} + m_{1}m_{2}h^{2}} \right)}
	\end{split}
\end{equation}
On the other hand, 
\begin{equation}\label{delta G}
	G\left( {\rho(m),\rho(m)} \right) - G\left( {\rho^{\lbrack h\rbrack}(m),\rho^{\lbrack h\rbrack}(m)} \right) = G\left( {\rho_{m},\rho_{m}} \right) - G\left( {\rho_{m}^{\lbrack h\rbrack},\rho_{m}^{\lbrack h\rbrack}} \right) + 2G\left( \rho_{1 - m},\rho_{m} - \rho_{m}^{\lbrack h\rbrack} \right)
\end{equation}
Let $h>0$ small enough, such that if $\rho_U (x)>0$ and $\rho_V (y)>0$ then $y_1-x_1-h>0$, by Taylor expansion we have $\frac{1}{\left| {x - y} \right|} - \frac{1}{\left| {x - y + h} \right|} = \frac{- \left( {y_{1} - x_{1}} \right)h}{\left| {x - y} \right|^{3}} + O\left( h^{2} \right)$, therefore,

\begin{equation}\label{delta G without star}
	\begin{split}
		G \left( {\rho_{m},\rho_{m}} \right) - G\left( {\rho_{m}^{\lbrack h\rbrack},\rho_{m}^{\lbrack h\rbrack}} \right) &= G\left( {\rho_{U} + \rho_{V},\rho_{U} + \rho_{V}} \right) - G\left( {\rho_{U}^{\lbrack h\rbrack} + \rho_{V}^{\lbrack h\rbrack},\rho_{U}^{\lbrack h\rbrack} + \rho_{V}^{\lbrack h\rbrack}} \right) \\
		&= 2G\left( {\rho_{U},\rho_{V}} \right) - 2G\left( {\rho_{U}^{\lbrack h\rbrack},\rho_{V}^{\lbrack h\rbrack}} \right)\\
		&= 2{\iint_{\mathbb{R}^3\times \mathbb{R}^3}{\frac{\rho_{U}(x)\rho_{V}(y) - \rho_{U}\left( {x - h_{1}} \right)\rho_{V}\left( {y + h_{2}} \right)}{\left| {x - y} \right|}\,dx\,dy}}\\
		&= 2{\iint_{\mathbb{R}^3\times \mathbb{R}^3}{\frac{\rho_{U}(x)\rho_{V}(y)}{\left| {x - y} \right|} - \frac{\rho_{U}(x)\rho_{V}(y)}{\left| {x - y + h} \right|}\,dx\,dy}} \\
		&= 2{\iint_{\{{\rho_{U}(x) > 0,\rho_{V}(y) > 0}\}}{\rho_{U}(x)\rho_{V}(y)\left( {\frac{- \left( {y_{1} - x_{1}} \right)h}{\left| {x - y} \right|^{3}} + O\left( h^{2} \right)} \right)\,dx\,dy}} \\
		&= 2h{\iint_{\{{\rho_{U}(x) > 0,\rho_{V}(y) > 0}\}}{\rho_{U}(x)\rho_{V}(y)\frac{\left(- \left( {y_{1} - x_{1}} \right)\right)}{\left| {x - y} \right|^{3}}\,dx\,dy}} + 2m_{1}m_{2}O\left( h^{2} \right)\\
		&\leq - \frac{2h}{R^{3}(m)}{\iint_{\{{\rho_{U}{(x)} > 0,\rho_{V}{(y)} > 0}\}}{\rho_{U}(x)\rho_{V}(y)\left( {y_{1} - x_{1}} \right)\,dx\,dy}} + 2m_{1}m_{2}O\left( h^{2} \right)
	\end{split}
\end{equation}
where $R(m)=B(m)\cdot R(J)$ given in Corollary \ref{bound on the sizes of original densities}. Similarly, we can show when $h$ is small enough,
\begin{small}
	\begin{equation*}
		\begin{split}
			&G\left( {\rho_{1 - m},\rho_{m} - \rho_{m}^{\lbrack h\rbrack}} \right) \\
			&= {\iint_{\mathbb{R}^3\times \mathbb{R}^3}{\frac{\rho_{1 - m}(x)\left( {\rho_{U}(y) + \rho_{V}(y)} \right)}{\left| {x - y} \right|}\,dx\,dy}} - {\iint_{\mathbb{R}^3\times \mathbb{R}^3}{\frac{\rho_{1 - m}(x)\left( {\rho_{U}\left( {y - h_{1}} \right) + \rho_{V}\left( {y + h_{2}} \right)} \right)}{\left| {x - y} \right|}\,dx\,dy}} \\
			&= {\iint_{\mathbb{R}^3\times \mathbb{R}^3}{\frac{\rho_{1 - m}(x)\left( {\rho_{U}(y) - \rho_{U}\left( {y - h_{1}} \right)} \right)}{\left| {x - y} \right|}\,dx\,dy}} + {\iint_{\mathbb{R}^3\times \mathbb{R}^3}{\frac{\rho_{1 - m}(x)\left( {\rho_{V}(y) - \rho_{V}\left( {y + h_{2}} \right)} \right)}{\left| {x - y} \right|}\,dx\,dy}} \\
			&= {\iint_{\mathbb{R}^3\times \mathbb{R}^3}{\rho_{1 - m}(x)\rho_{U}(y)\left( {\frac{1}{\left| {x - y} \right|} - \frac{1}{\left| {x - y - h_{1}} \right|}} \right)\,dx\,dy}} + {\iint_{\mathbb{R}^3\times \mathbb{R}^3}{\rho_{1 - m}(x)\rho_{V}(y)\left( {\frac{1}{\left| {x - y} \right|} - \frac{1}{\left| {x - y + h_{2}} \right|}} \right)\,dx\,dy}} \\
			&\leq {\iint_{\mathbb{R}^3\times \mathbb{R}^3}{\rho_{1 - m}(x)\rho_{U}(y)\left( \frac{h_{1}}{\left| {x - y} \right|\left| {x - y - h_{1}} \right|} \right)\,dx\,dy}} + {\iint_{\mathbb{R}^3\times \mathbb{R}^3}{\rho_{1 - m}(x)\rho_{V}(y)\left( \frac{h_{2}}{\left| {x - y} \right|\left| {x - y + h_{2}} \right|} \right)\,dx\,dy}}
		\end{split}
	\end{equation*}
\end{small}
By the construction of $\Omega_{1-m}$ and $\Omega_{m}$ (\ref{domains}) we know $\rho_{1-m} (x)>0$, $\rho_m (y)>0$ implies $\frac{J^{2}}{2m^{2}\left( {1 - m} \right)^{2}} \leq \left| {x - y} \right| \leq \frac{3J^{2}}{2m^{2}\left( {1 - m} \right)^{2}}$, and when $h$ is small enough, $\frac{J^{2}}{4m^{2}\left( {1 - m} \right)^{2}} < \left| {x - y - h_{1}} \right| < \frac{2J^{2}}{m^{2}\left( {1 - m} \right)^{2}}$ and $\frac{J^{2}}{4m^{2}\left( {1 - m} \right)^{2}} < \left| {x - y - h_{2}} \right| < \frac{2J^{2}}{m^{2}\left( {1 - m} \right)^{2}}$. We also notice that by construction $${\int_{\mathbb{R}^3}{\rho_{U}h_{1}}\,dx} = {\int_{\mathbb{R}^3}{\rho_{V}h_{2}}\,dx} = \frac{m_{1}m_{2}h}{m_{1} + m_{2}} = \frac{m_{1}m_{2}h}{m}$$
Then
$$
G\left( {\rho_{1 - m},\rho_{m} - \rho_{m}^{\lbrack h\rbrack}} \right) \leq \frac{16m_{1}m_{2}m^{3}\left( {1 - m} \right)^{5}}{J^{4}}h
$$

Similarly, we have
$$
- G\left( {\rho_{1 - m},\rho_{m} - \rho_{m}^{\lbrack h\rbrack}} \right) = G\left( {\rho_{1 - m},\rho_{m}^{\lbrack h\rbrack} - \rho_{m}} \right) \leq \frac{16m_{1}m_{2}m^{3}\left( {1 - m} \right)^{5}}{J^{4}}h
$$

Thus,
\begin{equation}\label{delta G with star}
	\left| {G\left( {\rho_{1 - m},\rho_{m} - \rho_{m}^{\lbrack h\rbrack}} \right)} \right| \leq \frac{16m_{1}m_{2}m^{3}\left( {1 - m} \right)^{5}}{J^{4}}h
\end{equation}

Similar to the arguments in Lemma \ref{moment of inertia increasing rate as mass decreases}, we know $\frac{J^{2}}{I\left( {\rho(m)} \right)I\left( {\rho^{\lbrack h\rbrack}(m)} \right)}\frac{1}{m} \leq Cm^5$ when $m$ is small enough. Therefore, similar to Corollary \ref{bound on the sizes of original densities} and due to Definition \ref{scaling density for planet} which gives coefficient $B(m)=m^{\frac{\gamma - 2}{3\gamma - 4}}$, we collect (\ref{delta E_J}) (\ref{delta T_J}) (\ref{delta G}) (\ref{delta G without star}) (\ref{delta G with star}) and have (recall we assume $\gamma>\frac{3}{2}$)

\begin{equation}\label{delta E_J 2}
	\begin{split}
		&E_{J}\left( {\rho(m)} \right) - E_{J}\left( {\rho^{\lbrack h\rbrack}(m)} \right) \\
		&\geq \left( {\frac{1}{R^{3}(m)} - \frac{J^{2}}{I\left( {\rho(m)} \right)I\left( {\rho^{\lbrack h\rbrack}(m)} \right)}\frac{1}{m}} \right)\cdot h \cdot{\iint_{\{{\rho_{U}{(x)} > 0,\rho_{V}{(y)} > 0}\}}{\rho_{U}(x)\rho_{V}(y)\left( {y_{1} - x_{1}} \right)\,dx\,dy}} \\
		&\quad + O\left( h^{2} \right) - \frac{16m_{1}m_{2}m^{3}\left( {1 - m} \right)^{5}}{J^{4}}h \\
		&\geq \left( {\frac{dm_{1}m_{2}}{2R^{3}(m)} - \frac{16m_{1}m_{2}m^{3}\left( {1 - m} \right)^{5}}{J^{4}}} \right)h + O\left( h^{2} \right)
	\end{split}
\end{equation}

Therefore, when $d > \frac{32m^{3}\left( {1 - m} \right)^{5}R^{3}(m)}{J^{4}} = \frac{32\left( {1 - m} \right)^{5}R^{3}(J)}{J^{4}}m^{\frac{12\gamma - 18}{3\gamma - 4}}$, we know for $h$ small enough,
$$E_{J}\left( {\rho(m)} \right) - E_{J}\left( {\rho^{\lbrack h\rbrack}(m)} \right) > 0$$ 
That is, if $\rho(m)$ is a constrained minimizer, then $d\leq Cm^{\frac{12\gamma - 18}{3\gamma - 4}}$ for some $C$ and all small $m$. More precisely, we have the following proposition:

\begin{proposition}\label{components distance for planet}
	Given polytropic law $P(\rho)=K\rho^\gamma$ indexed by $\gamma>\frac{3}{2}$ and a constrained minimizer $\rho (m)=\rho_{1 - m}+\rho_{m}$ mentioned in Theorem \ref{themA'}. There is a $C>0$ and a $\delta>0$, such that for all $m\in (0,\delta)$, if spt $\rho_{m}$ has 2 convex connected components, then the distance between these 2 components is less than $\eta$, where $\eta=Cm^{\frac{12\gamma - 18}{3\gamma - 4}}$.  
\end{proposition}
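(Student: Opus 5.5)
The proof is by contradiction, using the variation $\rho^{[h]}(m)$ built just above. Take $\delta$ small enough that Theorem \ref{themA'} supplies a constrained minimizer $\rho(m)$. For such $m$, Corollary \ref{bound on the sizes of original densities} gives $\operatorname{spt}\rho_m\subset B_{R(m)}$ with $R(m)=B(m)R(J)$, and Lemma \ref{interior} lets us assume symmetry about $z=0$. Suppose $\operatorname{spt}\rho_m$ has two convex components $U,V$ at distance $d$. Rotate about the $z$-axis so that (\ref{UV in order}) holds; convexity is what makes a separating slab $\{x_1=b\}$ available after rotation. Then form $\rho^{[h]}(m)$ with $h_1m_1=h_2m_2$. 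This keeps both centres of mass fixed, and for small $h$ it keeps the support inside $\Omega_m$, so $\rho^{[h]}(m)\in W_m$ and therefore $E_J(\rho^{[h]}(m))\ge E_J(\rho(m))$.

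Next we expand the energy difference to first order in $h$, following (\ref{delta E_J})--(\ref{delta G with star}).

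\textbf{Self-interaction term.} Bringing $U$ and $V$ closer raises $G(\rho_m,\rho_m)$ by at least
\[
2h\,R(m)^{-3}\iint \rho_U(x)\rho_V(y)(y_1-x_1)\,dx\,dy .
\]
Here we use $|x-y|\le 2R(m)$ on the supports; the factor $2$ should be tracked against the $1/R^3$ written in (\ref{delta G without star}). Since $y_1-x_1\ge d$, the double integral is at least $d\,m_1m_2$.

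\textbf{Kinetic term.} The moment of inertia decreases by $\frac{2h}{m}\iint\rho_U\rho_V(y_1-x_1)$. It is multiplied by $\frac{J^2}{2I(\rho(m))I(\rho^{[h]}(m))}$, which by Lemma \ref{moment of inertia increasing rate as mass decreases} is $O(m^6)$. So its coefficient $O(m^5)$ in front of the same double integral is negligible against $R(m)^{-3}=R(J)^{-3}m^{-3(\gamma-2)/(3\gamma-4)}$. The comparison $m^{5}\ll m^{-3(\gamma-2)/(3\gamma-4)}$ holds for all $\gamma>\frac32$, since the right exponent exceeds $-1$.

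\textbf{Star--planet term.} The cross term with the star is bounded linearly in $h$ by (\ref{delta G with star}), namely $\le 16m_1m_2m^3(1-m)^5J^{-4}h$. This uses that $\Omega_m$ and $\Omega_{1-m}$ are separated by $\eta/2\sim m^{-2}$.

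Combining the three terms gives (\ref{delta E_J 2}):
\[
E_J(\rho(m))-E_J(\rho^{[h]}(m))\ \ge\ m_1m_2\Big(\tfrac{d}{2R(m)^3}-\tfrac{16m^3(1-m)^5}{J^4}\Big)h+O(h^2).
\]
If $d>32m^3(1-m)^5R(m)^3J^{-4}=C\,m^{3+3(\gamma-2)/(3\gamma-4)}=C\,m^{\frac{12\gamma-18}{3\gamma-4}}$, the bracket is positive. Then for small $h$ we get $E_J(\rho^{[h]}(m))<E_J(\rho(m))$, contradicting minimality. Hence $d\le Cm^{\frac{12\gamma-18}{3\gamma-4}}$. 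Enlarging $C$ slightly gives the strict inequality.

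The main obstacle is controlling the $O(h^2)$ remainders uniformly in the Taylor expansion of $1/|x-y+h|$. The first-order coefficient is positive but tiny in $m$, so we only need it fixed for each given $m$ and then send $h\to0$. That is legitimate because $y_1-x_1\ge d>0$ on the supports, which keeps the kernel smooth there. A second point to check is that the admissibility of $\rho^{[h]}(m)$ needs $\operatorname{spt}\rho_m$ in the interior of $\Omega_m$, which Lemma \ref{interior} provides.
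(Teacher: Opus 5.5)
Your proposal is correct and follows the paper's own argument: the centre-of-mass-preserving shift $\rho^{[h]}(m)$ with $h_1m_1=h_2m_2$, and a first-order comparison of three terms leading to (\ref{delta E_J 2}) and the threshold $d\sim m^{\frac{12\gamma-18}{3\gamma-4}}$. The three terms are the self-interaction gain (of order $d\,m_1m_2R(m)^{-3}h$), the $O(m^5)$ kinetic loss, and the crude star--planet bound $16m_1m_2m^3(1-m)^5J^{-4}h$. Two slips affect only constants or wording: from $|x-y|\le 2R(m)$ the lost factor is $2^3=8$, not $2$ (absorbed into $C$), and the kinetic comparison $m^5\ll R(m)^{-3}$ needs $-3\frac{\gamma-2}{3\gamma-4}<5$, which holds because this exponent is below $3$ when $\gamma>\frac32$; it exceeding $-1$ is not the relevant fact.
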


\begin{proof}
	{
		\rm
      See the arguments above.
	}
\end{proof}

\begin{remark}
	We put the condition $\gamma > \frac{3}{2}$ so that we can apply Theorem \ref{themA'}, but in fact the estimate (\ref{delta E_J 2}) actually holds even for smaller values of $\gamma$. 
\end{remark}

Actually, a similar result can be shown for stars' densities.

\begin{corollary}\label{components distance for star}
	Given polytropic law $P(\rho)=K\rho^\gamma$ indexed by $\gamma>\frac{3}{2}$ and a constrained minimizer $\rho (m)=\rho_{1 - m}+\rho_{m}$ mentioned in Theorem \ref{themA'}. There is a $C>0$ and a $\delta>0$, such that for all $m\in (0,\delta)$, if spt $\rho_{1-m}$ has 2 convex connected components,  then the distance between these 2 components is less than $\eta$, where $\eta=Cm^{5}$.
\end{corollary}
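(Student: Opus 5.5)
I will repeat the planet argument preceding Proposition \ref{components distance for planet}, now applied to the star. Suppose spt $\rho_{1-m}$ has two convex connected components $U,V$ with masses $m_1+m_2=1-m$, at distance $d$. By Lemma \ref{interior} and a rotation about the $z$-axis, I place them in the order analogous to (\ref{UV in order}): the planet lies to one side, and a hyperplane $x_1=b$ separates $U$ from $V$, so that $d=\inf_{V}y_1-\sup_U x_1$. For small $h>0$ I take $h_1m_1=h_2m_2$, $h_1+h_2=h$, and define $\rho_{1-m}^{[h]}=\rho_U(x-h_1)+\rho_V(x+h_2)$. This moves the two pieces toward each other while keeping the center of mass of $\rho_{1-m}$ fixed, so $\rho^{[h]}(m)=\rho_m+\rho_{1-m}^{[h]}\in W_m$ for small $h$. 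I then expand $E_J(\rho(m))-E_J(\rho^{[h]}(m))$ exactly as in (\ref{delta E_J}).

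The three contributions are handled as follows.

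\textbf{Kinetic part.} By Lemma \ref{expansion of MoI}, the change of $I$ is
\[
\frac{1}{1-m}\Bigl(-2h\iint (y_1-x_1)\rho_U(x)\rho_V(y)\,dx\,dy+m_1m_2h^2\Bigr).
\]
Since $I(\rho(m))\gtrsim m^{-3}$ (Lemma \ref{moment of inertia increasing rate as mass decreases}), the prefactor $\frac{J^2}{I(\rho(m))I(\rho^{[h]}(m))(1-m)}$ is at most $Cm^{6}$.

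\textbf{Self-interaction part.} The Taylor expansion in (\ref{delta G without star}) gives a gain of at least
\[
\frac{2h}{R^3}\iint \rho_U(x)\rho_V(y)(y_1-x_1)\,dx\,dy+O(h^2).
\]
Here $R=R(J)$ is the uniform bound on the diameter scale of spt $\rho_{1-m}$ from Corollary \ref{bound on the sizes of original stars}, so $|x-y|\le 2R(J)$ on the supports, and $R$ does not depend on $m$. This is the key difference from the planet case, where $R(m)=B(m)R(J)$.

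\textbf{Interaction with the planet.} As in (\ref{delta G with star}), the distance between spt $\rho_m$ and spt $\rho_{1-m}$ is comparable to $\eta\sim J^2m^{-2}$. Using $\int\rho_Uh_1=\int\rho_Vh_2=\frac{m_1m_2h}{1-m}$, the cross term is bounded by $C\,m\cdot\frac{m_1m_2h}{1-m}\cdot m^{4}J^{-4}$, that is, $\le Cm_1m_2m^5h$.

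Collecting these, and using $\iint\rho_U\rho_V(y_1-x_1)\ge m_1m_2d$, I obtain
\[
E_J(\rho(m))-E_J(\rho^{[h]}(m))\ge\Bigl(\bigl(\tfrac{1}{R(J)^3}-Cm^6\bigr)m_1m_2d-Cm_1m_2m^5\Bigr)h+O(h^2).
\]
For $m$ small, $\frac{1}{R(J)^3}-Cm^6\ge\frac{1}{2R(J)^3}$. Hence if $d>2CR(J)^3m^5$, this quantity is positive for small $h$, which contradicts minimality on $W_m$. Therefore $d\le Cm^5$.

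The main obstacle is the bookkeeping that keeps the perturbation inside $W_m$. The shifted components must stay in $\Omega_{1-m}$ and must not touch each other or spt $\rho_m$ for small $h$. This follows from Lemma \ref{interior}, since the support sits a positive distance inside the domain, together with the separation $d>0$. Beyond that, the bulk of the work is checking the $m$-powers in the cross term: $\mu$-factor $m$, $h$-moment $m_1m_2h$, and $|x-y|^{-2}\sim m^4$.
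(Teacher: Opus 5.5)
Your proposal is correct and is essentially the paper's proof. It uses the same center-of-mass-preserving shift of $U$ and $V$ and the same three-term expansion, with the $m$-independent radius $R(J)$ of Corollary \ref{bound on the sizes of original stars} controlling the self-interaction gain, an $O(m^6)$ kinetic prefactor, and an $O(m_1m_2m^5h)$ cross term with the planet, giving $d\le Cm^5$.

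Two minor slips are harmless. With $|x-y|\le 2R(J)$ the gain coefficient should be $\frac{2h}{(2R(J))^3}$ rather than $\frac{2h}{R^3}$, which only changes $C$. And the claim that the planet lies beyond $V$ after the rotation is not guaranteed, but it is also not needed, since your cross-term bound does not depend on direction.
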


\begin{proof}
	{
		\rm
		The proof is essentially the same as the proof above. Suppose spt $\rho_{1 - m}$ has two convex connected components $U$ and $V$, after moving $U$ and $V$ similarly, we have
		\begin{equation*}
		\begin{split}
			&\quad E_{J}\left( {\rho(m)} \right) - E_{J}\left( {\rho^{\lbrack h\rbrack}(m)} \right) \\
			&\geq \left( {\frac{1}{R^{3}\left( {1 - m} \right)} - \frac{J^{2}}{I\left( {\rho(m)} \right)I\left( {\rho^{\lbrack h\rbrack}(m)} \right)}\frac{1}{1 - m}} \right){\iint_{\{{\rho_{U}{(x)} > 0,\rho_{V}{(y)} > 0}\}}{\rho_{U}(x)\rho_{V}(y)\left( {y_{1} - x_{1}} \right)h\,dx\,dy}} \\
			&\quad + O\left( h^{2} \right) - \frac{16m_{1}m_{2}m^{5}\left( {1 - m} \right)^{3}}{J^{4}}h \\
			&\geq \left( {\frac{dm_{1}m_{2}}{2R^{3}\left( {1 - m} \right)} - \frac{16m_{1}m_{2}m^{5}\left( {1 - m} \right)^{3}}{J^{4}}} \right)h + O\left( h^{2} \right)
		\end{split}
	\end{equation*}
		
		We just note that for star density, $R(1-m)$ will be bounded by $R(J)$ as $m$ goes to 0 ($R(J)$ is given in Corollary \ref{bound on the sizes of original stars}). Thus we know $\frac{1}{R^{3}\left( {1 - m} \right)} \geq \frac{2J^{2}}{I\left( {\rho(m)} \right)I\left( {\rho^{\lbrack h\rbrack}(m)} \right)}\frac{1}{1 - m}$ for $m$ small enough. Moreover, when $d > \frac{32m^{5}\left( {1 - m} \right)^{3}R^{3}(1-m)}{J^{4}}$, we know $
		E_{J}\left( {\rho(m)} \right) - E_{J}\left( {\rho^{\lbrack h\rbrack}(m)} \right) > 0$ for $h$ small enough. Thus we can apply the method of contradiction to prove this corollary.
	}
\end{proof}

\begin{remark}
	We have estimated distance between (possible) two connected components. For cases with more than two connected components, say $U_1, U_1, U_3, ...$ (could be uncountable many), if they satisfy certain shape configurations — for example $U_i$ and $V_i$ satisfy (\ref{UV in order}) up to rotation, where $V_i = {\bigcup\limits_{\alpha \neq i}U_{\alpha}}$ — we can then move $U_i$ and $V_i$ similarly as above and analyze the distance relationships between them. Furthermore, we can also use Theorem \ref{themA'} (ii) to rule out some configurations of connected components. For example, the case in which an outer shell encloses an inner solid sphere is impossible for $\rho_m$ or $\rho_{1 - m}$.
\end{remark}

\begin{remark}
	It should be noted that the troublesome term in (\ref{delta E_J 2}) preventing us from further reducing the distance between connected components $U$ and $V$ is the term $\frac{16m_{1}m_{2}m^{3}\left( {1 - m} \right)^{5}}{J^{4}}$, which comes from (\ref{delta G with star}). In other words, it may become possible to prove simple connectedness in each subdomain by moving and gluing the relevant connected components, if one can show that $G\left( {\rho_{m}^{\lbrack h\rbrack},\rho_{m}^{\lbrack h\rbrack}} \right) - G\left( {\rho_{m},\rho_{m}} \right)$ can absorb $\left| {G\left( {\rho_{1 - m},\rho_{m} - \rho_{m}^{\lbrack h\rbrack}} \right)} \right|$. For instance, this could be achieved by proving $G\left( {\rho_{m}^{\lbrack h\rbrack},\rho_{m}^{\lbrack h\rbrack}} \right) - G\left( {\rho_{m},\rho_{m}} \right) > 10\left| {G\left( {\rho_{1 - m},\rho_{m} - \rho_{m}^{\lbrack h\rbrack}} \right)} \right|$ for all small $m$ (that is $m\in (0,\delta)$ for some $\delta>0$) and for all positive distance $d$ between two connected components $U$ and $V$.
\end{remark}

Although we have not proved here that the simple connectedness in each subdomain when $J>0$, we recall when $J=0$ (non-rotating case), the support of any minimizer $\sigma_m$ in Theorem \ref{non-rotating} is truly simply connected (see Remark \ref{simply connected for non-rotating case}). This leads us to the following conjecture:

\begin{conjecture}\label{conjecture 2 connected components}
	The support of $\rho(m)$ in Theorem \ref{themA'} should have exactly 2 connected components when $m$ is small enough.
\end{conjecture}

\begin{remark}
	Results on simple connectedness obtained in other settings may also be consulted; see, for example, \cite{JM19, SW17, SW19}.
\end{remark}


	\section*{Appendix}\label{Appendix}
	\appendix

	\section{Properties of Sobolev Spaces}\label{sectionA-properties of sobolev spaces}
	Here we recall two propositions about the potential $V_\rho(x) = \int_{\mathbb{R}^3} \frac{\rho(y)}{|x-y|} \,dy = \left( \frac{1}{|\cdot|} * \rho \right)(x)$.
	\begin{proposition}[Hardy-Littlewood-Sobolev Inequality {\cite[Theorem 1.7]{BCD11}}] \label{HLSI}
		Let $1<p, r<\infty$ and $0<\alpha<n$ be such that $\frac{1}{p}+\frac{\alpha}{n}=\frac{1}{r}+1$. $\exists C_{p, \alpha, n}>0$, such that
		\begin{equation} \label{BoV}
			\left\||\cdot|^{-\alpha} * f\right\|_{L^{r}\left(\mathbb{R}^{n}\right)} \leq C_{p, \alpha, n}\|f\|_{L^{p}\left(\mathbb{R}^{n}\right)}
		\end{equation}
	\end{proposition}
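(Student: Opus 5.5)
The plan is to prove the Hardy--Littlewood--Sobolev inequality (Proposition \ref{HLSI}) by the classical route: a pointwise bound on $|\cdot|^{-\alpha}*f$ by the Hardy--Littlewood maximal function, followed by the maximal theorem. By homogeneity, namely replacing $f$ by $f/\|f\|_{L^p}$, we may assume $\|f\|_{L^p(\mathbb{R}^n)}=1$ and $f\ge 0$. The exponent relation $\frac1p+\frac{\alpha}{n}=\frac1r+1$ with $1<p,r<\infty$ forces $0<n-\alpha<n/p$.

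\textbf{Pointwise bound (Hedberg's trick).} Fix $x$ and a radius $\delta>0$, and split the convolution into $|x-y|<\delta$ and $|x-y|\ge\delta$.
\begin{itemize}
\item For the near part, decompose into dyadic annuli $2^{-k-1}\delta\le|x-y|<2^{-k}\delta$. This gives the bound $\int_{|x-y|<\delta}\frac{f(y)}{|x-y|^{\alpha}}\,dy\le C\,\delta^{n-\alpha}Mf(x)$, using $n-\alpha>0$ so that the geometric series converges. Here $Mf$ denotes the maximal function.
\item For the far part, apply H\"older with $p'$. We have $\big\||\cdot|^{-\alpha}\mathbf 1_{\{|\cdot|\ge\delta\}}\big\|_{L^{p'}}=C\delta^{n/p'-\alpha}$, which is finite precisely because $\alpha p'>n$, i.e.\ $n-\alpha<n/p$.
\end{itemize}
Choosing $\delta=Mf(x)^{-p/n}$ balances the two terms. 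This yields
\[
\big(|\cdot|^{-\alpha}*f\big)(x)\le C\,Mf(x)^{1-\frac{p(n-\alpha)}{n}}=C\,Mf(x)^{p/r},
\]
where the final exponent identity is exactly the hypothesis $\frac1r=\frac1p-\frac{n-\alpha}{n}$.

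\textbf{Integration.} Raise the pointwise bound to the power $r$ and integrate. The maximal theorem, applicable since $p>1$, gives $\|Mf\|_{L^p}\le C_p\|f\|_{L^p}=C_p$. Hence $\int\big(|\cdot|^{-\alpha}*f\big)^r\le C\int (Mf)^p\le C$, and undoing the normalization finishes the proof.

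\textbf{Main obstacle.} The main obstacle is the bookkeeping of exponents in the balancing step. In particular, we must verify that both the near-field condition $\alpha<n$ and the far-field condition $\alpha p'>n$ are implied by the hypotheses, so that every constant is finite. The maximal theorem itself is taken as standard.

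In the paper's application we have $n=3$ and $\alpha=1$, giving $p=\frac43\mapsto r=12$ and $p=\frac{12}{11}\mapsto r=4$, which matches the uses made of the proposition.
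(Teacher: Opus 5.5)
Your proof is correct. It is Hedberg's argument, and the exponent bookkeeping checks out:
\begin{itemize}
\item $n-\alpha>0$ makes the dyadic near-field sum converge;
\item $\alpha p'>n$ is equivalent to $\frac1r>0$, so the far-field $L^{p'}$ norm is finite;
\item $1-\frac{p(n-\alpha)}{n}=\frac{p}{r}$ is exactly the hypothesis.
\end{itemize}
Two details are trivial but worth stating. First, after normalizing $\|f\|_{L^p}=1$ you have $Mf>0$ everywhere and $Mf<\infty$ a.e., so the choice $\delta=Mf(x)^{-p/n}$ is legitimate off a null set. Second, for signed $f$, the bound for $|f|$ also shows that the convolution integral converges absolutely a.e.

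The paper does not prove this proposition; it simply imports it from \cite[Theorem 1.7]{BCD11}. The usual textbook route there is to regard $|x|^{-\alpha}$ as an element of weak $L^{n/\alpha}$ and apply a refined Young inequality, i.e.\ a weak-type estimate plus interpolation. The two routes buy different things:
\begin{itemize}
\item Your route needs only the Hardy--Littlewood maximal theorem. It also yields the stronger pointwise bound $(|\cdot|^{-\alpha}*|f|)(x)\le C\|f\|_{L^p}^{1-p/r}\,(Mf(x))^{p/r}$, which together with the weak $(1,1)$ bound for $M$ gives the endpoint weak-type estimate at $p=1$. Its cost is that the dyadic decomposition exploits the radial, decreasing structure of the kernel.
\item The weak-$L^{n/\alpha}$ route requires interpolation machinery, but it applies verbatim to any kernel in $L^{n/\alpha,\infty}$, radial or not.
\end{itemize}
Your exponent check, $p=\frac43\mapsto r=12$ and $p=\frac{12}{11}\mapsto r=4$, matches exactly how the proposition is used in the paper.
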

	
	\begin{proposition}[Differentiability of Potential {\cite[Section 3]{AB71}} {\cite[Appendix B]{Che26G1}}] \label{diff. of poten.}
		If $\rho \in L^{1}\left(\mathbb{R}^{3}\right) \cap L^{p}\left(\mathbb{R}^{3}\right)$ for some $p>3$, then $V_{\rho} \in$ $W^{1, \infty}\left(\mathbb{R}^{3}\right)$ is continuously differentiable and the weak derivative coincides with the classical one for all $x \in \mathbb{R}^{3}$.
	\end{proposition}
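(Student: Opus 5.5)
The statement to prove is Proposition \ref{diff. of poten.}: if $\rho\in L^1(\mathbb{R}^3)\cap L^p(\mathbb{R}^3)$ for some $p>3$, then $V_\rho\in W^{1,\infty}(\mathbb{R}^3)$ is $C^1$, and its weak gradient agrees with the classical one everywhere. I would prove it by splitting the Newtonian kernel into a near part and a far part. The near part is controlled by $L^p$ through Hölder's inequality, using that $|y|^{-1}$ and $|y|^{-2}$ lie in $L^{p'}(B_1)$ when $p'<3/2$, i.e.\ $p>3$. The far part is controlled by $L^1$, since both kernels are bounded there.

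\textbf{Boundedness.} Write
\[
V_\rho(x)=\int_{|x-y|<1}\frac{\rho(y)}{|x-y|}\,dy+\int_{|x-y|\ge1}\frac{\rho(y)}{|x-y|}\,dy .
\]
Applying Hölder to the first integral and the trivial bound to the second gives
\[
|V_\rho(x)|\le \bigl\||\cdot|^{-1}\bigr\|_{L^{p'}(B_1)}\|\rho\|_{L^p}+\|\rho\|_{L^1}.
\]
For the candidate gradient
\[
W(x):=-\int_{\mathbb{R}^3}\frac{(x-y)\,\rho(y)}{|x-y|^3}\,dy
\]
the same split gives $|W(x)|\le \bigl\||\cdot|^{-2}\bigr\|_{L^{p'}(B_1)}\|\rho\|_{L^p}+\|\rho\|_{L^1}$. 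The first constant is finite because $2p'<3$ exactly when $p>3$.

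\textbf{Continuity.} I would show that $V_\rho$ and $W$ are continuous. Write $W=K*\rho$ with $K=K\mathbf 1_{B_1}+K\mathbf 1_{B_1^c}=:K_1+K_2$. Then $K_1\in L^{p'}$ and $\rho\in L^p$, so $K_1*\rho$ is uniformly continuous, by continuity of translations in $L^{p'}$ (here $p'<\infty$). For the far part, $K_2$ is bounded and continuous away from the unit sphere, and $\rho\in L^1$. Its contribution is continuous by dominated convergence, since the sphere $|x-y|=1$ has measure zero and so the jump of the cut-off kernel does no harm. Equivalently, one can replace the sharp cutoff by a smooth one. The same argument handles $V_\rho$.

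\textbf{Identifying the derivative.} This is the step I expect to need the most care: showing that $W$ is the classical gradient of $V_\rho$, not merely a weak one. I would first show that $W$ is the weak gradient. Mollify $\rho$ to $\rho_\varepsilon\in C_c^\infty$, truncating so that $\rho_\varepsilon\to\rho$ in both $L^1$ and $L^p$. For $\rho_\varepsilon$, differentiating under the integral gives $\nabla V_{\rho_\varepsilon}=W_\varepsilon$. The bounds above are linear in $\|\rho\|_{L^1}+\|\rho\|_{L^p}$, so $V_{\rho_\varepsilon}\to V_\rho$ and $W_\varepsilon\to W$ uniformly. Passing to the limit in $\int V_{\rho_\varepsilon}\partial_i\varphi=-\int W_{\varepsilon,i}\varphi$ shows that $W$ is the weak gradient of $V_\rho$, hence $V_\rho\in W^{1,\infty}$.

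Alternatively, the uniform convergence alone finishes the job. The functions $V_{\rho_\varepsilon}$ are $C^1$, converge uniformly to $V_\rho$, and their gradients converge uniformly to the continuous $W$. The standard theorem on uniform limits of derivatives (applied along segments via the fundamental theorem of calculus) then gives that $V_\rho$ is classically $C^1$ with $\nabla V_\rho=W$ at every point. This settles the coincidence of weak and classical derivatives everywhere, not just almost everywhere.
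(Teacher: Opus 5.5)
Your proof is correct. The paper gives no proof of this proposition in its text, only the citations \cite{AB71} and \cite{Che26G1}, and your argument is the standard self-contained one: you split the kernel, apply H\"older on the unit ball (where $|z|^{-2}\in L^{p'}$ exactly when $p>3$), and finish with uniform approximation and the theorem on uniform limits of derivatives. One detail to add: if $p=\infty$, mollifications do not converge in $L^\infty$, so first replace $p$ by some finite $q>3$. This is legitimate because $L^1\cap L^\infty\subset L^q$.
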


%
%
%
%

%

\addcontentsline{toc}{section}{Acknowledgments}
\section*{Acknowledgments}\label{section-acknowledgments}
	The author is partially supported by the National Science Foundation grant DMS-2308208. This work was primarily carried out during the author’s Master’s studies at the University of Bonn. The author thanks Juan Velázquez and Dimitri Cobb for their continued advice and support since the author’s time in Bonn, as well as Christof Sparber and Mimi Dai for their comments and support during the author's Ph.D. studies at the University of Illinois Chicago. Thanks also to Lorenzo Pompili, Shao Liu, Xiaopeng Cheng, Bernhard Kepka, Daniel Sánchez Simón del Pino, Han Cao for discussions, and to Théophile Dolmaire and other instructors. The author is grateful to his parents.

\bibliographystyle{abbrv}
\addcontentsline{toc}{section}{References}
\bibliography{references}

@misc{Che26G1,
	title={Gradient Existence and Energy Finiteness of Local Minimizers in the {W}asserstein $L^\infty$ Topology for Binary-Star Systems}, 
	author={Hangsheng Chen},
	year={2026},
	eprint={2602.01678},
	archivePrefix={arXiv},
	primaryClass={math.AP},
	url={https://arxiv.org/abs/2602.01678}, 
}

@misc{Che26R2,
	title={Revisiting Non-Rotating Star Models: Classical Existence and Uniqueness Theory and {S}caling Relations}, 
	author={Hangsheng Chen},
	year={2026},
	eprint={2602.02631},
	archivePrefix={arXiv},
	primaryClass={math.AP},
	url={https://arxiv.org/abs/2602.02631}, 
}

@misc{CVD24,
	language = {eng},
	publisher = {Rheinische Friedrich-Wilhelms-Universität Bonn},
	title = {Existence for stable rotating star-planet systems},
	address = {Bonn},
	author = {Chen, Hangsheng and Velázquez, J. J. L. and Cobb, Dimitri and Rheinische Friedrich-Wilhelms-Universität Bonn Begründer eines Werks},
	year = {2024},
}

@incollection{Rei07,
	title = {Chapter 5 - Collisionless Kinetic Equations from Astrophysics – The Vlasov–Poisson System},
	editor = {C.M. Dafermos and E. Feireisl},
	series = {Handbook of Differential Equations: Evolutionary Equations},
	publisher = {North-Holland},
	volume = {3},
	pages = {383-476},
	year = {2007},
	issn = {1874-5717},
	doi = {https://doi.org/10.1016/S1874-5717(07)80008-9},
	url = {https://www.sciencedirect.com/science/article/pii/S1874571707800089},
	author = {Gerhard Rein}
}

@misc{AKV23,
	title={Rotating solutions to the incompressible Euler-Poisson equation with external particle}, 
	author={Diego Alonso-Orán and Bernhard Kepka and Juan J. L. Velázquez},
	year={2023},
	eprint={2302.01146},
	archivePrefix={arXiv},
	primaryClass={math.AP}
}

@Article{Rei03,
	author={Rein, Gerhard},
	title={Non-Linear Stability of Gaseous Stars},
	journal={Archive for Rational Mechanics and Analysis},
	year={2003},
	month={Jun},
	day={01},
	volume={168},
	number={2},
	pages={115-130},
	issn={1432-0673},
	doi={10.1007/s00205-003-0260-y},
	url={https://doi.org/10.1007/s00205-003-0260-y}
}

@Article{SW19,
	author={Strauss, Walter A.
	and Wu, Yilun},
	title={Rapidly Rotating Stars},
	journal={Communications in Mathematical Physics},
	year={2019},
	month={Jun},
	day={01},
	volume={368},
	number={2},
	pages={701-721},
	issn={1432-0916},
	doi={10.1007/s00220-019-03414-7},
	url={https://doi.org/10.1007/s00220-019-03414-7}
}

@article{SW17,
	author = {Strauss, Walter A. and Wu, Yilun},
	title = {Steady States of Rotating Stars and Galaxies},
	journal = {SIAM Journal on Mathematical Analysis},
	volume = {49},
	number = {6},
	pages = {4865-4914},
	year = {2017},
	doi = {10.1137/17M1119391},
	URL = { 
	https://doi.org/10.1137/17M1119391
	},
	eprint = { 
	https://doi.org/10.1137/17M1119391
	}
}

@article{Hei94,
	author = {Heilig, U.},
	title = {On {Lichtenstein's} analysis of rotating newtonian stars},
	journal = {Annales de l'I.H.P. Physique th\'eorique},
	pages = {457--487},
	publisher = {Gauthier-Villars},
	volume = {60},
	number = {4},
	year = {1994},
	mrnumber = {1288588},
	zbl = {0808.35107},
	language = {en},
	url = {http://www.numdam.org/item/AIHPA_1994__60_4_457_0/}
}

@Article{JM17,
	author={Jang, Juhi
	and Makino, Tetu},
	title={On Slowly Rotating Axisymmetric Solutions of the Euler--Poisson Equations},
	journal={Archive for Rational Mechanics and Analysis},
	year={2017},
	month={Aug},
	day={01},
	volume={225},
	number={2},
	pages={873-900},
	issn={1432-0673},
	doi={10.1007/s00205-017-1115-2},
	url={https://doi.org/10.1007/s00205-017-1115-2}
}

@Article{CL94,
	author={Chanillo, Sagun
	and Li, Yan Yan},
	title={On diameters of uniformly rotating stars},
	journal={Communications in Mathematical Physics},
	year={1994},
	month={Dec},
	day={01},
	volume={166},
	number={2},
	pages={417-430},
	issn={1432-0916},
	doi={10.1007/BF02112323},
	url={https://doi.org/10.1007/BF02112323}
}

@article{CF80,
	title = {The shape of axisymmetric rotating fluid},
	journal = {Journal of Functional Analysis},
	volume = {35},
	number = {1},
	pages = {109-142},
	year = {1980},
	issn = {0022-1236},
	doi = {https://doi.org/10.1016/0022-1236(80)90082-8},
	url = {https://www.sciencedirect.com/science/article/pii/0022123680900828},
	author = {Luis A Caffarelli and Avner Friedman}
}

@Article{Auc91,
	author={Auchmuty, Giles},
	title={The global branching of rotating stars},
	journal={Archive for Rational Mechanics and Analysis},
	year={1991},
	month={Jun},
	day={01},
	volume={114},
	number={2},
	pages={179-193},
	issn={1432-0673},
	doi={10.1007/BF00375402},
	url={https://doi.org/10.1007/BF00375402}
}

@article{AB71M,
	author = {J. F. G. Auchmuty and Richard Beals},
	year = {1971},
	month = {04},
	pages = {L79},
	title = {Models of Rotating Stars},
	volume = {165},
	journal = {The Astrophysical Journal},
	doi = {10.1086/180721}
}

@Article{Lic33,
	author={Lichtenstein, Leon},
	title={Untersuchungen {\"u}ber die Gleichgewichtsfiguren rotierender Fl{\"u}ssigkeiten, deren Teilchen einander nach dem Newtonschen Gesetze anziehen},
	journal={Mathematische Zeitschrift},
	year={1933},
	month={Dec},
	day={01},
	volume={36},
	number={1},
	pages={481-562},
	issn={1432-1823},
	doi={10.1007/BF01188634},
	url={https://doi.org/10.1007/BF01188634}
}

@article{McC94,
	title={A convexity theory for interacting gases and equilibrium crystals},
	author={Robert J. McCann},
	year={1994},
	url={https://api.semanticscholar.org/CorpusID:118075048},
	journal = {Ph.D. Thesis, Princeton University},
}

@article{Mor02,
	author = {Frank Morgan},
	title = {The Perfect Shape for a Rotating Rigid Body},
	journal = {Mathematics Magazine},
	volume = {75},
	number = {1},
	pages = {30-32},
	year = {2002},
	publisher = {Taylor & Francis},
	doi = {10.1080/0025570X.2002.11953096},
	URL = { 		https://doi.org/10.1080/0025570X.2002.11953096	},
	eprint = { 		https://doi.org/10.1080/0025570X.2002.11953096 }
}

@article{GS84,
	author = {Clark R. Givens and Rae Michael Shortt},
	title = {{A class of Wasserstein metrics for probability distributions.}},
	volume = {31},
	journal = {Michigan Mathematical Journal},
	number = {2},
	publisher = {University of Michigan, Department of Mathematics},
	pages = {231 -- 240},
	year = {1984},
	doi = {10.1307/mmj/1029003026},
	URL = {https://doi.org/10.1307/mmj/1029003026}
}

@MISC {To20,
	TITLE = {Weak and almost everywhere convergence},
	AUTHOR = {Tomás},
	HOWPUBLISHED = {Mathematics Stack Exchange},
	NOTE = {URL:https://math.stackexchange.com/q/611997 (version: 2020-06-12)},
	EPRINT = {https://math.stackexchange.com/q/611997},
	URL = {https://math.stackexchange.com/q/611997}
}

@book{RF10,
	title={Real Analysis},
	author={Royden, H.L. and Fitzpatrick, P.},
	isbn={9780131437470},
	lccn={2009048692},
	url={https://books.google.de/books?id=0Y5fAAAACAAJ},
	year={2010},
	publisher={Prentice Hall}
}

@book{Dal93,
	title={An introduction to $\Gamma$-convergence},
	author={Dal Maso, Gianni},
	volume={8},
	year={1993},
	series={Progress in Nonlinear Differential Equations and Their Applications},
	publisher={Birkhäuser Boston, MA}
}

@article{Lie77,
	author = {Lieb, Elliott H.},
	title = {Existence and Uniqueness of the Minimizing Solution of {C}hoquard's Nonlinear Equation},
	journal = {Studies in Applied Mathematics},
	volume = {57},
	number = {2},
	pages = {93-105},
	doi = {https://doi.org/10.1002/sapm197757293},
	url = {https://onlinelibrary.wiley.com/doi/abs/10.1002/sapm197757293},
	eprint = {https://onlinelibrary.wiley.com/doi/pdf/10.1002/sapm197757293},
	year = {1977}
}

@book{Bre11,
	title={Functional analysis, Sobolev spaces and partial differential equations},
	author={Brezis, Haim},
	volume={2},
	number={3},
	year={2011},
	publisher={Springer}
}

@book{BCD11,
	title={Fourier Analysis and Nonlinear Partial Differential Equations},
	author={Bahouri, H. and Chemin, J.Y. and Danchin, R.},
	isbn={9783642168307},
	series={Grundlehren der mathematischen Wissenschaften},
	url={https://books.google.de/books?id=CcTnaveQkn0C},
	year={2011},
	publisher={Springer Berlin Heidelberg}
}

@article{AB71,
	title={Variational solutions of some nonlinear free boundary problems},
	author={J. F. G. Auchmuty and Richard Beals},
	journal={Archive for Rational Mechanics and Analysis},
	year={1971},
	volume={43},
	pages={255-271},
	url={https://api.semanticscholar.org/CorpusID:122838332}
}

@article{JM19,
	title = {On rotating axisymmetric solutions of the {E}uler–{P}oisson equations},
	journal = {Journal of Differential Equations},
	volume = {266},
	number = {7},
	pages = {3942-3972},
	year = {2019},
	issn = {0022-0396},
	doi = {https://doi.org/10.1016/j.jde.2018.09.023},
	url = {https://www.sciencedirect.com/science/article/pii/S0022039618305667},
	author = {Juhi Jang and Tetu Makino},
}

@article{JS22,
	place = {Country unknown/Code not available}, 
	title = {On Uniformly Rotating Binary Stars and Galaxies}, 
	url = {https://par.nsf.gov/biblio/10334799}, 
	DOI = {10.1007/s00205-022-01766-4}, 
	abstractNote = {}, 
	journal = {Archive for Rational Mechanics and Analysis}, 
	volume = {244}, 
	number = {2}, 
	author = {Juhi Jang and Jinmyoung Seok}, 
	year = {2022},
}

@article{Li91,
	author={YanYan Li},
	title={On uniformly rotating stars},
	journal={Archive for Rational Mechanics and Analysis},
	year={1991},
	month={Dec},
	day={01},
	volume={115},
	number={4},
	pages={367-393},
	issn={1432-0673},
	doi={10.1007/BF00375280},
	url={https://doi.org/10.1007/BF00375280}
}

@article{LY87,
	author={Lieb, Elliott H.
	and Yau, Horng-Tzer},
	title={The {C}handrasekhar theory of stellar collapse as the limit of quantum mechanics},
	journal={Communications in Mathematical Physics},
	year={1987},
	month={Mar},
	day={01},
	volume={112},
	number={1},
	pages={147-174},
	issn={1432-0916},
	doi={10.1007/BF01217684},
	url={https://doi.org/10.1007/BF01217684}
}

@article{Lio84,
	author = {Lions, P. L.},
	title = {The concentration-compactness principle in the calculus of variations. {The} locally compact case, part 1},
	journal = {Annales de l'I.H.P. Analyse non lin\'eaire},
	pages = {109--145},
	publisher = {Gauthier-Villars},
	volume = {1},
	number = {2},
	year = {1984},
	mrnumber = {778970},
	zbl = {0541.49009},
	language = {en},
	url = {http://www.numdam.org/item/AIHPC_1984__1_2_109_0/}
}

@article{McC06,
	title={STABLE ROTATING BINARY STARS AND FLUID IN A TUBE},
	author={Robert J. McCann},
	year={2006},
	journal={Houston Journal of Mathematics},
	volume={32(2)},
	pages={603–631},
	url={https://api.semanticscholar.org/CorpusID:7123278}
}

\end{document}